%------------------------------------------------------------------------------

%\documentclass[reqno]{amsart}
%\documentclass[12pt,reqno,draft]{amsart}
\documentclass[12pt,reqno]{amsart}
\usepackage{fullpage}

\newtheorem{theorem}{Theorem}[section]
\newtheorem{lemma}[theorem]{Lemma}

\newtheorem{cor}[theorem]{Corollary}
\newtheorem{conj}{Conjecture}
\usepackage{graphicx}
\usepackage{color}
\usepackage{subfigure}
\usepackage{amssymb}
\usepackage{amsmath}
\usepackage{colonequals}
\usepackage{hyperref}
%\usepackage{showlabels}
%\usepackage{txfonts}
%\usepackage[all]{xypic}
%\entrymodifiers={+!!<0pt,\fontdimen22\textfont2>}

\theoremstyle{definition}
\newtheorem{definition}[theorem]{Definition}

\newtheorem{assumption}{Assumption}
\newtheorem{remark}[theorem]{Remark}
\newtheorem{question}{Question}

%\theoremstyle{remark}
%\newtheorem{remark}[theorem]{Remark}

%\numberwithin{equation}{section}
%\numberwithin{figure}{section}
%\numberwithin{table}{section}
\renewcommand{\subset}{\subseteq}

\renewcommand{\epsilon}{\varepsilon}
\renewcommand{\nu}{v}

\newcommand{\abs}[1]{\left|#1\right|}                   % Absolute value notation
\newcommand{\absf}[1]{|#1|}                             % small absolute value signs
\newcommand{\vnorm}[1]{\left|\left|#1\right|\right|}    % norm notation
\newcommand{\vnormf}[1]{||#1||}                         % norm notation, forced to be small
                        % Pieces of English for math mode

                             % Blackboard notation
\newcommand{\N}{\mathbb{N}}

\newcommand{\R}{\mathbb{R}}

                % Image formatting parameters

                                % Linebreak
                % formatting commands for bibliography
\newcommand{\embolden}[1]{\textbf {#1}}

\begin{document}

\title{Low Correlation Noise Stability of Symmetric Sets}

% NSF Graduate Research Fellowship DGE-0813964
% Part of this work was carried out when S.~H. and A.~N. were visiting the Quantitative Geometry program at MSRI.
%
%\thanks{S. H. is supported by NSF Graduate Research Fellowship DGE-0813964.  Part of this work was carried out while visiting the Quantitative Geometry program at MSRI}
%\author{Steven Heilman}
%\address{Courant Institute, New York University, New York NY 10012}
%\email{heilman@cims.nyu.edu}
%\author{Aukosh Jagannath}
%%\address{Courant Institute, New York University, New York NY 10012}
%\email{asj260@nyu.edu}
%\author{Assaf Naor}
%%\address{Courant Institute, New York University, New York NY 10012}
%\email{naor@cims.nyu.edu}
%\date{..... 2011}
%
\author{Steven Heilman}
%\address{Department of Mathematics, UCLA, Los Angeles, CA 90095-1555\\ E-mail address: {\upshape \texttt{heilman@math.ucla.edu}}}
\date{\today}
\thanks{Department of Mathematics, UCLA, Los Angeles, CA 90095-1555. \\ \textit{Email.} heilman@math.ucla.edu\\ \textit{Acknowledgement.} Thanks to Oded Regev for helpful discussions.  Thanks to Elchanan Mossel for encouraging me to publish these results.}
\keywords{Noise stability, symmetric sets, Gaussian measure, optimization, calculus of variations}
\subjclass[2010]{60E15,49Q10,46N30}

\begin{abstract}
We study the Gaussian noise stability of subsets $A$ of Euclidean space satisfying $A=-A$.  It is shown that an interval centered at the origin, or its complement, maximizes noise stability for small correlation, among symmetric subsets of the real line of fixed Gaussian measure.  On the other hand, in dimension two and higher, the ball or its complement does not always maximize noise stability among symmetric sets of fixed Gaussian measure.  In summary, we provide the first known positive and negative results for the Symmetric Gaussian Problem.
\end{abstract}
\maketitle
%%outline:
% basic stuff, formulas for coefficients, P_t f, etc.
% E-L equation (make note that we can make volume preserving perturbations)
% flatness argument that relies on symmetry (gives proof for dimension 2,3, gives proof contingent on conjecture of what's his name)
%   also need inductive argument?
%   also need perturbative version of conicality of propeller problem
% direct symmetry argument for dimension 2,3
% extra stuff: direct calculation of (d/dt)P_t f, using results of caffarelli et al, colding and minicozzi too
%    try to relate this to extending the solution of SSC

\section{Introduction}
%\snote{Edit lecture notes on website, and research statement, to reflect these changes; edit abstract}

%\snote{Mention somewhere: when integrating on surfaces, $dx$ denotes the restriction of Lebesgue measure to the surface.}
%rewrite using $\rho$ instead of $\theta$

Gaussian noise stability is a well-studied topic with connections to geometry of minimal surfaces \cite{colding11}, hypercontractivity and invariance principles \cite{mossel10}, isoperimetric inequalities \cite{pisier86,ledoux94,khot07,mossel12,kane13,kane14}, sharp Unique Games hardness results in theoretical computer science \cite{khot07,mossel10,khot11,khot12}, social choice theory, learning theory \cite{klivans01,klivans02,klivans08}, and communication complexity \cite{chakrabarti10,sherstov12,vidick13a}.

In applications, it is often desirable to maximize noise stability.  A sample result is the following well-known Theorem of Borell, which has recently been re-proven and strengthened in various ways:

\begin{theorem}[{\cite{borell85,ledoux94,mossel12,eldan13}}]\label{thm0}
Among all subsets of Euclidean space $\R^{n}$ of fixed Gaussian measure, a half space maximizes noise stability (for positive correlation).
\end{theorem}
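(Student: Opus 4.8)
The plan is to realize noise stability as a quadratic form in the Ornstein--Uhlenbeck semigroup, interpolate between the set $A$ and a halfspace of the same measure along the OU flow, and reduce the monotonicity of this interpolation to a single pointwise inequality for the bivariate Gaussian orthant probability. Fix the correlation $\rho\in(0,1)$ (the cases $\rho\in\{0,1\}$ being trivial), let $\gamma_n$ be the standard Gaussian measure on $\R^n$, and for $t\in[0,1]$ let $T_t$ be the OU operator $(T_th)(x)=\int_{\R^n}h\big(tx+\sqrt{1-t^2}\,y\big)\,d\gamma_n(y)$, so $T_1=\mathrm{id}$ and $T_0h=\int h\,d\gamma_n$. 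The noise stability of $A$ at correlation $\rho$ is $\mathrm{Stab}_\rho(A)=\langle 1_A,T_\rho 1_A\rangle_{L^2(\gamma_n)}$. For the halfspace $H=\{x:\,x_1\le\Phi^{-1}(a)\}$ with $\gamma_n(H)=a$ one computes directly $\mathrm{Stab}_\rho(H)=\Lambda_\rho(a,a)$, where
\[ \Lambda_\rho(u,v):=\Pr\big[\,G_1\le\Phi^{-1}(u),\ G_2\le\Phi^{-1}(v)\,\big] \]
and $(G_1,G_2)$ is a $\rho$-correlated pair of standard real Gaussians. Two elementary facts will be used: $\Lambda_\rho(\epsilon_1,\epsilon_2)=\epsilon_1\epsilon_2$ for $\epsilon_1,\epsilon_2\in\{0,1\}$, and $\Lambda_\rho(u,v)\ge uv$ on $[0,1]^2$ (the events $\{G_1\le\cdot\}$, $\{G_2\le\cdot\}$ are positively correlated when $\rho\ge0$).

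\textbf{Interpolation along the OU flow.} Let $(X,Y)$ be the $\rho$-correlated Gaussian pair on $\R^n\times\R^n$, so that $\langle f,T_\rho g\rangle=\E[f(X)g(Y)]$, and for $t\in[0,1]$ put $F_t:=T_t 1_A$ and
\[ \Psi(t):=\E\big[\Lambda_\rho\big(F_t(X),\,F_t(Y)\big)\big]. \]
At $t=0$ we have $F_0\equiv a$, so $\Psi(0)=\Lambda_\rho(a,a)=\mathrm{Stab}_\rho(H)$; at $t=1$ we have $F_1=1_A$, so by the first fact above $\Psi(1)=\E[1_A(X)1_A(Y)]=\mathrm{Stab}_\rho(A)$. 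Thus Theorem~\ref{thm0} follows once $\Psi$ is shown to be nonincreasing on $[0,1]$. (Running the same computation with two distinct $[0,1]$-valued functions in place of $1_A$, and then using $\Lambda_\rho(u,v)\ge uv$, yields the sharper functional inequality $\langle f,T_\rho g\rangle\le\Lambda_\rho\big(\int f\,d\gamma_n,\ \int g\,d\gamma_n\big)$.) To compute $\Psi'$ one invokes the OU evolution equation $\partial_t F_t=-\tfrac1t\,\mathcal{L}F_t$, with $\mathcal{L}=\Delta-x\cdot\nabla$ the OU generator, and then integrates by parts in Gaussian space, the cross terms acquiring a factor $\rho$ from the conditional law of $Y$ given $X$. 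The outcome is that $\Psi'(t)$ equals $-\tfrac1t$ times the expectation of a quadratic form in the pair $\nabla F_t(X),\nabla F_t(Y)\in\R^n$ whose $2\times2$ coefficient matrix is assembled from the first and second partial derivatives of $\Lambda_\rho$ evaluated at $(F_t(X),F_t(Y))$, with $\rho$'s weighting the off-diagonal entries.

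\textbf{The pointwise inequality (the main obstacle).} Consequently $\Psi$ is nonincreasing as soon as this $2\times2$ matrix has the appropriate definite sign at every point of $(0,1)^2$ --- a pointwise statement of the form ``$\nabla^2\Lambda_\rho$, combined with a $\rho$-weighted multiple of $\mathrm{diag}(\partial_u\Lambda_\rho,\partial_v\Lambda_\rho)$, is semidefinite.'' This is the crux, and where I expect the real work to lie. It is checked by explicit computation from the closed forms
\[ \partial_u\Lambda_\rho(u,v)=\Phi\!\Big(\tfrac{\Phi^{-1}(v)-\rho\Phi^{-1}(u)}{\sqrt{1-\rho^2}}\Big),\qquad \partial_v\Lambda_\rho(u,v)=\Phi\!\Big(\tfrac{\Phi^{-1}(u)-\rho\Phi^{-1}(v)}{\sqrt{1-\rho^2}}\Big), \]
from which $\partial_{uu}\Lambda_\rho$ has the sign of $-\rho$ and the remaining entries are Gaussian densities composed with affine functions of $\Phi^{-1}$; the required semidefiniteness then reduces to an elementary but delicate comparison of these densities. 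Two technical points arise along the way: the smoothing property of $T_t$ for $t\in(0,1)$ justifies differentiating under the integral and makes $F_t$ smooth with controlled derivatives, and the blow-up of $\nabla\Lambda_\rho,\nabla^2\Lambda_\rho$ as $(u,v)\to\partial([0,1]^2)$ is handled by first running the argument with $1_A$ replaced by a function bounded away from $0$ and $1$ and then taking limits. Granting the pointwise inequality, $\Psi'\le0$ on $(0,1)$, so $\mathrm{Stab}_\rho(A)=\Psi(1)\le\Psi(0)=\mathrm{Stab}_\rho(H)$, which is the theorem.

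\textbf{Remark on an alternative.} Borell's original argument avoids the matrix inequality at the cost of a compactness step: one shows that Ehrhard symmetrization --- replacing $A$ by the set obtained by symmetrizing each line in the $e_1$-direction to a halfline of the same one-dimensional Gaussian measure --- does not decrease $\mathrm{Stab}_\rho$, and then that iterating such symmetrizations over a dense set of directions drives $A$ to a halfspace. I would favor the semigroup route above, since it localizes the entire difficulty into a single two-variable inequality and sidesteps the convergence analysis.
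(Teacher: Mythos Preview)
The paper does not supply a proof of Theorem~\ref{thm0}; it is quoted as a known result with references to Borell, Ledoux, Mossel--Neeman, and Eldan, and serves only as motivation for the symmetric variant studied in the rest of the paper. So there is no ``paper's own proof'' to compare your attempt against.

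That said, your sketch is a faithful outline of the semigroup proof of Mossel and Neeman (the reference \texttt{mossel12} in the citation list). The interpolation $\Psi(t)=\E[\Lambda_\rho(T_t 1_A(X),T_t 1_A(Y))]$, the identification of the endpoints, and the reduction of $\Psi'\le 0$ to a pointwise $2\times 2$ sign condition on a matrix built from the partials of $\Lambda_\rho$ are all correct in spirit. Your description of that matrix is slightly garbled --- what one actually needs is that
\[
\begin{pmatrix} \partial_{uu}\Lambda_\rho & \rho\,\partial_{uv}\Lambda_\rho\\ \rho\,\partial_{uv}\Lambda_\rho & \partial_{vv}\Lambda_\rho\end{pmatrix}
\]
is negative semidefinite on $(0,1)^2$ when $\rho\in(0,1)$, not a Hessian plus a diagonal first-order correction --- but you rightly flag this as the heart of the argument and defer the computation. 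The regularity remarks (smoothness of $T_t 1_A$ for $t<1$, truncation away from $\{0,1\}$ to control the boundary singularity of $\Lambda_\rho$) are the right technical caveats. Your alternative via Ehrhard symmetrization is Borell's original route (\texttt{borell85}). Either would be acceptable here; since the paper merely cites the result, nothing further is required.
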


Here a half space is any set of points lying on one side of a hyperplane.

A well-known Corollary of Theorem \ref{thm0} says: among all subsets of Euclidean space $\R^{n}$ of fixed Gaussian measure, a half space has minimal Gaussian surface area.  This statement may be surprising if one has only seen the isoperimetric inequality for Lebesgue measure.  The latter inequality says: among all subsets of Euclidean space $\R^{n}$ of fixed Lebesgue measure, a ball has minimal surface area.

The present paper concerns a variant of Theorem \ref{thm0} where we only consider symmetric sets.  We say a subset $A$ of $\R^{n}$ is symmetric if $A=-A$.  Such a variant of Theorem \ref{thm0} is a conjecture.

\begin{conj}[Informal, \cite{barthe01,chakrabarti10,odonnell12}]\label{conj0}
Among all symmetric subsets of $\R^{n}$ of fixed Gaussian measure, the ball centered at the origin or its complement maximizes noise stability (for positive correlation).
\end{conj}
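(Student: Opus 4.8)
The plan is to reformulate noise stability spectrally and then attack the conjecture by the calculus of variations, since every known proof of Theorem~\ref{thm0} (Ehrhard symmetrization, the Mossel--Neeman semigroup interpolation, Eldan's stochastic method) symmetrizes toward a half space and so destroys the constraint $A=-A$. For $\rho\in(0,1)$ write $\mathrm{Stab}_\rho(A)=\langle 1_A,T_\rho 1_A\rangle_{L^2(\gamma_n)}$, where $\gamma_n$ is the standard Gaussian measure and $T_\rho$ the Ornstein--Uhlenbeck operator; a Hermite expansion gives $\mathrm{Stab}_\rho(A)=\sum_{k\ge 0}\rho^k\|P_k 1_A\|_2^2$, with $P_k$ the orthogonal projection onto degree-$k$ Hermite functions. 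Relax to $f\colon\R^n\to[0,1]$ with $\int f\,d\gamma_n=a$ and $f(-x)=f(x)$; then $f\mapsto\langle f,T_\rho f\rangle$ is convex (the eigenvalues $\rho^k$ of $T_\rho$ are nonnegative) and weak-$*$ continuous ($T_\rho$ is compact), so a maximizer exists, and since a convex functional attains its maximum at an extreme point, we may take it to be the indicator of a symmetric set.

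First I would extract the Lagrange conditions. Perturbing a maximizer $1_A$ by $\epsilon(1_E-1_F)$ with $E,F$ symmetric, $E\subseteq A^c$, $F\subseteq A$, $\gamma_n(E)=\gamma_n(F)$, and using that $T_\rho$ commutes with $x\mapsto -x$, one gets a constant $c=c(A)$ with $T_\rho 1_A\ge c$ a.e.\ on $A$ and $T_\rho 1_A\le c$ a.e.\ on $A^c$: every maximizer is a superlevel set of its own Ornstein--Uhlenbeck evolution. The centered ball $B$ satisfies this, since by Anderson's unimodality theorem $x\mapsto T_\rho 1_B(x)=\Pr[N(\rho x,(1-\rho^2)I)\in B]$ is radially non-increasing, so $\{T_\rho 1_B\ge c\}$ is a centered ball; likewise $B^c$ is $\{T_\rho 1_B\le c\}$. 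Hence the ball and its complement are critical points, and the issue is to rule out, or else to locate, the competing critical configurations.

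For $n=1$ and $\rho$ small I would then run a perturbative argument. If $A=-A$ then $1_A$ is even, so $P_k 1_A=0$ for odd $k$ and $\mathrm{Stab}_\rho(A)=a^2+\rho^2\|P_2 1_A\|_2^2+O(\rho^4)$ with the error uniform in $A$ (it is at most $\rho^4\|1_A\|_2^2\le\rho^4$). The $\rho^2$-coefficient equals $\tfrac12\bigl(\int_A(x^2-1)\,d\gamma_1\bigr)^2$, and by the bathtub principle applied to the even function $x^2-1$ this is maximized over symmetric sets of measure $a$ exactly by the centered interval $I_a=\{|x|\le t_a\}$ or the centered complementary interval $J_a=\{|x|\ge t'_a\}$ of measure $a$ --- whichever gives the larger value of $\bigl|\int_A(x^2-1)\,d\gamma_1\bigr|$ --- with a quantitative gap for sets that are $L^2(\gamma_1)$-far from the winner. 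To pass from the leading order to an honest statement for all $\rho\in(0,\rho_0)$ I would first show the maximizer has simple structure: $T_\rho 1_A$ is real-analytic and, for $\rho$ small, a $C^\infty_{\mathrm{loc}}$-small perturbation of $a+\tfrac{\rho^2}{2}\bigl(\int_A(x^2-1)\,d\gamma_1\bigr)(x^2-1)$, so $T_\rho 1_A$ is unimodal and its superlevel set is a single interval or its complement, forcing the maximizer to be $I_a$ or $J_a$; a two-term comparison ($\rho^2$, then $\rho^4$ to break a possible tie) then finishes. Making this structural reduction --- and its attendant stability estimate --- uniform over all symmetric $A$ is the main obstacle here.

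The more serious obstacle is that no general proof exists, because the clean statement of Conjecture~\ref{conj0} is false in dimension $\ge 2$, so a dimension-free argument must break down; this is consistent with the failure of classical symmetrization. In fact the degree-two analysis already exhibits counterexamples: $\|P_2 1_A\|_2^2$ depends on $A$ only through the symmetric matrix $\int_A xx^{T}\,d\gamma_n$, and for Gaussian measure $a=1-\epsilon$ close to $1$ the slab $A=\{|x_1|\le s\}$ with $\gamma_1([-s,s])=a$ has $\|P_2 1_A\|_2^2\sim 2\bigl(\log\tfrac1\epsilon\bigr)^2\epsilon^2$ as $\epsilon\to 0$, which is $\sim n$ times larger than the value $\sim\tfrac2n\bigl(\log\tfrac1\epsilon\bigr)^2\epsilon^2$ for the centered ball of the same measure (and far larger than the value $\sim\tfrac n2\epsilon^2$ for the complement of a small ball); hence for $\rho$ sufficiently small the slab beats both candidate sets, so the ball and its complement are not maximizers. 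So the realistic outcome is a proof of the one-dimensional, low-correlation case of Conjecture~\ref{conj0} together with an explicit higher-dimensional counterexample --- the positive and negative results announced in the abstract.
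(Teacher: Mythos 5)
Your proposal is essentially correct and tracks the paper's two-pronged answer: a positive result in $n=1$ for small $\rho$, and a disproof for $n\ge 2$. For $n=1$ your route (reduce to the $\rho^2$-coefficient $\tfrac12(\int_A(x^2-1)\,d\gamma_1)^2$, apply a bathtub/rearrangement argument to identify the centered interval and its complement, then upgrade via first-variation level-set structure and iteration) is the same strategy the paper implements in Lemmas~\ref{lemma8.0}--\ref{lemma18} and Theorem~\ref{thm2.0}, and your honest flag that the uniform structural/stability step is the obstacle is precisely where the paper spends its effort (the Hermite tail bounds of Lemma~\ref{lemma7} and the iterative Step~5). The genuinely different contribution is your $n\ge 2$ counterexample: you argue asymptotically that for $a=1-\epsilon\to 1$ the slab has $\|P_21_A\|_2^2\sim 2\epsilon^2(\log\tfrac1\epsilon)^2$, beating the centered ball's $\sim \tfrac{2}{n}\epsilon^2(\log\tfrac1\epsilon)^2$ and the small-ball complement's $\sim\tfrac n2\epsilon^2$; the computations check out (integration by parts gives $\int_{|x|\le s}(x^2-1)\,d\gamma_1=-2se^{-s^2/2}/\sqrt{2\pi}$, and Lemma~\ref{newlem} gives the ball values), and this is arguably cleaner and more general than the paper's explicit $n=2$ numerical ellipse example in Remark~\ref{goodrk}. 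What your route does not yield, and what the paper considers a main contribution, is the \emph{local} non-optimality statement: Theorems~\ref{newthm} and~\ref{thm3} compute the second variation of $F$ and of noise stability at the ball and combine it with the sharp spherical Poincar\'e-type inequality of Lemma~\ref{lemma50} to show the ball fails to be even a local maximizer precisely when $r^2>n+2$, pinning down a phase transition that your global slab comparison does not detect. So: same $n=1$ plan, a different and somewhat cleaner global counterexample for $n\ge 2$, but none of the paper's second-variation/local-optimality machinery.
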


If Conjecture \ref{conj0} were true, then a Corollary would be: among all symmetric subsets of Euclidean space $\R^{n}$ of fixed Gaussian measure, a ball centered at the origin or its complement has minimal Gaussian surface area.  So, by restricting our attention to symmetric sets, the isoperimetric sets for the Gaussian measure and Lebesgue measure become essentially the same.

We show that Conjecture \ref{conj0} holds in the case $n=1$, and it does not hold in general in the case $n\geq2$.  That is, we provide the first known positive and negative results for Conjecture \ref{conj0}.

\subsection{Previous Work}

It is natural to expect that that the approaches for proving Theorem \ref{thm0} taken e.g. in \cite{borell85,ledoux94,mossel12,eldan13} would apply to Conjecture \ref{conj0}.  However, this does not seem to be the case.  The approaches of \cite{borell85,mossel12,eldan13}  in proving Theorem \ref{thm0} all seem to use the following property of a half-space: when a half space is translated, it still maximizes noise stability (with a different measure constraint).  This translation-invariance property goes away when we consider Conjecture \ref{conj0}.

The restriction that the subset $A$ is symmetric in Conjecture \ref{conj0}, i.e. that $A=-A$, immediately removes any translation invariance property of the maximizers for this problem.  That is, if a set $A$ satisfies $A=-A$ and the set $A$ maximizes noise stability among all symmetric subsets of $\R^{n}$ with Gaussian measure $1/2$, then an arbitrary translation of $A$ will no longer be a symmetric set.  So, this translated set cannot maximize noise stability among symmetric sets.  In short, we need to use some approach different from \cite{borell85,mossel12,eldan13} in our investigation of Conjecture \ref{conj0}.  The approach of \cite{heilman12} was designed to avoid this translation-invariance issue, and we can similarly apply the approach of \cite{heilman12} to Conjecture \ref{conj0}.  When applying this approach to Conjecture \ref{conj0}, we first maximize noise stability for the correlation $\rho=0$.  Then, when the correlation $\rho$ is small, one shows that the first variation condition for noise stability essentially defines a contractive mapping in a neighborhood of a global maximum.  On the other hand, the approach of \cite{heilman12} currently seems special to the low correlation regime, whereas the approaches of \cite{borell85,mossel12,eldan13} work for Theorem \ref{thm0} for any correlation value $\rho\in(-1,1)$.

Also, as used in various other works on isoperimetry with respect to the Gaussian measure (see e.g. \cite{corneli08}), one may try to solve Conjecture \ref{conj0} by solving the analogous problem on the unit $n$-dimensional unit sphere $S^{n}$ equipped with its normalized Haar measure.  Solving this analogous problem on $S^{n}$ and letting $n\to\infty$ could potentially solve Conjecture \ref{conj0} itself.  In fact, \cite{barthe01} mentions this strategy for considering Conjecture \ref{conj0}.  However, as communicated to us by K. Oleszkiewicz (and noted in \cite{barthe01}), this strategy seems infeasible for proving Conjecture \ref{conj0}.  There is a symmetric torus in $S^{3}$ of Haar measure $1/2$ which has less surface area than two spherical caps of total measure $1/2$.  Therefore, two spherical caps of total measure $1/2$ cannot maximize noise stability on the sphere $S^{3}$ for all correlation values $\rho\in(-1,1)$.  (If we normalize correctly, the derivative of noise stability at $\rho=1$ is equal to surface area.  That is, maximizing noise stability for $\rho\to1$ corresponds to minimizing surface area.)  It is still possible that spherical caps maximize noise stability on the sphere $S^{n}$ as $n\to\infty$, but this example for $S^{3}$ suggests the situation could be complicated for any fixed $n$.

In contrast to Conjecture \ref{conj0}, \cite[Theorem 1]{barthe01} shows that, if we choose a modified definition of Gaussian surface area, then symmetric strips have minimal modified Gaussian surface area among all sets of fixed Gaussian measure.  Here a symmetric strip is a symmetric set bounded by two parallel hyperplanes.  Since \cite[Theorem 1]{barthe01} uses a modified definition of Gaussian surface area, \cite[Theorem 1]{barthe01} does not contradict Conjecture \ref{conj0}.

Lastly, it is tempting to try to prove Conjecture \ref{conj0} by a symmetrization argument, as in \cite{burchard01,isaksson11}, but it is unclear how to construct such an argument in this setting.  A symmetrization argument would begin with an arbitrary set $A$, and then construct a new set which is ``more symmetric,'' and whose noise stability would be larger than that of $A$.  In other words, a symmetrization argument begins with a set $A$ and then moves $A$ in a direction of ``increasing gradient'' of noise stability.  Symmetrization arguments are best suited for statements such as Theorem \ref{thm0}, where the set of maximum noise stability is unique, up to rotations.  However, since we expect Conjecture \ref{conj0} to have at least two local maxima (namely the ball centered at the origin and its complement), a symmetrization argument seems more difficult to implement.

Noise stability can be interpreted as a nonlocal interaction energy \cite{villani03}.  Note that a theory of nonlocal minimal surfaces has been developed \cite{caf10}, but it does not seem to apply in the present setting.

For the reasons including those mentioned in \cite{barthe01}, Conjecture \ref{conj0} appears to be a difficult problem to solve in general.  Furthermore, Conjecture \ref{conj0} essentially contains the problem of minimizing ``entropy'' among self-shrinking solutions to the mean curvature flow.  This problem has recently found significant progress \cite{colding12}, building on a sequence of works including \cite{colding11}, but this minimization problem is still not fully resolved.  The main result of \cite{colding12} only considers minimizing ``entropy'' among compact sets, so e.g. cones are ignored in their result.

%With the intricacies of Conjecture \ref{conj0} in mind, we therefore try to prove Conjecture \ref{conj0} for certain specific parameters, namely dimensions $n=1,2$ and for low %correlation $\rho$.  We also investigate some asymptotic results for $\rho\to0$ as $n\to\infty$.

\subsection{Basic Definitions}

\begin{definition}[Gaussian Measure]
Let $n$ be a positive integer.  Let $A\subset\R^{n}$ be a measurable set.  Define the \textit{Gaussian measure of $A$} to be
$$\gamma_{n}(A)\colonequals\int_{A}e^{-(x_{1}^{2}+\cdots+x_{n}^{2})/2}\frac{dx}{(2\pi)^{n/2}}.$$
\end{definition}

Let $\N\colonequals\{0,1,2,\ldots\}$.  For any $x=(x_{1},\ldots,x_{n}),y=(y_{1},\ldots,y_{n})\in\R^{n}$, define $\langle x,y\rangle\colonequals\sum_{i=1}^{n}x_{i}y_{i}$, $\vnorm{x}_{2}\colonequals\sqrt{\langle x,x\rangle}$.  For any $f\colon\R^{n}\to\R$, let $\vnorm{f}_{L_{2}(\gamma_{n})}\colonequals(\int_{\R^{n}}\abs{f(x)}^{2}d\gamma_{n}(x))^{1/2}$.  Let $L_{2}(\gamma_{n})\colonequals\{f\colon\R^{n}\to\R\colon\vnorm{f}_{L_{2}(\gamma_{n})}<\infty\}$.  For any $x\in\R^{n}$ and any $r>0$, let $B(x,r)\colonequals\{y\in\R^{n}\colon\vnorm{x-y}_{2}<r\}$.  Let $f\colon\R^{n}\to[0,1]$ and let $\rho\in[-1,1]$,  define the \textit{Ornstein-Uhlenbeck operator with correlation $\rho$} applied to $f$ by
\begin{equation}\label{oudef}
T_{\rho}f(x)\colonequals\int_{\R^{n}}f(x\rho+y\sqrt{1-\rho^{2}})d\gamma_{n}(y),\qquad\forall\,x\in\R^{n}.
\end{equation}
$T_{\rho}$ is a parametrization of the Ornstein-Uhlenbeck operator.  $T_{\rho}$ is not a semigroup, but it satisfies $T_{\rho_{1}}T_{\rho_{2}}=T_{\rho_{1}\rho_{2}}$, as we will see below.  We have chosen this definition since the usual Ornstein-Uhlenbeck operator is only defined for $\rho\in[0,\pi/2]$.

\begin{definition}[Noise Stability]
Let $n$ be a positive integer.  Let $\rho\in(-1,1)$.  Let $A\subset\R^{n}$ be a measurable set.  Define the \textit{Noise Stability of $A$ with correlation $\rho$} to be
$$\int_{\R^{n}}1_{A}(x)T_{\rho}1_{A}(x)d\gamma_{n}(x).$$
\end{definition}

\subsection{The Symmetric Gaussian Problem}

Conjecture \ref{conj0} appeared in \cite{chakrabarti10} in relation to the Gap-Hamming-Distance problem in communication complexity.  There, the following inequality was proven using concentration of measure techniques.  In particular, the following property was used: when $n$ is large, most of the measure of $\gamma_{n}$ is concentrated near the sphere of radius $\sqrt{n}$ centered at the origin.

\begin{theorem}[{\cite[Corollary 3.6]{chakrabarti10}}]\label{thm00}
For all $c,\epsilon>0$, there exists $\delta,N>0$ such that, for all $n>N$, for all $0\leq\rho\leq c/\sqrt{n}$, and for all $A,B\subset\R^{n}$ with $\gamma_{n}(A)\geq e^{-\delta n}$ with $A=-A$, we have
$$\int_{\R^{n}}1_{A}(x)T_{\rho}1_{B}(x)d\gamma_{n}(x)\geq(1-\epsilon)\gamma_{n}(A)\gamma_{n}(B).$$
\end{theorem}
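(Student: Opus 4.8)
The plan is to follow the concentration-of-measure strategy of \cite{chakrabarti10}. Write $\langle 1_A,T_\rho 1_B\rangle=\Pr[G\in A,\ \rho G+\sqrt{1-\rho^2}H\in B]$ with $G,H$ independent standard Gaussian vectors, and set $\nu_A\colonequals\gamma_n(\cdot\cap A)/\gamma_n(A)$ (a symmetric probability measure, since $A=-A$), likewise $\nu_B$. The three ingredients are: (i) localize $A$ and $B$ to the shell where $\vnorm{x}_2\approx\sqrt n$; (ii) use $A=-A$ to recenter the noise, producing an explicit symmetric density; (iii) a covariance/moment estimate showing that when $\gamma_n(A),\gamma_n(B)\ge e^{-\delta n}$, a correlation $\rho\le c/\sqrt n$ perturbs the product measure by at most $\epsilon$.

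\emph{Localization.} Since $x\mapsto\vnorm{x}_2$ is $1$-Lipschitz, Gaussian concentration gives $\gamma_n(S^c)\le 2e^{-\eta^2 n/2}$ for the shell $S\colonequals\{x:\,|\,\vnorm{x}_2-\sqrt n\,|\le\eta\sqrt n\}$. Taking $\eta\colonequals\sqrt{3\delta}$ yields $\gamma_n(S^c)=o(e^{-\delta n})$, so $\gamma_n(A\cap S)\ge(1-o(1))\gamma_n(A)$ and $\gamma_n(B\cap S)\ge(1-o(1))\gamma_n(B)$. As $T_\rho$ is positivity preserving, $\langle f,T_\rho g\rangle$ is monotone in $f,g\ge 0$, and $S=-S$ (so $A\cap S$ is still symmetric), it suffices to prove the bound for symmetric $A,B\subset S$ with $\epsilon$ replaced by $\epsilon/2$. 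On $S$ one has $\vnorm{x}_2^2=n(1+O(\eta))$, so the Radon–Nikodym derivative of $\mu_s\colonequals N(0,(1-\rho^2)I_n)$ with respect to $\gamma_n$, namely $(1-\rho^2)^{-n/2}e^{-\rho^2\vnorm{w}_2^2/(2(1-\rho^2))}$, equals $1+O(c^2\sqrt\delta)+o(1)$ uniformly on $S$ (using $\rho^2 n\le c^2$).

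\emph{Recentering.} Because $A=-A$ and $\gamma_n$ is symmetric, $\langle 1_A,T_\rho 1_B\rangle=\tfrac12\int_A\big(T_\rho 1_B(x)+T_\rho 1_B(-x)\big)\,d\gamma_n(x)$, and the averaged kernel $\tfrac12\big(N(\rho x,(1-\rho^2)I_n)+N(-\rho x,(1-\rho^2)I_n)\big)$ has density with respect to $\gamma_n$ equal to
\begin{equation}\label{eq:psikernel}
\psi(x,w)\colonequals(1-\rho^2)^{-n/2}\,e^{-\rho^2(\vnorm{x}_2^2+\vnorm{w}_2^2)/(2(1-\rho^2))}\,\cosh\!\Big(\tfrac{\rho\langle x,w\rangle}{1-\rho^2}\Big).
\end{equation}
This $\psi$ is nonnegative, symmetric in $(x,w)$, and a probability density on $(\R^n\times\R^n,\gamma_n\times\gamma_n)$ with both marginals equal to $\gamma_n$; hence $\langle 1_A,T_\rho 1_B\rangle=\gamma_n(A)\gamma_n(B)\,\E_{x\sim\nu_A,\,w\sim\nu_B}[\psi(x,w)]$, and it remains to show $\E_{\nu_A\times\nu_B}[\psi]\ge 1-\epsilon/2$. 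For $x,w\in S$ the prefactor in \eqref{eq:psikernel} is $e^{-n\rho^2/2}\big(1+O(c^2\eta)+o(1)\big)$, so (using $\cosh\ge 0$) the task reduces to showing $\E_{\nu_A\times\nu_B}[\cosh(\rho\langle x,w\rangle/(1-\rho^2))]\ge(1-O(c^2\delta^{1/3}))\,e^{n\rho^2/2}$.

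\emph{The covariance estimate, and the main obstacle.} Let $\Sigma_A\colonequals\E_{\nu_A}[xx^\top]$ and $\Sigma_B\colonequals\E_{\nu_B}[ww^\top]$. Since $\nu_A$ is centered, $\log(1/\gamma_n(A))=D(\nu_A\,\|\,\gamma_n)\ge\tfrac12\sum_i(\lambda_i-1-\log\lambda_i)$ with $\lambda_i$ the eigenvalues of $\Sigma_A$ (the Gaussian maximizes differential entropy for a given covariance); combined with $\gamma_n(A)\ge e^{-\delta n}$ this forces all but $O(\delta^{1/3}n)$ eigenvalues of $\Sigma_A$ into $[1-O(\delta^{1/3}),1+O(\delta^{1/3})]$, and likewise for $\Sigma_B$, whence $\operatorname{tr}(\Sigma_A\Sigma_B)\ge(1-O(\delta^{1/3}))n$ (von Neumann's trace inequality). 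Expanding $\cosh(\lambda u)=\sum_{k\ge0}\lambda^{2k}u^{2k}/(2k)!$ with $\lambda=\rho/(1-\rho^2)$, the claim reduces to the moment lower bound $\E_{\nu_A\times\nu_B}[\langle x,w\rangle^{2k}]\ge(1-o(1))\,(2k-1)!!\,(\operatorname{tr}\Sigma_A\Sigma_B)^k$, summably in $k$, since $\sum_k\lambda^{2k}(2k-1)!!\,M^k/(2k)!=e^{\lambda^2 M/2}$ and $\lambda^2\operatorname{tr}(\Sigma_A\Sigma_B)/2\ge n\rho^2(1-O(\delta^{1/3}))/2$. This is the crux: the naive two-fold Jensen bound gives only $(\operatorname{tr}\Sigma_A\Sigma_B)^k$, which is short of the Gaussian moment by the factor $(2k-1)!!$ and leaves a $c$-dependent deficit in $\E[\cosh]$ that is \emph{not} absorbable into a small $\delta$. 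Recovering the factor $(2k-1)!!$ needs a quantitative central limit theorem for $\langle x,w\rangle=\sum_i x_iw_i$ under $\nu_A\times\nu_B$: on the $(n-O(\delta n))$-dimensional subspace on which the truncated Gaussians $\nu_A,\nu_B$ are close to $\gamma_n$, the inner product is a sum of many weakly dependent products and is approximately $N(0,(1+o(1))n)$, while the $O(\delta n)$ remaining "thin'' directions contribute only bounded, lower-order corrections controlled by the eigenvalue bound above — this is precisely where concentration of $\gamma_n$ near the sphere of radius $\sqrt n$ is used. Granting this, $\E_{\nu_A\times\nu_B}[\psi]\ge 1-O(c^2\delta^{1/3})-o(1)$; choosing $\delta$ small in terms of $c$ and $\epsilon$, and $N$ large, finishes the proof. (Alternatively one may simply invoke \cite[Corollary 3.6]{chakrabarti10} directly.)
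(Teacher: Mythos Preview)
The paper does not itself prove this statement: Theorem~\ref{thm00} is quoted from \cite{chakrabarti10} as background, with only the remark that the original argument uses Gaussian concentration near the sphere of radius $\sqrt n$. There is no in-paper proof to compare against, and your closing parenthetical (``invoke \cite{chakrabarti10} directly'') is in fact how the present paper treats the result.

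Your sketch sets up the right objects --- the symmetrized kernel $\psi$ is correct, and the relative-entropy bound forcing most eigenvalues of $\Sigma_A$ near $1$ is a nice observation --- but there is a genuine gap at precisely the step you label ``the crux.'' The asserted moment inequality
\[
\E_{\nu_A\times\nu_B}\big[\langle x,w\rangle^{2k}\big]\ \ge\ (1-o(1))\,(2k-1)!!\,(\operatorname{tr}\Sigma_A\Sigma_B)^k
\]
is not established, and the CLT heuristic you offer does not prove it: $\nu_A,\nu_B$ are Gaussians conditioned on arbitrary symmetric sets, the summands $x_iw_i$ are in general neither independent nor weakly dependent in any quantified sense, and spectral control of $\Sigma_A,\Sigma_B$ is second-moment information only --- it says nothing about the fourth and higher moments governing $k\ge2$. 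Already in one dimension a sharply truncated Gaussian has $\E[U^4]/(\E[U^2])^2\to 9/5<3$, so the factor $(2k-1)!!$ is not automatic for conditioned Gaussians; one would need to argue carefully that the constraint $\gamma_n(B)\ge e^{-\delta n}$ forces the linear functionals $w\mapsto\langle x,w\rangle$ to have near-Gaussian even moments under $\nu_B$, uniformly over the $x$'s that carry most of $\nu_A$. Since ``granting this'' grants the whole theorem, the proposal is a plausible outline but not a proof.

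A secondary issue: the statement as recorded here constrains only $\gamma_n(A)$, yet your argument uses $\gamma_n(B)\ge e^{-\delta n}$ both to localize $B$ to the shell and to run the eigenvalue bound for $\Sigma_B$. Without some such lower bound the conclusion is actually false: with $A=B(0,r)$, $\gamma_n(A)=1/2$, and $B$ a tiny ball centered at $y_0$ with $\|y_0\|/\sqrt n$ large, one has $T_\rho 1_A(y_0)/\gamma_n(A)\to 0$ even for $\rho=c/\sqrt n$. So either a hypothesis on $B$ is implicit in the cited source and omitted here, or the route through $\Sigma_B$ must be replaced.
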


A sharper estimate of the right side would give sharper lower bounds for the Gap-Hamming-Distance problem.  Some related versions of Theorem \ref{thm00} were investigated in \cite{sherstov12} and \cite{vidick13a}.
%\begin{remark}
%...
%\end{remark}
%
%
%--rho near 1 - still unresolved even when restricted to compact, self-shrinking surfaces, and with volume constraint removed

%Discussion, statement of results, see also \cite{khot09,khot11,khot12,heilman11,heilman12}, \cite{chakrabarti10,colding11}.

The following conjecture is suggested in \cite{barthe01,chakrabarti10,odonnell12}.  Conjecture \ref{SGP} below is a formal re-statement of Conjecture \ref{conj0}.

\begin{conj}\label{SGP}
\textbf{(Symmetric Gaussian Problem)}  Let $0<a,b<1$, $-1\leq\rho\leq1$ and let $A,B\subset\R^{n}$ with $\gamma_{n}(A)=a$, $\gamma_{n}(B)=b$. Let $r_{a},r_{b},r_{a}',r_{b}'>0$ so that $\gamma_{n}(B(0,r_{a}))=a$, $\gamma_{n}(B(0,r_{b})^{c})=b$, $\gamma_{n}(B(0,r_{a}')^{c})=a$, and $\gamma_{n}(B(0,r_{b}'))=b$.  If $\rho>0$, then $(B(0,r_{a}),B(0,r_{b})^{c})$ or $(B(0,r_{a}')^{c},B(0,r_{b}'))$ achieves the following infimum
\begin{equation}\label{six1.9}
\inf_{\substack{A,B\subset\R^{n}\colon\\ \gamma_{n}(A)=a,\gamma_{n}(B)=b,\,A=-A}}\int_{\R^{n}}1_{A}(x)T_{\rho}(1_{B})(x)d\gamma_{n}(x).
\end{equation}
If $\rho<0$, the same result holds, with the additional restriction that $B=-B$ in \eqref{six1.9}.
\end{conj}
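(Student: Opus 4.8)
We outline a strategy for establishing Conjecture \ref{SGP} in the one-dimensional case $n=1$ for small $|\rho|$ --- which, in view of the discussion above, is essentially the most one can hope to prove, since the conjecture is false for $n\ge2$. Following the approach of \cite{heilman12} recalled above, we begin with reductions valid in all dimensions. If $\rho=0$ then $T_{0}1_{B}\equiv\gamma_{n}(B)=b$, so \eqref{six1.9} equals $ab$ for every feasible pair and the stated pairs (trivially) attain it; so assume $\rho\ne0$. Substituting $y\mapsto-y$ in \eqref{oudef} and using $A=-A$ shows that $T_{\rho}1_{A}$ is an even function of $x$, and also an even function of $\rho$; the same substitution gives $\int_{\R^{n}}1_{A}T_{-\rho}1_{B}\,d\gamma_{n}=\int_{\R^{n}}1_{A}T_{\rho}1_{-B}\,d\gamma_{n}$ when $A=-A$, so the $\rho<0$ problem (with $B=-B$) is equivalent via $B\mapsto-B$ to the $\rho>0$ problem, and it suffices to treat $\rho>0$. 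Finally, since $T_{\rho}$ is self-adjoint on $L_{2}(\gamma_{n})$, fixing $A$ and minimizing $\int_{\R^{n}}(T_{\rho}1_{A})1_{B}\,d\gamma_{n}$ over $\gamma_{n}(B)=b$ is a linear ``bang-bang'' problem solved by a sublevel set $\{T_{\rho}1_{A}\le t\}$, which is symmetric because $T_{\rho}1_{A}$ is even. Hence it remains to minimize a single functional $F_{\rho}(A)$ over symmetric $A$ with $\gamma_{1}(A)=a$, and the conjectured minimizer is the centered interval or its complement.

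For the one-dimensional core I would: (i) obtain a minimizer $A^{*}$ by weak-$\ast$ compactness of indicator functions; (ii) show that, since $T_{\rho}1_{A}$ is real-analytic and non-constant for $\rho\ne0$, both $A^{*}$ and its optimal partner $B^{*}$ have finitely many boundary points, hence are determined by finitely many positive endpoints $0<s_{1}<\cdots<s_{k}$ (together with the datum $0\in A^{*}$ or not); and (iii) write the first-variation (Euler--Lagrange) condition --- $T_{\rho}1_{B^{*}}$ is constant on $\partial A^{*}$, and symmetrically $T_{\rho}1_{A^{*}}$ is constant on $\partial B^{*}$ --- together with $\gamma_{1}(A^{*})=a$ as a finite analytic system in the $s_{i}$ and $\rho$. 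The crucial structural input is the expansion $T_{\rho}1_{A}=a+\rho^{2}g_{2}(A)+\rho^{4}g_{4}(A)+\cdots$ (no odd powers, by the evenness in $\rho$ above): at $\rho=0$ the system is totally degenerate, so the genuine small parameter is $\rho^{2}$. Exactly as in \cite{heilman12}, I would show that the suitably rescaled first-variation map is, for $\rho$ small, a contraction on a neighborhood (in the parameters $s_{i}$) of the centered-interval configuration and of its complement; this produces a unique critical configuration near each, which by evenness must be precisely $(B(0,r_{a}),B(0,r_{b})^{c})$ or $(B(0,r_{a}')^{c},B(0,r_{b}'))$, and a second-order check identifies it as a strict local minimum.

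The main obstacle is ruling out a minimizer that ``escapes'' as $\rho\to0$: one whose number of boundary points $k$ grows without bound, or one that stays far (in Gaussian symmetric difference, among symmetric sets) from both an interval and the complement of an interval. I would handle this with an a priori structural lemma --- a quantitative merging estimate, built from the $\rho^{2}$-expansion, showing that for small $\rho$ any symmetric competitor with superfluous boundary points is strictly improved by coalescing components --- which bounds $k$ and pins $A^{*}$ to within $O(\rho)$ of one of the two candidates, so that the local analysis above becomes global. A second, conceptual obstacle --- really the reason one cannot expect more --- is that the scheme is intrinsically one-dimensional and perturbative: for $n\ge2$ a lower-dimensional ``surface-area competitor'' (compare the symmetric torus in $S^{3}$ discussed above) defeats the statement, so any correct proof must genuinely use that $\partial A$ is $0$-dimensional in $\R$; and the contraction argument yields nothing once $|\rho|$ is bounded away from $0$, even when $n=1$.
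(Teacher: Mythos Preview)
Your outline shares the paper's perturbative first-variation philosophy, but the concrete execution differs and the step you flag as ``the main obstacle'' is exactly where the paper's argument supplies a tool you do not have.

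\textbf{Where the approaches diverge.} You propose to parametrize a symmetric minimizer by finitely many endpoints $0<s_1<\cdots<s_k$, write the Euler--Lagrange system analytically, and run a contraction on these parameters; the global step is then pushed into an unproved ``a priori structural lemma'' that bounds $k$ and forces the minimizer close to an interval or its complement. The paper never parametrizes by endpoints and never bounds the number of components. Instead it obtains the closeness you need directly, via a one-line rearrangement stability estimate (Lemmas~\ref{lemma8.0}, \ref{lemma8}, packaged as Lemma~\ref{lemma18}): if a symmetric set $B'$ nearly minimizes $\int_{B'}(x^2-1)\,d\gamma_1$, then $\gamma_1(B'\triangle B(0,r))\le C\epsilon^{1/2}$, with a quantitative lower bound coming from the monotonicity of $1-x^2$ across $\partial B(0,r)$. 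Combined with the $\rho^2$-expansion (Step~1 of Theorem~\ref{thm2.0}), this yields $L^1$-closeness of the actual minimizing pair $(A',B')$ to $(B(0,r_a),B(0,r_b')^c)$ with error $O(|\rho|^{7/8})$, with no structural hypothesis on $A',B'$ at all.

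\textbf{The endgame is also different.} Once closeness is established, the paper does not run a contraction on endpoint parameters. It instead controls $\frac{d}{dx}T_\rho 1_{B'}(x)$ directly (Lemma~\ref{lemma7} plus the $L^1$-closeness), showing it is strictly monotone in $|x|$ on $[\min(a,1-a)/10,\sqrt{-3\log|\rho|}]$; hence the sublevel set $\{T_\rho 1_{B'}\le c_1\}=A'$ is already a single interval on that window. The final Step~5 is an iteration, but on the \emph{radius of the window} (pushing $\sqrt{-(k+2)\log|\rho|}\to\infty$), not on the first-variation map near a fixed point. This sidesteps your difficulty entirely: there is nothing to ``merge'' because the level-set description forces a single interval from the outset.

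\textbf{What this means for your proposal.} Your scheme is plausible, but the structural lemma you would need is doing essentially all of the work, and you have not indicated how to prove it without something equivalent to the rearrangement stability above. If you want to salvage the endpoint-parametrization route, the cleanest fix is to import Lemma~\ref{lemma18} to get $L^1$-closeness first; at that point, however, the derivative estimate on $T_\rho 1_{B'}$ already gives a single interval, and the finite-dimensional contraction becomes unnecessary. A minor point: your reduction ``it remains to minimize a single functional $F_\rho(A)$'' is not how the paper proceeds---it keeps the bilinear problem and treats $A',B'$ symmetrically throughout, which is what makes the iterative window-extension in Step~5 work for both sets simultaneously.
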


To see that Conjecture \ref{SGP} is equivalent to that of \cite{chakrabarti10,odonnell12}, let $A,B\subset\R^{n}$ and observe
\begin{flalign*}
&\int_{\R^{n}} 1_{A}(x)T_{\rho}(x)1_{B}d\gamma_{n}(x)
=\int_{\R^{n}} 1_{A}(x)\int 1_{B}(x\rho+y\sqrt{1-\rho^{2}})d\gamma_{n}(y)d\gamma_{n}(x)\\
&\qquad\qquad=\iint_{\R^{n}} 1_{A}(x)1_{B}(x\rho+y\sqrt{1-\rho^{2}})d\gamma_{n}(y)d\gamma_{n}(x)
=\mathbb{P}((X,Y)\in A\times B).
\end{flalign*}
Here $X=(X^{(1)},\ldots,X^{(n)}),Y=(Y^{(1)},\ldots,Y^{(n)})$ are jointly normal standard $n$-dimensional Gaussian random variables such that the covariances satisfy $\mathbb{E}(X^{(i)}Y^{(j)})=\rho\cdot1_{\{i=j\}}$.

Restricting Conjecture \ref{SGP} to the case $a+b=1$ and $A=B^{c}$ gives the following special case of Conjecture \ref{SGP}.

\begin{conj}\label{SGP1}
\textbf{(Symmetric Gaussian Problem, Quadratic Version)}  Let $0< a<1$, $-1\leq\rho\leq1$ and let $A\subset\R^{n}$ with $\gamma_{n}(A)=a$.  Let $r_{a},r_{a}'>0$ so that $\gamma_{n}(B(0,r_{a}))=a$, $\gamma_{n}(B(0,r_{a}')^{c})=a$.  Then $B(0,r_{a})$ or $B(0,r_{a}')^{c}$ achieves the following supremum
\begin{equation}\label{six1.9b}
\sup_{\substack{A\subset\R^{n}\colon\\ \gamma_{n}(A)=a,\,A=-A}}\int_{\R^{n}}1_{A}(x)T_{\rho}(1_{A})(x)d\gamma_{n}(x).
\end{equation}
%If $\rho<0$, the same result holds, with the additional restriction that $B=-B$ in \eqref{six1.9}.
\end{conj}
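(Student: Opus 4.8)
I will only treat the case $n=1$, and only for correlations $\rho$ in a sufficiently small interval $(-\rho_{0},\rho_{0})$ with $\rho_{0}=\rho_{0}(a)>0$ --- this is the assertion made in the abstract --- following the strategy of \cite{heilman12}. Write $\mathcal{S}_{\rho}(A)\colonequals\int_{\R}1_{A}(x)T_{\rho}1_{A}(x)\,d\gamma_{1}(x)$, let $\{h_{k}\}_{k\ge0}$ be the Hermite polynomials, orthonormal in $L_{2}(\gamma_{1})$, with $h_{0}\equiv1$ and $h_{2}(x)=(x^{2}-1)/\sqrt2$ (so $T_{\rho}h_{k}=\rho^{k}h_{k}$), and set $I_{a}\colonequals B(0,r_{a})$, $J_{a}\colonequals B(0,r_{a}')^{c}$; the goal is $\mathcal{S}_{\rho}(A)\le\max\{\mathcal{S}_{\rho}(I_{a}),\mathcal{S}_{\rho}(J_{a})\}$ for every symmetric $A$ with $\gamma_{1}(A)=a$.

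\textbf{Step 1 (reduction and leading term).}
Because $A=-A$ forces $\widehat{1_{A}}(k)=0$ for odd $k$ in $1_{A}=\sum_{k}\widehat{1_{A}}(k)h_{k}$, we have $\mathcal{S}_{\rho}(A)=\sum_{k\ge0}\rho^{k}\widehat{1_{A}}(k)^{2}=\sum_{k\ \mathrm{even}}\abs{\rho}^{k}\widehat{1_{A}}(k)^{2}=\mathcal{S}_{\abs{\rho}}(A)$; in particular the $\rho<0$ clause of Conjecture \ref{SGP1} reduces to the $\rho>0$ clause, and we may take $\rho\in(0,\rho_{0})$. Using $\widehat{1_{A}}(0)=a$ and $\sum_{k}\widehat{1_{A}}(k)^{2}=\vnorm{1_{A}}_{L_{2}(\gamma_{1})}^{2}=a$,
$$\mathcal{S}_{\rho}(A)=a^{2}+\rho^{2}\,\widehat{1_{A}}(2)^{2}+R_{\rho}(A),\qquad 0\le R_{\rho}(A)\le\rho^{4}\big(a-a^{2}-\widehat{1_{A}}(2)^{2}\big)\le\rho^{4}(a-a^{2}).$$
Also $\widehat{1_{A}}(2)=\tfrac{1}{\sqrt2}\int_{A}(x^{2}-1)\,d\gamma_{1}=\tfrac{1}{\sqrt2}\big(\int_{A}x^{2}\,d\gamma_{1}-a\big)$. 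Since the level sets of $x\mapsto x^{2}$ are symmetric, the bathtub principle shows that $\int_{A}x^{2}\,d\gamma_{1}$ is minimized among symmetric sets of measure $a$ only by $I_{a}$ and maximized only by $J_{a}$; a direct computation ($\int_{-R}^{R}(x^{2}-1)\,d\gamma_{1}=-\tfrac{2R}{\sqrt{2\pi}}e^{-R^{2}/2}$) gives $\widehat{1_{I_{a}}}(2)=-\tfrac{r_{a}}{\sqrt{\pi}}e^{-r_{a}^{2}/2}<0<\tfrac{r_{a}'}{\sqrt{\pi}}e^{-(r_{a}')^{2}/2}=\widehat{1_{J_{a}}}(2)$. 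Hence $\widehat{1_{A}}(2)^{2}\le M_{a}^{2}\colonequals\max\{\widehat{1_{I_{a}}}(2)^{2},\widehat{1_{J_{a}}}(2)^{2}\}$, with equality only if $A\in\{I_{a},J_{a}\}$, and a stability version of the bathtub principle gives an increasing function $\omega_{a}$ with $\omega_{a}(0^{+})=0$ such that $M_{a}^{2}-\widehat{1_{A}}(2)^{2}\ge\omega_{a}(d_{a}(A))$, where $d_{a}(A)\colonequals\min\{\gamma_{1}(A\triangle I_{a}),\gamma_{1}(A\triangle J_{a})\}$.

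\textbf{Step 2 (existence, Euler--Lagrange, the far regime).}
For $\rho\in(0,1)$ the functional $f\mapsto\langle f,T_{\rho}f\rangle_{L_{2}(\gamma_{1})}$ is convex (its eigenvalues $\rho^{k}$ are nonnegative) and weak-$*$ continuous ($T_{\rho}$ is Hilbert--Schmidt, so compact) on the weak-$*$ compact convex set $K_{a}=\{f\colon\R\to[0,1]\ \mathrm{even},\ \int f\,d\gamma_{1}=a\}$, whose extreme points are exactly indicators of symmetric measure-$a$ sets; hence the supremum in \eqref{six1.9b} is attained at some $1_{A}$. A standard first-variation argument --- swap infinitesimal subintervals of equal Gaussian measure between $A$ and $A^{c}$, noting that the resulting second-order term is $O(\ell^{2})$ for a swap of interval-length $\ell$ while the first-order term is $O(\ell)$ --- shows that any maximizer $A$ obeys, for some $\lambda\in\R$,
$$\{T_{\rho}1_{A}>\lambda\}\ \subseteq\ A\ \subseteq\ \{T_{\rho}1_{A}\ge\lambda\}\qquad(\mathrm{mod}\ \gamma_{1}).$$
As $T_{\rho}1_{A}$ is a Gaussian convolution, it is real-analytic and even, and it is nonconstant (since $a\in(0,1)$, $\rho\neq0$); so $A$ is, mod null, a locally finite union of intervals symmetric about $0$. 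Combining with Step 1, if $d_{a}(A)\ge\delta$ then $\mathcal{S}_{\rho}(A)\le a^{2}+\rho^{2}(M_{a}^{2}-\omega_{a}(\delta))+\rho^{4}(a-a^{2})$ while $\max\{\mathcal{S}_{\rho}(I_{a}),\mathcal{S}_{\rho}(J_{a})\}\ge a^{2}+\rho^{2}M_{a}^{2}$, which is impossible once $\rho^{2}<\omega_{a}(\delta)/(a-a^{2})$. So for such $\rho$, every maximizer $A$ has $d_{a}(A)<\delta$.

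\textbf{Step 3 (the near regime --- the main obstacle --- and conclusion).}
It remains to show that, after fixing $\delta=\delta(a)$ small and shrinking $\rho_{0}$, any symmetric $A$ with $\gamma_{1}(A\triangle I_{a})<\delta$ satisfying the Euler--Lagrange relation of Step 2 must equal $I_{a}$ (and symmetrically for $J_{a}$); this is where the method of \cite{heilman12} enters. The guiding picture is that $T_{\rho}1_{A}=a+\rho^{2}\widehat{1_{A}}(2)h_{2}+\sum_{k\ge4}\rho^{k}\widehat{1_{A}}(k)h_{k}$ is, on each bounded interval, a $C^{2}$-small perturbation (by the decay of the Hermite coefficients) of the quadratic $a+\rho^{2}\widehat{1_{A}}(2)h_{2}$; since $A$ is close to $I_{a}$, $\widehat{1_{A}}(2)$ is close to $\widehat{1_{I_{a}}}(2)<0$, so this quadratic is strictly decreasing in $\abs{x}$ with slope of order $\rho^{2}\abs{x}$. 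One then argues that $T_{\rho}1_{A}$ is itself strictly decreasing in $\abs{x}$ on all of $\R$, whence every super-level set $\{T_{\rho}1_{A}\ge\lambda\}$ is an interval centered at $0$, forcing $A$ to be such an interval and then $A=I_{a}$ by the measure constraint; the case near $J_{a}$ is identical with signs reversed. Equivalently --- and this is the cleanest formulation --- one shows that for $\rho$ small the map $B\mapsto\{T_{\rho}1_{B}\ge\lambda(B)\}$, with $\lambda(B)$ chosen to preserve Gaussian measure, is a contraction on a small neighborhood of $I_{a}$ (resp.\ of $J_{a}$), hence has a unique fixed point there. Together with Step 2, this forces every maximizer to be $I_{a}$ or $J_{a}$, which proves the claim. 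I expect the bulk of the work to be (i) upgrading the $L_{2}$-closeness of $T_{\rho}1_{A}$ to its quadratic model into the pointwise/$C^{2}$ control needed to rule out spurious level-set components --- in particular bounding the contribution to $T_{\rho}1_{A}$ coming from any far-away part of $A$ of small Gaussian measure --- and (ii) choosing $\delta$ and $\rho_{0}$ in Steps 2 and 3 consistently.
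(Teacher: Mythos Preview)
Your plan is correct and is essentially the paper's approach (Theorem \ref{thm2.0}): Hermite-expand so that the $\rho^{2}$ coefficient dominates, use a quantitative bathtub/rearrangement stability estimate to force any optimizer close to $I_{a}$ or $J_{a}$, invoke existence plus the first-variation level-set condition, and then exploit monotonicity of $|x|\mapsto T_{\rho}1_{A}(x)$ to pin the optimizer down exactly. Two minor differences worth knowing: the paper actually proves the bilinear Conjecture \ref{SGP} for $n=1$ first and reads off the quadratic version; and for your Step~3 the paper does not phrase things as a contraction mapping but instead proves monotonicity of $T_{\rho}1_{A}$ only on a window $|x|\le\sqrt{-3\log|\rho|}$ via Hermite-tail bounds (Lemmas \ref{lemma6} and \ref{lemma7}) and then runs an explicit bootstrap (Step~5 of Theorem \ref{thm2.0}) pushing the window $\sqrt{-(k+2)\log|\rho|}\to\sqrt{-(k+3)\log|\rho|}$ out to infinity via the integral representation of $\frac{d}{dx}T_{\rho}1_{A}$ --- this is precisely the device that resolves your acknowledged difficulty~(i).
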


\subsection{Our Contribution}

In this paper, we provide two distinct approaches to Conjectures \ref{SGP} and \ref{SGP1}.  In our first approach, we examine the first variation of the noise stability directly, as in \cite{heilman12}.  This approach is used in our first main result, Theorem \ref{thm1}.
%In our second approach, in Theorem \ref{thm1.5}, we use Taylor expansions to explicitly compute the second-degree Hermite-Fourier coefficients of the interior of ellipses in the %plane.  We then exploit an unexpected radial symmetry in these formulas in Lemma \ref{lemma43}.
In our second approach, we compute the second variation of noise stability for balls and their complements, which reduces to proving certain Gaussian Poincar\'{e}-type inequalities in Lemma \ref{lemma50}.  The second-variation approach appears to be the first application of second-variation arguments to noise stability problems.  We show that, for certain measure restrictions $0<a<1$, the ball or its complement locally maximizes noise stability.  But for other measure restrictions $a$, the ball or its complement does \textit{not} locally maximize noise stability.  As a result, Conjectures \ref{SGP} and \ref{SGP1} are false, for certain measure restrictions $a$.

Here is our first main result.

\begin{theorem}[Conjecture \ref{SGP}, $n=1$, $\rho$ small]\label{thm1}
Let $n=1$, $0<a,b<1$, and let $\abs{\rho}<\min(e^{-40},\min(a^{20},(1-a)^{20})\min(b^{20},(1-b)^{20}))/1000$.  Then Conjecture \ref{SGP} holds with these parameters.  Consequently, Conjecture \ref{SGP1} holds with these parameters.
\end{theorem}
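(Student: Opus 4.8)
The plan is to follow the first-variation strategy of \cite{heilman12}, adapted to the symmetric one-dimensional setting. By the identity relating \eqref{six1.9} to $\mathbb{P}((X,Y)\in A\times B)$, the objective is bilinear in $1_A,1_B$, so for fixed $B$ the infimum over symmetric $A$ of Gaussian measure $a$ is attained by a set of the form $A=\{x\in\R:T_\rho 1_B(x)\leq c\}$ for an appropriate level $c$ (choosing the sublevel set because $\rho>0$ forces us to minimize, and one must break ties carefully when $\rho<0$ using $B=-B$); symmetrizing, since $T_\rho 1_B$ inherits symmetry from $B$ when $B=-B$, or more generally by replacing $B$ with its symmetric decreasing-type rearrangement isn't available here — instead one argues directly that the optimal symmetric $A$ is determined by the first-variation (Euler–Lagrange) condition. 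First I would set up this first-variation condition: at an optimizer $(A,B)$, there exist Lagrange multipliers $\lambda,\mu$ so that $T_\rho 1_B = \lambda$ a.e. on $\partial A$ and $T_\rho 1_A = \mu$ a.e. on $\partial B$ (the symmetry constraint $A=-A$ only removes the translation freedom and does not otherwise change the stationarity condition, since symmetric perturbations still span enough directions). In dimension one, $\partial A$ and $\partial B$ are finite sets of points, and symmetry means these points come in $\pm$ pairs.

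Next I would show that for $|\rho|$ sufficiently small the only symmetric solutions of the first-variation condition are centered intervals and their complements. The key computation is that $T_\rho 1_B(x) = \gamma_1(B)+ \rho\,(\text{something})\cdot x + O(\rho^2)$-type expansions are not quite enough; rather, following \cite{heilman12}, one writes the fixed-point condition $A = \{x: T_\rho 1_B(x) \le c\}$, $B=\{x:T_\rho 1_A(x)\le c'\}$ (or the complementary version) and shows that the map sending the boundary points of $A$ to the boundary points of the updated set is a contraction on a small neighborhood of the centered-interval configuration, with contraction constant $O(\rho)$. Here the explicit smallness hypothesis $|\rho|<\min(e^{-40},\min(a^{20},(1-a)^{20})\min(b^{20},(1-b)^{20}))/1000$ is exactly what is needed to absorb the various constants: the factors $a^{20},(1-a)^{20}$ control how close a boundary point can be to $0$ or $\infty$ in terms of the Gaussian density and its derivatives there (since $\gamma_1$ of a small centered interval is comparable to its length, and $\gamma_1$ of the complement of a large interval decays like the Gaussian tail), and the $e^{-40}/1000$ handles absolute constants in the Taylor estimates for $T_\rho$. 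One first disposes of the $\rho=0$ case (where $T_0 1_B \equiv \gamma_1(B)$ is constant, so the condition degenerates and one must use that noise stability at $\rho=0$ equals $\gamma_1(A)\gamma_1(B)$, which is constant, hence \emph{every} symmetric set is trivially optimal — so the real content is that for $0<|\rho|$ small the centered interval and its complement are the \emph{only} critical configurations, and then checking which is the infimum).

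Then I would verify that among the finitely many candidate critical configurations (centered interval for $A$ paired with complement-of-centered-interval for $B$, and vice versa), the claimed ones indeed achieve the infimum and not merely a saddle — this is a direct comparison of the two explicit values, or a second-variation check, using $\rho>0$ to pin down the sign. Finally, for the $\rho<0$ case, the extra hypothesis $B=-B$ in \eqref{six1.9} is used so that the symmetrization/first-variation argument still closes (without it the problem for $\rho<0$ behaves differently, which is why the conjecture is stated with that caveat), and the conclusion for Conjecture \ref{SGP1} follows by specializing $a+b=1$, $A=B^c$, noting that $B^c$ is symmetric iff $B$ is and that $T_\rho 1_{B^c} = \gamma_1(\R)-T_\rho 1_B = 1 - T_\rho 1_B$, which converts the infimum in \eqref{six1.9} into the supremum in \eqref{six1.9b}.

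\textbf{Main obstacle.} The hard part will be making the contraction-mapping argument fully rigorous with the stated explicit constant: one must Taylor-expand $T_\rho 1_B$ near its boundary-defining level set, control the second derivative of $T_\rho 1_B$ uniformly (which blows up as the boundary point tends to $0$ or to $\infty$, hence the $a^{20}$-type safeguards), and show the resulting fixed-point map is a strict contraction on an explicitly described neighborhood that is nonetheless large enough to contain the true optimizer. Verifying that the numerology $20$ and $1000$ and $e^{-40}$ actually suffice — rather than just "some small constant" — is where essentially all the technical work lies.
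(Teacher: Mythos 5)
There is a genuine gap. You have the high-level strategy right (first-variation / Euler--Lagrange level-set condition, smallness of $\rho$, an iterative or contraction-type closing argument, the $\rho<0$ caveat, and the reduction of Conjecture~\ref{SGP1} to~\ref{SGP}), but you have not identified the technical engine that makes the low-correlation argument work, and without it the contraction you describe cannot be started.

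The problem is precisely the point you dismiss. You treat $\rho=0$ as ``trivially degenerate'' (every symmetric set is optimal) and then assert that for small $|\rho|$ the only critical configurations are centered intervals and their complements. But the first-variation condition $A'=\{x\colon T_{\rho}1_{B'}(x)\le c\}$, $B'=\{x\colon T_{\rho}1_{A'}(x)\le c'\}$ by itself does not rule out $A'$ being a union of many intervals: $T_{\rho}1_{B'}$ is real-analytic but its shape depends on $B'$, which you do not know a priori. To set up a contraction on boundary points you must first know that the optimizer is \emph{close} to a centered interval. The paper obtains this from a quantitative second-order analysis at $\rho=0$: since $A'=-A'$ kills the degree-$1$ Hermite coefficients, the Taylor expansion of noise stability in $\rho$ is governed by the degree-$2$ Hermite--Fourier coefficient, and one shows (Step~1 of the proof, estimate~\eqref{four12.9}) that a noise-stability minimizer nearly extremizes $\bigl(\int 1_{A'}h_{2}\sqrt{2!}\,d\gamma_{1}\bigr)\bigl(\int 1_{B'}h_{2}\sqrt{2!}\,d\gamma_{1}\bigr)$. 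A rearrangement stability lemma (Lemmas~\ref{lemma8.0},~\ref{lemma8},~\ref{lemma18}) then forces $A',B'$ to be $O(|\rho|^{7/16})$-close in measure to a centered interval and a complementary centered interval. Only with this a priori closeness in hand does the level-set/monotonicity argument (Steps~3--4) close. Your proposal has no substitute for this: the ``second-variation check'' you mention at the end is a comparison between candidate critical points, not a stability estimate that rules out far-away configurations.

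A secondary inaccuracy: the iteration in Step~5 of the paper is not a contraction of a map sending boundary points to updated boundary points. Once $A',B'$ are known to be close to balls, the paper shows $\mathrm{sign}(x)\,\frac{d}{dx}T_{\rho}1_{B'}(x)>0$ on an annulus $\sqrt{-(k+2)\log|\rho|}\le|x|\le\sqrt{-(k+3)\log|\rho|}$ and inductively extends the range of this monotonicity; letting $k\to\infty$ shows the sublevel set $\{T_{\rho}1_{B'}\le c_{1}\}$ is exactly an interval. That is an extension-of-monotonicity bootstrap, not a Banach-fixed-point argument on a finite-dimensional configuration space. Your framing is closer to the informal ``General Framework'' section of the paper than to the proof that is actually carried out, and even at that level the missing stability estimate for the degree-$2$ surrogate functional is exactly item~(i) of that framework, which you have not supplied.
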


The proof of Theorem \ref{thm1} adapts the strategy of \cite{heilman12}, though the case $n=1$ of Conjecture \ref{SGP} provides several simplifications compared to the fairly intricate geometric arguments of \cite{heilman12}.  Also, the proof in \cite{heilman12} was only able to handle correlations $\rho>0$, whereas the present paper can handle both positive and negative correlations $\rho$.

Unfortunately, already when $n=2$, Conjecture \ref{SGP1} is incorrect (and consequently Conjecture \ref{SGP} is incorrect).  To see why, let $A\subset\R^{2}$ and define
\begin{equation}\label{zero1}
F(A)=\left(\int_{A}(1-x_{1}^{2})d\gamma_{2}(x)\right)^{2}+\left(\int_{A}(1-x_{2}^{2})d\gamma_{2}(x)\right)^{2}.
\end{equation}
If Conjecture \ref{SGP1} is correct, then by differentiating twice with respect to $\rho$ at $\rho=0$ (see \eqref{one3} below for details), Conjecture \ref{SGP1} for $n=2$ implies that the ball or its complement maximizes $F(A)$ among all symmetric sets $A\subset\R^{2}$ with $\gamma_{2}(A)=a$.  (Without loss of generality, if $A$ maximizes noise stability, then we may apply a rotation to $A$ if necessary to ensure that $\int_{A}x_{1}x_{2}d\gamma_{2}(x)=0$.)  However, $F(A)$ is not always maximized by the ball or its complement.  To see this, define
$$A=\{(x_{1},x_{2})\in\R^{2}\colon x_{1}^{2}/(2.5)^{2}+x_{2}^{2}/(2.31394)^{2}\leq1\}.$$

Let $r=2.4$.  Then $\gamma_{2}(B(0,r))=1-e^{-r^{2}/2}\approx .943865$.  And from Lemma \ref{newlem} below, $F(B(0,r))=F(B(0,r)^{c})=\frac{1}{2}r^{4}e^{-r^{2}}\approx.0522732$.

And if $r'=\sqrt{-2\log(1-e^{-2.88})}$, then $\gamma_{2}(B(0,r'))=\gamma_{2}(B(0,r)^{c})$, and again from Lemma \ref{newlem} below, $F(B(0,r'))=\frac{1}{2}(r')^{4}e^{-(r')^{2}}\approx.0059468$.

Finally, a numerical computation shows that $\gamma_{2}(A)\approx.943865$, and $F(A)=F(A^{c})\approx .0524720>.0522732$.

That is, $\gamma_{2}(B(0,r)^{c})=\gamma_{2}(B(0,r'))\approx\gamma_{2}(A^{c})$, but
$$F(A^{c})>\max(F(B(0,r)^{c}),F(B(0,r'))).$$
That is, Conjectures \ref{SGP} and \ref{SGP1} are false.  A few more details are provided for this numerical calculation in Remark \ref{goodrk} below.  Furthermore, note that if $A'=\{(x_{1},x_{2})\in\R^{2}\colon \abs{x_{1}}\leq 1.90999\}$, then a numerical calculation shows that $\gamma_{2}(A')\approx.943865$, and $F(A')=F((A')^{c})\approx.0604796$.  So, for this measure restriction, the strip actually has larger value than the ball, or the complement of a ball, or the ellipse $A$.  So, at least for this measure restriction, the symmetric set maximizing $F$ seems to be the strip $A'$.  If so, this result would agree with the S-inequality (formerly the S-conjecture) proven in \cite{latala99}, which implies in particular that: for any symmetric convex set $A$ with $\gamma_{2}(A)=\gamma_{2}(A')$, and for any $t\geq1$, we have $\gamma_{2}(tA)\geq\gamma_{2}(tA')$.
%i can't see a way to make the implicit function theorem work to make the
% Psi(x,t,\overline{t}).....
%

The fact that Conjectures \ref{SGP} and \ref{SGP1} are false for $n\geq2$ is especially surprising since they are more or less known to be true in the limit when $\rho\to1$, by e.g. \cite{colding12}.  At very least, the boundary of a set $A$ which maximizes noise stability in the limit $\rho\to1$ should be a minimal surface, i.e. a surface of constant mean curvature.  On the other hand, the example above and Theorem \ref{newthm} suggest that strips could maximize noise stability for small $\rho$, as in the main result of \cite{barthe01}.  In fact, it is entirely unclear which symmetric set maximizes $F$, and it is unclear which symmetric set maximizes noise stability.  It could be the case that noise stability among symmetric sets of fixed Gaussian measure is maximized when $A\subset\R^{n}$ has boundary which is a dilation of the set $S^{k}\times\R^{n-k-1}$ for some $0\leq k\leq n-1$. However, this statement could also be false.

We did not arrive at the above counterexamples by accident.  In fact it is generally true that when a ball or its complement has a large radius, then that set does not maximize the quantity \eqref{zero1}.  This fact is made rigorous by computing the second variation of the quantity \eqref{zero1}.  Before presenting this second variation result, we establish some notation.

Let $A\subset\R^{n}$ be a set with smooth boundary, and let $N\colon\partial A\to S^{n-1}$ denote the unit exterior normal to $\partial A$.  Let $X\colon\R^{n}\to\R^{n}$ be a vector field.  Let $\Psi\colon\R^{n}\times(-1,1)\to\R^{n}$ such that $\Psi(x,0)=x$ and such that $\frac{d}{dt}\Psi(x,t)=X(\Psi(x,t))$ for all $x\in\R^{n},t\in(-1,1)$.  For any $t\in(-1,1)$, let $A^{(t)}=\Psi(A,t)$.    Note that $A^{(0)}=A$.  Let $G\colon\R^{n}\times\R^{n}\to\R$ be a Schwartz function, e.g. to investigate noise stability we let $G(x,y)=(2\pi)^{-n}e^{\frac{-\vnorm{x}_{2}^{2}-\vnorm{y}_{2}^{2}+2\rho\langle x,y\rangle}{2(1-\rho^{2})}}$ $\forall$ $x,y\in\R^{n}$.  Or to investigate the functional in \eqref{zero1}, we let $G(x,y)=\sum_{i=1}^{n}(1-x_{i}^{2})(1-y_{i}^{2})\gamma_{n}(x)\gamma_{n}(y)$.   Define
$$V(x,t)\colonequals\int_{A^{(t)}}G(x,y)dy,\qquad V\colon\R^{n}\times(-1,1)\to\R.$$

\begin{theorem}[Second Variation Formula, {\cite[Theorem 2.6]{chokski07}}]\label{thm4}
Let
$$F(A)\colonequals \int_{\R^{n}}\int_{\R^{n}} 1_{A}(x)G(x,y)1_{A}(y)dxdy.$$  Then
%For any $x\in\partial A$, let $N(x)$ denote the outward pointing unit normal vector of $A$ at $x$.  For any $t\in(-1,1)$, let $A^{(t)}\subset\R^{n}$ so that $A^{(0)}=A$.  .... Then a normal variation of $A$ satisfies
\begin{flalign*}
\frac{1}{2}\frac{d^{2}}{dt^{2}}F(A^{(t)})|_{t=0}
&=\int_{\partial A}\int_{\partial A}G(x,y)\langle X(x),N(x)\rangle\langle X(y),N(y)\rangle dxdy\\
&\qquad+\int_{\partial A}\mathrm{div}(V(x,0)X(x))\langle X(x),N(x)\rangle dx.
\end{flalign*}
\end{theorem}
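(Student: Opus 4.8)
The plan is to reduce everything to two applications of the transport theorem for an integral over a moving domain and two applications of the divergence theorem, so that we never have to expand the tangential Jacobian of $\Psi|_{\partial A}$ or the variation of the normal. First I would observe that we may assume $G$ is symmetric, $G(x,y)=G(y,x)$: replacing $G$ by $\tfrac12\big(G(x,y)+G(y,x)\big)$ changes neither $F(A)$ nor the right-hand side of the claimed identity (for the double boundary integral this is just the substitution $x\leftrightarrow y$, and both Schwartz functions $G$ of interest are symmetric already). Throughout, the rapid decay of $G$ and its derivatives, together with the smoothness of $\partial A$ and the regularity of $X$ (hence of the flow $\Psi$), justify differentiating under the integral sign and applying the divergence theorem even when $A$ is unbounded, since $V(\cdot,t)$ and its gradient decay rapidly and so the boundary flux at infinity vanishes.

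Step 1 (first variation). Writing $F(A^{(t)})=\int_{A^{(t)}}V(x,t)\,dx$ and using the transport formula
$$\frac{d}{dt}\int_{A^{(t)}}f(x,t)\,dx=\int_{A^{(t)}}\partial_t f(x,t)\,dx+\int_{\partial A^{(t)}}f(x,t)\langle X(x),N_t(x)\rangle\,dx,$$
where $N_t$ is the outer unit normal to $\partial A^{(t)}$ (the boundary moving with velocity field $X$ since $\partial A^{(t)}=\Psi(\partial A,t)$ and $\partial_t\Psi=X\circ\Psi$), together with $\partial_t V(x,t)=\int_{\partial A^{(t)}}G(x,y)\langle X(y),N_t(y)\rangle\,dy$ and the symmetry of $G$, I get $\tfrac12\tfrac{d}{dt}F(A^{(t)})=\int_{\partial A^{(t)}}V(x,t)\langle X(x),N_t(x)\rangle\,dx$. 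Then, by the divergence theorem applied to the vector field $x\mapsto V(x,t)X(x)$, this equals $\int_{A^{(t)}}\mathrm{div}_x\big(V(x,t)X(x)\big)\,dx$.

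Step 2 (second variation). I differentiate this last expression again with the same transport formula:
$$\frac12\frac{d^2}{dt^2}F(A^{(t)})=\int_{A^{(t)}}\partial_t\,\mathrm{div}_x\big(V(x,t)X(x)\big)\,dx+\int_{\partial A^{(t)}}\mathrm{div}_x\big(V(x,t)X(x)\big)\langle X(x),N_t(x)\rangle\,dx.$$
At $t=0$ the second term is exactly the second term in the statement. In the first term, since $X$ is independent of $t$ we have $\partial_t\,\mathrm{div}_x(VX)=\mathrm{div}_x\big((\partial_t V)X\big)$, and at $t=0$, $\partial_t V(x,0)=\int_{\partial A}G(x,y)\langle X(y),N(y)\rangle\,dy$; applying the divergence theorem once more (now to $x\mapsto X(x)\int_{\partial A}G(x,y)\langle X(y),N(y)\rangle\,dy$) converts $\int_{A}\mathrm{div}_x(\cdots)\,dx$ into $\int_{\partial A}\langle X(x),N(x)\rangle\int_{\partial A}G(x,y)\langle X(y),N(y)\rangle\,dy\,dx$, which is the first term in the statement.

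The only real obstacle is bookkeeping: applying the transport theorem with the correct boundary-velocity term at each stage and checking that the Schwartz decay of $G$ lets one differentiate under the integral and discard the flux at infinity when $A$ is the complement of a ball. No geometric subtlety beyond this arises, precisely because converting each boundary integral back to a volume integral before differentiating avoids ever having to compute the first variation of the surface measure or of the normal.
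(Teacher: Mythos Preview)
Your argument is correct and genuinely different from the paper's. The paper pulls back the integral via $\Psi$ to the fixed domain $A$, expands $J\Psi(x,t)=1+t\,\mathrm{div}X+\tfrac12 t^{2}\mathrm{div}((\mathrm{div}X)X)+o(t^{2})$, differentiates $V(\Psi(x,t),t)J\Psi(x,t)$ twice by brute force to obtain seven terms, and then groups them in three batches (each handled by a divergence-theorem identity) together with a separate computation of $\int_A V_{tt}(x,0)\,dx$. Your route instead applies the Reynolds transport formula twice and, crucially, converts the first-variation boundary integral back to a volume integral before differentiating again; this neatly sidesteps any computation of the variation of the surface measure or of $N_t$, which is exactly what makes the paper's calculation long. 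Your proof is essentially four lines where the paper's is two pages.

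Two small remarks. First, your justification for reducing to symmetric $G$ is slightly off: symmetrizing $G$ \emph{does} change $V(x,0)$ in the second term of the right-hand side (it becomes $\tfrac12\big(\int_A G(x,y)\,dy+\int_A G(y,x)\,dy\big)$), so the right-hand side is not literally invariant. What is true is that both sides of the identity, computed with the symmetrized $G$, agree with the genuine second variation, so the reduction is still valid---and in any case the paper's own proof (at the step identifying $\int_A G(x,y)\,dx$ with $V(y,0)$) also tacitly uses symmetry of $G$, which holds in all the applications. Second, in Step~1 your use of symmetry to get the factor $\tfrac12$ is exactly the point where the two cross terms $\int_{A}\partial_t V$ and $\int_{\partial A}V\langle X,N\rangle$ coincide; this is worth stating, since without symmetry one would carry both $V$ and the ``transpose'' potential and recover only the averaged formula.
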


When integrating on a surface $\partial A$, we let $dx$ denote the restriction of Lebesgue measure to the surface $\partial A$.

The second variation formula of Theorem \ref{thm4} essentially appears in \cite{chokski07}, though their statement differs a bit from ours.  Nevertheless, their proof immediately gives Theorem \ref{thm4}.  We reproduce the details of the proof of \cite[Theorem 2.6]{chokski07} in the Appendix, Section \ref{sec2var}.  Theorem \ref{thm4} does not seem to have been applied to noise stability before.  In particular, optimizing noise stability has typically focused on either first variation arguments, or on heat flow methods.  So, we consider our application of the second variation formula to the noise stability functional to be one of the main contributions of this work.

Combining Theorem \ref{thm4} with a Poincar\'{e}-type inequality on the sphere (Lemma \ref{lemma50} below), we deduce the following second variation calculation for the functional $F$ when $G(x,y)=\sum_{i=1}^{n}(1-x_{i}^{2})(1-y_{i}^{2})\gamma_{n}(x)\gamma_{n}(y)$ $\forall$ $x,y\in\R^{n}$.  This second variation computation constitutes our second main result.

\begin{theorem}[Local Optimality and Non-Optimality Conditions]\label{newthm}
Let $n\geq2$.  Let $r>0$ such that $r^{2}\leq n+2$.  Assume that $\frac{d}{dt}|_{t=0}\gamma_{n}(A^{(t)})=0$ and $\frac{d^{2}}{dt^{2}}|_{t=0}\gamma_{n}(A^{(t)})=0$.  Then if $A^{(0)}=B(0,r)$ or if $A^{(0)}=B(0,r)^{c}$, we have
$$\frac{d}{dt}F(A^{(t)})|_{t=0}=0,\qquad\frac{d^{2}}{dt^{2}}F(A^{(t)})|_{t=0}<0.$$
That is, the sets $B(0,r)$ and $B(0,r)^{c}$ locally maximize the sum of the squares of their second-degree Fourier coefficients among symmetric sets of fixed Gaussian measure.

However, if $r^{2}> n+2$, then $\exists$ symmetric sets $\{A^{(t)}\}_{-1<t<1}$ so that $\frac{d}{dt}|_{t=0}\gamma_{n}(A^{(t)})=0$, $\frac{d^{2}}{dt^{2}}|_{t=0}\gamma_{n}(A^{(t)})=0$, with $A^{(0)}=B(0,r)$ or $A^{(0)}=B(0,r)^{c}$, and such that
$$\frac{d}{dt}F(A^{(t)})|_{t=0}=0,\qquad\frac{d^{2}}{dt^{2}}F(A^{(t)})|_{t=0}>0.$$
That is, the sets $B(0,r)$ and $B(0,r)^{c}$ do \textbf{not} locally maximize the sum of the squares of their second-degree Fourier coefficients among symmetric sets of fixed Gaussian measure.
\end{theorem}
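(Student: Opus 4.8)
The plan is to apply the Second Variation Formula (Theorem~\ref{thm4}) with $G(x,y)=\sum_{i=1}^{n}(1-x_{i}^{2})(1-y_{i}^{2})\gamma_{n}(x)\gamma_{n}(y)$ and to exploit the rotational symmetry of the competitor set $A^{(0)}=B(0,r)$ (the complement case is identical since $F(A)=F(A^{c})$ up to the linear term, which vanishes under the first-order volume constraint). First I would write the allowed normal perturbations on $\partial A = rS^{n-1}$: parametrize $\langle X(x),N(x)\rangle = f(x/r)$ for $f\in L_2(S^{n-1})$, and observe that the two constraints $\frac{d}{dt}|_0\gamma_n(A^{(t)})=0$ and $\frac{d^2}{dt^2}|_0\gamma_n(A^{(t)})=0$ force $\int_{S^{n-1}} f\,d\sigma = 0$ (and a second condition that I expect to be automatically implied, or easily arranged, once $f$ is taken mean-zero, because the Gaussian density is radial and constant on $\partial A$). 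Expanding $f = \sum_{k\geq 1} f_k$ into spherical harmonics, the first term of Theorem~\ref{thm4} becomes a quadratic form in $f$ that, because $G$ is built from degree-$2$ Hermite-type polynomials $1-x_i^2$, only couples to the degree-$0$ and degree-$2$ spherical-harmonic components of $f$; since $f_0=0$, only $f_2$ survives, contributing a manifestly nonnegative (in fact, for the "good" case, a bounded) amount. The second term of Theorem~\ref{thm4}, $\int_{\partial A}\mathrm{div}(V(x,0)X(x))\langle X(x),N(x)\rangle\,dx$, I would compute using that $V(x,0)=\int_{A}G(x,y)\,dy$ is itself a radial function of $x$ on $\mathbb{R}^n$ (again by symmetry of the ball), whose radial derivative and the mean-curvature term $\mathrm{div}\,X$ on the sphere produce an explicit constant (depending on $n$ and $r$) times $\int_{S^{n-1}} f^2\,d\sigma$ plus a term coupling to $f_2$.

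Carrying out these reductions, the whole second variation collapses to an expression of the schematic form
\begin{equation*}
\tfrac12\tfrac{d^2}{dt^2}F(A^{(t)})|_{t=0}
= c_n(r)\,\|f\|_{L_2(S^{n-1})}^2 + d_n(r)\,\|f_2\|_{L_2(S^{n-1})}^2,
\end{equation*}
for explicit constants $c_n(r),d_n(r)$, where $c_n(r)$ is (up to a positive factor) $r^2-(n+2)$ times a positive quantity coming from the curvature/divergence term, and $d_n(r)\geq 0$ is the nonnegative contribution of the boundary-boundary integral. The key analytic input here is exactly the Poincar\'e-type inequality on the sphere (Lemma~\ref{lemma50}): among mean-zero $f$, the degree-$2$ component is controlled, $\|f_2\|_{L_2}^2 \leq \lambda_n \|f\|_{L_2}^2$ with the sharp constant $\lambda_n$, or more precisely a spectral-gap statement that lets one absorb the $d_n(r)\|f_2\|^2$ term against $c_n(r)\|f\|^2$ when $r^2\leq n+2$, yielding strict negativity (the strictness uses $f_2\neq f$ generically, i.e. higher harmonics cost without gaining). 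Conversely, when $r^2>n+2$, I would simply \emph{exhibit} the destabilizing family: take $f$ to be (proportional to) a single degree-$2$ spherical harmonic, e.g. $f(\omega)=\omega_1^2-\frac1n$ on $S^{n-1}$, which is mean-zero, check that the associated flow $\Psi$ can be chosen to also kill the second-order volume derivative (perturb by a small radial term if needed), and observe that for this $f$ both terms $c_n(r)\|f\|^2$ and $d_n(r)\|f_2\|^2=d_n(r)\|f\|^2$ add up to something strictly positive precisely because $c_n(r)>0$. That produces the claimed symmetric sets $\{A^{(t)}\}$ with positive second variation.

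The main obstacle I anticipate is bookkeeping the two competing contributions with the correct signs and constants — in particular, computing $V(x,0)=\int_{B(0,r)}G(x,y)\,dy$ and its divergence against the sphere's second fundamental form, and pinning down that the crossover is exactly at $r^2=n+2$ rather than some nearby value; this requires carefully integrating $1-y_i^2$ against the Gaussian over a ball and using the identity $\int_{S^{n-1}}\omega_i^2\,d\sigma = \frac1n$, together with the $L_2(\gamma_n)$-orthogonality of the Hermite polynomials $1$ and $1-x_i^2$. A secondary technical point is justifying that the volume constraints reduce exactly to $f_0=0$ (and that the second-order volume constraint imposes no further restriction beyond one that can always be met by an inessential reparametrization of the flow), so that the minimization is genuinely over all mean-zero $f$; this is where I would invoke the radial structure of $\gamma_n$ on $\partial B(0,r)$ to see the first-variation volume form is a constant multiple of $\int_{S^{n-1}}f\,d\sigma$. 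Once these are in place, the inequality $r^2\leq n+2$ versus $r^2>n+2$ falls out of Lemma~\ref{lemma50} as above, and the hypothesis $r^2\leq n+2$ in the statement is seen to be exactly the sharp threshold for local optimality.
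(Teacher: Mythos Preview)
Your plan is essentially the paper's own argument: apply the second variation formula of Theorem~\ref{thm4} to the ball, reduce to a quadratic form in the normal speed $f=\langle X,N\rangle$ on $\partial B(0,r)$, and feed in the sharp Poincar\'e-type inequality of Lemma~\ref{lemma50} to obtain the threshold $r^{2}=n+2$; for the non-optimality case you correctly pick the extremizer of Lemma~\ref{lemma50}, which on $\partial B(0,r)$ is (up to a scalar) exactly the paper's choice $x\mapsto (n-1)x_{1}^{2}-\sum_{j\geq2}x_{j}^{2}$.

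One point of confusion to fix before you start the bookkeeping: in your schematic $c_{n}(r)\|f\|^{2}+d_{n}(r)\|f_{2}\|^{2}$, the coefficient $c_{n}(r)$ is \emph{not} itself proportional to $r^{2}-(n+2)$. What actually happens (see Lemma~\ref{lemma61} and the proof of Corollary~\ref{varbndcor}) is that the divergence term contributes a term which is always \emph{strictly negative}, namely a negative constant times $\int_{\partial B(0,r)}f^{2}$, while the boundary--boundary double integral contributes the nonnegative quantity $\sum_{i}(\int_{\partial B(0,r)}x_{i}^{2}f)^{2}$ (not $\|f_{2}\|^{2}$ per se). The factor $r^{2}-(n+2)$ appears only \emph{after} you apply Lemma~\ref{lemma50} to bound the latter by a multiple of $\int f^{2}$ and combine. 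Relatedly, the inequality you wrote, $\|f_{2}\|^{2}\leq\lambda_{n}\|f\|^{2}$, is trivially true with $\lambda_{n}=1$ and is not what is needed; Lemma~\ref{lemma50} is the sharp bound on the specific quadratic form $\sum_{i}(\int x_{i}^{2}f)^{2}$, with equality exactly on the function above. Finally, the second-order volume constraint is not automatic from $\int f=0$: the paper disposes of it (for the equality/destabilizing case) by constructing an explicit vector field $X$ on $\R^{n}$ with $\langle X,N\rangle=(n-1)x_{1}^{2}-\sum_{j\geq2}x_{j}^{2}$ on $\partial B(0,r)$ and $\mathrm{div}\,X-\langle X,x\rangle=0$ there, so that Lemma~\ref{lemma3} gives $\frac{d^{2}}{dt^{2}}|_{t=0}\gamma_{n}(A^{(t)})=0$ directly; your ``add a small radial correction'' idea would also work, but you should check it does not disturb the leading-order second variation.
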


The equality $\frac{d}{dt}F(A^{(t)})|_{t=0}=0$ follows readily from \eqref{one4} below, and the second variation calculation is contained in Corollary \ref{varbndcor} below.
%\begin{remark}
%\end{remark}
%\snote{Would ideally make remark about Implicit function theorem here; can't seem to iron out the details}
%

The condition $r^{2}\leq n+2$ unfortunately does not hold for all measure restrictions as $n\to\infty$.  That is, there are many measure restrictions $a=\gamma_{n}(B(0,r))$ where $r^{2}>n+2$.  To see this, let $r(s,n)=\sqrt{n+s\sqrt{2n}}$ for any $s>0$, $n\in\N$.  Then $\lim_{n\to\infty}\gamma_{n}(B(0,r(s,n)))=\int_{-\infty}^{s}\gamma_{1}(t)dt$, which follows from the Central Limit Theorem.  And $\lim_{n\to\infty}[(r(s,n))^{2}-n-2]<0$ if and only if $s<0$.  That is, the ball $B(0,r(s,n))$ only locally maximizes $F$ for sufficiently large $n$ when $\lim_{n\to\infty}\gamma_{n}(B(0,r(s,n)))\leq1/2$.  And the complement $B(0,r(s,n))^{c}$ only locally maximizes $F$ for sufficiently large $n$ when $\lim_{n\to\infty}\gamma_{n}(B(0,r(s,n))^{c})\geq1/2$.

As we observed in the case $n=2$, with $r=2.4$ and with $r'=\sqrt{-2\log(1-e^{-2.88})}\approx.3399$, we had $\gamma_{2}(B(0,r)^{c})=\gamma_{2}(B(0,r'))$ with $F(B(0,r)^{c})>F(B(0,r'))$.  Theorem \ref{newthm} then says that Conjectures \ref{SGP} and \ref{SGP1} are false in a fairly strong sense, since if the radius of the ball or complement is sufficiently large, then that set does not locally maximize $F$.  It therefore seems natural to try to formulate a weaker version of Conjecture \ref{SGP1} which only identifies sets of large noise stability as $n\to\infty$.  Such a statement may still be suitable for applications to the Gap-Hamming-Distance problem as well.

\begin{question}\label{q1}
For any $a\in(0,1)$, $n\geq1$, let $B_{n,a}\subset\R^{n}$ be the ball centered at the origin such that $\gamma_{n}(B_{n,a})=a$.
%define $\Phi(a)=\int_{-\infty}^{a}\gamma_{1}(t)dt$.  For any $A\subset\R^{n}$, let
%$$F(A)=\sum_{i=1}^{n}\left(\int_{A}(1-x_{i}^{2})d\gamma_{n}(x)\right)^{2}.$$
%Let $a\colonequals\gamma_{n}(A)$, so that $\lim_{n\to\infty}\gamma_{n}\left(B\left(0,\sqrt{n+\sqrt{2n}\Phi^{-1}(a)}\,\right)\right)=\Phi(\Phi^{-1}(a))=a$.

If $A=-A$, and if $\gamma_{n}(A)=a$, is it true that
%$$F(A)\leq\lim_{n\to\infty}F\left(B\left(0,\sqrt{n+\sqrt{2n}\Phi^{-1}(a)}\,\right)\right)?$$
$$F(A)\leq\sup_{n\geq1}\big[\max(F(B_{n,a}),F(B_{n,1-a}^{c}))\big]?$$
%That is, (using our computation of the right side from \eqref{seven24} below), is it true that
%$$F(A)\leq\frac{1}{\pi}e^{-(\Phi^{-1}(a))^{2}}?$$
%In particular, if $a=1/2$, and if $A=-A$, is it true that
%$$F(A)\leq\frac{1}{\pi}?$$
\end{question}

We would like to change the above question, replacing $\sup_{n\geq1}$ with $\lim_{n\to\infty}$ or $\limsup_{n\to\infty}$.  Unfortunately, the following calculation shows that, for any $0<a<.15$, quantity $F(B_{n,a})$ decreases monotonically when $n$ is very large.

From the Central Limit Theorem with error bound (also known as the Edgeworth Expansion) \cite[XVI.4.(4.1)]{feller71}, for any $s\in\R$, the following asymptotic expansion holds as $n\to\infty$:
$$\gamma_{n}\left(B\left(0,\sqrt{n+s\sqrt{2n}}\,\right)\right)=\int_{-\infty}^{s}e^{-t^{2}/2}dt/\sqrt{2\pi}+\frac{(1-s^{2})e^{-s^{2}/2}/\sqrt{2\pi}}{\sqrt{n}}+o(n^{-1/2}).$$
And an asymptotic expansion in Lemma \ref{lemma25} shows that, as $n\to\infty$, if $B(0,\sqrt{n+s\sqrt{2n}})\subset\R^{n}$, we then have
$$F\left(B\left(0,\sqrt{n+s\sqrt{2n}}\,\right)\right)=\frac{1}{\pi}e^{-s^{2}+s^{3}2\sqrt{2}n^{-1/2}/3-O(n^{-1})},\qquad\forall\,s\in\R.$$
%These two facts taken together suggest that $F(B_{n,a})$ can actually decrease when $n$ is large and $s>1$ (i.e. when $a>.84$ in Question \ref{q1}).

For $n$ sufficiently large, if $B(0,\sqrt{n+s\sqrt{2n}})\subset\R^{n}$ and $s<-1$, then $\gamma_{n}(B(0,\sqrt{n+s\sqrt{2n}}))$ decreases as $n$ increases, and $F(B(0,\sqrt{n+s\sqrt{2n}}))$ increases as $n$ increases.  And if $s>1$, then $\gamma_{n}(B(0,\sqrt{n+s\sqrt{2n}})^{c})$ increases as $n$ increases, and $F(B(0,\sqrt{n+s\sqrt{2n}}))$ decreases as $n$ increases.  Therefore, if $a<.15$, $F(B_{n,a})<F(B_{n,1-a}^{c})$, and $B_{n,1-a}^{c}$ does not locally maximize $F$, by Theorem \ref{newthm}. That is, when $0<a<.15$, when $n$ is large, and when $\abs{\rho}$ is near zero, Conjectures \ref{SGP} and \ref{SGP1}.

If $r>0$, then since $B(0,r)\subset\R^{n}$ is rotationally symmetric, we conclude that $F(B(0,r))=\frac{1}{n}(\int_{B(0,r)}(n-\sum_{i=1}^{n}x_{i}^{2})d\gamma_{n}(x))^{2}$.  So, if $n$ is fixed,  $\max_{a>0}F(B_{n,a})=F(B(0,\sqrt{n}))$.  Also, if $B(0,\sqrt{n})\subset\R^{n}$, then $F(B(0,\sqrt{n}))< F(B(0,\sqrt{n+1}))$, by Lemma \ref{lemma25} below.  And since $\lim_{n\to\infty}F(B(0,\sqrt{n}))=\frac{1}{\pi}$, a variant of Question \ref{q1} can be:
\begin{question}\label{q1.1}
If $A\subset\R^{n}$ with $A=-A$ (with no restriction on the measure of $A$), is it true that
$$F(A)\leq\frac{1}{\pi}?$$
\end{question}

If Question \ref{q1} is incorrect, then Conjecture \ref{SGP} is false in a much stronger sense than mentioned above.  That is, if Question \ref{q1} is incorrect, then there exists some $A\subset\R^{n}$ such that $\partial A$ is a the level set of a degree two polynomial such that $A$ has larger noise stability than any ball or ball complement in any Euclidean space of any dimension (for noise stability with small correlation $\rho$).  If Question \ref{q1} is correct, then Question \ref{q1} can be interpreted as an ``infinite-dimensional'' special case of the following weakened ``infinite-dimensional'' version of Conjecture \ref{SGP}:

\begin{question}\label{q2}
Let $0<\rho<1$.  For any $a\in(0,1)$, $n\geq1$, let $B_{n,a}\subset\R^{n}$ be the ball centered at the origin such that $\gamma_{n}(B_{n,a})=a$.
If $A=-A$, and if $\gamma_{n}(A)=a$, is it true that
$$F(A)\leq\sup_{n\geq1}\big[\max(F(B_{n,a}),F(B_{n,1-a}^{c}))\big]?$$

If $A=-A$, and if $\gamma_{n}(A)=a$, is it true that
\begin{flalign*}
&\int_{\R^{n}}1_{A}(x)T_{\rho}(1_{A})(x)d\gamma_{n}(x)
\leq\sup_{n\geq1}\Big(\max\big(\int_{\R^{n}}1_{B_{n,a}}(x)T_{\rho}(1_{B_{n,a}})(x)d\gamma_{n}(x),\\
&\qquad\qquad\qquad\qquad\qquad\qquad\qquad\quad
\int_{\R^{n}}1_{B_{n,1-a}^{c}}(x)T_{\rho}(1_{B_{n,1-a}^{c}})(x)d\gamma_{n}(x)\big)\Big)?
\end{flalign*}
\end{question}

% P(X1^2+...+Xn^2\leq n+s\sqrt{2n}) = P((X1^2 +...+Xn^2 - n)/\sqrt{2n}\leq s )
% now E X1^4 =3, so var(X1^2)= E X1^4 - (EX1^2)^2=3-1=2

%Since the noise stability of sets is essentially a variational problem involving an $L_{\infty}$ constraint, the ``usual'' framework of the calculus of variations does not apply for taking variations of noise stability.  That is, for variational arguments for noise stability, it is less desirable to treat a set $A\subset\R^{n}$ as a function $1_{A}$, and then to vary $1_{A}$ by adding other functions to $1_{A}$.  It is more helpful to change the set $A$ using normal variations $\Psi(A,t)$ as in Theorem \ref{thm4}.

Despite the negative results mentioned above, including Theorem \ref{newthm}, we present some evidence toward positive answers to Question \ref{q1} and \ref{q2}.  First, Theorem \ref{newthm} can be extended to handle the noise stability functional, as long as $\rho$ is small.  Before stating this Theorem, for any $A\subset\R^{n}$, and for any $-1<\rho<1$, let
$$F_{\rho}(A)=\int_{\R^{n}}1_{A}(x)T_{\rho}(1_{A}(x))d\gamma_{n}(x).$$
Below, we continue to use the notational conventions of Theorem \ref{thm4}.

\begin{theorem}[Local Optimality and Non-Optimality Conditions for Noise Stability]\label{thm3}
Assume $\int_{\partial A}\abs{\langle X(x),N(x)\rangle}^{2}dx=1$.

Let $r>0$ such that $r^{2}\leq n+2$.  Let $0<a<1$.  Assume that $\gamma_{n}(A^{(0)})=a$.  Then there exists $\rho_{0}=\rho_{0}(a,r,n)>0$ such that, for all $\abs{\rho}<\rho_{0}$, the following holds.  Assume that $\frac{d}{dt}|_{t=0}\gamma_{n}(A^{(t)})=0$ and $\frac{d^{2}}{dt^{2}}|_{t=0}\gamma_{n}(A^{(t)})=0$.  Then if $A^{(0)}=B(0,r)$ or if $A^{(0)}=B(0,r)^{c}$, we have
$$\frac{d}{dt}F_{\rho}(A^{(t)})|_{t=0}=0,\qquad\frac{d^{2}}{dt^{2}}F_{\rho}(A^{(t)})|_{t=0}<0.$$
That is, the sets $B(0,r)$ and $B(0,r)^{c}$ locally maximize noise stability among symmetric sets of fixed Gaussian measure.

However, if $r^{2}> n+2$, if $0<a<1$, then there exists $\rho_{0}=\rho_{0}(a,r,n)>0$ such that, for all $\abs{\rho}<\rho_{0}$, the following holds.  There exist symmetric sets $\{A^{(t)}\}_{-1<t<1}$ so that $\gamma_{n}(A^{(0)})=a$, $\frac{d}{dt}|_{t=0}\gamma_{n}(A^{(t)})=0$, $\frac{d^{2}}{dt^{2}}|_{t=0}\gamma_{n}(A^{(t)})=0$, with $A^{(0)}=B(0,r)$ or $A^{(0)}=B(0,r)^{c}$, and such that
$$\frac{d}{dt}F_{\rho}(A^{(t)})|_{t=0}=0,\qquad\frac{d^{2}}{dt^{2}}F_{\rho}(A^{(t)})|_{t=0}>0.$$
That is, the sets $B(0,r)$ and $B(0,r)^{c}$ do \textbf{not} locally maximize noise stability among symmetric sets of fixed Gaussian measure.
\end{theorem}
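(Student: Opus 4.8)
The plan is to expand the noise stability $F_\rho$ in powers of $\rho$ and show that, for $\abs{\rho}$ small, the second variation of $F_\rho$ at a ball is governed by its $\rho^2$-coefficient, which is essentially the functional $F$ of \eqref{zero1} already analyzed in Theorem \ref{newthm}.

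\emph{Step 1: reduce the second variation to a quadratic form in the normal profile.} First I would use the Mehler expansion $G_\rho(x,y)=\gamma_n(x)\gamma_n(y)\sum_k\rho^{\abs{k}}h_k(x)h_k(y)$ of the noise-stability kernel $G_\rho(x,y)=(2\pi)^{-n}e^{(-\vnorm{x}_2^2-\vnorm{y}_2^2+2\rho\langle x,y\rangle)/(2(1-\rho^2))}$ of Theorem \ref{thm4}, where $\{h_k\}$ is the $L_2(\gamma_n)$-orthonormal Hermite basis (convergent for $\abs{\rho}<1$), to write $F_\rho(A)=\sum_{d\ge0}\rho^dW_d(A)$ with $W_d(A)=\sum_{\abs{k}=d}(\int_{\R^n}1_Ah_k\,d\gamma_n)^2$; for $A=-A$ the odd $W_d$ vanish, $W_0(A)=\gamma_n(A)^2$, and $W_2(A)=\tfrac12\sum_i(\int_A(1-x_i^2)\,d\gamma_n)^2+\sum_{i<j}(\int_Ax_ix_j\,d\gamma_n)^2$ differs from $F$ of \eqref{zero1} only by sign-definite cross terms. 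Since $\gamma_n$ is constant on $\partial B(0,r)$, the ball is a critical point of $F_\rho$ under the fixed-measure constraint; writing the Lagrange condition and applying Theorem \ref{thm4}, the constrained second variation at $A^{(0)}=B(0,r)$ depends only on $\phi:=\langle X,N\rangle|_{\partial A}$ subject to $\int_{\partial A}\gamma_n\phi\,dx=0$ (the hypothesis $\tfrac{d}{dt}|_{t=0}\gamma_n(A^{(t)})=0$; the hypothesis $\tfrac{d^2}{dt^2}|_{t=0}\gamma_n(A^{(t)})=0$ can always be met for a given $\phi$ by the choice of flow), and equals
\[
\mathcal Q_\rho(\phi)=\int_{\partial A}\int_{\partial A}G_\rho(x,y)\phi(x)\phi(y)\,dx\,dy+\gamma_n(r)\,U_\rho'(r)\int_{\partial A}\phi(x)^2\,dx,
\]
where $U_\rho(x):=(1-\rho^2)^{n/2}T_\rho1_{B(0,r)}(x)$ is radial with radial derivative $U_\rho'$. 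Both summands are real-analytic in $\rho\in(-1,1)$ (the Gaussian average smooths $1_{B(0,r)}$), and both $\rho$-expansions begin at order $\rho^2$: odd orders vanish by symmetry, the order-zero term vanishes by $\int_{\partial A}\gamma_n\phi=0$. The case $A^{(0)}=B(0,r)^c$ follows from $F_\rho(A^c)=1-2\gamma_n(A)+F_\rho(A)$ together with the volume constraints and the fact that flowing the complement by $X$ yields the complement of the flow.

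\emph{Step 2: identify the $\rho^2$-coefficient.} Next I would decompose $\phi=\sum_{\ell\ge1}\phi_\ell$ into spherical harmonics on $\partial B(0,r)=rS^{n-1}$ and, using $\int_{B(0,r)}(\vnorm{x}_2^2-n)\,d\gamma_n=-r^n\gamma_n(r)\abs{S^{n-1}}$ (from $\Delta\gamma_n=(\vnorm{x}_2^2-n)\gamma_n$ and the divergence theorem), compute
\[
\mathcal Q_\rho(\phi)=\rho^2\,\frac{\gamma_n(r)^2r^{n+1}\abs{S^{n-1}}}{n}\Big[\Big(\frac{r^2}{n+2}-1\Big)\vnorm{\phi_2}_{L_2(\partial A)}^2-\sum_{\ell\ge1,\,\ell\ne2}\vnorm{\phi_\ell}_{L_2(\partial A)}^2\Big]+O(\rho^4).
\]
This is the $W_2$-form of Theorem \ref{newthm}, obtained via Theorem \ref{thm4} and the Poincar\'e-type Lemma \ref{lemma50}: for $r^2<n+2$ the bracket is $\le-\delta_0\vnorm{\phi}_{L_2(\partial A)}^2$ for some $\delta_0=\delta_0(r,n)>0$, while for $r^2>n+2$ it is strictly positive when $\phi=\phi_2$ is a degree-$2$ harmonic. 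I would then control the tail: writing the remainder as $\sum_{d\ge4,\ \mathrm{even}}\rho^d\big(c_d(\phi)+\gamma_n(r)u_d\vnorm{\phi}_{L_2(\partial A)}^2\big)$ with $c_d(\phi)=\sum_{\abs{k}=d}(\int_{\partial A}\gamma_nh_k\phi)^2\ge0$ and $u_d$ the Taylor coefficients of $U_\rho'(r)$, Cauchy--Schwarz with the identity $\sum_d\rho^d\sum_{\abs{k}=d}h_k(r\omega)^2=e^{r^2\rho/(1+\rho)}$ gives $c_d(\phi)\le C(r,n)2^d\vnorm{\phi}_{L_2(\partial A)}^2$, and analyticity of $U_\rho$ gives $\abs{u_d}\le C(r,n)2^d$, so the remainder is $O(\rho^4)\vnorm{\phi}_{L_2(\partial A)}^2$ uniformly in $\phi$ for $\abs{\rho}\le\tfrac14$.

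\emph{Step 3: conclude.} For $r^2<n+2$, combining Step 2 gives $\mathcal Q_\rho(\phi)\le\rho^2(-\delta_0+C(a,r,n)\rho^2)\vnorm{\phi}_{L_2(\partial A)}^2<0$ for every admissible $\phi\ne0$ once $\abs{\rho}<\rho_0(a,r,n)$, proving the local-maximality half (the endpoint $r^2=n+2$, where the $\rho^2$-term degenerates on degree-$2$ profiles, would be handled by inspecting the $\rho^4$-coefficient there). For $r^2>n+2$, I would fix a degree-$2$ harmonic $\phi=\phi_2$ making the bracket of Step 2 positive, realize it by a symmetry-preserving (odd) vector field $X$ with $\langle X,N\rangle|_{\partial A}=\phi_2$ adjusted to meet the two volume constraints, and conclude $\mathcal Q_\rho(\phi_2)=\rho^2(\text{positive})+O(\rho^4)>0$ for $\abs{\rho}<\rho_0(a,r,n)$, proving the non-optimality half; the complement cases follow as in Step 1. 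The hard part will be the uniform tail estimate in the local-maximality regime: the strictly negative $\rho^2$-term must dominate the \emph{entire} tail $\sum_{d\ge4}\rho^d(\cdots)$ uniformly over the infinite-dimensional family of admissible profiles $\phi$, which needs both a coercive (spectral-gap) lower bound for $-\mathcal Q^{(2)}$ — precisely the content of Lemma \ref{lemma50}, where degree-$2$ harmonics are the unique borderline direction — and the uniform geometric bound $c_d(\phi)\le C(r,n)2^d\vnorm{\phi}^2$, available only because the Mehler kernel is restricted to a sphere on which $\gamma_n$ is bounded below. A secondary subtlety is the endpoint $r^2=n+2$, where the sign of the second variation is decided by the $\rho^4$-coefficient rather than the $\rho^2$-coefficient.
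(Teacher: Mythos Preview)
Your plan is essentially the paper's own argument: expand the noise-stability kernel via Mehler, recognize that the $\rho^{2}$-coefficient of the constrained second variation at a ball is exactly (a rotation-invariant version of) the $F$-second-variation already computed in Theorem~\ref{newthm}/Corollary~\ref{varbndcor}, and then bound the higher-order remainder by $C\rho^{2}\vnorm{\phi}_{L_{2}(\partial A)}^{2}$ times a positive power of $\abs{\rho}$ so that the sign of $\mathcal Q_{\rho}$ is inherited from Theorem~\ref{newthm} for $\abs{\rho}$ small.

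A few points of comparison. First, where you decompose $\phi$ in spherical harmonics and exploit $\mathrm{SO}(n)$-invariance of the degree-$2$ Hermite kernel to obtain the clean bracket $\big(\tfrac{r^{2}}{n+2}-1\big)\vnorm{\phi_{2}}^{2}-\sum_{\ell\ne 2}\vnorm{\phi_{\ell}}^{2}$, the paper instead invokes the rotation trick of Remark~\ref{rotrk} to kill the cross terms $\int_{\partial A}x_{i}x_{j}\phi$ and thereby reduce to the diagonal functional $F$ of \eqref{zero1}; your route is cleaner and avoids the somewhat awkward $t$-dependent rotation. Second, for the tail the paper simply applies the pointwise Hermite bound of Lemma~\ref{lemma6} on the fixed sphere $\partial B(0,r)$ (yielding \eqref{Cseven} and then $\abs{\rho^{-2}\tfrac{d^{2}}{dt^{2}}F_{\rho}-\tfrac12\tfrac{d^{2}}{dt^{2}}F}\le C\abs{\rho}^{1/2}\vnorm{\phi}^{2}$), which is more elementary than your Mehler-diagonal bound $\sum_{d}\rho^{d}\sum_{\abs{k}=d}h_{k}(r\omega)^{2}=(1-\rho^{2})^{-n/2}e^{r^{2}\rho/(1+\rho)}$; either works. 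Third, you correctly flag the endpoint $r^{2}=n+2$: there the $\rho^{2}$-bracket vanishes on degree-$2$ profiles (equality in Corollary~\ref{varbndcor}), so strict negativity is not delivered by the $\rho^{2}$-term alone. The paper does not address this and simply cites Theorem~\ref{newthm}; your proposal to inspect the $\rho^{4}$-coefficient on the degenerate direction is the natural fix, though you would still need to carry it out.
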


\begin{remark}\label{rotrk}
Let $A\subset\R^{n}$ with $A=-A$.  Since $A=-A$, $\frac{d}{d\rho}|_{\rho=0}\int_{\R^{n}}1_{A}(x)T_{\rho}1_{A}(x)d\gamma_{n}(x)=0$ (see \eqref{one1} below).  Moreover, we have the Taylor series estimate
\begin{equation}\label{newone}
\abs{\int_{\R^{n}}1_{A}(x)T_{\rho}1_{A}(x)d\gamma_{n}(x)
-\left[(\gamma_{n}(A))^{2}+\frac{\rho^{2}}{2}\frac{d^{2}}{d\rho^{2}}|_{\rho=0}\int_{\R^{n}}1_{A}(x)T_{\rho}1_{A}(x)d\gamma_{n}(x)\right]}
\leq\abs{\rho}^{3}.
\end{equation}
So, the second derivative of noise stability at $\rho=0$ is most significant when $\rho$ is near zero.  In fact, as shown in \eqref{one3} below,
$$
\frac{d^{2}}{d\rho^{2}}|_{\rho=0}\int_{\R^{n}}1_{A}(x)T_{\rho}1_{A}(x)d\gamma_{n}(x)
=\sum_{i=1}^{n}(\int_{A}(1-x_{i}^{2})d\gamma_{n}(x))^{2}
+2\sum_{i,j\in\{1,\ldots,n\}\colon i\neq j}(\int_{A}x_{i}x_{j}d\gamma_{n}(x))^{2}.
$$
However, the last sum can be ignored for the following reason.  If we interpret $x\in\R^{n}$ as a column vector, consider the $n\times n$ matrix $M\colonequals\int_{A}xx^{T}d\gamma_{n}(x)$.  Then if $Q$ is an orthogonal $n\times n$, matrix, a change of variables shows that $QMQ^{T}=\int_{A}(Qx)(Qx)^{T}d\gamma_{n}(x)=\int_{QA}xx^{T}d\gamma_{n}(x)$.  So, to diagonalize $M$ with an orthogonal matrix $Q$, it suffices to replace $A$ with the set $QA$.  So, when we maximize $\int_{\R^{n}}1_{A}(x)T_{\rho}1_{A}(x)d\gamma_{n}(x)$ or when we consider variations of the noise stability, we can and will assume that $\int_{A^{(t)}}x_{i}x_{j}d\gamma_{n}(x)=0$ for any $i,j\in\{1,\ldots,n\}$ with $i\neq j$, for any $t\in(-1,1)$.  In particular (using $t=0$), we can and will assume that $\int_{A}x_{i}x_{j}d\gamma_{n}(x)=0$ for any $i,j\in\{1,\ldots,n\}$ with $i\neq j$

%\snote{This is okay for noise stability since that is rotation invariant.  But $F$ is not rotation invariant.  Maybe should add: if ball or its complement maximizes sum of squares of all second order Fourier coefficients, then it also maximizes $F$.  Also, maximizing the sum of squares is equivalent to maximizing $F$ among sets $A$ with $M$ diagonal.  Second variation of $F$ ends up being maximized by sets with $M$ diagonal.}
\end{remark}

%And Theorem \ref{thm3} identifies the maximum of this second derivative quantity.
%
%Unfortunately, the relation between noise stability of a ball in $\R^{n}$ and in $\R^{n+1}$ is not immediately obvious when the respective balls have the same volume.  We therefore use an asymptotic expansion to show the following result, which can be considered positive evidence toward Questions \ref{q1} and \ref{q2}.  The argument of Theorem \ref{thm5} is tedious though straightforward.

%For any $x,y\in\R^{n}$, define $G(x,y)=\sum_{i=1}^{n}(1-x_{i}^{2})(1-y_{i}^{2})\gamma_{n}(x)\gamma_{n}(y)$, and for any $A\subset\R^{n}$ define
%
%$$F(A)\colonequals \int_{\R^{n}}\int_{\R^{n}} 1_{A}(x)G(x,y)1_{A}(y)dxdy.$$
%\begin{theorem}\label{thm5}
%If $0<a<1$, and if $B_{n}\subset\R^{n}$ is a ball centered at the origin with $\gamma_{n}(B_{n})=a$, then $F(B_{n+1})>F(B_{n})$ for $n$ sufficiently large.
%\end{theorem}

%To prove Theorem \ref{thm2}, we use the second variation formula below, which may be of independent interest.

%..............
%\begin{definition}\label{sdef}  We consider the set of bounded symmetric functions
%$$
%S(\gamma_{n})\colonequals\{f\colon\R^{n}\to\R\colon\forall\,x\in\R^{n},f(x)=f(-x),0\leq f(x)\leq1\}
%$$
%\end{definition}
%
%................

\subsection{General Framework}

Though Conjecture \ref{SGP} and other noise stability optimization problems concern the optimization of a very specific functional, i.e. noise stability, our treatment of Conjecture \ref{SGP} uses a fairly general strategy.  That is, we can consider our approach to Conjecture \ref{SGP} within the following general context:

\begin{itemize}
\item We are given some Banach space $V$, and for each $\rho\in(-1,1)$, we have a function $F_{\rho}\colon V\to\R$ to be maximized.
\item The maximum of $F_{0}$ over $V$ is equivalent to maximizing $F_{0}$ over a finite-dimensional manifold.
\item We would like to show: if $v_{0}\in V$ maximizes $F_{0}$, then $v_{0}$ also maximizes $F_{\rho}$ for all $\rho$ close to $0$.
\end{itemize}

It is generally impossible that the final statement holds.  For example, suppose we are asked to maximize $F_{\rho}(v)=-(v-\rho)^{2}$ where $v\in \R$.  Then $v=0$ maximizes $F_{0}$, but $v=0$ does not maximize $F_{\rho}$ when $\rho\neq0$.

Our main strategy in proving Theorem \ref{thm1} is to try to relate the first variation (i.e. first derivative) of $F_{\rho}$ to that of $F_{0}$ when $\rho$ is near $0$.
\begin{itemize}
\item[(i)] Prove some stability estimate for $F_{0}$.  (If $v$ nearly maximizes $F_{0}$, then $v$ is close to $v_{0}$.)
\item[(ii)] Show that if $\rho$ is close to $0$, then the first variation of $F_{\rho}$ is close to that of $F_{0}$.
\item[(iii)] Assume that $F_{\rho}$ depends continuously on $\rho$, if $v$ maximizes $F_{\rho}$, then $v$ nearly maximizes $F_{0}$.  So, $v$ is close to $v_{0}$ by (ii).
\item[(iv)] Since $v$ is close to $v_{0}$ by (iii), an appropriate version of $(ii)$ implies that $v$ is very close to $v_{0}$.  Then, by iterating $(ii)$ an infinite number of times, we conclude that $v=v_{0}$, as desired.
\end{itemize}

This strategy was used in \cite{heilman12} to show that if $\rho$ is close to zero, then the maximum noise stability of three sets partitioning $\R^{n}$ each with Gaussian measure $1/3$ occurs when the three sets are cones each with cone angle $2\pi/3$.  This appeared to be the first use of this strategy applied to noise stability problems.  However, a similar strategy has been used for perturbations of perimeter functionals \cite{julin14,figalli15}

In the present paper, we will consider the Banach $V$ consisting of symmetric bounded functions: $f\colon\R^{n}\to[0,1]$ with $f(-x)=f(x)$.  So, if $A$ is symmetric, i.e. $A=-A$ and $A\subset\R^{n}$, then $1_{A}\in V$.  We will also let $F_{\rho}$ be the noise stability.

The strategy depicted above, as used in \cite{heilman12}, however has some shortcomings for Conjecture \ref{SGP} when $n\geq2$.  In particular, part (iv) of the above strategy seems most natural only when we impose the additional restriction that the set $A\subset\R^{n}$ satisfies $\int_{A}(1-x_{i}^{2})d\gamma_{n}(x)=\int_{A}(1-x_{j}^{2})d\gamma_{n}(x)$ for all $i,j\in\{1,\ldots,n\}$.  This assumption imposes additional constraints beyond the assumption that $A=-A$.  It is possible to prove a version of Theorem \ref{thm1} under this additional constraint, but we choose not to do so, since this constraint seems too restrictive to be of interest.
%However, in some sense this condition is not quite so restrictive, since if we are given $A\subset\R^{n}$, we can replace $1_{A}$ with the averaged function %$f=\frac{1}{n!}\sum_{i=1}^{n!}1_{P_{i}A}$, where $P_{1},\ldots,P_{n!}$ are the set of all $n\times n$ permutation matrices.  Then the function $f$ satisfies %$\int_{\R^{n}}f(x)(1-x_{i}^{2})d\gamma_{n}(x)=\int_{\R^{n}}f(x)(1-x_{j}^{2})d\gamma_{n}(x)$ for all $i,j\in\{1,\ldots,n\}$, but the noise stability of $f$ will be smaller than that of %$A$.

In any case, in order to prove Theorems \ref{newthm} and \ref{thm3}, we abandon the strategy of \cite{heilman12}, and we instead use the second variation formula Theorem \ref{thm4}.  That is, we change the itemized strategy above to the following.
\begin{itemize}
\item[(i)] Compute the second variation of $F_{0}$ is negative at a ball centered at the origin.
\item[(ii)] Show that if $\rho$ is close to $0$, then the second variation of $F_{\rho}$ is close to that of $F_{0}$.
\end{itemize}

\subsection{Organization}

Sections \ref{secvar} through \ref{2setvar} provide supporting lemmas for the proof of Theorem \ref{thm1}, which appears in Section \ref{secmain}.
%Theorem \ref{thm1.5} is proven in Section \ref{secdim2}.
Theorems \ref{newthm} and \ref{thm3} are proven in Section \ref{secopt}.

\subsection{Some Hermite-Fourier Analysis}

Let $\lambda>0$.  Recall that the Hermite polynomials $h_{0},h_{1},h_{2},\ldots$ of one variable are defined by
$$e^{\lambda x-\lambda^{2}/2}\equalscolon\sum_{\ell\in\N}\lambda^{\ell}h_{\ell}(x),\qquad\forall\,x\in\R.$$
Note that $\int_{\R}h_{\ell}(x)^{2}d\gamma_{1}(x)=1/\ell!$, and $\{\sqrt{\ell!}\,h_{\ell}\}_{\ell\in\N}$ is an orthonormal basis of $L_{2}(\gamma_{1})$ with respect to the inner product $\langle f,g\rangle=\int_{\R}f(x)\overline{g(x)}d\gamma_{1}(x)$, $f,g\colon\R\to\R$.  Set $f(x)\colonequals e^{\lambda x-\lambda^{2}/2}$.  A routine computation \cite{heilman12} shows that $T_{\rho}(f)(x)=e^{(\lambda\rho)x-(\lambda\rho)^{2}/2}$, $\forall$ $x\in\R$, $\forall$ $\rho\in(-1,1)$.

We therefore have the relation
\begin{equation}\label{six1}
T_{\rho}f(x)=\sum_{\ell\in\N}\lambda^{\ell}\rho^{\ell}h_{\ell}(x),\qquad\forall\,x\in\R,\quad\forall\,\rho\in(-1,1).
\end{equation}
So, by linearity, $T_{\rho}h_{\ell}(x)=\rho^{\ell}h_{\ell}(x)$, $\forall$ $x\in\R$, $\forall$ $\ell\in\N$, $\forall$ $\rho\in(-1,1)$.

We now extend the above observations to higher dimensions.  Let $f\in L_{2}(\gamma_{n})$, so that $f=\sum_{\ell\in\N^{n}}\langle f,h_{\ell}\sqrt{\ell!}\rangle h_{\ell}\sqrt{\ell!}$ in the $L_{2}(\gamma_{n})$ sense, where $\ell=(\ell_{1},\ldots,\ell_{n})\in\N^{n}$ and $h_{\ell}(x)=\prod_{i=1}^{n}h_{\ell_{i}}(x_{i})$ $\forall$ $x\in\R^{n}$.  Write $\vnorm{\ell}_{1}\colonequals \ell_{1}+\cdots+\ell_{n}$ and $\ell!\colonequals (\ell_{1}!)\cdots(\ell_{n}!)$.  Then $T_{\rho}$ satisfies $T_{\rho}h_{\ell}=\rho^{\vnorm{\ell}_{1}}h_{\ell}$ for any $\ell\in\N^{n}$, and for any $\rho\in(-1,1)$,
\begin{equation}\label{six1.8}
T_{\rho}f(x)=\sum_{\ell\in\N^{n}}\rho^{\vnorm{\ell}_{1}}\sqrt{\ell!}\,h_{\ell}(x)\left(\int_{\R}\sqrt{\ell!}\,h_{\ell}(y)f(y)d\gamma_{n}(y)\right),
\qquad\forall\,x\in\R^{n}.
\end{equation}

Let $f,g\in L_{2}(\gamma_{n})$.  By Plancherel's Theorem and \eqref{six1.8} we have
\begin{equation}\label{one1}
\int_{\R^{n}} f(x) T_{\rho}g(x)d\gamma_{n}(x)=\sum_{\ell\in\N^{n}}\rho^{\vnorm{\ell}_{1}}
\int_{\R^{n}} f(x)\sqrt{\ell!}h_{\ell}(x)d\gamma_{n}(x)
\int_{\R^{n}} g(y)\sqrt{\ell!}h_{\ell}(y)d\gamma_{n}(y).
\end{equation}
%By formally taking derivatives of \eqref{one1}, we get
%%By formally taking the derivative $d/d\rho$ of \eqref{one1} at $\rho=0$, we get
%\begin{equation}\label{one2}
%\left.\frac{d}{d\rho}\right|_{\rho=0}\int f T_{\rho}(1-f)d\gamma_{n}
%=-\vnorm{\int_{\R^{n}}xf(x)d\gamma_{n}(x)}_{\ell_{2}^{n}}^{2}.
%\end{equation}
By formally taking the second derivative $d^{2}/d\rho^{2}$ of \eqref{one1}, we get
\begin{equation}\label{one2.5}
\frac{d^{2}}{d\rho^{2}}\int f T_{\rho}fd\gamma_{n}
=\sum_{\ell\in\N^{n}}\vnorm{\ell}_{1}(\vnorm{\ell}_{1}-1)\rho^{\vnorm{\ell}_{1}-2}\abs{\int f\sqrt{\ell!}h_{\ell}d\gamma_{n}}^{2}.
\end{equation}
Evaluating \eqref{one2.5} at $\rho=0$,
\begin{equation}\label{one3}
\left.\frac{d^{2}}{d\rho^{2}}\right|_{\rho=0}\int f T_{\rho}fd\gamma_{n}
=2\sum_{\ell\in\N^{n}\colon\vnorm{\ell}_{1}=2}\abs{\int f\sqrt{\ell!}h_{\ell}d\gamma_{n}}^{2}.
\end{equation}

Suppose $f(x)=f(-x)$, $\forall$ $x\in\R^{n}$.  Applying this property and then changing variables,
\begin{equation}\label{one3.1}
\int_{\R^{n}}h_{\ell}(x)f(x)d\gamma_{n}(x)=(-1)^{\vnorm{\ell}_{1}}\int_{\R^{n}}h_{\ell}(-x)f(-x)d\gamma_{n}(x)
=(-1)^{\vnorm{\ell}_{1}}\int_{\R^{n}}h_{\ell}(x)f(x)d\gamma_{n}(x).
\end{equation}
%Therefore, $\int_{\R^{n}}xf(x)d\gamma_{n}(x)=0$, i.e. $\frac{d}{d\rho}|_{\rho=0}\int f T_{\rho}(1-f)d\gamma_{n}=0$.
%
%first derivative is \abs{j}\cos\theta^{\abs{j}-1}(-\sin\theta)\abs{a_{j}}^{2}
%second derivative is
% \abs{j}\cos\theta^{\abs{j}-1}(-\cos\theta)\abs{a_{j}}^{2}+\abs{j}(\abs{j}-1)\cos\theta^{\abs{j}-2}(-\sin\theta)^{2}\abs{a_{j}}^{2}
%so evaluate at \theta=\pi/2, get second derivative of
% \abs{j}(\abs{j}-1)\abs{a_{2}}^{2}

%\section{Maximizing the First Eigenvalue}

\section{Maximizing Second Degree Fourier Coefficients}\label{secvar}

%[Need to fix these lemmas to also work for $\rho\in(-1,1)$, with and without restrictions on the measure.  For example, need to symmetrize $U_{1},U_{2}$, then it should all work.]
%[change this lemma to just work for sum of second degree coeffs]

We begin with the following adaptation of \cite[Lemma 2.1]{khot11}.  The following Lemma provides existence and first variation conditions \eqref{one4} for maximizing the second degree term in \eqref{one1}, or equivalently the second derivative term in \eqref{newone}.

\begin{lemma}\label{lemma0}
Let $0<a<1$.  Then there exists $A\subset\R^{n}$ such that
$$
\sum_{\substack{\ell\in\N^{n}\colon\\ \vnorm{\ell}_{1}=2}}\left(\int_{\R^{n}}1_{A}(x)h_{\ell}(x)\sqrt{\ell!}d\gamma_{n}(x)\right)^{2}
=\sup_{\substack{\{f\colon\R^{n}\to[0,1],\\ \int_{\R^{n}} f(x)d\gamma_{n}(x)=a\}}}
\sum_{\substack{\ell\in\N^{n}\colon\\ \vnorm{\ell}_{1}=2}}\left(\int_{\R^{n}}f(x)h_{\ell}(x)\sqrt{\ell!}d\gamma_{n}(x)\right)^{2}.
$$
Moreover, $A=-A$, and there exists $c\in\R$ such that
\begin{equation}\label{one4}
A=\left\{x\in\R^{n}\colon
\int_{\R^{n}}1_{A}(y)\left[\sum_{i=1}^{n}(x_{i}^{2}-1)(y_{i}^{2}-1)+2\sum_{i\neq j}(x_{i}x_{j}y_{i}y_{j})\right]d\gamma_{n}(y)\geq c\right\}.
\end{equation}
%note, the sum "double counts" the sum over i,j
\end{lemma}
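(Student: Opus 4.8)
The plan is to follow the template of \cite[Lemma 2.1]{khot11}: produce a maximizer of the (convex, quadratic) functional $\Phi(f)\colonequals\sum_{\ell\in\N^{n}\colon\vnorm{\ell}_{1}=2}\big(\int_{\R^{n}}f\sqrt{\ell!}\,h_{\ell}\,d\gamma_{n}\big)^{2}$ over $K\colonequals\{f\colon\R^{n}\to[0,1]\colon\int_{\R^{n}}f\,d\gamma_{n}=a\}$ by weak-$*$ compactness, and then to \emph{linearize} $\Phi$ at the maximizer via Cauchy--Schwarz, which will force the maximizer to be the indicator of a set of the form $\{g\ge c\}$ for a quadratic polynomial $g$. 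The identity $A=-A$ is then immediate, because $g$ will be an even function.

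First I would establish existence. Each degree-two polynomial $\sqrt{\ell!}\,h_{\ell}$ lies in $L_{1}(\gamma_{n})$, so $f\mapsto\int_{\R^{n}}f\sqrt{\ell!}h_{\ell}\,d\gamma_{n}$ is weak-$*$ continuous on $L_{\infty}(\gamma_{n})$, and hence $\Phi$ is weak-$*$ continuous on the norm-bounded set $K$. Since $K$ is convex, norm-bounded and weak-$*$ closed, it is weak-$*$ compact (Banach--Alaoglu), so $\Phi$ attains its supremum $M\colonequals\sup_{f\in K}\Phi(f)$ at some $f^{*}\in K$. (Alternatively one can invoke the Bauer maximum principle to take $f^{*}$ to be an extreme point of $K$, i.e.\ an indicator, since the extreme points of $K$ are exactly the $1_{A}$ with $\gamma_{n}(A)=a$; but this is not needed for what follows.)

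Next comes the linearization. I would first check $M>0$: for a centered slab $S=\{x\colon\abs{x_{1}}\le s\}$ with $\gamma_{n}(S)=a$, a one-variable computation shows that $\int_{S}(x_{1}^{2}-1)\,d\gamma_{n}$ is strictly negative for every $s\in(0,\infty)$ (it tends to $0$ only as $s\to0^{+}$ or $s\to\infty$), so $\Phi(1_{S})>0$. Now set $a_{\ell}\colonequals\int_{\R^{n}}f^{*}\sqrt{\ell!}h_{\ell}\,d\gamma_{n}$, so $M=\sum_{\vnorm{\ell}_{1}=2}a_{\ell}^{2}>0$, and let $g\colonequals\sum_{\vnorm{\ell}_{1}=2}a_{\ell}\sqrt{\ell!}h_{\ell}$, a nonconstant polynomial of degree two. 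For every $f\in K$, Cauchy--Schwarz gives $\int_{\R^{n}}fg\,d\gamma_{n}=\sum_{\ell}a_{\ell}\int_{\R^{n}}f\sqrt{\ell!}h_{\ell}\,d\gamma_{n}\le M^{1/2}\Phi(f)^{1/2}\le M$, with equality at $f=f^{*}$. Hence $f^{*}$ maximizes the \emph{linear} functional $f\mapsto\int_{\R^{n}}fg\,d\gamma_{n}$ over $K$, with maximum value $M$. Because $g$ is a nonconstant quadratic, every level set $\{g=t\}$ is $\gamma_{n}$-null, so the bathtub (Hardy--Littlewood) principle shows this linear functional has a \emph{unique} maximizer over $K$ up to $\gamma_{n}$-null sets, namely $1_{\{g\ge c\}}$ with $c$ determined by $\gamma_{n}(\{g\ge c\})=a$. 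Therefore $f^{*}=1_{\{g\ge c\}}$ $\gamma_{n}$-a.e.; setting $A\colonequals\{x\colon g(x)\ge c\}$ we get $\gamma_{n}(A)=a$, $1_{A}=f^{*}$ a.e., and $\Phi(1_{A})=M$, which is the asserted equality of the supremum. Since $1_{A}=f^{*}$ a.e., we have $g(x)=\int_{\R^{n}}1_{A}(y)\big(\sum_{\vnorm{\ell}_{1}=2}\ell!\,h_{\ell}(x)h_{\ell}(y)\big)d\gamma_{n}(y)$, and the kernel $\sum_{\vnorm{\ell}_{1}=2}\ell!\,h_{\ell}(x)h_{\ell}(y)$ is, up to a positive multiplicative constant absorbed into $c$, the kernel $\sum_{i}(x_{i}^{2}-1)(y_{i}^{2}-1)+2\sum_{i\neq j}x_{i}x_{j}y_{i}y_{j}$ appearing in \eqref{one4}; this yields \eqref{one4}. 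Finally this kernel is even in $x$, so $g(-x)=g(x)$ and $A=\{g\ge c\}=-A$.

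The main obstacle is mild and lies in the linearization step: one has to see that the Cauchy--Schwarz bound $\int_{\R^{n}}fg\,d\gamma_{n}\le M$ is saturated precisely at the maximizer $f^{*}$, and one has to rule out the degenerate possibility that $\{g=c\}$ carries positive Gaussian measure — which is exactly why one first checks $M>0$, equivalently that $g$ is a nonconstant quadratic. The remaining ingredients — weak-$*$ compactness, the bathtub principle, and the invariance of degree-two Hermite functions under $x\mapsto-x$ recorded in \eqref{one3.1} — are routine.
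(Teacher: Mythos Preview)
Your proof is correct and takes a genuinely different route from the paper's. The paper argues in three separate steps: (i) existence of a maximizer $f^*\in C$ by weak compactness in $L_2(\gamma_n)$; (ii) symmetrization $f_s(x)=(f(x)+f(-x))/2$ to reduce to $C^s$, then convexity of $T$ together with the Bauer-type principle from \cite{khot11} to land on an extreme point $1_A$ with $A=-A$; (iii) the level-set description \eqref{one4} via a local mass-exchange (first-variation) contradiction. Your argument instead linearizes the quadratic functional at the maximizer by Cauchy--Schwarz, $\int fg\,d\gamma_n\le M^{1/2}\Phi(f)^{1/2}\le M$ with equality at $f^*$, and then invokes the bathtub principle for the \emph{linear} problem $\max_{f\in K}\int fg\,d\gamma_n$. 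This yields the indicator form, the explicit level-set description, and the symmetry $A=-A$ simultaneously, with no separate symmetrization or perturbation step needed. The extra ingredient you must supply (and do) is $M>0$, to guarantee $g$ is a nonconstant quadratic so that $\{g=c\}$ is $\gamma_n$-null and the bathtub maximizer is unique; the paper's first-variation argument also tacitly relies on this nondegeneracy to pass from ``no inversion'' to an actual level-set identity, but does not isolate it. In short, the paper's proof is more modular and closer to classical first-variation arguments, while yours is more unified and incidentally shows that \emph{every} maximizer over $K$ (not only one chosen via extreme points) is already an indicator of the stated form.
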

\begin{proof}
The set $C\colonequals\{f\colon\R^{n}\to[0,1],\int_{\R^{n}} f(x)d\gamma_{n}(x)=a\}$ is a norm closed, convex and norm bounded subset of the Hilbert space $L_{2}(\gamma_{n})$.  Therefore, $C\subset L_{2}(\gamma_{n})$ is weakly closed.  Also, $C$ is weakly compact by the Banach-Alaoglu Theorem.  Define $T\colon C\to\R$ by
\begin{equation}\label{one8}
T(f)\colonequals\sum_{\ell\in\N^{n}\colon\vnorm{\ell}_{1}=2}\abs{\int_{\R^{n}} f(x)\sqrt{\ell!}h_{\ell}(x)d\gamma_{n}(x)}^{2}.
\end{equation}
Since $\vnormf{h_{\ell}\sqrt{\ell!}}_{L_{2}(\gamma_{n})}=1$, $T$ is a finite sum of weakly continuous functions.  Therefore, $T$ is weakly continuous on the weakly compact set $C\subset L_{2}(d\gamma_{n})$.  So, there exists $f\in C$ such that $T(f)=\max_{g\in C}T(g)$.

Now, the function $f_{s}(x)\colonequals(f(x)+f(-x))/2$ satisfies
$$
\int_{\R^{n}} f_{s}(x)\sqrt{\ell!}h_{\ell}(x)d\gamma_{n}(x)
=
\begin{cases}
\int_{\R^{n}} f(x)\sqrt{\ell!}h_{\ell}(x)d\gamma_{n}(x) & ,\vnorm{\ell}_{1}\,\mbox{even, }\ell\in\N^{n}\\
0 & ,\vnorm{\ell}_{1}\,\mbox{odd, }\ell\in\N^{n}.
\end{cases}
$$
So, $T(f_{s})\geq T(f)$.  Let $C^{s}\colonequals\{f\colon\R^{n}\to[0,1],f(x)=f(-x),\int_{\R^{n}} f(x)d\gamma_{n}(x)=a\}$.  We have just shown that
\begin{equation}\label{two3}
\max_{g\in C}T(g)=\max_{g\in C^{s}}T(g).
\end{equation}

We therefore try to maximize $T(g)$ on $C^{s}$.  We now show that $T$ is convex on $B^{s}$.  Let $f_{1},f_{2}\in C^{s}$, and let $\lambda\in[0,1]$.  Then
\begin{equation}\label{two4}
\lambda T(f_{1})+(1-\lambda)T(f_{2})-T(\lambda f_{1}+(1-\lambda)f_{2})
\stackrel{\eqref{one8}}{=}\lambda(1-\lambda)T(f_{1}-f_{2})
\geq0.
\end{equation}
So, $T$ is a weakly continuous convex function on the weakly compact set $C^{s}\subset L_{2}(\gamma_{n})$.  Therefore, there exists $A\subset\R^{n}$ such that $1_{A}\in C^{s}$ satisfies $T(1_{A})=\max_{g\in C^{s}}T(g)$ \cite[Lemma 2.1]{khot11}.  Combining this observation with \eqref{two3}, $T(1_{A})=\max_{g\in C^{s}}T(g)=\max_{g\in C}T(g)$, and $A=-A$ since $1_{A}\in C^{s}$.  The existence of $A$ is therefore proven.

We now prove \eqref{one4}.  We argue by contradiction.  Define
$$\overline{T}(f)(x)\colonequals\left.\frac{1}{2}\frac{d^{2}}{d\rho^{2}}\right|_{\rho=0}T_{\rho}f(x)
\stackrel{\eqref{six1.8}}{=}\sum_{\ell\in\N^{n}\colon\vnorm{\ell}_{1}=2}\left(\int_{\R^{n}}f(y)\sqrt{\ell!}h_{\ell}d\gamma_{n}(y)\right)\sqrt{\ell!}h_{\ell}(x).$$
Note that $\int_{\R^{n}} f(x)\overline{T}f(x)d\gamma_{n}(x)=T(f)$.  Suppose there exists $x_{1},x_{2}\in\R^{n}$, $x_{1}\notin A,x_{2}\in A$ such that $\overline{T}1_{A}(x_{1})>\overline{T}1_{A}(x_{2})$.  Let $U_{1}\subset\R^{n}$ be a small ball around $x_{1}$ and let $U_{2}$ be a small ball around $x_{2}$ such that $\overline{T}1_{A}(u_{1})>\overline{T}1_{A}(u_{2})$, $\forall$ $u_{1}\in U_{1},u_{2}\in U_{2}$.  Also, assume that $U_{1}\cap U_{2}=\emptyset$ and $\gamma_{n}(U_{1})=\gamma_{n}(U_{2})$.  Define $A'\colonequals (A\setminus U_{2})\cup U_{1}$.  Then $1_{A'}=1_{A}-1_{U_{2}}+1_{U_{1}}$, and for $U_{1},U_{2}$ sufficiently small,
\begin{flalign*}
\int_{\R^{n}}1_{A'}(x)\overline{T}1_{A'}(x)d\gamma_{n}(x)
&=\int_{\R^{n}} 1_{A}(x)\overline{T}1_{A}(x)d\gamma_{n}(x)
+2\int_{\R^{n}}(1_{U_{1}}(x)-1_{U_{2}}(x))\overline{T}1_{A}(x)d\gamma_{n}(x)\\
&\qquad+\int_{\R^{n}}(1_{U_{1}}(x)-1_{U_{2}}(x))\overline{T}(1_{U_{1}}-1_{U_{2}})(x)d\gamma_{n}(x)\\
&>\int_{\R^{n}} 1_{A}(x)\overline{T}1_{A}(x)d\gamma_{n}(x).
\end{flalign*}

This inequality contradicts the maximality of $A$.  We conclude that no such $x_{1},x_{2}$ exist, so \eqref{one4} holds.
\end{proof}

\begin{remark}
One difficulty in proving Question \ref{q2} is that there are many potential critical points for the noise stability.  For example, if the boundary of $A$ is of the form $S^{m}\times\R^{n-m}$, with $0\leq m\leq n$, then $A$ satisfies \eqref{one4}.  Also, if the boundary of the set $A$ is any Simons-Lawson cone, then $A$ satisfies \eqref{one4}.  That is, in the limit $\rho\to0$, $A$ is a candidate critical point of noise stability if the boundary of $A$ is equal to
$$\{(x_{1},\ldots,x_{2n})\in\R^{2n}\colon \sum_{i=1}^{n}x_{i}^{2}=\sum_{i=n+1}^{2n}x_{i}^{2}\}.$$
\end{remark}

\section{Iterative Estimates}

The following inequality for Hermite polynomials will be useful in the sequel.

\begin{lemma}[{\cite[Lemma 5.1]{heilman12}}]\label{lemma6}
For $\ell=(\ell_{1},\ldots,\ell_{n})\in\N^{n}$ and $x=(x_{1},\ldots,x_{n})\in\R^{n}$,
$$
h_{\ell}(x)\sqrt{\ell!}
\leq\vnorm{\ell}_{1}^{n}3^{\vnorm{\ell}_{1}}\prod_{i=1}^{n}\max(1,\abs{x_{i}}^{\ell_{i}}).
$$
%And for $\abs{\ell_{i}}$ odd,
%$$
%h_{\ell}(x)\sqrt{\ell!}
%\leq\abs{x_{i}}\abs{\ell}^{n}3^{\abs{\ell}}\prod_{i=1}^{n}\max(1,\abs{x_{i}}^{\ell_{i}}).
%$$
%$$
%h_{\ell}(x)\sqrt{\ell!}\leq e^{x^{2}/4}.
%$$
\end{lemma}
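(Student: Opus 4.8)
The plan is to reduce to one variable and then estimate the coefficients of the normalized Hermite polynomials directly. Since $h_{\ell}(x)\sqrt{\ell!}=\prod_{i=1}^{n}h_{\ell_{i}}(x_{i})\sqrt{\ell_{i}!}$ and the right-hand side of the claimed inequality is a product of nonnegative factors, it suffices to prove the one-variable bound
$$\abs{h_{k}(t)}\sqrt{k!}\leq e^{k/2}\max(1,\abs{t}^{k}),\qquad\forall\,k\in\N,\ t\in\R,$$
and then take the product over the $n$ coordinates.

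For the one-variable bound, expanding the generating function $e^{\lambda t-\lambda^{2}/2}=\sum_{k}\lambda^{k}h_{k}(t)$ gives $h_{k}(t)=\sum_{m=0}^{\lfloor k/2\rfloor}\frac{(-1)^{m}}{m!\,(k-2m)!\,2^{m}}t^{k-2m}$. Taking absolute values term by term and using $\abs{t}^{k-2m}\leq\max(1,\abs{t}^{k})$ (valid since $0\leq k-2m\leq k$), we obtain $\abs{h_{k}(t)}\sqrt{k!}\leq\max(1,\abs{t}^{k})\sum_{m=0}^{\lfloor k/2\rfloor}\frac{\sqrt{k!}}{m!\,(k-2m)!\,2^{m}}$. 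The remaining estimate is purely numerical: writing $k!=\big(\prod_{j=k-2m+1}^{k}j\big)(k-2m)!\leq k^{2m}(k-2m)!$ gives $\sqrt{k!}\leq k^{m}\sqrt{(k-2m)!}$, hence $\frac{\sqrt{k!}}{m!\,(k-2m)!\,2^{m}}\leq\frac{(k/2)^{m}}{m!}$, and summing yields $\sum_{m\geq0}(k/2)^{m}/m!=e^{k/2}$.

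Multiplying the one-variable bound over $i=1,\dots,n$ gives $\abs{h_{\ell}(x)}\sqrt{\ell!}\leq e^{\vnorm{\ell}_{1}/2}\prod_{i=1}^{n}\max(1,\abs{x_{i}}^{\ell_{i}})$, and since $e^{1/2}<3$ and $\vnorm{\ell}_{1}^{n}\geq1$ when $\vnorm{\ell}_{1}\geq1$, we conclude $e^{\vnorm{\ell}_{1}/2}\leq3^{\vnorm{\ell}_{1}}\leq\vnorm{\ell}_{1}^{n}3^{\vnorm{\ell}_{1}}$, which is the claimed inequality. (The degenerate case $\ell=0$, where the stated right side is $0$, does not arise in the applications, so one may restrict to $\vnorm{\ell}_{1}\geq1$.) I do not expect a genuine obstacle here; the one mildly delicate point is to regroup the factorial so that the resulting exponential series has base below $3$, since a cruder bound on $k!/(k-2m)!$ produces a constant growing faster than $3^{k}$, which would be too weak. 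This is in essence the argument of \cite[Lemma 5.1]{heilman12}.
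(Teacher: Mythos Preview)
Your argument is correct. The paper does not reproduce a proof of this lemma; it merely cites \cite[Lemma 5.1]{heilman12}, so there is nothing in the present paper to compare against beyond the statement itself.

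A few remarks on your write-up. Your one-variable bound $\abs{h_{k}(t)}\sqrt{k!}\leq e^{k/2}\max(1,\abs{t}^{k})$ is in fact strictly sharper than what the stated lemma asks for, since $e^{1/2}<3$ and the extra factor $\vnorm{\ell}_{1}^{n}$ on the right is then unnecessary slack. The key step---bounding $k!\leq k^{2m}(k-2m)!$ so that $\sqrt{k!}/\big(m!(k-2m)!2^{m}\big)\leq (k/2)^{m}/m!$ and summing to $e^{k/2}$---is clean and exactly the right regrouping. Your observation that the stated inequality fails at $\ell=0$ (since $\vnorm{\ell}_{1}^{n}=0$ while $h_{0}\sqrt{0!}=1$) is correct; the lemma is only ever invoked in the paper for $\vnorm{\ell}_{1}\geq 2$, so this causes no issue downstream.
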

%The second estimate is proven in \cite{indritz61}.
%
%H_{n}(x)\leq(2^{n}n!)^{1/2}e^{x^{2}/2}
%h_{n}(x)=2^{-n/2}(n!)^{-1}H_{n}(x/\sqrt{2})
%h_{n}(x)\sqrt{n!}\leq2^{-n/2}2^{n/2}(n!)^{-1}(n!)^{1/2}(n!)^{1/2}e^{(x/\sqrt{2})^{2}/2}
% so h_{n}(x)\leq e^{x^{2}/4}

Below we will also require the following bounds on $T_{\rho}$ applied to the indicator function of an interval.

\begin{lemma}\label{lemma7}
Let $B=B(0,r)\subset\R$ with $\gamma_{1}(B)=a$.  Let $x\in\R$ with $\abs{x}\leq\sqrt{-4\log\abs{\rho}}$, and let $\abs{\rho}<e^{-40}$.  Then
%$$
%\sqrt{2\pi}\frac{d}{dx}T_{\rho}1_{B}(x)=\rho e^{-(r+x\rho)^{2}/[2(1-\rho^{2})]}-\rho e^{-(r-x\rho)^{2}/[2(1-\rho^{2})]}.
%$$
$$
\bigg|\frac{d}{dx}T_{\rho}1_{B}(x)+\rho^{2}\sqrt{\frac{2}{\pi}}xre^{-r^{2}/2}\bigg|\leq\min(a^{1/2},(1-a)^{1/2})10\abs{\rho}^{15/4}.
$$
Also, for any $f\colon\R\to[-1,1]$,
$$
\frac{d}{dx}T_{\rho}f(x)=\frac{\rho}{\sqrt{1-\rho^{2}}}\int_{\R} yf(x\rho+y\sqrt{1-\rho^{2}})d\gamma_{1}(y).
$$
%(d/dx)T_{\rho}f(x)=\int (d/dx)f(x\rho+y\sqrt{1-\rho^{2}})e^{-y^{2}/2}dy/\sqrt{2\pi}
%                  =\int(df/dz)(x\rho+y\sqrt{1-\rho^{2}})\rho e^{-y^{2}/2}dy/\sqrt{2\pi}
%                  =\int(d/dy)f(x\rho+y\sqrt{1-\rho^{2}})(\rho/\sqrt{1-\rho^{2}})e^{-y^{2}/2}dy/\sqrt{2\pi}
%                  =-(\rho/\sqrt{1-\rho^{2}})\int f(x\rho+y\sqrt{1-\rho^{2}})(-y)e^{-y^{2}/2}dy/\sqrt{2\pi}
\end{lemma}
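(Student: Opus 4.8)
The plan is to prove the second (exact) identity first, since it is the key differentiation-under-the-integral computation, and then use it to derive the main estimate. For the second identity, I start from the definition \eqref{oudef}, $T_{\rho}f(x)=\int_{\R}f(x\rho+y\sqrt{1-\rho^2})\,d\gamma_1(y)$, and differentiate in $x$. One clean way is to change variables to write $T_{\rho}f(x)=\int_{\R}f(z)\,g_{\rho}(z-x\rho)\,dz$ for a Gaussian kernel $g_{\rho}$ of variance $1-\rho^2$, differentiate the kernel, and then integrate by parts (or equivalently use the Gaussian integration-by-parts identity $\int y\,\phi(y)f(\cdots)\,d\gamma_1(y) = \int \phi'(y)f(\cdots)d\gamma_1(y) + \cdots$) to move the derivative back onto $f$; this yields exactly $\frac{d}{dx}T_{\rho}f(x)=\frac{\rho}{\sqrt{1-\rho^2}}\int_{\R}y\,f(x\rho+y\sqrt{1-\rho^2})\,d\gamma_1(y)$. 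I would note this requires only $f$ bounded measurable, so the stated hypothesis $f\colon\R\to[-1,1]$ suffices, with dominated convergence justifying the differentiation.

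For the main estimate, I apply the exact identity with $f=1_B$, $B=(-r,r)$. Then $\frac{d}{dx}T_{\rho}1_B(x)=\frac{\rho}{\sqrt{1-\rho^2}}\int_{\R}y\,1_{\{|x\rho+y\sqrt{1-\rho^2}|<r\}}\,d\gamma_1(y)$. The indicator constrains $y$ to an interval $\big((-r-x\rho)/\sqrt{1-\rho^2},\,(r-x\rho)/\sqrt{1-\rho^2}\big)$, and $\int_c^d y\,d\gamma_1(y) = \frac{1}{\sqrt{2\pi}}(e^{-c^2/2}-e^{-d^2/2})$, which is elementary. Writing $c,d$ explicitly and Taylor expanding $e^{-c^2/2}-e^{-d^2/2}$ in the small quantities $\rho$ and $\rho^2$ (using $(r\mp x\rho)^2/(1-\rho^2) = r^2 \mp 2rx\rho + O(\rho^2)$ and $e^{-r^2/2}$ as the leading factor), the leading term of $\frac{\rho}{\sqrt{1-\rho^2}}\cdot\frac{1}{\sqrt{2\pi}}(e^{-c^2/2}-e^{-d^2/2})$ works out to $-\rho^2\sqrt{2/\pi}\,x r e^{-r^2/2}$, matching the claimed main term. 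The remainder must then be bounded by $\min(a^{1/2},(1-a)^{1/2})\cdot 10|\rho|^{15/4}$.

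The main obstacle, and the only genuinely delicate point, is controlling the error term with the stated $|\rho|^{15/4}$ power and, in particular, with the factor $\min(a^{1/2},(1-a)^{1/2})$ rather than a constant. The power $15/4$ (instead of the naive $3$ coming from an $O(\rho^3)$ Taylor remainder) is exactly what the hypothesis $|x|\le\sqrt{-4\log|\rho|}$, i.e. $e^{-x^2/2}\ge|\rho|^{2}$ — or rather the relevant bound $|x|^k|\rho|^{\text{stuff}}$ — is designed to absorb: one pays $|x|$ or $x^2$ factors from the expansion but each is dominated by a tiny power of $|\rho|$ via $|x|\le\sqrt{-4\log|\rho|}\ll |\rho|^{-\epsilon}$ for any $\epsilon>0$ once $|\rho|<e^{-40}$, so the $\rho^3$ remainder times polynomial-in-$|x|$ factors still beats $|\rho|^{15/4}$ with room to spare. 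The $\min(a^{1/2},(1-a)^{1/2})$ factor comes from the fact that $e^{-r^2/2}$ controls the whole expression: if $a=\gamma_1(B(0,r))$ is small then $r$ is small and $e^{-r^2/2}\approx 1$ but the whole integral $\int_{B^c}$-type quantity is small like $a$ (one should instead bound $\int_{|y|>\cdots}|y|d\gamma_1 \lesssim e^{-(\cdot)^2/2}$, and $1-a = 2\gamma_1((r,\infty)) \asymp \frac{1}{r}e^{-r^2/2}$ for large $r$, while $a \asymp r$ for small $r$), so one tracks $e^{-r^2/2}$ and converts it to $\min(a^{1/2},(1-a)^{1/2})$ using the two-sided Gaussian tail estimate $\frac{1}{\sqrt{2\pi}}\frac{r}{1+r^2}e^{-r^2/2}\le \gamma_1((r,\infty))\le \frac{1}{\sqrt{2\pi}}\frac{1}{r}e^{-r^2/2}$ together with $a=2\gamma_1((0,r))\ge$ a linear-in-$r$ bound for small $r$. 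I would organize the estimate into two regimes, $r\le 1$ and $r\ge 1$, handle the $e^{-r^2/2}$-to-$\min(a^{1/2},(1-a)^{1/2})$ conversion in each, and then verify the numerical constant $10$ is comfortably satisfied given $|\rho|<e^{-40}$.
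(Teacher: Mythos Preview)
Your approach is correct but genuinely different from the paper's. The paper does not compute $\frac{d}{dx}T_\rho 1_B$ directly via the integral identity and then Taylor-expand $e^{-c^2/2}-e^{-d^2/2}$; instead it expands $T_\rho 1_B$ in the Hermite basis using \eqref{six1.8}, differentiates termwise via $h_\ell'=h_{\ell-1}$, identifies the $\ell=2$ term as the main contribution $-\rho^2\sqrt{2/\pi}\,xr e^{-r^2/2}$ (with $\ell=1,3$ vanishing by the symmetry \eqref{one3.1}), and bounds the tail $\sum_{\ell\ge 4}$ using the pointwise Hermite estimate of Lemma~\ref{lemma6} together with the Cauchy--Schwarz bound $\big|\int 1_B h_\ell\sqrt{\ell!}\,d\gamma_1\big|\le \min(a^{1/2},(1-a)^{1/2})$. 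The chief advantage of the paper's route is that the factor $\min(a^{1/2},(1-a)^{1/2})$ drops out immediately and uniformly from Cauchy--Schwarz applied to $1_B$ and $1_{B^c}$, with no need to relate $e^{-r^2/2}$ to $a$ or $1-a$ via Mills-ratio tail estimates or to split into regimes $r\lessgtr 1$; the remainder then has the clean form $\min(a^{1/2},(1-a)^{1/2})\sum_{\ell\ge4}|\rho|^{\ell}\cdot(\text{poly in }|x|)$. Your direct-computation route is more elementary (no Hermite machinery, no Lemma~\ref{lemma6}) and gives an exact closed form $\frac{d}{dx}T_\rho 1_B(x)=-\sqrt{2/\pi}\,\frac{\rho}{\sqrt{1-\rho^2}}\,e^{-(r^2+x^2\rho^2)/(2(1-\rho^2))}\sinh\!\big(\tfrac{rx\rho}{1-\rho^2}\big)$, from which the remainder is visibly $O(\rho^4)$ (not $\rho^3$, by evenness) times factors of the form $r^k e^{-r^2/2}$ and powers of $|x|$; your conversion of $r^k e^{-r^2/2}$ to $\min(a^{1/2},(1-a)^{1/2})$ via tail bounds is correct but is the one place where the argument is less clean than the paper's.
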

\begin{proof}
%\begin{flalign*}
%\frac{d}{dx}T_{\rho}1_{B}(x)
%&=\frac{d}{dx}\int_{\R}1_{B}(x\rho+y\sqrt{1-\rho^{2}})d\gamma_{1}(y)
%=\frac{d}{dx}\int_{(-r-x\rho)/\sqrt{1-\rho^{2}}}^{(r-x\rho)/\sqrt{1-\rho^{2}}}e^{-y^{2}/2}dy/\sqrt{2\pi}\\
%&=-\rho e^{-(r-x\rho)^{2}/[2(1-\rho^{2})]}/\sqrt{2\pi}+\rho e^{-(r+x\rho)^{2}/[2(1-\rho^{2})]}/\sqrt{2\pi}.
%\end{flalign*}
%
%1_{[-r,r]}(x\rho+y\sqrt{1-\rho^{2}})  %% x\rho+y\sqrt{1-\rho^{2}}\in[-r,r]
%=1_{[-r-x\rho,r-x\rho]}(y\sqrt{1-\rho^{2}})   %%  y\sqrt{1-\rho^{2}}\in[-r-x\rho,r-x\rho]
%=1_{[(-r-x\rho)/\sqrt{1-\rho^{2}},(r-x\rho)/\sqrt{1-\rho^{2}}]}(y)  %%y\in(1-\rho^{2})^{-1/2}[-r-x\rho,r-x\rho]

Recall that $h_{\ell}(x)=\sum_{m=0}^{\lfloor \ell/2\rfloor}\frac{x^{\ell-2m}(-1)^{m}2^{-m}}{m!(\ell-2m)!}$.  So, $h_{1}(x)=x$, $h_{2}(x)=(1/2)(x^{2}-1)$, $(d/dx)h_{\ell}=h_{\ell-1}$ for $\ell\geq1$, and $\int_{\R} 1_{B}(x)h_{2}(x)\sqrt{2}d\gamma_{1}(x)=-re^{-r^{2}/2}/\sqrt{\pi}$.
%$$
%\int_{-r}^{r}e^{-\alpha x^{2}/2}dx=\int_{-\sqrt{\alpha}r}^{\sqrt{\alpha}r}e^{-x^{2}/2}dx\alpha^{-1/2}.
%$$
%$$
%\frac{d}{d\alpha}\int_{0}^{z\sqrt{\alpha}}f(t)dt=f(z\sqrt{\alpha})\frac{z}{2\sqrt{\alpha}}.
%$$
%\begin{flalign*}
%\int_{-r}^{r}(-x^{2}/2)e^{-x^{2}/2}dx
%&=\frac{d}{d\alpha}|_{\alpha=1}\int_{-r}^{r}e^{-\alpha x^{2}/2}dx\\
%&=-\frac{1}{2}\int_{-r}^{r}e^{-x^{2}/2}dx+\frac{d}{d\alpha}|_{\alpha=1}\int_{-\sqrt{\alpha}r}^{\sqrt{\alpha}r}e^{-x^{2}/2}dx\\
%&=-(1/2)a+e^{-r^{2}/2}r.
%\end{flalign*}
%$$\int_{-r}^{r}(-x^{2}/2+1/2)e^{-x^{2}/2}dx=re^{-r^{2}/2}.$$
Since $\gamma_{1}(B)=a$, $\vnorm{1_{B}}_{L_{2}(\gamma_{n})}=a^{1/2}$ and $\vnorm{1_{B^{c}}}_{L_{2}(\gamma_{n})}=(1-a)^{1/2}$.  Then, for $h_{\ell}$ with $\ell\geq1$, the Cauchy-Schwarz inequality implies that
\begin{equation}\label{two6}
\begin{aligned}
\abs{\int_{\R} 1_{B}(x)h_{\ell}(x)\sqrt{\ell!}d\gamma_{1}(x)}
&=\min\left(\,\abs{\int_{\R} 1_{B}(x)h_{\ell}(x)\sqrt{\ell!}d\gamma_{1}(x)},\abs{\int_{\R} 1_{B^{c}}(x)h_{\ell}(x)\sqrt{\ell!}d\gamma_{1}(x)}\,\right)\\
&\leq\min(a^{1/2},(1-a)^{1/2}).
\end{aligned}
\end{equation}

By \eqref{six1.8}, and using $\int_{\R}1_{B}(x)h_{1}(x)d\gamma_{1}(x)=\int_{\R}1_{B}(x)h_{3}(x)d\gamma_{1}(x)=0$, which follows from \eqref{one3.1}, we have for any $x\in\R$,
\begin{flalign*}
\frac{d}{dx}T_{\rho}1_{B}(x)
&=\sum_{\ell\in\N}\rho^{\abs{\ell}}\left(\int_{\R}1_{B}(y)h_{\ell}(y)\sqrt{\ell!}d\gamma_{1}(y)\right)h_{\ell-1}(x)\sqrt{\ell!}\\
&=-\rho^{2}\sqrt{2}xre^{-r^{2}/2}/\sqrt{\pi}+\sum_{\ell\geq4}
\rho^{\abs{\ell}}\left(\int_{\R}1_{B}(y)h_{\ell}(y)\sqrt{\ell!}d\gamma_{1}(y)\right)h_{\ell-1}(x)\sqrt{\ell!}.\\
\end{flalign*}

Then, by \eqref{two6} and Lemma \ref{lemma6}, if $\abs{x}\leq\sqrt{-4\log\rho}$,
\begin{flalign*}
\bigg|\frac{d}{dx}T_{\rho}1_{B}(x)+\rho^{2}\sqrt{\frac{2}{\pi}}xre^{-r^{2}/2}\bigg|
&\leq\min(a^{1/2},(1-a)^{1/2})\sum_{\ell\geq4}
\abs{\rho}^{\abs{\ell}}3^{\abs{\ell}-1}(\abs{\ell}-1)\max(1,\abs{x}^{\ell-1})\\
&\leq10 \min(a^{1/2},(1-a)^{1/2})\abs{\rho}^{15/4}.
\end{flalign*}
\end{proof}

\section{Perturbation of Fourier Coefficients}\label{secpert}
%-EL equation should say (d^{2}/d\rho^{2})T_{\rho}f=constant
%-optimizing second fourier coefficient is trivial
%-then iterative argument should work

When $n=1$, we would like to show: if $B'\subset\R$ exists such that $f=1_{B'}$ nearly maximizes \eqref{one8}, then $B'$ is close to a ball $B\subset\R$ centered at the origin.  When $n=1$, this statement amounts to a simple rearrangement argument.

\begin{lemma}\label{lemma10}
For any $x\in\R$, let $g(x)=1-x^{2}$, and let $0<c<d$.  Then
$$
g(c)\geq\frac{1}{\gamma_{1}([c,d])}\int_{c}^{d}g(x)d\gamma_{1}(x)\geq g(d-(d-c)/3).
$$
\end{lemma}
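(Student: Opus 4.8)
The first inequality is pure monotonicity: since $0<c<d$, every $x\in[c,d]$ has $x>0$, so $g(x)=1-x^{2}$ is decreasing on $[c,d]$ and $g(x)\le g(c)$ there; integrating this against the probability measure $\gamma_{1}([c,d])^{-1}\,d\gamma_{1}$ restricted to $[c,d]$ yields $\gamma_{1}([c,d])^{-1}\int_{c}^{d}g(x)\,d\gamma_{1}(x)\le g(c)$.

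For the second inequality, set $m:=d-(d-c)/3=(2d+c)/3$, so $c<m<d$. Since $g(x)-g(m)=m^{2}-x^{2}$, the desired bound $\gamma_{1}([c,d])^{-1}\int_{c}^{d}g\,d\gamma_{1}\ge g(m)$ is equivalent to the single sign assertion $\int_{c}^{d}(m^{2}-x^{2})\,d\gamma_{1}(x)\ge 0$. The plan is to deduce this from the corresponding Lebesgue statement together with a correlation inequality. First, a direct antiderivative computation gives
$$\int_{c}^{d}(m^{2}-x^{2})\,dx=(d-c)\Big(m^{2}-\tfrac{1}{3}(d^{2}+dc+c^{2})\Big)=\tfrac{1}{9}(d-c)^{2}(d+2c)\ \ge\ 0,$$
using $0<c<d$; this is precisely where the constant $1/3$ in the statement gets pinned down, so that this plain Lebesgue integral comes out non-negative. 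Second, writing $p(x):=(2\pi)^{-1/2}e^{-x^{2}/2}$ for the Gaussian density, both $x\mapsto m^{2}-x^{2}$ and $x\mapsto p(x)$ are decreasing on $[c,d]$ (again because $[c,d]\subset(0,\infty)$), hence non-negatively correlated: by Chebyshev's integral inequality,
$$\int_{c}^{d}(m^{2}-x^{2})\,p(x)\,dx\ \ge\ \frac{1}{d-c}\Big(\int_{c}^{d}(m^{2}-x^{2})\,dx\Big)\Big(\int_{c}^{d}p(x)\,dx\Big)\ \ge\ 0,$$
the last step using the computation above together with $\int_{c}^{d}p(x)\,dx=\gamma_{1}([c,d])>0$. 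Since the left-hand side equals $\int_{c}^{d}(m^{2}-x^{2})\,d\gamma_{1}(x)$, this is exactly the sign assertion, hence the second inequality.

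I do not expect a genuine obstacle. The only two ideas are (i) rewriting the claim as the sign assertion $\int_{c}^{d}(m^{2}-x^{2})\,d\gamma_{1}\ge0$, and (ii) noticing that the Gaussian density is decreasing on the positive axis, so the (easy) Lebesgue sign transfers to $\gamma_{1}$ via the fact that two similarly-ordered monotone functions are non-negatively correlated. If one preferred not to invoke Chebyshev's inequality, the same estimate follows by the substitution $x=m+u$ and splitting $[c-m,d-m]$ into $[-\tfrac{2}{3}w,-\tfrac{1}{3}w]$, $[-\tfrac{1}{3}w,0]$, $[0,\tfrac{1}{3}w]$ with $w:=d-c$, pairing the last two pieces and bounding them against the first, but that route is more computational and less transparent.
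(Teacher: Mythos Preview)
Your proof is correct. The paper states Lemma~\ref{lemma10} without proof, so there is nothing to compare against; your argument via monotonicity for the upper bound and Chebyshev's integral inequality for the lower bound is clean and complete. The Lebesgue computation $\int_c^d(m^2-x^2)\,dx=\tfrac{1}{9}(d-c)^2(d+2c)$ is exactly right and, as you note, explains the constant $1/3$ in the statement; the transfer to $\gamma_1$ via the correlation inequality for similarly-ordered functions is the natural move and works because both $m^2-x^2$ and the Gaussian density are decreasing on $[c,d]\subset(0,\infty)$.
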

%\begin{lemma}\label{lemma11}
%Let $0\leq k\leq4$, $k\in\Z$, $t>0$.  Then
%$$
%\int_{t}^{\infty}y^{k}d\gamma_{1}(y)\leq e^{-t^{2}/2}(t^{k}+3).
%$$
%\end{lemma}
%h_{2}(x)=(1/2)(x^{2}-1)
\begin{lemma}\label{lemma8.0}
Let $n=1$.  Let $B=B(0,r)$ such that $\gamma_{1}(B)=a$ and
\begin{equation}\label{three0.0}
\int_{\R}1_{B}(x)\sqrt{2!}h_{2}(x)d\gamma_{1}(x)
=\inf_{\substack{\{f\colon\R\to[0,1],\\ \int_{\R} f(x)d\gamma_{1}(x)=a\}}}
\int_{\R} f(x)\sqrt{2!}h_{2}(x)d\gamma_{1}(x).
\end{equation}
Let $B'\subset\R$.  Assume that there is an $\epsilon>0$ such that $B'$ satisfies
\begin{equation}\label{three1.0}
\int_{\R}1_{B'}(x)\sqrt{2!}h_{2}(x)d\gamma_{1}(x)
<\inf_{\substack{\{f\colon\R\to[0,1],\\ \int_{\R} f(x)d\gamma_{1}(x)=a\}}}
\int_{\R} f(x)\sqrt{2!}h_{2}(x)d\gamma_{1}(x)
+\epsilon a.
\end{equation}
%Let $\widetilde{a}$ such that $\int_{0}^{1}d\gamma_{1}=\widetilde{a}$.  Then there exists $0<\epsilon<\epsilon_{0}(a)$ such that
Then
\begin{equation}\label{three2.0}
\int_{\R}\abs{1_{B'}(x)-1_{B}(x)}d\gamma_{1}(x)<10\epsilon^{1/2}.
\end{equation}
\end{lemma}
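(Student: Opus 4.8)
The plan is to argue via a rearrangement/level-set comparison. Write $\phi(x) := \sqrt{2}\,h_2(x) = \frac{1}{\sqrt 2}(x^2-1)$. Since $\phi$ is even and strictly increasing in $|x|$, the infimum in \eqref{three0.0} over functions $f\colon\R\to[0,1]$ with $\int f\,d\gamma_1 = a$ is attained by putting mass $1$ exactly where $\phi$ is smallest, i.e. on a symmetric interval $B=B(0,r)$ with $\gamma_1(B)=a$; this is just the bathtub principle. So $B$ in the statement is genuinely the minimizer, and $\int_\R 1_B \phi\,d\gamma_1 = \inf(\cdots)$. The quantity I want to control is the ``mass defect'' $\int_\R 1_{B'}\phi\,d\gamma_1 - \int_\R 1_B\phi\,d\gamma_1 < \epsilon a$.

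First I would write the difference of the two integrals in a way that exposes a gain. Let $h := 1_{B'} - 1_B$, so $\int_\R h\,d\gamma_1 = 0$ and $|h|\le 1$. Decompose $h = h^+ - h^-$ where $h^+ = (1_{B'}-1_B)^+$ is supported on $B'\setminus B$ (the part of $B'$ outside the optimal interval) and $h^-$ supported on $B\setminus B'$. Equal total $\gamma_1$-mass: $\int h^+\,d\gamma_1 = \int h^-\,d\gamma_1 =: \delta$. Then
\begin{equation*}
\int_\R 1_{B'}\phi\,d\gamma_1 - \int_\R 1_B\phi\,d\gamma_1 = \int \phi h^+\,d\gamma_1 - \int \phi h^-\,d\gamma_1.
\end{equation*}
On $B\setminus B'$ we have $|x|<r$ so $\phi(x) < \phi(r)$; on $B'\setminus B$ we have $|x|\ge r$ so $\phi(x)\ge\phi(r)$. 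Hence the right side is at least $\int (\phi-\phi(r))h^+\,d\gamma_1 \ge 0$, and moreover it is bounded below by the amount of $h^+$-mass that sits strictly away from the sphere $|x|=r$, weighted by how far $\phi$ has increased there. Concretely, splitting $B'\setminus B$ into the part within a window $\{r\le |x|\le r+s\}$ and the part beyond, one gets a bound of the form: mass defect $\ge (\phi(r+s)-\phi(r))\cdot(\text{mass of } h^+ \text{ beyond the window})$, and separately one uses $\int |h|\,d\gamma_1 = 2\delta$ together with the fact that, because $\int h\,d\gamma_1=0$, any large symmetric-difference measure forces a large amount of $h^+$-mass far from $\partial B$ unless $\delta$ itself is small. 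The cleanest route: pick $s$ optimally (roughly $s \asymp \epsilon^{1/2}$), use Lemma \ref{lemma10} (applied on the window $[r, r+s]$, or rather its reflection/complement form) to lower-bound the average of $\phi - \phi(r)$ on the relevant region, and conclude $\delta \lesssim \epsilon^{1/2}$. Since $\int_\R|1_{B'}-1_B|\,d\gamma_1 = 2\delta$, this gives \eqref{three2.0} with room to spare for the constant $10$.

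The place to be careful — and what I expect to be the main obstacle — is the bookkeeping that converts the one-sided inequality ``mass defect $\ge 0$'' into a quantitative bound ``$\delta \lesssim \epsilon^{1/2}$'', because a naive estimate only gives $\delta\lesssim \epsilon$ when the excess mass of $B'$ is concentrated right at $|x|=r$ (where $\phi-\phi(r)\approx 0$), and in that regime $\phi(x)-\phi(r) \approx \phi'(r)(|x|-r)$ is only linear in the distance. So one has to balance two competing effects: mass of $h^+$ within distance $s$ of the sphere contributes little per unit mass (only $\sim \phi'(r)\,s$), while mass beyond distance $s$ contributes a fixed amount $\sim\phi'(r)\,s$ per unit mass but, by the displacement/zero-average constraint together with total symmetric difference $2\delta$, must be at least of order $\delta - (\text{window mass})$. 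Choosing $s$ so these balance — window mass is at most the Gaussian measure of an $s$-neighborhood of $\partial B$, which is $O(s)$ — yields an inequality like $\epsilon a \gtrsim \phi'(r)\, s\,(\delta - O(s))$, and taking $s \sim \delta$ gives $\epsilon a \gtrsim \phi'(r)\,\delta^2$, hence $\delta \lesssim (\epsilon/\phi'(r))^{1/2}$. One then has to check the constant works out to beat $10\epsilon^{1/2}$ across the range $0<a<1$ (so $r$ ranges over $(0,\infty)$), using that $\phi'(r)=r/\sqrt 2$ and, near $a\to 0$ or $a\to 1$, handling the degenerate geometry by instead comparing $B'$ with the complement $B^c$ via \eqref{two6}-type Cauchy–Schwarz bounds as already done elsewhere in the paper — or simply noting Lemma \ref{lemma10} is scale-robust enough that the constant $10$ was chosen generously. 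I would carry out the window optimization explicitly only to the point of extracting the $\epsilon^{1/2}$ and a crude numerical constant, then invoke Lemma \ref{lemma10} for the averaging step.
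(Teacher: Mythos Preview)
Your approach is correct and shares the paper's core idea: the defect $\int_{B'}\phi\,d\gamma_1 - \int_B\phi\,d\gamma_1$ is bounded below by something quadratic in $\delta = \gamma_1(B'\setminus B) = \gamma_1(B\setminus B')$, because $\phi$ varies linearly near $\partial B$. The executions differ. The paper rearranges \emph{both} pieces to intervals abutting the boundary: it pushes $B\setminus B'$ outward to $[r_0,r]\cup[-r,-r_0]$ (only decreasing $\int(1-x^2)$) and $B'\setminus B$ inward to $[r,r_1]\cup[-r_1,-r]$ (only increasing $\int(1-x^2)$), where $\gamma_1([r_0,r])=\gamma_1([r,r_1])=\delta/2$, and then applies Lemma~\ref{lemma10} to each interval to compare the two averages directly, obtaining a gap $\geq f(r-(r-r_0)/3)-f(r)\gtrsim r(r-r_0)$; since $r-r_0\geq\sqrt{2\pi}\,\gamma_1([r_0,r])=\sqrt{2\pi}\,\delta/2$, the quadratic bound $\gtrsim r\delta^{2}$ drops out with no optimization step. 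Your window argument instead discards the (nonnegative) $h^-$ contribution and layer-cakes the $h^+$ side, which also works but introduces the extra parameter $s$. The paper's route is shorter and uses Lemma~\ref{lemma10} in an essential way; your route goes through with the crude pointwise bound $\phi(x)-\phi(r)\geq\phi(r+s)-\phi(r)$ for $|x|\geq r+s$ and does not actually need Lemma~\ref{lemma10}.

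One bookkeeping slip: from $\epsilon a\gtrsim\phi'(r)\,\delta^2$ you should conclude $\delta\lesssim(\epsilon a/\phi'(r))^{1/2}$, not $(\epsilon/\phi'(r))^{1/2}$. Retaining the $a$ is exactly what makes the constant uniform in $a$: since $a=\gamma_1(B(0,r))\leq 2r/\sqrt{2\pi}$ one has $a/\phi'(r)\lesssim a/r\leq\sqrt{2/\pi}$, so $\delta\lesssim\epsilon^{1/2}$ with an absolute constant and no separate treatment of $a\to 0$ or $a\to1$ is required. The paper's proof uses the same inequality $r\geq a\sqrt{\pi/2}$ at its final step.
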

%change all A to B'
\begin{proof}
We use a rearrangement argument.  Note that
$$\gamma_{1}(B'\setminus B)=\gamma_{1}(B')-\gamma_{1}(B'\cap B)=\gamma_{1}(B)-\gamma_{1}(B'\cap B)=\gamma_{1}(B\setminus B').$$
%So shift mass from $A\setminus B$ to $B\setminus A$.  Note that $(A\setminus B)\cap(B\setminus A)=\emptyset$.  The gain is greater than or equal to the gain from adding the smallest values of $h_{2}$.
Since $B=(B'\cap B)\cup(B\setminus B')$ and $B'=(B'\cap B)\cup(B'\setminus B)$,
\begin{equation}\label{three3}
\int_{B}(1-x^{2})d\gamma_{1}(x)-\int_{B'}(1-x^{2})d\gamma_{1}(x)
=\int_{B\setminus B'}(1-x^{2})d\gamma_{1}(x)-\int_{B'\setminus B}(1-x^{2})d\gamma_{1}(x).
\end{equation}

Let $r_{0}\in[0,r)$ such that $\gamma_{1}([r_{0},r])=(1/2)\gamma_{1}(B\setminus B')$, and let $r_{1}\in(r,\infty]$ such that $\gamma_{1}([r,r_{1}])=(1/2)\gamma_{1}(B'\setminus B)$.  Then, since $(B\setminus B')\subset B=B(0,r)$,
$$\int_{B\setminus B'}(1-x^{2})d\gamma_{1}(x)
\geq2\int_{r_{0}}^{r}(1-x^{2})d\gamma_{1}(x).
$$
Also, since $(B'\setminus B)\subset B^{c}=B(0,r)^{c}$,
$$
\int_{B'\setminus B}(1-x^{2})d\gamma_{1}(x)
\leq2\int_{r}^{r_{1}}(1-x^{2})d\gamma_{1}(x).
$$
Let $f(x)\colonequals1-x^{2}$.  From \eqref{three3} and Lemma \ref{lemma10},
\begin{equation}\label{three4}
\begin{aligned}
&\int_{B}(1-x^{2})d\gamma_{1}(x)-\int_{B'}(1-x^{2})d\gamma_{1}(x)
\geq2\int_{r_{0}}^{r}(1-x^{2})d\gamma_{1}(x)-2\int_{r}^{r_{1}}(1-x^{2})d\gamma_{1}(x)\\
&\quad=\gamma_{1}(B\setminus B')\left(\frac{1}{\gamma_{1}([r_{0},r])}\int_{r_{0}}^{r}(1-x^{2})d\gamma_{1}(x)
-\frac{1}{\gamma_{1}([r,r_{1}])}\int_{r}^{r_{1}}(1-x^{2})d\gamma_{1}(x)\right)\\
&\quad\geq\gamma_{1}(B\setminus B')(f(r-(r-r_{0})/3)-f(r)\\
&\quad\geq(4r/3)(r-r_{0})(1/3)\gamma_{1}(B\setminus B')
\geq(4/9)\sqrt{2\pi}r\gamma_{1}(B\setminus B')\gamma_{1}([r_{0},r])\\
&\quad\geq(2/9)\sqrt{2\pi}r\gamma_{1}(B\setminus B')^{2}
\geq(2\pi/9)\gamma_{1}(B\setminus B')^{2}a.
\end{aligned}
\end{equation}
%derivative at left endpoint, multiplied by increment
%also, (r-r_{0})\geq\int_{r_{0}}^{r}dx\geq\int_{r_{0}}^{r}e^{-x^{2}/2}dx=\sqrt{2\pi}\gamma_{1}[r_{0},r]=(\sqrt{2\pi}/2)\gamma_{1}(B\setminus A)
%
%Note that, by Lemma \ref{lemma9}, we may assume $r>1/6$.
Finally, by \eqref{three4} we have
\begin{equation}\label{three6}
\int_{B}(x^{2}-1)d\gamma_{1}(x)-\int_{B'}(x^{2}-1)d\gamma_{1}(x)
\leq-a(1/6)(\int\abs{1_{B'}(x)-1_{B}(x)}d\gamma_{1}(x))^{2}.
\end{equation}
%note that $\delta=2\gamma(B\setminus A)$
So, combining \eqref{three6}, \eqref{three0.0} and \eqref{three1.0}, $\int_{\R}\abs{1_{B'}(x)-1_{B}(x)}d\gamma_{1}(x)<10\epsilon^{1/2}$.
\end{proof}

%perturbative statement
\begin{lemma}\label{lemma8}
Let $n=1$.  Let $B=B(0,r')^{c}$ such that $\gamma_{1}(B)=a$ and
\begin{equation}\label{three0}
\int_{\R}1_{B}(x)\sqrt{2!}h_{2}(x)d\gamma_{1}(x)
=\sup_{\substack{\{f\colon\R\to[0,1],\\ \int_{\R} f(x)d\gamma_{1}(x)=a\}}}
\int_{\R} f(x)\sqrt{2!}h_{2}(x)d\gamma_{1}(x).
\end{equation}
Let $B'\subset\R^{1}$.  Assume that there is an $\epsilon>0$ such that $B'$ satisfies
\begin{equation}\label{three1}
\int_{\R}1_{B'}(x)\sqrt{2!}h_{2}(x)d\gamma_{1}(x)
>\sup_{\substack{\{f\colon\R\to[0,1],\\ \int_{\R} f(x)d\gamma_{1}(x)=a\}}}
\int_{\R} f(x)\sqrt{2!}h_{2}(x)d\gamma_{1}(x)
-\epsilon(1-a).
\end{equation}
%Let $\widetilde{a}$ such that $\int_{0}^{1}d\gamma_{1}=\widetilde{a}$.  Then there exists $0<\epsilon<\epsilon_{0}(a)$ such that
Then
\begin{equation}\label{three2}
\int_{\R}\abs{1_{B'}(x)-1_{B}(x)}d\gamma_{1}(x)<10\epsilon^{1/2}.
\end{equation}
\end{lemma}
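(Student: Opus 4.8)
The plan is to run the same rearrangement argument used for Lemma~\ref{lemma8.0}, with $B(0,r)$ replaced by $B(0,r')^{c}$, the inner and outer Gaussian shells interchanged, and $a$ replaced by $1-a$. First I would record that $\sqrt{2!}\,h_{2}(x)=(x^{2}-1)/\sqrt{2}$, so \eqref{three0} says $B=B(0,r')^{c}$ maximizes $f\mapsto\int_{\R}f(x)(x^{2}-1)\,d\gamma_{1}(x)$ over $\{f\colon\R\to[0,1],\ \int_{\R}f\,d\gamma_{1}=a\}$; this is consistent with the bathtub principle, since the superlevel sets $\{x\colon x^{2}-1>t\}$ are exactly the sets $B(0,r')^{c}$. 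As in Lemma~\ref{lemma8.0} we may assume $\gamma_{1}(B')=a$, so that $\gamma_{1}(B\setminus B')=\gamma_{1}(B'\setminus B)=:2m$ and
\[
\int_{B}(x^{2}-1)\,d\gamma_{1}(x)-\int_{B'}(x^{2}-1)\,d\gamma_{1}(x)=\int_{B\setminus B'}(x^{2}-1)\,d\gamma_{1}(x)-\int_{B'\setminus B}(x^{2}-1)\,d\gamma_{1}(x).
\]

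Next I would bound the two terms on the right by comparison with Gaussian shells. Since $B\setminus B'\subset B=\{\abs{x}\ge r'\}$ and $x^{2}-1$ is increasing in $\abs{x}$, the quantity $\int_{B\setminus B'}(x^{2}-1)\,d\gamma_{1}$ is minimized, among subsets of $\{\abs{x}\ge r'\}$ of Gaussian measure $2m$, by the innermost shell $[-r_{1},-r']\cup[r',r_{1}]$ with $\gamma_{1}([r',r_{1}])=m$, hence it is $\ge 2\int_{r'}^{r_{1}}(x^{2}-1)\,d\gamma_{1}(x)$. Symmetrically, since $B'\setminus B\subset B^{c}=\{\abs{x}\le r'\}$, the quantity $\int_{B'\setminus B}(x^{2}-1)\,d\gamma_{1}$ is maximized by the outermost shell $[-r',-r_{0}]\cup[r_{0},r']$ with $\gamma_{1}([r_{0},r'])=m$, so it is $\le 2\int_{r_{0}}^{r'}(x^{2}-1)\,d\gamma_{1}(x)$. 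Subtracting, writing each one-sided integral as $m$ times an average, and applying Lemma~\ref{lemma10} to $g(x)=1-x^{2}$ on $[r_{0},r']$ (together with the trivial bound $x^{2}-1\ge (r')^{2}-1$ on $[r',r_{1}]$), I obtain
\[
\int_{B}(x^{2}-1)\,d\gamma_{1}(x)-\int_{B'}(x^{2}-1)\,d\gamma_{1}(x)\ \ge\ 2m\Bigl((r')^{2}-\bigl(r'-\tfrac{r'-r_{0}}{3}\bigr)^{2}\Bigr)\ \ge\ \tfrac{10}{9}\,m\,r'(r'-r_{0}).
\]

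Then I would close the estimate exactly as in Lemma~\ref{lemma8.0}: from $m=\gamma_{1}([r_{0},r'])\le (r'-r_{0})/\sqrt{2\pi}$ one gets $r'-r_{0}\ge\sqrt{2\pi}\,m$, and from $1-a=\gamma_{1}(B(0,r'))\le 2r'/\sqrt{2\pi}$ one gets $\sqrt{2\pi}\,r'\ge\pi(1-a)$; combined with the previous display this gives $\int_{B}(x^{2}-1)\,d\gamma_{1}-\int_{B'}(x^{2}-1)\,d\gamma_{1}\ge \tfrac{10\pi}{9}(1-a)m^{2}$. Since $\int_{\R}\abs{1_{B'}(x)-1_{B}(x)}\,d\gamma_{1}(x)=\gamma_{1}(B\setminus B')+\gamma_{1}(B'\setminus B)=4m$, this reads
\[
\int_{B}(x^{2}-1)\,d\gamma_{1}(x)-\int_{B'}(x^{2}-1)\,d\gamma_{1}(x)\ \ge\ \tfrac{1}{6}(1-a)\Bigl(\int_{\R}\abs{1_{B'}(x)-1_{B}(x)}\,d\gamma_{1}(x)\Bigr)^{2}.
\]
On the other hand, \eqref{three0}, \eqref{three1} and $\sqrt{2!}\,h_{2}=(x^{2}-1)/\sqrt{2}$ give $\int_{B}(x^{2}-1)\,d\gamma_{1}-\int_{B'}(x^{2}-1)\,d\gamma_{1}<\sqrt{2}\,\epsilon(1-a)$. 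Comparing the two bounds and cancelling $1-a$ yields $\bigl(\int_{\R}\abs{1_{B'}-1_{B}}\,d\gamma_{1}\bigr)^{2}<6\sqrt{2}\,\epsilon<100\,\epsilon$, which is \eqref{three2}.

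The argument is essentially forced, so I do not anticipate a genuine obstacle. The one place that needs care is the shell (bathtub) comparison --- verifying that, among subsets of $\{\abs{x}\ge r'\}$ (resp.\ of $\{\abs{x}\le r'\}$) of prescribed Gaussian mass, the functional $\int(x^{2}-1)\,d\gamma_{1}$ is extremized by the shell adjacent to $\abs{x}=r'$ --- together with keeping straight the monotonicity of $x^{2}-1$ in $\abs{x}$ and the bookkeeping of $a$ versus $1-a$. Everything else is a line-by-line transcription of the proof of Lemma~\ref{lemma8.0}.
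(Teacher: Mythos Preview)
Your argument is correct, but it is considerably longer than necessary. The paper's proof is a single line: apply Lemma~\ref{lemma8.0} to $B^{c}$. The point is that the map $f\mapsto 1-f$ is a bijection between $\{f\colon\R\to[0,1],\ \int f\,d\gamma_{1}=a\}$ and $\{g\colon\R\to[0,1],\ \int g\,d\gamma_{1}=1-a\}$, and since $\int_{\R}h_{2}\,d\gamma_{1}=0$ it sends $\int f\,\sqrt{2!}\,h_{2}\,d\gamma_{1}$ to $-\int g\,\sqrt{2!}\,h_{2}\,d\gamma_{1}$. Thus the sup in \eqref{three0} equals minus the inf in \eqref{three0.0} (with $a$ replaced by $1-a$), the near-sup hypothesis \eqref{three1} for $B'$ becomes exactly the near-inf hypothesis \eqref{three1.0} for $(B')^{c}$ with the parameter $1-a$, and $\abs{1_{B'}-1_{B}}=\abs{1_{(B')^{c}}-1_{B^{c}}}$. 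Lemma~\ref{lemma8.0} then gives \eqref{three2} immediately.

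What you have written is, in effect, the unpacking of this complementation: your swapped shells, your replacement of $a$ by $1-a$, and your use of $x^{2}-1$ in place of $1-x^{2}$ are precisely what happens when one traces Lemma~\ref{lemma8.0} through $f\mapsto 1-f$. So nothing is wrong, and your numerical bookkeeping checks out, but you are re-deriving a lemma you already have rather than invoking it.
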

\begin{proof}
Apply Lemma \ref{lemma8.0} to $B^{c}$.
\end{proof}

\section{Existence Lemma}\label{secexist}

We prove the existence of two sets which minimize Gaussian correlation.  The argument below is almost identical to Lemma \ref{lemma8}.

%R(x,y)-R(x_{n},y_{n})=R(x,y-y_{n})-R(x_{n}-x,y_{n})
\begin{lemma}\label{lemma14}
Let $\rho\in(0,1)$, $0<a,b<1$.  Then there exist $A,B\subset\R^{n}$ with $A=-A$, $B=-B$ such that
\begin{equation}\label{zero0}
\int_{\R^{n}}1_{A}(x)T_{\rho}1_{B}(x)d\gamma_{n}(x)
=\inf_{\substack{\{f,g\colon\R^{n}\to[0,1],\int_{\R^{n}} f(x)d\gamma_{n}(x)=a\\\int_{\R^{n}} g(x)d\gamma_{n}(x)=b,f(x)=f(-x)\,\forall\,x\in\R^{n}\}}}
\int_{\R^{n}} f(x)T_{\rho}g(x)d\gamma_{n}(x).
\end{equation}
If $\rho\in(-1,0)$, the same result holds, with the additional restriction $g(x)=g(-x)$ $\forall$ $x\in\R^{n}$ in \eqref{zero0}.
\end{lemma}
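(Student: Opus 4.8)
The plan is to follow the variational scheme of Lemma \ref{lemma0}, with one extra step forced by the fact that the functional $(f,g)\mapsto\int_{\R^{n}}f(x)T_{\rho}g(x)\,d\gamma_{n}(x)$ is bilinear rather than convex. I will argue for $\rho\in(0,1)$; the case $\rho\in(-1,0)$ is identical except that the symmetry constraint is imposed on the second function throughout, which only shortens the argument. First I would solve the relaxed problem, replacing $1_{A},1_{B}$ by measurable $f,g\colon\R^{n}\to[0,1]$. Set
$$C_{a}^{s}\colonequals\Big\{f\colon\R^{n}\to[0,1]\ :\ f(x)=f(-x)\ \forall x\in\R^{n},\ \int_{\R^{n}}f\,d\gamma_{n}=a\Big\},$$
$$C_{b}\colonequals\Big\{g\colon\R^{n}\to[0,1]\ :\ \int_{\R^{n}}g\,d\gamma_{n}=b\Big\},\qquad C_{b}^{s}\colonequals C_{b}\cap\big\{g\colon g(x)=g(-x)\ \forall x\big\}.$$
Exactly as in the proof of Lemma \ref{lemma0}, each of these is a convex, norm-bounded, norm-closed --- hence weakly closed and, by reflexivity of $L_{2}(\gamma_{n})$ (Banach--Alaoglu), weakly compact --- subset of $L_{2}(\gamma_{n})$.

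The next ingredient is that $T_{\rho}$ is a \emph{compact} self-adjoint operator on $L_{2}(\gamma_{n})$: by \eqref{six1.8} its eigenvalues are $\rho^{\vnorm{\ell}_{1}}$, $\ell\in\N^{n}$, each value $\rho^{k}$ occurring with finite multiplicity and $\rho^{k}\to0$. Hence $g\mapsto T_{\rho}g$ carries weakly convergent sequences to norm-convergent ones. So if $(f_{k},g_{k})\in C_{a}^{s}\times C_{b}$ is a minimizing sequence for $\langle f,T_{\rho}g\rangle_{L_{2}(\gamma_{n})}$, a subsequence converges weakly to some $(f_{0},g_{0})\in C_{a}^{s}\times C_{b}$, and then $\langle f_{k},T_{\rho}g_{k}\rangle\to\langle f_{0},T_{\rho}g_{0}\rangle$ because $T_{\rho}g_{k}\to T_{\rho}g_{0}$ in norm while $\vnorm{f_{k}}_{L_{2}(\gamma_{n})}$ stays bounded. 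Thus the infimum of the relaxed problem is attained at $(f_{0},g_{0})$, and it equals the infimum in \eqref{zero0}.

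Then I would promote $(f_{0},g_{0})$ to a pair of indicators by optimizing one variable at a time. Freezing $g=g_{0}$, the map $f\mapsto-\langle f,T_{\rho}g_{0}\rangle$ is linear, hence convex, and weakly continuous on $C_{a}^{s}$; as in Lemma \ref{lemma0} it attains its maximum on $C_{a}^{s}$ at an indicator $1_{A}$ with $A=-A$ and $\gamma_{n}(A)=a$, so $\langle 1_{A},T_{\rho}g_{0}\rangle\leq\langle f_{0},T_{\rho}g_{0}\rangle$ equals the infimum in \eqref{zero0}. Now freeze $f=1_{A}$. Since $T_{\rho}$ commutes with the reflection $x\mapsto-x$ (immediate from \eqref{oudef}, as in \eqref{one3.1}) and $A=-A$, a change of variables gives $\langle 1_{A},T_{\rho}g\rangle=\langle 1_{A},T_{\rho}g_{s}\rangle$ for every $g$, where $g_{s}(x)\colonequals\tfrac{1}{2}(g(x)+g(-x))$; since $g\in C_{b}\Rightarrow g_{s}\in C_{b}^{s}$, we get $\inf_{g\in C_{b}}\langle 1_{A},T_{\rho}g\rangle=\inf_{g\in C_{b}^{s}}\langle 1_{A},T_{\rho}g\rangle$. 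The latter is a linear, weakly continuous functional on the convex weakly compact set $C_{b}^{s}$, so its infimum is attained at an indicator $1_{B}$ with $B=-B$, $\gamma_{n}(B)=b$, and equals $\langle 1_{A},T_{\rho}g_{0}\rangle$, the infimum in \eqref{zero0}. Hence $(1_{A},1_{B})$ realizes \eqref{zero0} with $A=-A$ and $B=-B$. For $\rho\in(-1,0)$ the infimum in \eqref{zero0} is already over symmetric $g$, so one minimizes over $C_{b}^{s}$ from the start and the reflection-averaging step is omitted.

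The only genuinely nonroutine points --- the ones that distinguish this from Lemma \ref{lemma0} --- are: (i) the compactness of $T_{\rho}$, which is what upgrades weak convergence of $g_{k}$ to norm convergence of $T_{\rho}g_{k}$ and so yields joint (sequential) weak continuity of the bilinear form, without which weak semicontinuity would not be automatic; and (ii) the observation that, because $\int fT_{\rho}g\,d\gamma_{n}$ is not jointly convex, one cannot apply the Bauer maximum principle in a single stroke, and must instead alternate the extreme-point reduction between the two arguments. Everything else --- that every $a,b\in(0,1)$ is the Gaussian measure of some symmetric set (e.g.\ a centered ball), and that the extreme points of $C_{a}^{s}$, $C_{b}^{s}$ are indicators of symmetric sets --- is handled just as in Lemma \ref{lemma0}.
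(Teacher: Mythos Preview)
Your argument is correct and follows the same overall scheme as the paper: weak compactness of the constraint sets in $L_{2}(\gamma_{n})$, attainment of the relaxed infimum, symmetrization of $g$ via averaging with its reflection, and a one-variable-at-a-time extreme-point reduction to indicators of symmetric sets. The one substantive difference is your treatment of joint weak continuity of $(f,g)\mapsto\langle f,T_{\rho}g\rangle$. The paper asserts this follows simply because the form is ``strongly bounded bilinear,'' but that inference is false in general---the inner product on an infinite-dimensional Hilbert space is bounded bilinear yet not jointly weakly continuous. Your route, observing that $T_{\rho}$ is compact (its eigenvalues $\rho^{\vnorm{\ell}_{1}}$ have finite multiplicity and tend to zero) so that $g_{k}\rightharpoonup g$ forces $T_{\rho}g_{k}\to T_{\rho}g$ in norm, supplies the correct justification and in fact repairs a gap in the paper's stated reasoning. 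The remaining difference---performing the symmetrization of $g$ after extracting $1_{A}$ rather than before---is cosmetic, since in both versions it rests on the vanishing of the odd Hermite coefficients of the symmetric factor.
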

\begin{proof}
Define the set $C\colonequals\{f,g\colon\R^{n}\to[0,1],\int_{\R^{n}} f(x)d\gamma_{n}(x)=a,
\int_{\R^{n}} g(x)d\gamma_{n}(x)=b,f(x)=f(-x)\,\forall x\in\R^{n}\}$.  Then $C$ is a norm closed, convex and norm bounded subset of the Hilbert space $L_{2}(\gamma_{n})\oplus L_{2}(\gamma_{n})$.  Therefore, $C\subset L_{2}(\gamma_{n})\oplus L_{2}(\gamma_{n})$ is weakly closed.  Also, $C$ is weakly compact by the Banach-Alaoglu Theorem.  Define $T\colon C\to\R$ by
\begin{equation}\label{zero8}
T(f,g)\colonequals\int_{\R^{n}} f(x)T_{\rho}g(x)d\gamma_{n}(x).
\end{equation}
%(a+b)/2\leq(ab)^{1/2}, (ab)\leq(a^{2}+b^{2})^{1/2}
From the Cauchy-Schwarz inequality and \eqref{six1.8}, $\abs{T(f,g)}\leq\vnorm{f}_{L_{2}(\gamma_{n})}\vnorm{g}_{L_{2}(\gamma_{n})}$.  That is, $T$ is a strongly bounded bilinear function, so $T$ is weakly continuous.  So, $T$ is weakly continuous on the weakly compact set $C\subset L_{2}(d\gamma_{n})\oplus L_{2}(d\gamma_{n})$.  And there exist $f,g\in C$ such that $T(f,g)=\min_{(f',g')\in C}T(f',g')$.

From \eqref{six1.8}, we have the following absolutely convergent sum
\begin{equation}\label{zero2}
\begin{aligned}
&T(f,g)
=\sum_{\substack{\ell\in\N^{n}\colon\\\vnorm{\ell}_{1}\,\mathrm{even}}}\rho^{\vnorm{\ell}_{1}}
\int_{\R^{n}} f(x)\sqrt{\ell!}h_{\ell}(x)d\gamma_{n}(x)\int_{\R^{n}} g(y)\sqrt{\ell!}h_{\ell}(y)d\gamma_{n}(y)\\
&\qquad+\sum_{\substack{\ell\in\N^{n}\colon\\\vnorm{\ell}_{1}\,\mathrm{odd}}}\rho^{\vnorm{\ell}_{1}}
\int_{\R^{n}} f(x)\sqrt{\ell!}h_{\ell}(x)d\gamma_{n}(x)\int_{\R^{n}} g(y)\sqrt{\ell!}h_{\ell}(y)d\gamma_{n}(y).
\end{aligned}
\end{equation}
Since $f(x)=f(-x)$ for all $x\in\R^{n}$, the sum over odd terms in \eqref{zero2} is zero by \eqref{one3.1}.

Now, the function $g_{s}(x)\colonequals (g(x)+g(-x))/2$ satisfies
$$
\int_{\R^{n}} g_{s}(x)\sqrt{\ell!}h_{\ell}(x)d\gamma_{n}(x)
=
\begin{cases}
\int_{\R^{n}} g(x)\sqrt{\ell!}h_{\ell}(x)d\gamma_{n}(x) & ,\vnorm{\ell}_{1}\,\mbox{even}\\
0 & ,\vnorm{\ell}_{1}\,\mbox{odd}.
\end{cases}
$$

So, $T(f,g_{s})\leq T(f,g)$.  (If $\rho<0$, then we have already assumed that $g(x)=g(-x)$ for all $x\in\R^{n}$, so that $g_{s}=g$.)  Let $C^{s}\colonequals\{f,g\colon\R^{n}\to[0,1],f(x)=f(-x),g(x)=g(-x),\forall x\in\R^{n},\int_{\R^{n}} f(x)d\gamma_{n}(x)=a,\int_{\R^{n}} g(x)d\gamma_{n}(x)=b\}$.  We have just shown that
\begin{equation}\label{zero3}
\min_{(f',g')\in C}T(f',g')=\min_{(f',g')\in C^{s}}T(f',g').
\end{equation}

We therefore try to minimize $T$ on $C^{s}$.  But $T$ is linear in each of its arguments, and $T$ is a weakly continuous function on the weakly compact set $C^{s}\subset L_{2}(\gamma_{n})\oplus L_{2}(\gamma_{n})$.  Therefore, there exist $A,B\subset\R^{n}$ such that $1_{A},1_{B}\in C^{s}$ satisfy $T(1_{A},1_{B})=\min_{(f',g')\in C^{s}}T(f',g')$.  Combining this fact with \eqref{zero3}, $T(1_{A},1_{B})=\min_{(f',g')\in C^{s}}T(f',g')=\min_{(f',g')\in C}T(f',g')$, and $A=-A,B=-B$ since $(1_{A},1_{B})\in C^{s}$.
\end{proof}

\section{Perturbation Lemma}

Similar to Section \ref{secpert}, we show: if two sets $A,B\subset\R$ nearly minimize the product of their second-order Hermite-Fourier coefficients, then these sets are close to a ball and the complement of a ball, respectively.

\begin{lemma}\label{lemma18}
Let $n=1$, $0<a,b<1$.  Let $(A,B)=(B(0,r_{a}),B(0,r_{b}')^{c})$ or let $(A,B)=(B(0,r_{a}')^{c},B(0,r_{b}))$ such that $\gamma_{1}(A)=a,\gamma_{1}(B)=b$ and such that
\begin{equation}\label{four0}
\begin{aligned}
&\int_{\R}1_{A}h_{2}\sqrt{2!}(x)d\gamma_{1}(x)\int_{\R}1_{B}(y)h_{2}(y)\sqrt{2!}d\gamma_{1}(y)\\
&\qquad=\inf_{\substack{\{f,g\colon\R\to[0,1],\\ \int_{\R} f(x)d\gamma_{1}(x)=a,\int_{\R} g(y)d\gamma_{1}(y)=b\}}}
\left(\int_{\R} f(x)\sqrt{2!}h_{2}(x)d\gamma_{1}(x)\right)\left(\int_{\R} g(y)\sqrt{2!}h_{2}(y)d\gamma_{1}(y)\right).
\end{aligned}
\end{equation}
Let $A',B'\subset\R^{1}$ with $\gamma_{1}(A')=a$ and $\gamma_{1}(B')=b$.  Assume that there is an $\epsilon>0$ such that
\begin{equation}\label{four1}
\begin{aligned}
&\int_{\R}1_{A'}(x)h_{2}(x)\sqrt{2!}d\gamma_{1}(x)\int_{\R}1_{B'}(y)h_{2}(y)\sqrt{2!}d\gamma_{1}(y)\\
&\qquad<\inf_{\substack{\{f,g\colon\R\to[0,1],\\ \int_{\R} f(x)d\gamma_{1}(x)=a,\int_{\R} g(y)d\gamma_{1}(y)=b\}}}
\left(\int_{\R} f(x)\sqrt{2!}h_{2}(x)d\gamma_{1}(x)\right)\left(\int_{\R} g(y)\sqrt{2!}h_{2}(y)d\gamma_{1}(y)\right)\\
&\qquad\qquad\qquad
+\epsilon\min(\sqrt{a},\sqrt{1-a})\min(\sqrt{b},\sqrt{1-b}).
\end{aligned}
\end{equation}
%Let $\widetilde{a}$ such that $\int_{0}^{1}d\gamma_{1}=\widetilde{a}$.  Then there exists $0<\epsilon<\epsilon_{0}(a)$ such that
Then
\begin{equation}\label{four2.3}
\min\left(\int_{\R}\abs{1_{A}(x)-1_{A'}(x)}d\gamma_{1}(x),\int_{\R}\abs{1_{A^{c}(x)}-1_{A'}(x)}d\gamma_{1}(x)\right)<10\epsilon^{1/2}\min(a,1-a)^{-1/4},
\end{equation}
\begin{equation}\label{four2.4}
\min\left(\int_{\R}\abs{1_{B}(y)-1_{B'}(y)}d\gamma_{1}(y),\int\abs{1_{B^{c}}(y)-1_{B'}(y)}d\gamma_{1}(y)\right)<10\epsilon^{1/2}\min(b,1-b)^{-1/4}.
\end{equation}
\end{lemma}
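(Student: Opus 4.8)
The plan is to reduce Lemma \ref{lemma18} to the single-function perturbation results of Section \ref{secpert}, namely Lemmas \ref{lemma8.0} and \ref{lemma8}. The key observation is that the product functional in \eqref{four0} factors: writing $P(f)\colonequals\int_{\R}f(x)\sqrt{2!}h_{2}(x)d\gamma_{1}(x)$, we are minimizing $P(f)P(g)$ over the product constraint set. Since $f,g$ vary independently, this infimum is a product (or more precisely a minimum of products of the extreme values) of one-variable optimization problems: $P$ restricted to $\{f\colon\R\to[0,1],\ \int f\,d\gamma_1=a\}$ attains its minimum at a centered interval $B(0,r_a)$ (value $-r_a e^{-r_a^2/2}/\sqrt{\pi}<0$) and its maximum at a complement of a centered interval $B(0,r_a')^c$ (value $r_a' e^{-(r_a')^2/2}/\sqrt{\pi}>0$); the analogous statement holds for $g$ with parameter $b$. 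Because the optimal value of $P$ on the $a$-constrained set has absolute value exactly $\min(\sqrt a,\sqrt{1-a})$ times a bounded factor — more carefully, one checks $|P|\le\min(\sqrt a,\sqrt{1-a})$ by \eqref{two6}, and the optimizers realize a value comparable to this — the minimum of the product $P(f)P(g)$ is achieved precisely at one of the two listed pairs $(B(0,r_a),B(0,r_b')^c)$ or $(B(0,r_a')^c,B(0,r_b))$, consistent with the hypothesis.

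First I would record the factored structure: from \eqref{four0} the optimal pair satisfies $P(1_A)P(1_B)=m_a m_b$ where $m_a\colonequals\min_{f}P(f)\cdot(\text{sign choice})$, i.e. the most negative attainable product, and I would quantify the ``spectral gap'' — how far $P(1_{A'})$ and $P(1_{B'})$ must be from their respective optimal values given the joint hypothesis \eqref{four1}. The elementary inequality $|xy-x_0y_0|\le |x-x_0||y|+|x_0||y-y_0|$, together with the uniform bounds $|P(1_{A'})|,|P(1_B)|\le\min(\sqrt a,\sqrt{1-a})$ (resp. with $b$), lets me convert the $\epsilon\min(\sqrt a,\sqrt{1-a})\min(\sqrt b,\sqrt{1-b})$ slack in \eqref{four1} into a bound of the form: either $P(1_{A'})$ is within $O(\epsilon)\min(\sqrt a,\sqrt{1-a})$ of its optimal value, or $P(1_{B'})$ is within $O(\epsilon)\min(\sqrt b,\sqrt{1-b})$ of its optimal value — and in fact, since the product must be nearly optimal, \emph{both} near-optimalities hold (any failure in one factor is not compensable given the sign constraints and uniform bounds). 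Here the sign structure matters: the product is minimized (most negative) only when one factor is near its minimum and the other near its maximum, which pins down \emph{which} of $A'$ or $(A')^c$ (resp. $B'$) is close to the centered interval.

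Then I would feed these near-optimality statements into Lemmas \ref{lemma8.0} and \ref{lemma8}. Concretely: if $A=B(0,r_a)$ then $P(1_{A'})<P(1_A)+\epsilon'\sqrt a$ for $\epsilon'$ a constant multiple of $\epsilon$ (after absorbing the $\min(\sqrt a,\sqrt{1-a})/\sqrt a\ge\min(a,1-a)^{-1/4}\cdot(\dots)$ comparison — this is where the $\min(a,1-a)^{-1/4}$ factor in \eqref{four2.3} enters, since I must divide the slack by $a$ rather than by $\min(\sqrt a,\sqrt{1-a})$ to match the hypothesis format of Lemma \ref{lemma8.0}, which uses slack $\epsilon a$), and Lemma \ref{lemma8.0} then yields $\int|1_{A'}-1_A|\,d\gamma_1<10(\epsilon')^{1/2}$; if instead $A=B(0,r_a')^c$ I apply Lemma \ref{lemma8} with $\epsilon'(1-a)$ in place of $\epsilon(1-a)$. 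Either way the conclusion is the claimed bound with the $\min$ over $A$ versus $A^c$ in \eqref{four2.3}. The same argument with $b$ in place of $a$ gives \eqref{four2.4}. I would need to be slightly careful tracking constants so the final bound comes out as $10\epsilon^{1/2}\min(a,1-a)^{-1/4}$ rather than a larger constant, but this is bookkeeping.

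The main obstacle I anticipate is the ``both factors are near-optimal'' step — arguing rigorously that a near-optimal \emph{product} forces each \emph{factor} near-optimal. This is not automatic for products of reals that can change sign, but it works here because of three facts that must be assembled carefully: (a) the uniform a priori bounds $|P(1_{A'})|\le\min(\sqrt a,\sqrt{1-a})$, $|P(1_{B'})|\le\min(\sqrt b,\sqrt{1-b})$ from Cauchy–Schwarz as in \eqref{two6}; (b) the optimal values $P(1_A),P(1_B)$ have \emph{definite signs} (one negative, one positive) and magnitudes comparable to these uniform bounds up to a dimension-free constant; and (c) consequently the optimal product is bounded away from $0$ by a constant multiple of $\min(\sqrt a,\sqrt{1-a})\min(\sqrt b,\sqrt{1-b})$, so a fractional-$\epsilon$ error in the product cannot hide inside a large error in one factor. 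Getting the quantitative form of (b) right — lower-bounding $|P(1_{B(0,r_a)})|=r_a e^{-r_a^2/2}/\sqrt\pi$ in terms of $\min(\sqrt a,\sqrt{1-a})$ — is the delicate computation, and is presumably exactly why the exponent $-1/4$ and the factor $10$ appear in the statement.
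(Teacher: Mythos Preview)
Your approach is essentially the paper's: factor the product $P(f)P(g)$, argue that near-optimality of the product forces near-optimality of each factor separately, then feed the resulting one-variable near-optimality statements into Lemmas~\ref{lemma8.0} and~\ref{lemma8}. The paper carries this out by fixing (without loss of generality) $(A,B)=(B(0,r_a')^c,B(0,r_b))$, then arguing by contradiction: if, say, $P(1_{A'})$ were not within $\epsilon\min(\sqrt a,\sqrt{1-a})$ of $\sup_f P(f)$, one multiplies through by $\inf_g P(g)<0$ and uses only the Cauchy--Schwarz upper bound $|\inf_g P(g)|\le\min(\sqrt b,\sqrt{1-b})$ (your point~(a), i.e.\ \eqref{two6}) to contradict \eqref{four1} directly. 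So the paper does \emph{not} need the lower bound on $|P(1_{B(0,r_a)})|=r_a e^{-r_a^2/2}/\sqrt\pi$ that you flag as ``the delicate computation'' in your step~(b); the contradiction uses only the upper bound on the \emph{other} factor. Your identification of where the $\min(a,1-a)^{-1/4}$ arises---from converting slack $\epsilon\min(\sqrt a,\sqrt{1-a})$ into the $\epsilon' a$ (or $\epsilon'(1-a)$) format required by Lemmas~\ref{lemma8.0}--\ref{lemma8}---matches the paper exactly.
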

\begin{proof}
%Should follow from Lemma \ref{lemma8}
Suppose without loss of generality that $(A,B)=(B(0,r_{a}')^{c},B(0,r_{b}))$.  First, note that there exists $\widetilde{B}\subset\R$ with $\gamma_{1}(\widetilde{B})=b$ such that
$$\int_{\R} 1_{\widetilde{B}}(y)\sqrt{2!}h_{2}(y)d\gamma_{1}(y)=\inf_{\substack{\{g\colon\R\to\R,0\leq g\leq 1\\ \int_{\R} g(y)d\gamma_{1}(y)=b\}}}\int_{\R} g(y)\sqrt{2!}h_{2}(y)d\gamma_{1}(y).$$
So, using $\vnormf{1_{\widetilde{B}}}_{L_{2}(\gamma_{1})}=\sqrt{b}$ and the Cauchy Schwarz inequality,
\begin{equation}\label{four2.9}
\begin{aligned}
&\bigg|\inf_{\substack{\{g\colon\R\to\R,0\leq g\leq 1\\ \int_{\R} g(y)d\gamma_{1}(y)=b\}}}\int_{\R} g(y)\sqrt{2!}h_{2}(y)d\gamma_{1}(y)\bigg|\\
&\qquad=\abs{\min\left(\int_{\R} 1_{\widetilde{B}}(y)\sqrt{2!}h_{2}(y)d\gamma_{1}(y),\int_{\R} 1_{\widetilde{B}^{c}}(y)\sqrt{2!}h_{2}(y)d\gamma_{1}(y)\right)}
\leq\min(\sqrt{b},\sqrt{1-b}).
\end{aligned}
\end{equation}

By \eqref{four1},
\begin{equation}\label{four2.1}
\int_{\R}1_{A'}(x)h_{2}(x)\sqrt{2!}d\gamma_{1}(x)>\sup_{\substack{\{f\colon\R\to[0,1],\\ \int_{\R} f(x)d\gamma_{1}(x)=a\}}}
\left(\int_{\R} f(x)\sqrt{2!}h_{2}(x)d\gamma_{1}(x)\right)-\epsilon\min(\sqrt{a},\sqrt{1-a}),
\end{equation}
\begin{equation}\label{four2.2}
\int_{\R}1_{B'}(y)h_{2}(y)\sqrt{2!}d\gamma_{1}(y)<\inf_{\substack{\{g\colon\R\to\R,0\leq g\leq 1,\\ \int_{\R} g(y)d\gamma_{1}(y)=b\}}}
\left(\int_{\R} g(y)\sqrt{2!}h_{2}(y)d\gamma_{1}(y)\right)+\epsilon\min(\sqrt{b},\sqrt{1-b}).
\end{equation}
%e.g., if int A...<sup...-e, then the product is >optimum+e*optimum 2nd term>optimum+e*min(b,1-b)/100>optimum+e*min(a,1-a)min(b-1-b)/100
For example, %if $\int_{\R}1_{A'}(x)h_{2}(x)\sqrt{2!}d\gamma_{1}(x)>0$, if $\int_{\R}1_{B'}(y)h_{2}(y)\sqrt{2!}d\gamma_{1}(y)<0$ and
if \eqref{four2.1} is false, then \eqref{four2.9} implies
\begin{flalign*}
&\int_{\R}1_{A'}(x)h_{2}(x)\sqrt{2!}d\gamma_{1}(x)\int_{\R}1_{B'}(y)h_{2}(y)\sqrt{2!}d\gamma_{1}(y)\\
&\geq\left[\int_{\R}1_{A'}(x)h_{2}(x)\sqrt{2!}d\gamma_{1}(x)\right]\cdot\inf_{\substack{\{g\colon\R\to\R,0\leq g\leq 1,\\ \int_{\R} g(y)d\gamma_{1}(y)=b\}}}
\left(\int_{\R} g(y)\sqrt{2!}h_{2}(y)d\gamma_{1}(y)\right)\\
&>\left[\sup_{\substack{\{f\colon\R\to[0,1],\\ \int_{\R} f(x)d\gamma_{1}(x)=a\}}}
\left(\int_{\R} f(x)\sqrt{2!}h_{2}(x)d\gamma_{1}(x)\right)-\epsilon\min(\sqrt{a},\sqrt{1-a})\right]\cdot\\
&\qquad\qquad\inf_{\substack{\{g\colon\R\to\R,0\leq g\leq 1,\\ \int_{\R} g(y)d\gamma_{1}(y)=b\}}}
\left(\int_{\R} g(y)\sqrt{2!}h_{2}(y)d\gamma_{1}(y)\right)\\
&\geq\inf_{\substack{\{f,g\colon\R\to[0,1],\\ \int_{\R} f(x)d\gamma_{1}(x)=a,\int_{\R} g(y)d\gamma_{1}(y)=b\}}}
\left(\int_{\R} f(x)\sqrt{2!}h_{2}(x)d\gamma_{1}(x)\right)\left(\int_{\R} g(y)\sqrt{2!}h_{2}(y)d\gamma_{1}(y)\right)\\
&\qquad\qquad\qquad+\epsilon\min(\sqrt{a},\sqrt{1-a})\min(\sqrt{b},\sqrt{1-b}).
\end{flalign*}
This inequality contradicts \eqref{four1}, so that \eqref{four2.1} holds.  Similarly, \eqref{four2.2} holds.

So, \eqref{four2.1}, \eqref{four2.2} and Lemmas \ref{lemma8.0} and \ref{lemma8} imply \eqref{four2.3} and \eqref{four2.4}.
% by assumption, we have, e.g. A<B+\epsilon\min(\sqrt{a},\sqrt{1-a})<B+\epsilon\sqrt{a}=B+\epsilon a(a^{-1/2}), so apply lemma
\end{proof}

\section{First Variation}\label{2setvar}

The following first variation argument is well-known.

\begin{lemma}\label{lemma000}
Let $\rho\in(0,1)$ and let $0<a,b<1$.  From \eqref{one1} and Lemma \ref{lemma14}, let $(A,B)\subset\R^{n}\times\R^{n}$ with $A=-A,B=-B$ such that
\begin{equation}\label{two5.0}
\int_{\R^{n}}1_{A}(x)T_{\rho}1_{B}(x)d\gamma_{n}(x)=
\inf_{\substack{\{f,g\colon\R^{n}\to[0,1],\int_{\R^{n}} f(x)d\gamma_{n}(x)=a\\ \int_{\R^{n}} g(x)d\gamma_{n}(x)=b,f(x)=f(-x)\,\forall\,x\in\R^{n}\}}}
\int_{\R^{n}}f(x)T_{\rho}g(x)d\gamma_{n}(x).
\end{equation}
Then there exist $c,c'\in\R$ such that
% if B is a ball, then T_{\rho}1_{B} is larger near zero, and A is the complement of a ball, so A=\{x\colon T_{\rho}1_{B}\leq c\}
% if B is a complement of a ball, then T_{\rho}1_{B} is smaller near zero, and A is a ball, so A=\{x\colon T_{\rho}1_{B}\leq c\}
\begin{equation}\label{two5pp}
A=\{x\in\R^{n}\colon T_{\rho}1_{B}(x)\leq c\}\,\wedge\,B=\{x\in\R^{n}\colon T_{\rho}1_{A}(x)\leq c'\}.
\end{equation}
If $\rho\in(-1,0)$, the same result holds, with the additional restriction $g(x)=g(-x)$ $\forall$ $x\in\R^{n}$ in \eqref{two5.0}.
\end{lemma}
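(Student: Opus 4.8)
The statement is a standard first-variation (``bang-bang'') characterization of the minimizers $(A,B)$ produced by Lemma \ref{lemma14}. The strategy is to argue by contradiction, perturbing $1_A$ (resp. $1_B$) by an infinitesimal mass transfer between two small balls, and to show that if the conclusion \eqref{two5pp} fails, the value of the objective in \eqref{two5.0} strictly decreases, contradicting minimality. This is the direct analogue of the argument already used to prove \eqref{one4} in Lemma \ref{lemma0}, except that the ``kernel'' is now $T_\rho 1_B$ (resp. $T_\rho 1_A$) rather than $\overline{T}1_A$, and we must maintain the symmetry constraint $A=-A$ (resp., when $\rho<0$, also $B=-B$).

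\textbf{Step 1: bilinearity and the variational inequality.} Fix $\rho\in(0,1)$ and write $L(f,g)=\int_{\R^n} f(x)T_\rho g(x)d\gamma_n(x)$, which is bilinear (see \eqref{zero8}). With $g=1_B$ fixed, $f\mapsto L(f,1_B)=\int_{\R^n} f(x)\,T_\rho 1_B(x)\,d\gamma_n(x)$ is \emph{linear} in $f$. So among all $f\colon\R^n\to[0,1]$ with $\int f\,d\gamma_n=a$ and $f(x)=f(-x)$, the minimizer of this linear functional is obtained by putting $f=1$ exactly where $T_\rho 1_B(x)+T_\rho 1_B(-x)$ is smallest. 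Since $B=-B$, we have $T_\rho 1_B(-x)=T_\rho 1_B(x)$ (change variables $y\mapsto -y$ in \eqref{oudef}), so the symmetry constraint is automatically compatible with a super-level-set of $-T_\rho 1_B$, i.e. a sub-level set $\{x: T_\rho 1_B(x)\le c\}$. Hence it suffices to show that $A$ must be exactly such a sub-level set (up to a null set).

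\textbf{Step 2: the exchange argument.} Suppose, for contradiction, that there is no $c$ with $A=\{x: T_\rho 1_B(x)\le c\}$ up to measure zero. Then one can find $x_1\in A$ and $x_2\notin A$ (both in the support of $\gamma_n$, and one may also arrange $-x_1,-x_2$ to behave symmetrically, using $A=-A$) with $T_\rho 1_B(x_1)>T_\rho 1_B(x_2)$, together with disjoint small balls $U_1\ni x_1$, $U_2\ni x_2$, and their reflections $-U_1,-U_2$, on which $T_\rho 1_B$ is separated by a fixed gap, with $\gamma_n(U_1)=\gamma_n(U_2)$ small, and $(U_1\cup -U_1)\subset A$, $(U_2\cup -U_2)\cap A=\emptyset$. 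Here one uses that $T_\rho 1_B$ is continuous, indeed smooth, by \eqref{oudef}. Define $A'=(A\setminus(U_1\cup -U_1))\cup(U_2\cup -U_2)$; then $A'=-A'$, $\gamma_n(A')=a$, and
\[
L(1_{A'},1_B)-L(1_A,1_B)=2\int_{\R^n}\big(1_{U_2}(x)+1_{-U_2}(x)-1_{U_1}(x)-1_{-U_1}(x)\big)T_\rho 1_B(x)\,d\gamma_n(x)<0,
\]
using $T_\rho 1_B(-x)=T_\rho 1_B(x)$ and the separation of values. This contradicts \eqref{two5.0}. Hence $A=\{x: T_\rho 1_B(x)\le c\}$ for some $c$. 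By symmetry of the roles of $A,B$ (note $A=-A$ forces $T_\rho 1_A$ even, so no extra symmetrization is needed for $B$ when $\rho>0$ either — the subtlety is only that for $\rho<0$ we have imposed $B=-B$ directly, which is harmless here), the same argument with $f=1_A$ fixed gives $B=\{x: T_\rho 1_A(x)\le c'\}$.

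\textbf{Step 3: the case $\rho<0$.} When $\rho\in(-1,0)$, we have assumed $B=-B$ as well, so both arguments go through verbatim; the sign of $\rho$ plays no role since $T_\rho$ still maps symmetric functions to symmetric functions and the exchange argument only used linearity and continuity. The main (minor) obstacle is purely bookkeeping: checking that the perturbation $A'$ can be taken symmetric without spoiling the measure constraint and the strict inequality — this is handled by always moving the symmetric pair $U_i\cup -U_i$ at once, and choosing $x_1,x_2$ in the ``nonsymmetric'' part of the discrepancy, exactly as in Lemma \ref{lemma0}. No genuinely hard step arises; the content is entirely the standard rearrangement/bang-bang lemma adapted to the symmetric class, and I would simply write it as ``the argument is identical to that of Lemma \ref{lemma0}, applied with $\overline{T}1_A$ replaced by $T_\rho 1_B$ and with all perturbations taken to be symmetric.''
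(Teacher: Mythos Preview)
Your proposal is correct and follows the same exchange/bang-bang argument as the paper's proof: assume the sub-level-set description fails, find two points on the wrong sides of the threshold, swap small neighborhoods, and contradict optimality. In fact you are more careful than the paper on one point: you explicitly perturb by symmetric pairs $U_i\cup(-U_i)$ so that $A'=-A'$ remains an admissible competitor in \eqref{two5.0}, whereas the paper's written proof moves a single ball $U_i$ and does not check symmetry of $A'$ (this is harmless because $T_\rho 1_B$ is even when $B=-B$, exactly as you observe, but you spell it out). One small slip: in your displayed difference $L(1_{A'},1_B)-L(1_A,1_B)$ the leading factor $2$ should not be there, since $1_{A'}-1_A=1_{U_2}+1_{-U_2}-1_{U_1}-1_{-U_1}$ already accounts for both halves.
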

\begin{proof}
We argue by contradiction.  Suppose there exists $x_{1},x_{2}\in\R^{n}$, $x_{1}\notin A,x_{2}\in A$ such that $T_{\rho}1_{B}(x_{1})>T_{\rho}1_{B}(x_{2})$.  Let $U_{1}\subset\R^{n}$ be a small ball around $x_{1}$ and let $U_{2}$ be a small ball around $x_{2}$ such that $T_{\rho}1_{B}(u_{1})>T_{\rho}1_{B}(u_{2})$, $\forall$ $u_{1}\in U_{1},u_{2}\in U_{2}$.  Also, assume that $U_{1}\cap U_{2}=\emptyset$ and $\gamma_{n}(U_{1})=\gamma_{n}(U_{2})$.
%Since $A=-A,B=-B$, the sets $V_{1}\colonequals U_{1}\cup(-U_{1})$ and $V_{2}\colonequals U_{2}\cup(-U_{2})$ satisfy $T_{\rho}1_{B}(v_{1})>T_{\rho}1_{B}(v_{2})$ $\forall$ $v_{1}\in V_{1},v_{2}\in V_{2}$.
Define $A'\colonequals (A\setminus U_{2})\cup U_{1}$.  Then $1_{A'}=1_{A}-1_{U_{2}}+1_{U_{1}}$, and for $U_{1},U_{2}$ sufficiently small,
\begin{flalign*}
\int_{\R^{n}}1_{A'}(x)T_{\rho}1_{B}(x)d\gamma_{n}(x)
&=\int_{\R^{n}} 1_{A}(x)T_{\rho}1_{B}(x)d\gamma_{n}(x)
+\int_{\R^{n}}(1_{U_{1}}(x)-1_{U_{2}}(x))T_{\rho}1_{B}(x)d\gamma_{n}(x)\\
&>\int_{\R^{n}} 1_{A}(x)T_{\rho}1_{B}(x)d\gamma_{n}(x).
\end{flalign*}

This inequality contradicts the maximality of $A$.  We conclude that no such $x_{1},x_{2}$ exist, so \eqref{two5pp} holds.
\end{proof}

\section{First Main Theorem}\label{secmain}

\begin{theorem}[Conjecture \ref{SGP}, $n=1$, $\rho$ small]\label{thm2.0}
Let $n=1$, $0<a,b<1$, and let $\abs{\rho}<\min(e^{-40},a^{20},(1-a)^{20},b^{20},(1-b)^{20})/1000$.  By Lemma \ref{lemma0}, let $(A,B)=(B(0,r_{a}),B(0,r_{b}')^{c})$ or let $(A,B)=(B(0,r_{a}')^{c},B(0,r_{b}))$ such that $\gamma_{1}(A)=a,\gamma_{n}(B)=b$ and such that
\begin{equation}\label{four10}
\begin{aligned}
&\left(\int_{\R}1_{A}(x)\sqrt{2!}h_{2}(x)d\gamma_{1}(x)\right)
\left(\int_{\R}1_{B}(y)\sqrt{2!}h_{2}(y)d\gamma_{1}(y)\right)\\
&\qquad=\inf_{\substack{\{f,g\colon\R\to[0,1],\\ \int_{\R} f(x)d\gamma_{1}(x)=a,\int_{\R} g(y)d\gamma_{1}(y)=b\}}}
\left(\int_{\R} f(x)\sqrt{2!}h_{2}(x)d\gamma_{1}(x)\right)
\left(\int_{\R} g(x)\sqrt{2!}h_{2}(x)d\gamma_{1}(x)\right).
\end{aligned}
\end{equation}
From Lemma \ref{lemma14}, let $A',B'\subset\R$ such that $\gamma_{1}(A')=a,\gamma_{1}(B')=b$ and such that
\begin{equation}\label{four11}
\int_{\R} 1_{A'}(x)T_{\rho}1_{B'}(x)d\gamma_{1}(x)
=\inf_{\substack{\{f,g\colon\R^{n}\to[0,1],\int_{\R} f(x)d\gamma_{1}(x)=a,\\
\int_{\R} g(x)d\gamma_{1}(x)=b,\,f(x)=f(-x),\forall\,x\in\R\}}}
\int_{\R}f(x)T_{\rho}g(x)d\gamma_{n}(x).
\end{equation}
If $\rho>0$, then $(A,B)=(A',B')$.  If $\rho<0$, the same result holds, with the additional restriction $g(x)=g(-x)$ $\forall$ $x\in\R^{n}$ in \eqref{four11}.
\end{theorem}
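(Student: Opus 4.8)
The plan is to realize the four--step scheme from the ``General Framework'': first solve the $\rho=0$ problem, then show that for small $|\rho|$ the first variation of noise stability is a small perturbation of the first variation at $\rho=0$, and finally iterate. Throughout write $\beta_2(S):=\int_{\R}1_S(x)\sqrt{2!}\,h_2(x)\,d\gamma_1(x)=\tfrac1{\sqrt2}\int_{\R}1_S(x)(x^2-1)\,d\gamma_1(x)$. The $\rho=0$ data is in hand: by the $n=1$ specialization of \eqref{one4} ($A=\{x\colon(x^2-1)\int_{\R}1_A(y)(y^2-1)\,d\gamma_1(y)\ge c\}$ forces $A$ to be a centered interval or its complement, and likewise, via Lemmas~\ref{lemma8.0} and \ref{lemma8}, the two--set minimizer of $\beta_2(\cdot)\beta_2(\cdot)$ is a ball/ball--complement pair), the minimizer in \eqref{four10} is exactly the pair $(A,B)$ of the statement; a noise--stability minimizer $(A',B')$ exists with $A'=-A'$, $B'=-B'$ by Lemma~\ref{lemma14}; and it satisfies the first--variation conditions $A'=\{x\colon T_\rho 1_{B'}(x)\le c\}$, $B'=\{x\colon T_\rho 1_{A'}(x)\le c'\}$ by Lemma~\ref{lemma000}.

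First I would transfer near--optimality. Since $(A,B)$ is admissible in \eqref{four11}, $\int 1_{A'}T_\rho 1_{B'}\,d\gamma_1\le\int 1_A T_\rho 1_B\,d\gamma_1$; expanding both sides by \eqref{one1}, the odd--degree terms vanish because $A',B',A,B$ are symmetric (use \eqref{one3.1}), the degree--$0$ terms agree (both $=ab$), the degree--$2$ terms are $\rho^2\beta_2(\cdot)\beta_2(\cdot)$, and each degree--$\ge4$ remainder is at most $|\rho|^4$ by Cauchy--Schwarz together with $\sum_{\ell}(\int 1_S\sqrt{\ell!}h_\ell\,d\gamma_1)^2=\gamma_1(S)\le1$. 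Hence $\beta_2(A')\beta_2(B')\le\beta_2(A)\beta_2(B)+2\rho^2$, so $(A',B')$ satisfies the hypothesis \eqref{four1} of Lemma~\ref{lemma18} with $\epsilon$ of order $\rho^2/(\min(\sqrt a,\sqrt{1-a})\min(\sqrt b,\sqrt{1-b}))$. Lemma~\ref{lemma18} then puts $1_{A'}$ within $10\epsilon^{1/2}\min(a,1-a)^{-1/4}$ of $1_A$ or of $1_{A^c}$ in $L_1(\gamma_1)$, and likewise for $B'$; as $\gamma_1(A^c)=1-a\ne a=\gamma_1(A')$ for $a$ away from $\tfrac12$ (both orderings of $(A,B)$ are optimizers when $a=b=\tfrac12$, handling the boundary case), the hypothesis $|\rho|<\min(e^{-40},a^{20},(1-a)^{20},b^{20},(1-b)^{20})/1000$ makes the $A^c$ (resp. $B^c$) alternative numerically impossible, so $\gamma_1(A\triangle A')$ and $\gamma_1(B\triangle B')$ are of order $|\rho|$ times a polynomial in $(\min(a,1-a)\min(b,1-b))^{-1}$.

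The heart of the proof is the bootstrap driven by the first--variation equations. Write $T_\rho 1_{B'}=T_\rho 1_B+T_\rho(1_{B'}-1_B)$. The second formula in Lemma~\ref{lemma7} shows $\tfrac{d}{dx}T_\rho 1_B(x)$ equals $\tfrac{\rho}{\sqrt{1-\rho^2}}$ times $\int y\,d\gamma_1(y)$ over the interval $\{y\colon x\rho+y\sqrt{1-\rho^2}\in B\}$ (symmetric about $-x\rho/\sqrt{1-\rho^2}$), which has, for every $x\ne0$ and every $\rho\ne0$, the sign of $-x$ when $B$ is a centered interval and of $+x$ when $B$ is the complement of one; hence $T_\rho 1_B$ is strictly monotone in $|x|$ on all of $\R$, with leading behaviour $-\rho^2\sqrt{2/\pi}\,xre^{-r^2/2}$ from the first formula in Lemma~\ref{lemma7}. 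Using \eqref{six1.8}, Lemma~\ref{lemma6}, and the smallness of $\gamma_1(B\triangle B')$ (so $\beta_2(B')$ has the sign of $\beta_2(B)$ and $|\beta_2(B')-\beta_2(B)|$ is controlled), one finds $T_\rho 1_{B'}(x)=\mathrm{const}+\tfrac{\rho^2}{\sqrt2}\beta_2(B')x^2+O(\rho^4x^4)$ for $|x|\le R:=\sqrt{-4\log|\rho|}$, so $T_\rho 1_{B'}$ is strictly monotone in $|x|$ on $[-R,R]$. With the measure constraint $\gamma_1(A')=a$ and the $\gamma_1$--closeness of $A'$ to $A$ (which excludes the wrong level--set shape), this forces $A'\cap[-R,R]$ to equal $A\cap[-R,R]$ up to a radius discrepancy of order $\rho^2$, localizing $A\triangle A'$ to a thin annulus near $\partial A$ of Lebesgue width $O(\rho^2)$ plus a set of $\gamma_1$--measure $O(\rho^2)$ in $\{|x|>R\}$; symmetrically for $B\triangle B'$. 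Feeding this sharpened structure back — the error $1_{B'}-1_B$ is now genuinely localized, so $|T_\rho(1_{B'}-1_B)(x)|$ and its derivative can be controlled by the Lebesgue width of the annulus rather than just by $\gamma_1(B\triangle B')$ — one extends the monotonicity of $T_\rho 1_{B'}$ beyond $|x|=R$, excludes any far--away ``chunk'' in $B'$ (it would lie in $B'=\{T_\rho 1_{A'}\le c'\}$, but the sign of the $\rho^2$--correction makes $T_\rho 1_{A'}(u)>c'$ for $|u|$ large), and so shrinks the discrepancy at each pass; in the limit $A'=A$ and $B'=B$ a.e. Finally, since $(A',B')$ realizes the infimum in \eqref{four11} and is now a ball/ball--complement pair, the expansion of the second paragraph shows it must minimize $\beta_2(\cdot)\beta_2(\cdot)$, hence equals the pair of \eqref{four10}. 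For $\rho<0$ nothing changes: symmetry of $A'$ (and, as part of the problem, of $B'$) again kills all odd--degree contributions and the leading correction is again $\rho^2\beta_2(A')\beta_2(B')$, while $B'=-B'$ enters only through Lemma~\ref{lemma14}.

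The step I expect to be the main obstacle is the far--field part of the bootstrap. The crude bound $|T_\rho 1_S(x)|\le\mathrm{Leb}(S)/\sqrt{2\pi(1-\rho^2)}$ is useless for $S=B\triangle B'$: a priori $S$ could contain a piece of $\{|x|>R\}$ of tiny $\gamma_1$--measure but non--negligible (even infinite) Lebesgue measure, and such a piece can make $T_\rho 1_S(x)$ of order one for a suitable large $|x|$. What rescues the argument is the order in which information accrues: the first pass of the bootstrap already shows $B\triangle B'$ is a thin annulus near $\partial B$ plus a $\gamma_1$--small tail; a second pass, using that $T_\rho 1_B$ stays strictly monotone in $|x|$ far out (with quantitative rate from the integral formula in Lemma~\ref{lemma7}) and that $B'$ is a level set of $T_\rho 1_{A'}$, forces the tail of $B'$ to be empty; only then is the map ``current pair $\mapsto$ its first--variation level sets'' a genuine contraction. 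Carrying this out while keeping every constant polynomial in $\min(a,1-a)$ and $\min(b,1-b)$, so that the stated bound on $|\rho|$ suffices at each stage, is the bookkeeping that makes the iteration converge.
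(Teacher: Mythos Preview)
Your proposal is correct and follows essentially the same five--step architecture as the paper: (i) compare noise stability to the degree--$2$ problem via the Hermite expansion, (ii) invoke Lemma~\ref{lemma18} to place $(A',B')$ near $(A,B)$, (iii)--(iv) use Lemma~\ref{lemma7} and the expansion \eqref{six1.8} to show $T_\rho 1_{B'}$ is monotone in $|x|$ on a large window, forcing $A'$ to be an exact ball there, and (v) iterate outward. Two small remarks: the paper's remainder bound in Step~1 is slightly sharper than yours because it uses $|\int 1_S\sqrt{\ell!}h_\ell\,d\gamma_1|\le\min(\sqrt{\gamma_1(S)},\sqrt{1-\gamma_1(S)})$ for $\ell\ge1$ (equation \eqref{two6}), not just $\le1$; and the paper's iteration is phrased as ``extend the region on which $A'\cap[-R_k,R_k]=B(0,r_1)$ \emph{exactly}'' (with $R_k=\sqrt{-(k+2)\log|\rho|}$ and $r_1$ fixed but unknown until the end), rather than your ``shrink the discrepancy'' framing---but the mechanism is identical, and your observation that the measure constraint pins the annulus width to $O(\rho^2)$ once the tail is known to sit in $\{|x|>R\}$ is a correct refinement the paper does not state.
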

\begin{proof}
Without loss of generality $(A,B)=(B(0,r_{a}),B(0,r_{b}')^{c})$.

\noindent\textbf{Step 1.}  Approximating Noise Stability using second order Hermite-Fourier coefficients.

From \eqref{one1}, and using that $A'=-A'$ with \eqref{one3.1}
\begin{equation}\label{four12}
\begin{aligned}
&\left|\frac{\int_{\R^{n}}1_{A'}(x)T_{\rho}1_{B'}(x)d\gamma_{1}(x)-\gamma_{1}(A')\gamma_{1}(B')}{\rho^{2}}\right.\\
&\qquad\qquad\qquad\left.-2\int_{\R}1_{A'}(x)h_{2}(x)\sqrt{2!}d\gamma_{1}(x)\int1_{B'}(y)h_{2}(y)\sqrt{2!}d\gamma_{1}(y)\right|\\
&\qquad\qquad\leq\abs{\rho}^{2}\min(\sqrt{a},\sqrt{1-a})\min(\sqrt{b},\sqrt{1-b}).
\end{aligned}
\end{equation}
From \eqref{four12} and \eqref{four11},
\begin{flalign}
&2\int_{\R}1_{A'}(x)h_{2}(x)\sqrt{2!}d\gamma_{1}(x)\int_{\R}1_{B'}(y)h_{2}(y)\sqrt{2!}d\gamma_{1}(y)\\
&\quad\leq\frac{\int_{\R}1_{A'}(x)T_{\rho}1_{B'}(x)d\gamma_{1}(x)-\gamma_{1}(A')\gamma_{1}(B')}{\rho^{2}}
+\abs{\rho}^{2}\min(\sqrt{a},\sqrt{1-a})\min(\sqrt{b},\sqrt{1-b})\nonumber\\
&\quad\stackrel{\eqref{four11}}{=}\inf_{\substack{\{f,g\colon\R\to[0,1],\\ \int_{\R} f(x)d\gamma_{1}(x)=a,\\ \int_{\R} g(x)d\gamma_{1}(x)=b,f(x)=f(-x)\,\forall x\in\R\}}}
\frac{\int_{\R} f(x)T_{\rho}g(x)d\gamma_{1}(x)-ab}{\rho^{2}}\nonumber\\
&\qquad\qquad
+\abs{\rho}^{2}\min(\sqrt{a},\sqrt{1-a})\min(\sqrt{b},\sqrt{1-b}).\label{four12.5}
\end{flalign}
Similarly, from \eqref{four12}
\begin{flalign}
&2\int_{\R}1_{A}(x)h_{2}(x)\sqrt{2!}d\gamma_{1}(x)\int_{\R}1_{B}(x)h_{2}(x)\sqrt{2!}d\gamma_{1}(x)\\
&\quad\geq\frac{\int_{\R}1_{A}(x)T_{\rho}1_{B}(x)d\gamma_{1}(x)-\gamma_{1}(A)\gamma_{1}(B)}{\rho^{2}}
-\abs{\rho}^{2}\min(\sqrt{a},\sqrt{1-a})\min(\sqrt{b},\sqrt{1-b})\nonumber\\
&\quad\geq\inf_{\substack{\{f,g\colon\R\to[0,1],\\ \int_{\R} f(x)d\gamma_{1}(x)=a,\\ \int_{\R} g(x)d\gamma_{1}(x)=b,f(x)=f(-x)\,\forall x\in\R\}}}
\frac{\int_{\R} f(x)T_{\rho}g(x)d\gamma_{1}(x)-ab}{\rho^{2}}\nonumber\\
&\qquad\qquad
-\abs{\rho}^{2}\min(\sqrt{a},\sqrt{1-a})\min(\sqrt{b},\sqrt{1-b}).\label{four12.7}
\end{flalign}

Combining \eqref{four12.5}, \eqref{four12.7} and \eqref{four10},
\begin{equation}\label{four12.9}
\begin{aligned}
&\int_{\R}1_{A'}(x)h_{2}(x)\sqrt{2!}d\gamma_{1}\int_{\R}1_{B'}(y)h_{2}(y)\sqrt{2!}d\gamma_{1}(y)\\
&\qquad\leq\inf_{\substack{\{f,g\colon\R\to[0,1],\\ \int_{\R} f(x)d\gamma_{1}(x)=a,\int_{\R} g(y)d\gamma_{1}(y)=b\}}}
\left(\int_{\R} f(x)\sqrt{2!}h_{2}(x)d\gamma_{1}(x)\right)\left(\int_{\R} g(y)\sqrt{2!}h_{2}(y)d\gamma_{1}(y)\right)\\
&\qquad\qquad+2\abs{\rho}^{2}\min(\sqrt{a},\sqrt{1-a})\min(\sqrt{b},\sqrt{1-b}).
\end{aligned}
\end{equation}

\noindent\textbf{Step 2.}  Optimal sets are close to balls or their complement.

From \eqref{four12.9} and Lemma \ref{lemma18},
\begin{equation}\label{four13}
\int_{\R}\abs{1_{A}(x)-1_{A'}(x)}^{2}d\gamma_{1}(x)<10\abs{\rho}^{7/8}\,\wedge\,\int_{\R}\abs{1_{B}(y)-1_{B'}(y)}^{2}d\gamma_{1}(y)<10\abs{\rho}^{7/8}.
\end{equation}
Then, by the Cauchy-Schwarz inequality, for every $\ell\in\N$,
\begin{equation}\label{four14}
\begin{aligned}
\abs{\int_{\R}(1_{A}(x)-1_{A'}(x))\sqrt{\ell!}h_{\ell}(x)d\gamma_{1}(x)}&<\sqrt{10}\abs{\rho}^{7/16}\\
\abs{\int_{\R}(1_{B}(y)-1_{B'}(y))\sqrt{\ell!}h_{\ell}(y)d\gamma_{1}(y)}&<\sqrt{10}\abs{\rho}^{7/16}.
\end{aligned}
\end{equation}
%
%From \eqref{four13}, \eqref{four14}, let $(A_{(0)},B_{(0)})\equalscolon(B(0,r_{0}),B(0,r_{0}')^{c})$ such that $\int1_{A_{(0)}}h_{2}d\gamma_{1}=\int 1_{A}h_{2}d\gamma_{1}$ and $\int1_{B_{(0)}}h_{2}d\gamma_{1}=\int1_{B}h_{2}d\gamma_{1}$.  Recall that
%\begin{equation}\label{three14.5}
%\int\absf{1_{B}-1_{B_{(0)}}}d\gamma_{1}\leq10\left(\int\absf{1_{B}(x)-1_{B_{(0)}}(x)}\absf{1-x^{2}}d\gamma_{1}(x)\right)^{1/2}.
%\end{equation}
%From \eqref{three14.5}, we have $\int\absf{1_{A}-1_{A_{(0)}}}d\gamma_{1}<10^{5/4}\rho^{3/32}$ and $\int\absf{1_{B}-1_{B_{(0)}}}d\gamma_{1}<10^{5/4}\rho^{3/32}$.

\noindent\textbf{Step 3.}  Estimating $T_{\rho}1_{B'}$.

Let $g=1_{B}-1_{B'}$.  Recall that $b=2\int_{r_{b}}^{\infty}e^{-x^{2}/2}dx/\sqrt{2\pi}$.  Then $\min(b,1-b)/10\leq r_{b}\leq\sqrt{-3\log\min(b,1-b)}$.  Since $0<\abs{\rho}<\min(b,1-b)<1$, we have $-\log\abs{\rho}>-\log\min(b,1-b)$.  Let $\abs{x}\leq\sqrt{-4\log\abs{\rho}}$.  Since $\abs{\rho}<e^{-10}$,
\begin{equation}\label{fourzero}
10\abs{\rho}^{11/16}\sum_{\ell\in\N\colon\abs{\ell}\geq4}\abs{\rho}^{\abs{\ell}-3}
\abs{\ell}3^{\abs{\ell}}(-4\log\abs{\rho})^{\abs{\ell}/2}<1.
\end{equation}
By \eqref{one3.1}, $\int_{\R}g(x)h_{3}(x)d\gamma_{1}(x)=0$.  So, using Lemma \ref{lemma6},
\begin{flalign}
&\abs{T_{\rho}g(x)-\rho^{2}\frac{\sqrt{2}}{2}(x^{2}-1)\int_{\R}(1_{B}(x)-1_{B'}(x))\sqrt{2}h_{2}(x)d\gamma_{1}(x)}\\
&\qquad\stackrel{\eqref{six1.8}}{\leq}\sum_{\ell\in\N\colon\abs{\ell}\geq4}\abs{\rho}^{\abs{\ell}}
\abs{\sqrt{\ell!}h_{\ell}(x)}\abs{\int_{\R} h_{\ell}(x)\sqrt{\ell!}g(x)d\gamma_{1}(x)}\nonumber\\
&\qquad\stackrel{\mathrm{Lemma}\,\ref{lemma6}}{\leq}\abs{\rho}^{3}\sum_{\ell\in\N\colon\abs{\ell}\geq4}\abs{\rho}^{\abs{\ell}-3}
\abs{\ell}3^{\abs{\ell}}\max(1,\abs{x}^{\abs{\ell}})\abs{\int_{\R} h_{\ell}(x)\sqrt{\ell!}g(x)d\gamma_{1}(x)}\label{four8}\\
&\qquad\stackrel{\eqref{four14}}{\leq}10\abs{\rho}^{55/16}\sum_{\ell\in\N\colon\abs{\ell}\geq4}\abs{\rho}^{\abs{\ell}-3}
\abs{\ell}3^{\abs{\ell}}\max(1,\abs{x}^{\abs{\ell}})\nonumber\\
&\qquad\label{four7}\leq10\abs{\rho}^{55/16}\sum_{\ell\in\N\colon\abs{\ell}\geq4}\abs{\rho}^{\abs{\ell}-3}
\abs{\ell}3^{\abs{\ell}}(-4\log\abs{\rho})^{\abs{\ell}/2}
\stackrel{\eqref{fourzero}}{\leq}\abs{\rho}^{11/4}.
\end{flalign}

That is, for any $\abs{x}\leq\sqrt{-4\log\abs{\rho}}$,
\begin{equation}\label{four15}
\absf{T_{\rho}1_{B}(x)-T_{\rho}1_{B'}(x)-\rho^{2}\frac{\sqrt{2}}{2}(x^{2}-1)\int_{\R}(1_{B}(x)-1_{B'}(x))\sqrt{2}h_{2}(x)d\gamma_{1}(x)}
=\abs{T_{\rho}g(x)}\leq\abs{\rho}^{11/4}.
\end{equation}
%for a small, the complement of the ball is the best.  In this case we have
% a=2\int_{r}^{\infty}e^{-x^{2}/2}dx/\sqrt{2\pi}
% so, 1/(2r\sqrt{2\pi})e^{-r^{2}/2}\leq a\leq 1/(r\sqrt{2\pi})e^{-r^{2}/2}.
% so, \log(1/(2r\sqrt{2\pi}))-r^{2}/2\leq\log(a) \leq \log(1/(r\sqrt{2\pi})) -r^{2}/2
% so, -\log(2r\sqrt{2\pi})-r^{2}/2 \leq\log(a) \leq -\log(r\sqrt{2\pi})-r^{2}/2
% so, -\log(r)-\log(2\sqrt{2\pi})-r^{2}/2 \leq\log(a) \leq -\log(r) -\log(\sqrt{2\pi}) -r^{2}/2
% would be nice to have r\leq\sqrt{-2\log(a)}.  if so, r^{2}/2\leq-2\log a, so \log(a)\leq-r^{2}/2
%
%1_{[-r,r]}(x\rho+y\sqrt{1-\rho^{2}})  %% x\rho+y\sqrt{1-\rho^{2}}\in[-r,r]
%=1_{[-r-x\rho,r-x\rho]}(y\sqrt{1-\rho^{2}})   %%  y\sqrt{1-\rho^{2}}\in[-r-x\rho,r-x\rho]
%=1_{[(-r-x\rho)/\sqrt{1-\rho^{2}},(r-x\rho)/\sqrt{1-\rho^{2}}]}(y)  %%y\in(1-\rho^{2})^{-1/2}[-r-x\rho,r-x\rho]

%have a problem if B is very small, but B is never very small, since for small measure, the complement of the ball is better
% if we expand the radius r by epsilon, then we change the intergral against h_2 by an amount \int_{r}^{r+\epsilon}(1-x^{2})e^{-x^{2}}
% whereas the measure changes by an amount \int_{r}^{r+\epsilon}e^{-x^{2}}

Similarly, for any $\abs{x}\leq\sqrt{-3\log\abs{\rho}}$,
\begin{equation}\label{four15.5}
\absf{\frac{d}{dx}T_{\rho}(1_{B}-1_{B'})(x)-\rho^{2}\sqrt{2}x\int_{\R}(1_{B}(y)-1_{B'}(y))\sqrt{2}h_{2}(y)d\gamma_{1}(y)}\leq\abs{\rho}^{11/4}
\end{equation}

\noindent\textbf{Step 4.}  Finding the level sets of $T_{\rho}1_{B'}$

We now apply Lemma \ref{lemma7}.  Let $\min(b,1-b)/10\leq\abs{x}\leq\sqrt{-3\log\abs{\rho}}$.  Then, using that $\min(b,1-b)/10\leq r_{b}'\leq\sqrt{-2\log\min(b,1-b)}$,
\begin{equation}\label{four16}
\mathrm{sign}(x)\cdot\frac{d}{dx}T_{\rho}1_{B}(x)
\geq\abs{x}\rho^{2}\min(b,(1-b))/10
\geq\rho^{2}\min(b,1-b,a,1-a)^{2}/10.
\end{equation}
%distance between the functions is very small

%By Lemma \ref{lemma0}, $A=\{x\in\R\colon(d^{2}/d\rho^{2})T_{\rho}1_{A}(x)\geq c\}$.  By \eqref{three15}, $(d^{2}/d\rho^{2})T_{\rho}1_{B}(x)\geq c-\abs{\rho}^{11/10}$ for $x\in A\cap B(0,2r)$.  Then, by \eqref{three16}, $(d^{2}/d\rho^{2})T_{\rho}1_{B}(x)\geq c+\abs{\rho}^{11/10}$ for $x\in B(0,r-2\min(a^{1/2},(1-a)^{1/2}))$.  So, using \eqref{three15} again, $(d^{2}/d\rho^{2})T_{\rho}1_{A}(x)\geq c$ for $x\in B(0,r-2\min(a^{1/2},(1-a)^{1/2}))$.
%
%.... may need to change this somehow.  try again, but start with level sets of B first
Let $\min(b,1-b)/10\leq r_{0}\leq\sqrt{-3\log\abs{\rho}}$.  By \eqref{four16}, using that $\mathrm{sign}(x)\frac{d}{dx}T_{\rho}1_{B}(x)<0$ for all $x\neq0$, there is a $\lambda=\lambda(r_{0})\in\R$ such that
\begin{flalign}
&x\in B(0,r_{0})\Longrightarrow T_{\rho}1_{B}(x)\leq\lambda,\nonumber\\
&x\in B(0,\sqrt{-3\log\abs{\rho}})\setminus B(0,r_{0})\Longrightarrow T_{\rho}1_{B}(x)>\lambda,\nonumber\\
&x\in B(0,r_{0}-(1/10)\min(e^{-40},b,1-b,a,1-a))\nonumber\\
&\qquad\qquad\qquad\Longrightarrow T_{\rho}1_{B}(x)\leq\lambda-(1/100)\rho^{2}\min(e^{-40},b,1-b,a,1-a)^{3},\label{four16.5}\\
&x\in B(0,\sqrt{-3\log\abs{\rho}})\setminus B(0,r_{0}+(1/10)\min(e^{-40},b,1-b,a,1-a))\nonumber\\
&\qquad\qquad\qquad\Longrightarrow T_{\rho}1_{B}(x)>\lambda+(1/100)\rho^{2}\min(e^{-40},b,1-b,a,1-a)^{3}.\label{four16.7}
\end{flalign}

Also, we may take $\lambda$ to be a continuous, strictly increasing function of $r_{0}$.  By \eqref{four15}, \eqref{four16.5} and \eqref{four16.7}, and using $\abs{\rho}<\min(e^{-40},a^{20},(1-a)^{20},b^{20},(1-b)^{20})/1000$.
\begin{equation}\label{four16.9}
\begin{aligned}
&x\in B(0,r_{0}-(1/10)\min(e^{-40},b,1-b,a,1-a))\Longrightarrow T_{\rho}1_{B'}(x)\leq\lambda,\\
&x\in B(0,\sqrt{-3\log\abs{\rho}})\setminus B(0,r_{0}+(1/10)\min(e^{-40},b,1-b,a,1-a))\\
&\qquad\qquad\qquad\Longrightarrow T_{\rho}1_{B'}(x)>\lambda.
\end{aligned}
\end{equation}

By Lemma \ref{lemma000}, there exists $c_{1},c_{2}\in\R$ such that
\begin{equation}\label{four000}
A'=\{x\in\R\colon T_{\rho}1_{B'}(x)\leq c_{1}\}\,\wedge\,
B'=\{x\in\R\colon T_{\rho}1_{A'}(x)\leq c_{2}\}.
\end{equation}
Since $\gamma_{1}(B(0,\sqrt{-3\log\abs{\rho}})^{c})<\min(a,1-a)$, $B(0,\sqrt{-3\log\abs{\rho}})\cap A'\neq\emptyset$.  So, by \eqref{four000} there exists an $x\in B(0,\sqrt{-3\log\abs{\rho}})$ such that $T_{\rho}1_{B'}(x)\leq c_{1}$.  So, there exists $r_{0}$ such that $\lambda(r_{0})=c_{1}$.  Rewriting \eqref{four16.9},
\begin{equation}\label{four17}
\begin{aligned}
&B(0,r_{0}-(1/10)\min(e^{-40},b,1-b,a,1-a))\subset\{x\in\R\colon T_{\rho}1_{B'}(x)\leq c_{1}\}\\
&\,\wedge\,(B(0,\sqrt{-3\log\abs{\rho}})\setminus B(0,r_{0}+(1/10)\min(e^{-40},b,1-b,a,1-a)))\\
&\qquad\qquad\qquad\qquad\cap\{x\in\R\colon T_{\rho}1_{B'}(x)\leq c_{1}\}=\emptyset.
\end{aligned}
\end{equation}
%Then \eqref{four17} says
%\begin{equation}\label{four18}
%\begin{aligned}
%&B(0,r_{0}-(1/10)\min(e^{-40},a,1-a))\subset A'\\
%&\,\wedge\,[B(0,\sqrt{-3\log\abs{\rho}}\setminus B(0,r_{0}+(1/10)\min(e^{-40},a,1-a))]\cap A'=\emptyset.
%\end{aligned}
%\end{equation}

Combining \eqref{four16} and \eqref{four15.5}, we have $\frac{d}{dx}T_{\rho}1_{B'}(x)\mathrm{sign}(x)>0$ for all $x$ such that $\sqrt{-3\log\abs{\rho}}\geq\abs{x}\geq\min(b,1-b,a,1-a)/10$.  Using this fact and \eqref{four17}, there exists $\min(b,1-b)/10\leq r_{1}\leq r_{b}'$ such that
\begin{equation}\label{four18.5}
\begin{aligned}
&B(0,r_{1})\subset\{x\in\R\colon T_{\rho}1_{B'}(x)\leq c_{1}\}\\
&\qquad\,\wedge\,[B(0,\sqrt{-3\log\abs{\rho}})\setminus B(0,r_{1})]\cap\{x\in\R\colon T_{\rho}1_{B'}(x)\leq c_{1}\}=\emptyset.
\end{aligned}
\end{equation}

Repeating the above implications with the roles of $A'$ and $B'$ reversed, there exists $\min(a,1-a,b,1-b)/10\leq r_{2}\leq r_{a}$ such that
\begin{equation}\label{four18.8}
\begin{aligned}
&B(0,r_{2})\cap\{x\in\R\colon T_{\rho}1_{A'}(x)\leq c_{2}\}=\emptyset\\
&\qquad\,\wedge\,[B(0,\sqrt{-3\log\abs{\rho}})\setminus B(0,r_{2})]\subset\{x\in\R\colon T_{\rho}1_{A'}(x)\leq c_{2}\}.
\end{aligned}
\end{equation}

\noindent\textbf{Step 5.}  A final iterative argument to eliminate points far from the origin.

We now construct an iteration.  Let $k\in\N$.  It is given that
\begin{equation}\label{four20}
\begin{aligned}
&B(0,r_{1})\subset\{x\in\R\colon T_{\rho}1_{B'}(x)\leq c_{1}\}\\
&\qquad\,\wedge\,[B(0,\sqrt{-(k+2)\log\abs{\rho}})\setminus B(0,r_{1})]\cap\{x\in\R\colon T_{\rho}1_{B'}(x)\leq c_{1}\}=\emptyset,\\
&B(0,r_{2})\cap\{x\in\R\colon T_{\rho}1_{A'}(x)\leq c_{2}\}=\emptyset\\
&\qquad\,\wedge\,[B(0,\sqrt{-(k+2)\log\abs{\rho}})\setminus B(0,r_{2})]\subset\{x\in\R\colon T_{\rho}1_{A'}(x)\leq c_{2}\}.
\end{aligned}
\end{equation}
We then conclude that
\begin{equation}\label{four21}
\begin{aligned}
&B(0,r_{1})\subset\{x\in\R\colon T_{\rho}1_{B'}(x)\leq c_{1}\}\\
&\qquad\,\wedge\,[B(0,\sqrt{-(k+3)\log\abs{\rho}})\setminus B(0,r_{1})]\cap\{x\in\R\colon T_{\rho}1_{B'}(x)\leq c_{1}\}=\emptyset,\\
&B(0,r_{2})\cap\{x\in\R\colon T_{\rho}1_{A'}(x)\leq c_{2}\}=\emptyset\\
&\qquad\,\wedge\,[B(0,\sqrt{-(k+3)\log\abs{\rho}})\setminus B(0,r_{2})]\subset\{x\in\R\colon T_{\rho}1_{A'}(x)\leq c_{2}\}.
\end{aligned}
\end{equation}
(Note that $k=2$ for \eqref{four20} is exactly \eqref{four18.5} and \eqref{four18.8}.)

Let $x$ with $\sqrt{-(k+2)\log\abs{\rho}}\leq\abs{x}\leq\sqrt{-(k+3)\log\abs{\rho}}$.  From Lemma \ref{lemma7} and \eqref{four20},
\begin{flalign*}
\frac{\sqrt{1-\rho^{2}}}{\abs{\rho}}\frac{d}{dx}T_{\rho}1_{A'}(x)
&=\frac{\sqrt{1-\rho^{2}}}{\abs{\rho}}\frac{d}{dx}T_{\rho}1_{B(0,r_{1})}(x)
+\frac{\sqrt{1-\rho^{2}}}{\abs{\rho}}\frac{d}{dx}T_{\rho}(1_{A'}-1_{B(0,r_{1})})(x)\\
&\leq\int_{-(r_{1}+x\abs{\rho})/\sqrt{1-\rho^{2}}}^{-(r_{1}-x\abs{\rho})/\sqrt{1-\rho^{2}}}yd\gamma_{1}(y)
+\int_{-(k+2)(1-\abs{\rho})\log\abs{\rho}}^{\infty}yd\gamma_{1}(y)\\
&\leq\int_{-(r_{1}-3\abs{\rho}\log\abs{\rho})/\sqrt{1-\rho^{2}}}^{-r_{1}/\sqrt{1-\rho^{2}}}yd\gamma_{1}(y)+\abs{\rho}^{(k+2)^{2}(1-\rho)^{2}/2}\\
&\leq\frac{30\abs{\rho}}{30\abs{\rho}}\int_{-(r_{1}+30\abs{\rho})/\sqrt{1-\rho^{2}}}^{-r_{1}/\sqrt{1-\rho^{2}}}yd\gamma_{1}(y)+\abs{\rho}^{(k+2)^{2}(1-\rho)^{2}/2}\\
&\leq-30\abs{\rho}\frac{1}{30\sqrt{2\pi}}e^{-(r_{1}+30\abs{\rho})^{2}/[2(1-\rho^{2})]}+\abs{\rho}^{(k+2)^{2}(1-\rho)^{2}/2}\\
&\leq-\abs{\rho}e^{-(11/10)r_{1}^{2}/2}+\abs{\rho}^{(k+2)^{2}(1-\rho)^{2}/2}<0.
\end{flalign*}

%here is the same thing for B'
%\begin{flalign*}
%\frac{\sqrt{1-\rho^{2}}}{\abs{\rho}}\frac{d}{dx}T_{\rho}1_{B'}(x)
%&=\frac{\sqrt{1-\rho^{2}}}{\abs{\rho}}\frac{d}{dx}T_{\rho}1_{B(0,r_{2})^{c}}(x)
%+\frac{\sqrt{1-\rho^{2}}}{\abs{\rho}}\frac{d}{dx}T_{\rho}(1_{B'}-1_{B(0,r_{2})^{c}})(x)\\
%&\geq\int_{(r_{2}-x\abs{\rho})/\sqrt{1-\rho^{2}}}^{(r_{2}+x\abs{\rho})/\sqrt{1-\rho^{2}}}yd\gamma_{1}(y)
%-\int_{-(k+2)(1-\abs{\rho})\log\abs{\rho}}^{\infty}yd\gamma_{1}(y)\\
%&\geq\int_{r_{2}/\sqrt{1-\rho^{2}}}^{(r_{2}+x\abs{\rho})/\sqrt{1-\rho^{2}}}yd\gamma_{1}(y)+\abs{\rho}^{(k+2)^{2}(1-\rho)^{2}/2}\\
%&\geq\frac{30\abs{\rho}}{30\abs{\rho}}\int_{r_{2}/\sqrt{1-\rho^{2}}}^{(r_{2}+30\abs{\rho})/\sqrt{1-\rho^{2}}}yd\gamma_{1}(y)+\abs{\rho}^{(k+2)^{2}(1-\rho)^{2}/2}\\
%&\geq30\abs{\rho}\frac{1}{30\sqrt{2\pi}}e^{-(r_{2}+30\abs{\rho})^{2}/[2(1-\rho^{2})]}+\abs{\rho}^{(k+2)^{2}(1-\rho)^{2}/2}\\
%&\geq\abs{\rho}e^{-(11/10)r_{2}^{2}/2}+\abs{\rho}^{(k+2)^{2}(1-\rho)^{2}/2}>0.
%\end{flalign*}

Similarly, $(\sqrt{1-\rho^{2}}/\abs{\rho})\frac{d}{dx}T_{\rho}1_{B'}(x)>0$.  Therefore, \eqref{four20} implies that \eqref{four21} holds.  So, let $k\to\infty$ in \eqref{four21}.  Combining \eqref{four21} and \eqref{four000} then completes the theorem.
$$
B(0,r_{2})=\{x\in\R\colon T_{\rho}1_{A'}\leq c_{1}\}=B'\,\wedge\,
B(0,r_{1})=\{x\in\R\colon T_{\rho}1_{B'}\leq c_{1}\}=A'.
$$
\end{proof}

\section{A Second Variation Formula}\label{secvars}

In preparation for later sections, we now investigate a second variation formula for quadratic functionals.  Lemma \ref{lemma2} below essentially appears in \cite[Theorem 2.6]{chokski07}.  However, their statement and proof are slightly different than we require.  We prove Lemmas \ref{lemma2} and Lemma \ref{lemma3} in the Appendix, Section \ref{sec2var} (see Lemmas \ref{latelemma} and \ref{lemmab}.)

\begin{assumption}\label{as1}
Let $A\subset\R^{n}$ be a set with smooth boundary, and let $N\colon\partial A\to S^{n-1}$ denote the unit exterior normal to $\partial A$.  Let $X\colon\R^{n}\to\R^{n}$ be a vector field.  Let $\Psi\colon\R^{n}\times(-1,1)$ such that $\Psi(x,0)=x$ and such that $\frac{d}{dt}|_{t=0}\Psi(x,t)=X(\Psi(x,t))$ for all $x\in\R^{n},t\in(-1,1)$.  For any $t\in(-1,1)$, let $A^{(t)}=\Psi(A,t)$.    Note that $A^{(0)}=A$.  Define
$$V(x,t)\colonequals\int_{A^{(t)}}G(x,y)dy,\qquad \forall\,x\in\R^{n},\,\forall\,t\in(-1,1).$$
\end{assumption}

\begin{lemma}[\embolden{The Second Variation}, {\cite[Theorem 2.6]{chokski07}}]\label{lemma2}

Let $G\colon\R^{n}\times\R^{n}\to\R$ be a Schwartz function.  For any $A\subset\R^{n}$, let $F(A)\colonequals \int_{\R^{n}}\int_{\R^{n}} 1_{A}(x)G(x,y)1_{A}(y)dxdy$.  Then
%For any $x\in\partial A$, let $N(x)$ denote the outward pointing unit normal vector of $A$ at $x$.  For any $t\in(-1,1)$, let $A^{(t)}\subset\R^{n}$ so that $A^{(0)}=A$.  .... Then a normal variation of $A$ satisfies

\begin{flalign*}
\frac{1}{2}\frac{d^{2}}{dt^{2}}F(A^{(t)})|_{t=0}
&=\int_{\partial A}\int_{\partial A}G(x,y)\langle X(x),N(x)\rangle\langle X(y),N(y)\rangle dxdy\\
&\qquad+\int_{\partial A}\mathrm{div}(V(x,0)X(x))\langle X(x),N(x)\rangle dx.
\end{flalign*}
\end{lemma}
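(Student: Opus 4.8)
The plan is to prove Lemma \ref{lemma2} by differentiating $F(A^{(t)})$ twice, carrying the derivatives through the flow $\Psi(\cdot,t)$. First I would write $F(A^{(t)}) = \int_{\R^n}\int_{\R^n} 1_{A^{(t)}}(x)G(x,y)1_{A^{(t)}}(y)\,dx\,dy = \int_{\R^n} 1_{A^{(t)}}(x) V(x,t)\,dx$, where $V(x,t) = \int_{A^{(t)}} G(x,y)\,dy$. The basic tool is the transport/change-of-variables formula: for a smooth compactly supported (or Schwartz) integrand $\phi$, one has $\frac{d}{dt}\int_{A^{(t)}} \phi(x,t)\,dx = \int_{A^{(t)}}\big(\partial_t\phi + \mathrm{div}(\phi X)\big)(x,t)\,dx$, and equivalently, when $\phi$ does not depend on $t$, this equals the boundary integral $\int_{\partial A^{(t)}} \phi(x)\langle X(x),N^{(t)}(x)\rangle\,dx$ by the divergence theorem. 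I would apply this first to compute $\partial_t V(x,t) = \int_{\partial A^{(t)}} G(x,y)\langle X(y),N^{(t)}(y)\rangle\,dy$, and to compute $\frac{d}{dt} F(A^{(t)}) = \int_{\partial A^{(t)}} V(x,t)\langle X(x),N^{(t)}(x)\rangle\,dx$ (using the symmetry of $G$'s two roles and that $F$ is quadratic, so the two differentiations of the two indicator factors each contribute $\int_{\partial A^{(t)}} V\langle X,N^{(t)}\rangle$, summing to $2$ times it; this is exactly the origin of the factor $\tfrac12$ on the left-hand side).

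Next I would differentiate $\frac{d}{dt} F(A^{(t)}) = \int_{\partial A^{(t)}} V(x,t)\langle X(x),N^{(t)}(x)\rangle\,dx$ a second time and evaluate at $t=0$. There are two contributions. The first comes from differentiating $V(x,t)$ in its explicit $t$-slot: this produces $\int_{\partial A}\partial_t V(x,0)\langle X(x),N(x)\rangle\,dx = \int_{\partial A}\int_{\partial A} G(x,y)\langle X(x),N(x)\rangle\langle X(y),N(y)\rangle\,dx\,dy$, which is precisely the first term in the claimed formula. The second contribution comes from differentiating the "moving surface" part $\int_{\partial A^{(t)}}(\,\cdot\,)\langle X,N^{(t)}\rangle$ with respect to $t$, with the integrand $V(x,0)X(x)$ frozen at $t=0$. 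For this I would use the standard first-variation-of-a-surface-integral identity: $\frac{d}{dt}\big|_{t=0}\int_{\partial A^{(t)}} \psi(x)\langle X(x),N^{(t)}(x)\rangle\,dx = \int_{\partial A}\mathrm{div}(\psi X)(x)\langle X(x),N(x)\rangle\,dx$ for a fixed vector field $\psi X$ (where I take $\psi = V(\cdot,0)$ scalar, so $\psi X$ is a vector field). Equivalently one can see this by writing $\int_{\partial A^{(t)}} \langle \psi X, N^{(t)}\rangle\,dx = \int_{A^{(t)}}\mathrm{div}(\psi X)\,dx$ via the divergence theorem and then applying the transport formula to the right-hand side, which gives $\frac{d}{dt}\big|_{t=0} = \int_{\partial A}\mathrm{div}(\psi X)\langle X,N\rangle\,dx$. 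This yields exactly the second term $\int_{\partial A}\mathrm{div}(V(x,0)X(x))\langle X(x),N(x)\rangle\,dx$, completing the identification.

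The main obstacle I anticipate is bookkeeping rather than conceptual: one must carefully track which quantities depend on $t$ explicitly versus through the moving domain, and apply the transport theorem correctly at each stage — in particular making sure that when differentiating $\int_{\partial A^{(t)}} V(x,t)\langle X(x), N^{(t)}(x)\rangle\,dx$ the total derivative splits cleanly as (derivative of $V$ in the explicit slot, domain/normal frozen) $+$ (domain and normal varying, $V(x,0)$ frozen), with no cross terms left over, and that the factor-of-two from the quadratic structure of $F$ lands correctly against the $\tfrac12$. A minor technical point is justifying differentiation under the integral sign and the validity of the transport formula, but since $G$ is Schwartz and $\partial A$ is smooth (and we may localize, as $X$ can be taken compactly supported), all integrands decay rapidly and these steps are routine. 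Since the paper states that the computation "essentially appears in \cite{chokski07}" and defers the full details to the Appendix (Section \ref{sec2var}, Lemmas \ref{latelemma} and \ref{lemmab}), I would present the above as the skeleton and relegate the transport-theorem lemmas and the smoothness/decay justifications to that appendix.
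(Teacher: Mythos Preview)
Your approach is correct and is a genuinely cleaner route than the paper's. The paper (in Lemma~\ref{latelemma}) pulls everything back to the fixed domain $A$ via the change of variables $x\mapsto\Psi(x,t)$, writes $F(A^{(t)})=\int_A V(\Psi(x,t),t)\,J\Psi(x,t)\,dx$, expands the Jacobian $J\Psi$ to second order in $t$, and brute-force chain-rules the second derivative into seven terms; it then spends the bulk of the argument regrouping those seven terms and applying the divergence theorem piece by piece (including a separate three-page computation of $\int_A V_{tt}$). Your approach stays on the moving domain, uses the transport identity $\frac{d}{dt}\int_{A^{(t)}}\phi=\int_{A^{(t)}}\partial_t\phi+\int_{\partial A^{(t)}}\phi\langle X,N^{(t)}\rangle$ twice, and exploits the trick of rewriting the boundary integral $\int_{\partial A^{(t)}}\langle V X,N^{(t)}\rangle$ as the volume integral $\int_{A^{(t)}}\mathrm{div}(VX)$ before differentiating again. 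This sidesteps all the Jacobian bookkeeping and term-regrouping. What the paper's approach buys is that it never needs to differentiate a surface integral directly (everything lives on the fixed $A$), which some readers may find more elementary; what your approach buys is a two-line derivation once the transport lemma is in hand.

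Two small points to tighten. First, your factor-of-two accounting is right but your notation slips: you write $\frac{d}{dt}F(A^{(t)})=\int_{\partial A^{(t)}} V\langle X,N^{(t)}\rangle$ in the second paragraph when you mean $\tfrac12\frac{d}{dt}F$; make this consistent. Second, your identification $\int_{A^{(t)}}\partial_t V(x,t)\,dx=\int_{\partial A^{(t)}}V(y,t)\langle X(y),N^{(t)}(y)\rangle\,dy$ uses that $\int_{A}G(x,y)\,dx=\int_{A}G(y,x)\,dx$, i.e.\ symmetry of $G$. The paper's proof also silently uses this in its equation~\eqref{Btwo18}, and all the paper's applications have symmetric $G$, so this is harmless---but you should state it as a hypothesis (or carry a second potential $\tilde V(y,t)=\int_{A^{(t)}}G(x,y)\,dx$ through the computation).
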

\begin{lemma}[Variation of Gaussian Measure]\label{lemma3}
$$\frac{d}{dt}|_{t=0}\gamma_{n}(A^{(t)})=\int_{\partial A}\langle X(x),N(x)\rangle \gamma_{n}(x)dx.$$
$$\frac{d^{2}}{dt^{2}}|_{t=0}\gamma_{n}(A^{(t)})=\int_{\partial A}(\mathrm{div}(X(x))-\langle X(x),x\rangle)\langle X(x),N(x)\rangle \gamma_{n}(x)dx.$$

%We take $G(x,y)=e^{-(x^{2}-2\rho xy+y^{2})/[2(1-\rho^{2})]}$ or $G(x,y)=\langle x,y\rangle e^{-(x^{2}+y^{2})/2}$.
%...
\end{lemma}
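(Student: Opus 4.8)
The last statement to prove is Lemma \ref{lemma3}, the first- and second-variation formulas for Gaussian measure under the flow $\Psi$. The plan is to recognize $\gamma_n$ as the special case of the functional $V(x,t)$ from Assumption \ref{as1} and more directly to reduce everything to the classical first variation of volume under a flow. First I would write $\gamma_n(A^{(t)}) = \int_{A^{(t)}} \gamma_n(y)\,dy$, where I abuse notation and write $\gamma_n(y) = (2\pi)^{-n/2}e^{-\vnorm{y}_2^2/2}$ for the Gaussian density. Then $\frac{d}{dt}\gamma_n(A^{(t)})$ is the variation of the integral of a fixed smooth density over the moving domain $A^{(t)} = \Psi(A,t)$, which by the transport theorem (change variables $y = \Psi(z,t)$, differentiate the Jacobian, change back) equals $\int_{A^{(t)}} \mathrm{div}(\gamma_n(y) X(y))\,dy$. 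Applying the divergence theorem on $A^{(t)}$ and evaluating at $t=0$ gives $\frac{d}{dt}|_{t=0}\gamma_n(A^{(t)}) = \int_{\partial A} \gamma_n(x)\langle X(x),N(x)\rangle\,dx$, which is the first claimed identity.

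For the second identity I would differentiate the interior formula $\frac{d}{dt}\gamma_n(A^{(t)}) = \int_{A^{(t)}} \mathrm{div}(\gamma_n(y)X(y))\,dy$ once more in $t$. This is again the variation of the integral of a fixed smooth function $h(y) \colonequals \mathrm{div}(\gamma_n(y)X(y))$ over the moving domain, so by the same transport argument it equals $\int_{A^{(t)}} \mathrm{div}(h(y)X(y))\,dy = \int_{\partial A^{(t)}} h(x)\langle X(x), N^{(t)}(x)\rangle\,dx$. Evaluating at $t=0$ gives $\frac{d^2}{dt^2}|_{t=0}\gamma_n(A^{(t)}) = \int_{\partial A} \mathrm{div}(\gamma_n(x)X(x))\langle X(x),N(x)\rangle\,dx$. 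It then remains to expand $\mathrm{div}(\gamma_n(x)X(x)) = \gamma_n(x)\mathrm{div}(X(x)) + \langle \nabla\gamma_n(x), X(x)\rangle = \gamma_n(x)(\mathrm{div}(X(x)) - \langle x, X(x)\rangle)$, using $\nabla \gamma_n(x) = -x\gamma_n(x)$. Substituting yields exactly the stated second variation formula.

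The only genuinely delicate point is justifying the two applications of the transport theorem: one needs the flow $\Psi$ to be well-defined and smooth on a neighborhood of $\overline{A}$ for $t$ near $0$, and one needs enough decay to differentiate under the integral sign and to apply the divergence theorem on the possibly unbounded set $A^{(t)}$. Since $\gamma_n$ and all its derivatives decay super-polynomially and $X$ is assumed to be a (suitably regular, compactly supported or tempered) vector field generating the flow, these integrability issues are routine; I would remark that one may assume $X$ has compact support, or otherwise invoke dominated convergence with the Gaussian weight as the dominating factor. This is precisely the same bookkeeping carried out in the proof of Lemma \ref{lemma2} following \cite{chokski07}, and I would point to the Appendix (Section \ref{sec2var}) where the analogous computations for $F(A^{(t)})$ are done in detail; Lemma \ref{lemma3} is the strictly easier linear analogue of that argument.
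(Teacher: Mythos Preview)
Your argument is correct, but it takes a different route from the paper. You work directly with the linear functional $A\mapsto\gamma_n(A)=\int_A\gamma_n(y)\,dy$ and apply the Reynolds transport theorem twice, using the Liouville identity $\frac{d}{dt}J\Psi=(\mathrm{div}\,X)(\Psi)\cdot J\Psi$ to obtain $\frac{d}{dt}\int_{A^{(t)}}f=\int_{A^{(t)}}\mathrm{div}(fX)$ for a fixed $f$, and then iterate with $f=\mathrm{div}(\gamma_n X)$. The paper instead recycles its quadratic second-variation formula (Lemma~\ref{lemma2}) by taking $G(x,y)=\gamma_n(x)\gamma_n(y)$, so that $F(A)=(\gamma_n(A))^2$; it then computes $\tfrac{1}{2}\frac{d^2}{dt^2}\big|_{t=0}(\gamma_n(A^{(t)}))^2$ from Lemma~\ref{lemma2} and extracts $\frac{d^2}{dt^2}\big|_{t=0}\gamma_n(A^{(t)})$ via the product rule $\tfrac{1}{2}(u^2)''=uu''+(u')^2$, cancelling the $(u')^2$ term against the first term of the second-variation formula and dividing by $\gamma_n(A)$. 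Your approach is more self-contained and conceptually cleaner (it never squares the functional), while the paper's approach has the economy of not redoing any Jacobian bookkeeping since all of that has already been absorbed into Lemma~\ref{lemma2}.
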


\subsection{Noise Stability}

As our first application of Lemma \ref{lemma2}, we study the second variation of the noise stability.

For any $x,y\in\R^{n}$, let $G(x,y)=\frac{e^{-\vnorm{\rho x-y}^{2}/[2(1-\rho^{2})]}\gamma_{n}(x)}{(1-\rho^{2})^{n/2}(2\pi)^{n/2}}=\frac{e^{\frac{-\vnorm{x}_{2}^{2}-\vnorm{y}_{2}^{2}+2\rho\langle x,y\rangle}{2(1-\rho^{2})}}}{(1-\rho^{2})^{n/2}(2\pi)^{n}}$.  Suppose Assumption \ref{as1} holds.  For any $A\subset\R^{n}$, for any $\rho\in(-1,1)$, define
$$F_{\rho}(A)\colonequals\int_{\R^{n}}\int_{\R^{n}} 1_{A}(x)G(x,y)1_{A}(y)dxdy
=\int_{\R^{n}}\int_{\R^{n}} 1_{A}(x)T_{\rho}1_{A}(x)d\gamma_{n}(x).
$$
Then, for any $t\in(-1,1)$ and for any $x\in\R^{n}$, using Assumption \ref{as1} we have
\begin{equation}\label{Afour1}
V(x,t)\colonequals\int_{A^{(t)}}G(x,y)dy=T_{\rho}1_{A^{(t)}}(x)\gamma_{n}(x).
\end{equation}
\begin{lemma}[\embolden{Second Variation of Noise Stability}]\label{lemma60}
Let $\rho\in(-1,1)$.  Assume Assumption \ref{as1} holds with $\frac{d^{2}}{dt^{2}}|_{t=0}\gamma_{n}(A^{(t)})=0$.  Assume also that $T_{\rho}1_{A}(x)$ is constant for all $x\in\partial A$.  Then
\begin{equation}\label{newfive}
\begin{aligned}
\frac{1}{2}\frac{d^{2}}{dt^{2}}F(A^{(t)})|_{t=0}
&=\int_{\partial A}\int_{\partial A}G(x,y)\langle X(x),N(x)\rangle\langle X(y),N(y)\rangle dxdy\\
&\quad+\int_{\partial A}\langle\nabla T_{\rho}1_{A}(x),X(x)\rangle\langle X(x),N(x)\rangle dx.
\end{aligned}
\end{equation}
\end{lemma}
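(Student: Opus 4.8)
The plan is to start from the general second variation formula in Lemma \ref{lemma2} and simplify the boundary divergence term using the two hypotheses: that the first-order measure variation term involving the second derivative of $\gamma_n(A^{(t)})$ vanishes, and that $T_\rho 1_A$ is constant on $\partial A$. The first double integral in \eqref{newfive} is already identical to the first term in Lemma \ref{lemma2}, so all the work is in rewriting
$$\int_{\partial A}\mathrm{div}(V(x,0)X(x))\langle X(x),N(x)\rangle dx.$$
Using \eqref{Afour1}, $V(x,0) = T_\rho 1_A(x)\,\gamma_n(x)$, so by the product rule
$$\mathrm{div}(V(x,0)X(x)) = \langle\nabla(T_\rho 1_A(x)\gamma_n(x)),X(x)\rangle + T_\rho 1_A(x)\gamma_n(x)\,\mathrm{div}(X(x)).$$
Expanding $\nabla(T_\rho 1_A \cdot \gamma_n) = \gamma_n\nabla T_\rho 1_A + T_\rho 1_A \nabla\gamma_n$ and using $\nabla\gamma_n(x) = -x\,\gamma_n(x)$, we get
$$\mathrm{div}(V(x,0)X(x)) = \gamma_n(x)\Big[\langle\nabla T_\rho 1_A(x),X(x)\rangle + T_\rho 1_A(x)\big(\mathrm{div}(X(x)) - \langle X(x),x\rangle\big)\Big].$$

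Now I would substitute this into the boundary integral and split it into two pieces. The first piece is exactly $\int_{\partial A}\langle\nabla T_\rho 1_A(x),X(x)\rangle\langle X(x),N(x)\rangle\gamma_n(x)\,dx$; however, note that \eqref{newfive} as written has no $\gamma_n(x)$ weight on this term, which means the convention here is that the surface integral on $\partial A$ already carries the weight implicitly — or, more precisely, one should absorb $\gamma_n$ into $G$ as in the definition $G(x,y)=e^{(\cdots)}/[(1-\rho^2)^{n/2}(2\pi)^n]$, under which $T_\rho 1_A(x)\gamma_n(x) = V(x,0)$ and the surface measures in the first double integral of \eqref{newfive} likewise carry the Gaussian-type weight through $G$. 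I would make this bookkeeping explicit at the start so the weights match. The second piece is
$$\int_{\partial A} T_\rho 1_A(x)\big(\mathrm{div}(X(x)) - \langle X(x),x\rangle\big)\langle X(x),N(x)\rangle\gamma_n(x)\,dx,$$
and here is where the hypotheses enter: since $T_\rho 1_A(x)$ is constant on $\partial A$, say equal to $c$, it pulls out of the integral, leaving
$$c\int_{\partial A}\big(\mathrm{div}(X(x)) - \langle X(x),x\rangle\big)\langle X(x),N(x)\rangle\gamma_n(x)\,dx = c\cdot\frac{d^2}{dt^2}\Big|_{t=0}\gamma_n(A^{(t)}),$$
by the second formula in Lemma \ref{lemma3}. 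By assumption this second derivative is zero, so the entire second piece vanishes, and \eqref{newfive} follows.

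The main obstacle I anticipate is purely notational rather than mathematical: keeping the Gaussian weight $\gamma_n(x)$ consistently accounted for between the abstract formula of Lemma \ref{lemma2} (stated for an unweighted Schwartz kernel $G$ with plain Lebesgue surface measure $dx$) and the weighted statement \eqref{newfive}. One must decide whether $G$ includes the factor $\gamma_n(x)$ or not, and then verify that the first double integral, the new $\langle\nabla T_\rho 1_A,X\rangle$ term, and the term killed by Lemma \ref{lemma3} all carry compatible weights; once the convention $V(x,0) = T_\rho 1_A(x)\gamma_n(x)$ from \eqref{Afour1} is fixed, everything lines up, but this is the step where an error would most naturally creep in. The actual computation — product rule, $\nabla\gamma_n = -x\gamma_n$, and recognizing the second-variation-of-measure integrand — is routine and short.
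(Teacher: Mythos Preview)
Your proposal is correct and follows essentially the same route as the paper's own proof: both apply Lemma \ref{lemma2}, substitute $V(x,0)=T_{\rho}1_{A}(x)\gamma_{n}(x)$ from \eqref{Afour1}, expand the divergence via the product rule (using $\nabla\gamma_{n}=-x\gamma_{n}$), and then kill the $T_{\rho}1_{A}(x)\bigl(\mathrm{div}(X)-\langle X,x\rangle\bigr)$ piece by pulling the constant boundary value of $T_{\rho}1_{A}$ outside and invoking Lemma \ref{lemma3} together with the hypothesis $\frac{d^{2}}{dt^{2}}|_{t=0}\gamma_{n}(A^{(t)})=0$. Your observation about the missing $\gamma_{n}(x)$ weight in the second integral of \eqref{newfive} is well taken: the paper's own computation in \eqref{Afour2} carries $\gamma_{n}(x)\,dx$ through to the final line, so the statement \eqref{newfive} appears to have a minor typographical omission of that factor, exactly as you suspected.
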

\begin{proof}
Applying Lemma \ref{lemma2},
\begin{equation}\label{Afour2}
\begin{aligned}
\frac{1}{2}\frac{d^{2}}{dt^{2}}F(A^{(t)})|_{t=0}
&=\int_{\partial A}\int_{\partial A}G(x,y)\langle X(x),N(x)\rangle\langle X(y),N(y)\rangle dxdy\\
&\qquad+\int_{\partial A}\mathrm{div}(V(x,0)X(x))\langle X(x),N(x)\rangle dx\\
&\stackrel{\eqref{Afour1}}{=}\int_{\partial A}\int_{\partial A}G(x,y)\langle X(x),N(x)\rangle\langle X(y),N(y)\rangle dxdy\\
&\qquad+\int_{\partial A}\mathrm{div}(T_{\rho}1_{A}(x)\gamma_{n}(x)X(x))\langle X(x),N(x)\rangle dx\\
&=\int_{\partial A}\int_{\partial A}G(x,y)\langle X(x),N(x)\rangle\langle X(y),N(y)\rangle dxdy\\
&\qquad+\int_{\partial A}(\sum_{i=1}^{n}T_{\rho}1_{A}(x)\frac{\partial}{\partial x_{i}}X^{(i)}(x)-x_{i}T_{\rho}1_{A}(x)X^{(i)}(x)\\
&\qquad\qquad+\frac{\partial}{\partial x_{i}}T_{\rho}1_{A}(x)X^{(i)}(x))\langle X(x),N(x)\rangle \gamma_{n}(x)dx.
\end{aligned}
\end{equation}

Using Lemma \ref{lemma3}, and that $T_{\rho}1_{A}(x)$ is constant when $x\in\partial A$, we then get \eqref{newfive}.
\end{proof}
%
%If the first variation is zero (with respect to a variation that ``preserves flatness''), then from above, $\int_{\partial A}T_{\rho}1_{A}(y)\langle X(y),N(y)\rangle d\gamma_{n}(y)=0$.
%
%$A$ or $A$ and $B$?
%
%Consider the case $n=1$.

\subsection{A Poincar\'{e}-Type Inequality}

In our investigation of the second variation of the ball or its complement in Section \ref{secfourvar} below, we require the following Poincar\'{e}-type inequality.

\begin{lemma}[A Poincar\'{e}-Type Inequality on the Sphere]\label{lemma50}
Let $r>0$ and let $B(0,r)\subset\R^{n}$.  Let $f\colon \partial B(0,r)\to\R$ with $\int_{\partial B(0,r)}f(x)dx=0$.
%and $\int_{\partial B(0,r)}x_{i}x_{j}f(x)dx=0$ for any $i,j\in\{1,\ldots,n\}$ with $i\neq j$.
Then
\begin{equation}\label{ten4}
\sum_{i=1}^{n}\left(\int_{\partial B(0,r)}x_{i}^{2}f(x)dx\right)^{2}\leq \frac{2r^{n+3}\mathrm{Vol}(S^{n-1})}{n(n+2)}\int_{\partial B(0,r)}(f(x))^{2}dx.
\end{equation}Moreover, equality occurs when $f(x)=(n-1)x_{1}^{2}-\sum_{j=2}^{n}x_{j}^{2}$ for any $x\in \partial B(0,r)$.
\end{lemma}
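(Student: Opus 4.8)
The plan is to read the left-hand side of \eqref{ten4} as the squared Euclidean length of a fixed linear image $Tf$ of $f$, and then to bound $\vnorm{Tf}_2$ by the largest eigenvalue of an explicit $n\times n$ Gram matrix by Cauchy--Schwarz.

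First I would normalize. Since $\sum_{i=1}^n x_i^2=r^2$ on $\partial B(0,r)$, symmetry gives $\int_{\partial B(0,r)}x_i^2\,dx=\frac{r^2}{n}\int_{\partial B(0,r)}dx$, so the functions $g_i(x)\colonequals x_i^2-\frac{r^2}{n}$ have mean zero on $\partial B(0,r)$ and satisfy $\sum_{i=1}^n g_i\equiv 0$ there. Because $\int_{\partial B(0,r)}f\,dx=0$, for every $i$ we have $\int_{\partial B(0,r)}x_i^2 f\,dx=\int_{\partial B(0,r)}g_i f\,dx$, so the left side of \eqref{ten4} is $\vnorm{Tf}_2^2$ with $Tf\colonequals\big(\int_{\partial B(0,r)}g_i f\,dx\big)_{i=1}^n\in\R^n$. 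For a unit vector $a\in\R^n$, Cauchy--Schwarz gives $\langle Tf,a\rangle^2=\big(\int_{\partial B(0,r)}(\sum_i a_i g_i)f\,dx\big)^2\le\big(\int_{\partial B(0,r)}(\sum_i a_i g_i)^2\,dx\big)\int_{\partial B(0,r)}(f(x))^2\,dx=(a^{T}Ma)\int_{\partial B(0,r)}(f(x))^2\,dx$, where $M$ is the Gram matrix $M_{ij}=\int_{\partial B(0,r)}g_i g_j\,dx$. Maximizing over $a$ yields $\vnorm{Tf}_2^2\le\lambda_{\max}(M)\int_{\partial B(0,r)}(f(x))^2\,dx$, so it remains to compute $\lambda_{\max}(M)$.

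Next I would compute $M$ from the moments of surface measure on $\partial B(0,r)$. Writing $x=r\omega$ with $\omega\in S^{n-1}$ and using the standard Gaussian identities $\E Z_1^4=3$, $\E Z_1^2 Z_2^2=1$, $\E\vnorm{Z}_2^4=n(n+2)$ for a standard Gaussian $Z$ on $\R^n$ (which give $\int_{S^{n-1}}\omega_1^4\,d\omega=\frac{3}{n(n+2)}\mathrm{Vol}(S^{n-1})$ and $\int_{S^{n-1}}\omega_1^2\omega_2^2\,d\omega=\frac{1}{n(n+2)}\mathrm{Vol}(S^{n-1})$), one gets
$$\int_{\partial B(0,r)}x_i^2\,dx=\frac{r^{n+1}}{n}\mathrm{Vol}(S^{n-1}),\qquad\int_{\partial B(0,r)}x_i^4\,dx=\frac{3r^{n+3}}{n(n+2)}\mathrm{Vol}(S^{n-1}),$$
$$\int_{\partial B(0,r)}x_i^2 x_j^2\,dx=\frac{r^{n+3}}{n(n+2)}\mathrm{Vol}(S^{n-1})\quad(i\ne j).$$
Expanding $g_i g_j=(x_i^2-\tfrac{r^2}{n})(x_j^2-\tfrac{r^2}{n})$ and substituting, $M_{ii}=\frac{2(n-1)r^{n+3}\mathrm{Vol}(S^{n-1})}{n^2(n+2)}$ and $M_{ij}=\frac{-2r^{n+3}\mathrm{Vol}(S^{n-1})}{n^2(n+2)}$ for $i\ne j$; equivalently $M=\frac{2r^{n+3}\mathrm{Vol}(S^{n-1})}{n^2(n+2)}(nI-J)$ with $J$ the $n\times n$ all-ones matrix. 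The matrix $nI-J$ has eigenvalue $0$ on the line $\R(1,\dots,1)$ (consistent with $\sum_i g_i\equiv 0$) and eigenvalue $n$ on its orthogonal complement, so $\lambda_{\max}(M)=\frac{2r^{n+3}\mathrm{Vol}(S^{n-1})}{n(n+2)}$, which is exactly the constant in \eqref{ten4}.

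Finally, for the equality statement I would simply check it directly: for $f(x)=(n-1)x_1^2-\sum_{j=2}^n x_j^2$ one has $\int_{\partial B(0,r)}f\,dx=0$, and a short computation with the moment formulas above shows that both sides of \eqref{ten4} equal $\frac{4(n-1)r^{2n+6}\big(\mathrm{Vol}(S^{n-1})\big)^2}{n(n+2)^2}$. (More conceptually, $f=\sum_i v_i g_i$ with $v=(n-1,-1,\dots,-1)\perp(1,\dots,1)$ lies in $\mathrm{span}\{g_i\}$, and since $\dim\mathrm{span}\{g_i\}=n-1$ matches the multiplicity of $\lambda_{\max}(M)$, the map $f\mapsto\vnorm{Tf}_2^2$ equals $\lambda_{\max}(M)\int_{\partial B(0,r)}(f(x))^2\,dx$ identically on that span.) There is no genuine obstacle here beyond bookkeeping; the one point to watch is that the $g_i$ are linearly dependent, so one must not try to expand $f$ "in a basis $\{g_i\}$" — the Gram-matrix/Cauchy--Schwarz argument bypasses this and also accounts for the zero eigenvalue of $M$.
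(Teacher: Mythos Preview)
Your proof is correct and follows the same underlying strategy as the paper: replace $x_i^2$ by mean-zero functions $g_i$, compute their Gram matrix on $\partial B(0,r)$, and read off the sharp constant from that matrix. The paper's $g_i=(n-1)x_i^2-\sum_{j\ne i}x_j^2$ is exactly $n$ times your $g_i=x_i^2-r^2/n$, so the two computations are equivalent.

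Where your version is tighter: you phrase the bound as $\vnorm{Tf}_2^2\le\lambda_{\max}(M)\int f^2$ via a single Cauchy--Schwarz and the explicit diagonalization $M=c(nI-J)$, which makes the constant $\frac{2r^{n+3}\mathrm{Vol}(S^{n-1})}{n(n+2)}$ and the $(n-1)$-dimensional equality space drop out immediately. The paper instead first invokes a spherical-harmonic reduction to degree-$2$ polynomials and then argues somewhat informally that ``the sum of squared projections'' onto the (non-orthogonal, linearly dependent) $g_i$ is maximized on their span; your Gram-matrix formulation makes that step rigorous without the harmonic-analysis detour. You also obtain the fourth moments via the Gaussian identities $\E Z_1^4=3$, $\E Z_1^2Z_2^2=1$, $\E\vnorm{Z}_2^4=n(n+2)$ rather than the paper's hyperspherical-coordinate computation, which is shorter. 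In short: same idea, but your packaging via $\lambda_{\max}(nI-J)=n$ is more transparent and avoids the spherical-harmonic step entirely.
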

\begin{proof}
For any $n\geq1$, write $\mathrm{Vol}(S^{n-1}) = 2\pi\prod_{\ell=1}^{n-2}\int_{0}^{\pi} \sin^{\ell} (x)dx$.  We first claim
\begin{equation}\label{newthree}
\int_{\partial B(0,r)}x_{1}^{4}dx=\frac{3r^{n+3}\mathrm{Vol}(S^{n-1})}{n(n+2)},
\qquad \int_{\partial B(0,r)}x_{1}^{2}x_{2}^{2}dx=\frac{r^{n+3}\mathrm{Vol}(S^{n-1})}{n(n+2)}.
\end{equation}
Indeed, using (hyper)-spherical coordinates,

\begin{flalign*}
&\int_{\partial B(0,r)}x_{1}^{4}dx
=\int_{0}^{\pi}\cdots\int_{0}^{\pi}\int_{0}^{2\pi}r^{4}\cos^{4}(\phi_{n-1})
\prod_{i=1}^{n-2}\sin^{4}\phi_{i}\prod_{j=1}^{n-2}\sin^{n-1-j}(\phi_{j})r^{n-1}d\phi_{n-1}\cdots d\phi_{1}\\
&\quad=r^{n+3}\int_{0}^{\pi}\cdots\int_{0}^{\pi}\int_{0}^{2\pi}\frac{1}{4}(1+\cos(2\phi_{n-1}))^{2}
\prod_{j=1}^{n-2}\sin^{n+3-j}(\phi_{j})d\phi_{n-1}\cdots d\phi_{1}\\
&\quad=r^{n+3}\int_{0}^{\pi}\cdots\int_{0}^{\pi}\int_{0}^{2\pi}\frac{3}{8}
\prod_{j=1}^{n-2}\sin^{n+3-j}(\phi_{j})d\phi_{n-1}\cdots d\phi_{1}
=\frac{3\cdot 2\pi}{8} r^{n+3}\prod_{\ell=5}^{n+2}\int_{0}^{\pi}\sin^{\ell}(x)dx\\
%&=r^{n+1}\int_{0}^{\pi}\cdots\int_{0}^{\pi}\int_{0}^{2\pi}
%\prod_{j=0}^{n-2}\sin^{n-j}(\phi_{j})d\phi_{n-1}\cdots d\phi_{1}\\
&\quad=\frac{3}{8}r^{n+3}\mathrm{Vol}(S^{n-1})\frac{\int_{0}^{\pi}\sin^{n+2}(x)dx\int_{0}^{\pi}\sin^{n+1}(x)dx\int_{0}^{\pi}\sin^{n}(x)dx\int_{0}^{\pi}\sin^{n-1}(x)dx}
{\int_{0}^{\pi}\sin(x)dx\int_{0}^{\pi}\sin^{2}(x)dx\int_{0}^{\pi}\sin^{3}(x)dx\int_{0}^{\pi}\sin^{4}(x)dx}\\
&\quad\stackrel{\eqref{five20}}{=}\frac{3}{8}r^{n+3}\mathrm{Vol}(S^{n-1})\frac{(2\pi)^{2}\frac{(n+1)!!}{(n+2)!!}\frac{(n)!!}{(n+1)!!}\frac{(n-1)!!}{n!!}\frac{(n-2)!!}{(n-1)!!}}
{2(\pi/2)(4/3)(3\pi/8)}
=\frac{3r^{n+3}\mathrm{Vol}(S^{n-1})}{n(n+2)}.
\end{flalign*}
A similar calculation proves the other part of \eqref{newthree}.

Now, for any $i\in\{1,\ldots,n\}$, let $g_{i}\colon\R^{n}\to\R$ so that $g_{i}(x)=(n-1)x_{i}^{2}-\sum_{j\neq i}x_{j}^{2}$.  It then follows from \eqref{newthree} that
\begin{equation}\label{newten}
\int_{\partial B(0,r)}(g_{i}(x))^{2}dx
=n(n-1)\left(\int_{\partial B(0,r)}x_{1}^{4}dx-\int_{\partial B(0,r)}x_{1}^{2}x_{2}^{2}dx\right)
=\frac{2(n-1)}{n+2}r^{n+3}\mathrm{Vol}(S^{n-1}).
\end{equation}
And if $j\in\{1,\ldots,n\}$ with $j\neq i$, then
\begin{equation}\label{neweleven}
\int_{\partial B(0,r)}g_{i}(x)g_{j}(x)dx
=n\left(-\int_{\partial B(0,r)}x_{1}^{4}dx+\int_{\partial B(0,r)}x_{1}^{2}x_{2}^{2}dx\right)
=-\frac{2}{n+2}r^{n+3}\mathrm{Vol}(S^{n-1}).
\end{equation}
So, the functions $g_{1},\ldots,g_{n}$ span an $(n-1)$-dimensional vector space, $\sum_{i=1}^{n}g_{i}=0$, and $\langle g_{i},g_{j}\rangle\colonequals\int_{\partial B(0,r)}g_{i}(x)g_{j}(x)dx=-2(n-1)\frac{r^{n+3}\mathrm{Vol}(S^{n-1})}{n(n+2)}$ if $i,j\in\{1,\ldots,n\}$ with $i\neq j$.

%
%\begin{flalign*}
%&\int_{\partial B(0,r)}x_{1}^{2}x_{2}^{2}dx\\
%&=\int_{0}^{\pi}\cdots\int_{0}^{\pi}\int_{0}^{2\pi}r^{4}\cos^{2}(\phi_{n-1})\sin^{2}(\phi_{n-1})
%\prod_{i=1}^{n-2}\sin^{4}\phi_{i}\prod_{j=1}^{n-2}\sin^{n-1-j}(\phi_{j})r^{n-1}d\phi_{n-1}\cdots d\phi_{1}\\
%&=r^{n+3}\int_{0}^{\pi}\cdots\int_{0}^{\pi}\int_{0}^{2\pi}(\cos^{2}(\phi_{n-1})-\cos^{4}(\phi_{n-1}))
%\prod_{j=1}^{n-2}\sin^{n+3-j}(\phi_{j})d\phi_{n-1}\cdots d\phi_{1}\\
%&=r^{n+3}\int_{0}^{\pi}\cdots\int_{0}^{\pi}\int_{0}^{2\pi}\frac{1}{8}
%\prod_{j=1}^{n-2}\sin^{n+3-j}(\phi_{j})d\phi_{n-1}\cdots d\phi_{1}
%=\frac{2\pi}{8} r^{n+3}\prod_{\ell=5}^{n+2}\int_{0}^{\pi}\sin^{\ell}(x)dx\\
%%&=r^{n+1}\int_{0}^{\pi}\cdots\int_{0}^{\pi}\int_{0}^{2\pi}
%%\prod_{j=0}^{n-2}\sin^{n-j}(\phi_{j})d\phi_{n-1}\cdots d\phi_{1}\\
%&=\frac{1}{8}r^{n+3}\mathrm{Vol}(S^{n-1})\frac{\int_{0}^{\pi}\sin^{n+2}(x)dx\int_{0}^{\pi}\sin^{n+1}(x)dx\int_{0}^{\pi}\sin^{n}(x)dx\int_{0}^{\pi}\sin^{n-1}(x)dx}
%{\int_{0}^{\pi}\sin(x)dx\int_{0}^{\pi}\sin^{2}(x)dx\int_{0}^{\pi}\sin^{3}(x)dx\int_{0}^{\pi}\sin^{4}(x)dx}\\
%&\stackrel{\eqref{five20}}{=}\frac{1}{8}r^{n+3}\mathrm{Vol}(S^{n-1})\frac{(2\pi)^{2}\frac{(n+1)!!}{(n+2)!!}\frac{(n)!!}{(n+1)!!}\frac{(n-1)!!}{n!!}\frac{(n-2)!!}{(n-1)!!}}
%{2(\pi/2)(4/3)(3\pi/8)}
%=\frac{r^{n+3}\mathrm{Vol}(S^{n-1})}{n(n+2)}.
%\end{flalign*}
%
%Vol(S^n-1) = 2\pi\prod_{i=1}^{n-2}\int \sin^i (x)
% so 2\pi \prod_{i=3}^{n}\int \sin^i (x) = 2\pi \prod_{i=1}^{n-2}\int \sin^i (x)   [i=n-1 term, i=n term / i=1 term i=2 term]

We now proceed to prove \eqref{ten4}.  Using $\int_{\partial B(0,r)}f(x)dx=0$, we have
\begin{flalign*}
\sum_{i=1}^{n}\left(\int_{\partial B(0,r)}x_{i}^{2}f(x)dx\right)^{2}
&=\sum_{i=1}^{n}\left(\int_{\partial B(0,r)}\frac{1}{n}[(x_{1}^{2}+\cdots+x_{n}^{2})+\sum_{j\neq i}(x_{i}^{2}-x_{j}^{2})]f(x)dx\right)^{2}\\
&=\sum_{i=1}^{n}\frac{1}{n^{2}}\left(\int_{\partial B(0,r)}g_{i}(x)f(x)dx\right)^{2}.\\
\end{flalign*}
So, to prove \eqref{ten4}, we can equivalently prove that
\begin{equation}\label{newseven}
\sum_{i=1}^{n}\left(\int_{\partial B(0,r)}g_{i}(x)f(x)dx\right)^{2}
\leq\frac{2n}{n+2}r^{n+3}\mathrm{Vol}(S^{n-1})\int_{\partial B(0,r)}(f(x))^{2}dx.
\end{equation}

Since the polynomials $g_{1},\ldots,g_{n}$ are polynomials which are homogeneous of degree $2$, by expanding $f$ in spherical harmonics, it suffices to assume that $f$ is also a polynomial which is homogeneous of degree $2$.  Then, the left side of \eqref{newseven} is the sum of the squared lengths of the projections of $f$ onto $g_{1},\ldots,g_{n}$.  Since $\sum_{i=1}^{n}g_{i}=0$, $g_{1},\ldots,g_{n}$ span an $(n-1)$-dimensional space, and $\langle g_{i},g_{j}\rangle$ is a constant for any $i,j\in\{1,\ldots,n\}$ with $i\neq j$, we conclude that the left side of \eqref{newseven} is bounded by a multiple of the right side.  More specifically, if $\int_{\partial B(0,r)}(f(x))^{2}dx$ is fixed, then the left side of \eqref{newseven} is maximized whenever $f$ is in the span of $g_{1},\ldots,g_{n}$.  In particular, the left side of \eqref{newseven} is maximized when $f$ is a multiple of $f_{1}$.  And indeed, equality holds in \eqref{newseven} in this case, since if $f=g_{1}$, we have by \eqref{newten} and \eqref{neweleven},

\begin{flalign*}
&\sum_{i=1}^{n}\left(\int_{\partial B(0,r)}g_{i}(x)f(x)dx\right)^{2}
=\langle g_{1},g_{1}\rangle^{2}+(n-1)\langle g_{1},g_{2}\rangle^{2}\\
&\quad=4(n-1)^{2}\frac{r^{2n+6}\mathrm{Vol}(S^{n-1})^{2}}{(n+2)^{2}}
+4(n-1)\frac{r^{2n+6}\mathrm{Vol}(S^{n-1})^{2}}{(n+2)^{2}}
=\frac{4n(n-1)}{(n+2)^{2}}r^{2n+6}\mathrm{Vol}(S^{n-1})^{2}.
\end{flalign*}

And by \eqref{newten},
$$
\sum_{i=1}^{n}\int_{\partial B(0,r)}(f(x))^{2}dx
=\frac{2(n-1)}{n+2}r^{n+3}\mathrm{Vol}(S^{n-1}).
$$

So,
$$
\frac{\sum_{i=1}^{n}\left(\int_{\partial B(0,r)}g_{i}(x)f(x)dx\right)^{2}}{\int_{\partial B(0,r)}(f(x))^{2}dx}
=\frac{\frac{4n(n-1)}{(n+2)^{2}}r^{2n+6}\mathrm{Vol}(S^{n-1})^{2}}{\frac{2(n-1)}{n+2}r^{n+3}\mathrm{Vol}(S^{n-1})}
=\frac{2n}{n+2}r^{n+3}\mathrm{Vol}(S^{n-1}).
$$
That is, \eqref{newseven} holds, and the proof is complete.
\end{proof}

\begin{lemma}
Let $k$ be a positive integer.  Then
\begin{equation}\label{five20}
\int_{0}^{\pi}\sin^{k}(\theta)d\theta
=\begin{cases}\pi\frac{(k-1)!!}{(k)!!}&\mbox{, if $k$ is even}\\
2\frac{(k-1)!!}{(k)!!}&\mbox{, if $k$ is odd.}
\end{cases}
\end{equation}
\end{lemma}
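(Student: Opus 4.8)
The plan is to reduce the claim to the classical Wallis recursion $I_{k}=\frac{k-1}{k}I_{k-2}$ for the integrals $I_{k}\colonequals\int_{0}^{\pi}\sin^{k}(\theta)\,d\theta$, and then iterate it down to two base cases. First I would compute those base cases directly: $I_{0}=\int_{0}^{\pi}d\theta=\pi$ and $I_{1}=\int_{0}^{\pi}\sin(\theta)\,d\theta=\bigl[-\cos(\theta)\bigr]_{0}^{\pi}=2$.

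Next, for $k\geq2$ I would integrate by parts with $u=\sin^{k-1}(\theta)$ and $dv=\sin(\theta)\,d\theta$, so that $v=-\cos(\theta)$ and $du=(k-1)\sin^{k-2}(\theta)\cos(\theta)\,d\theta$. The boundary term $\bigl[-\sin^{k-1}(\theta)\cos(\theta)\bigr]_{0}^{\pi}$ vanishes since $\sin(0)=\sin(\pi)=0$ and $k-1\geq1$. Using $\cos^{2}(\theta)=1-\sin^{2}(\theta)$ this gives
\[
I_{k}=(k-1)\int_{0}^{\pi}\sin^{k-2}(\theta)\cos^{2}(\theta)\,d\theta=(k-1)\bigl(I_{k-2}-I_{k}\bigr),
\]
so that $kI_{k}=(k-1)I_{k-2}$, i.e. $I_{k}=\frac{k-1}{k}I_{k-2}$.

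Finally I would iterate the recursion, treating the two parities separately. If $k$ is even, applying it $k/2$ times reduces $I_{k}$ to $I_{0}$ and yields $I_{k}=\frac{(k-1)(k-3)\cdots1}{k(k-2)\cdots2}\,I_{0}=\frac{(k-1)!!}{k!!}\,\pi$; if $k$ is odd, applying it $(k-1)/2$ times reduces $I_{k}$ to $I_{1}$ and yields $I_{k}=\frac{(k-1)(k-3)\cdots2}{k(k-2)\cdots3}\,I_{1}=\frac{(k-1)!!}{k!!}\cdot2$, which is \eqref{five20}. This iteration is formalized by a routine induction on $k$ in steps of two, with the even and odd classes anchored at $I_{0}$ and $I_{1}$ respectively. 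There is no real obstacle in the argument; the only points that require a moment's care are checking that the boundary term in the integration by parts vanishes, and identifying the descending products $(k-1)(k-3)\cdots$ and $k(k-2)\cdots$ with the double factorials $(k-1)!!$ and $k!!$ in each parity class.
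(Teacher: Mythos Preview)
Your proof is correct and follows essentially the same approach as the paper: derive the Wallis recursion $I_{k}=\frac{k-1}{k}I_{k-2}$ via integration by parts (with $u=\sin^{k-1}\theta$, $dv=\sin\theta\,d\theta$) and the identity $\cos^{2}=1-\sin^{2}$, then iterate down to the base cases $I_{0}=\pi$ and $I_{1}=2$. If anything, your write-up is slightly more careful in explicitly noting that the boundary term vanishes.
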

\begin{proof}
Let $k\geq0$.  Let $c_{k}\colonequals\int_{0}^{\pi}\sin^{k}(\theta)d\theta$.  Then
\begin{flalign*}
c_{k}&=-\int_{0}^{\pi}\sin^{k-1}(\theta)\frac{d}{d\theta}\cos(\theta) d\theta
=\int_{0}^{\pi}(k-1)\sin^{k-2}(\theta)\cos^{2}(\theta) d\theta\\
&=\int_{0}^{\pi}(2k-1)\sin^{k-2}(\theta)(1-\sin^{2}(\theta)) d\theta
=(k-1)(c_{k-2}-c_{k}).
\end{flalign*}
So, if $d_{k}=c_{0}=\pi$ when $k$ is even, and $d_{k}=c_{1}=2$ when $k$ is odd,
$$
c_{k}=\frac{k-1}{k}c_{k-2}=\frac{(k-1)(k-2)}{k(k-2)}c_{k-4}
=\cdots=\frac{(k-1)!!}{(k)!!}d_{k}
$$
\end{proof}

\begin{lemma}\label{newlem}
Let $r>0$.  Then
\begin{equation}\label{neweight}
\int_{B(0,r)}(1-x_{1}^{2})d\gamma_{n}(x)=\frac{\mathrm{Vol}(S^{n-1})r^{n} e^{-r^2/2}}{n(2\pi)^{n/2}}.
\end{equation}
\end{lemma}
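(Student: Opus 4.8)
The plan is to use rotational symmetry to collapse the integral to a radial one, and then finish either by the divergence theorem or by a one-dimensional integration in polar coordinates. First I would write
$$\int_{B(0,r)}(1-x_{1}^{2})d\gamma_{n}(x)=\gamma_{n}(B(0,r))-\int_{B(0,r)}x_{1}^{2}d\gamma_{n}(x),$$
and observe that since $B(0,r)$ is rotationally invariant, all the integrals $\int_{B(0,r)}x_{i}^{2}d\gamma_{n}(x)$ are equal, so their common value equals $\tfrac{1}{n}\sum_{i=1}^{n}\int_{B(0,r)}x_{i}^{2}d\gamma_{n}(x)=\tfrac{1}{n}\int_{B(0,r)}\vnorm{x}_{2}^{2}d\gamma_{n}(x)$. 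Hence the quantity to compute is $\tfrac{1}{n}\int_{B(0,r)}(n-\vnorm{x}_{2}^{2})d\gamma_{n}(x)$.

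For the first route I would use the pointwise identity $\mathrm{div}\big(x\,e^{-\vnorm{x}_{2}^{2}/2}\big)=(n-\vnorm{x}_{2}^{2})e^{-\vnorm{x}_{2}^{2}/2}$, so that the divergence theorem gives
$$\int_{B(0,r)}(n-\vnorm{x}_{2}^{2})d\gamma_{n}(x)=\frac{1}{(2\pi)^{n/2}}\int_{\partial B(0,r)}\langle x,N(x)\rangle e^{-\vnorm{x}_{2}^{2}/2}dx.$$
On $\partial B(0,r)$ one has $N(x)=x/r$, hence $\langle x,N(x)\rangle=r$ and $e^{-\vnorm{x}_{2}^{2}/2}=e^{-r^{2}/2}$, while $\int_{\partial B(0,r)}dx=\mathrm{Vol}(\partial B(0,r))=r^{n-1}\mathrm{Vol}(S^{n-1})$. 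Multiplying these and dividing by $n$ yields \eqref{neweight}.

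Alternatively — matching the spherical-coordinate style already used in the proof of Lemma \ref{lemma50} — I would pass to polar coordinates $x=s\omega$, $dx=s^{n-1}\,ds\,d\sigma(\omega)$, to get $\tfrac{1}{n(2\pi)^{n/2}}\int_{0}^{r}\int_{S^{n-1}}(n-s^{2})e^{-s^{2}/2}s^{n-1}\,d\sigma(\omega)\,ds$, use $\int_{S^{n-1}}d\sigma=\mathrm{Vol}(S^{n-1})$, and then recognize that $\frac{d}{ds}\big(\tfrac{s^{n}}{n}e^{-s^{2}/2}\big)=\tfrac{1}{n}(n-s^{2})s^{n-1}e^{-s^{2}/2}$, so the radial integral telescopes to $\tfrac{r^{n}}{n}e^{-r^{2}/2}$, again giving \eqref{neweight}.

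There is no real obstacle here; this is a routine computation. The only points requiring care are the surface-area normalization $\mathrm{Vol}(\partial B(0,r))=r^{n-1}\mathrm{Vol}(S^{n-1})$ (equivalently the factor $s^{n-1}$ in polar coordinates) and the bookkeeping that isolates the antiderivative $\tfrac{s^{n}}{n}e^{-s^{2}/2}$; everything else is the elementary rotational-symmetry reduction in the first paragraph.
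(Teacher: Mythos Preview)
Your proposal is correct. Your polar-coordinates route is essentially the paper's argument: the paper also reduces to the radial integral $\int_{0}^{r}(n-s^{2})s^{n-1}e^{-s^{2}/2}\,ds$, but rather than spotting the antiderivative $\tfrac{s^{n}}{n}e^{-s^{2}/2}$ directly, it introduces a scaling parameter $g(\alpha)=\gamma_{n}(B(0,\sqrt{\alpha}\,r))$ and computes $g'(1)$ two ways (once via the Fundamental Theorem of Calculus on the upper limit, once by differentiating under the integral sign), which amounts to the same identity. Your divergence-theorem route via $\mathrm{div}(x\,e^{-\vnorm{x}_{2}^{2}/2})=(n-\vnorm{x}_{2}^{2})e^{-\vnorm{x}_{2}^{2}/2}$ is a clean alternative that bypasses the radial antiderivative entirely and makes the boundary term $r^{n}e^{-r^{2}/2}\mathrm{Vol}(S^{n-1})$ appear in one step; it is arguably the most transparent of the three.
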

\begin{proof}
For $\alpha>0$, define
\begin{flalign*}
g(\alpha)\colonequals\int_{B(0,\sqrt{\alpha}r)}d\gamma_{n}(x)
&=\mathrm{Vol}(S^{n-1})\int_{0}^{\sqrt{\alpha}r}s^{n-1}e^{-s^{2}/2}ds/(2\pi)^{n/2}\\
&=\mathrm{Vol}(S^{n-1})\alpha^{n/2}\int_{0}^{r}s^{n-1}e^{-\alpha s^{2}/2}ds/(2\pi)^{n/2}.
\end{flalign*}
Then, using the product rule,
 %\sqrt{\alpha}y=r, dr=\sqrt{\alpha}dy
$$
g'(\alpha)
=\mathrm{Vol}(S^{n-1})\left(\alpha^{\frac{n}{2}}\int_{0}^{r}(-s^{n+1}/2)e^{-\alpha s^{2}/2}\frac{ds}{(2\pi)^{n/2}}
+\frac{n}{2}\alpha^{\frac{n}{2}-1}\int_{0}^{r}s^{n-1}e^{-\alpha s^{2}/2}\frac{ds}{(2\pi)^{n/2}}\right).
$$
Plugging in $\alpha=1$, we then get
$$g'(1)=\mathrm{Vol}(S^{n-1})(1/2)\int_{0}^{r}(n-s^{2})s^{n-1}e^{-s^{2}/2}ds/(2\pi)^{n/2}=\frac{1}{2}\int_{B(0,r)}(n-\vnorm{x}_{2}^{2})d\gamma_{n}(x).$$
Also, applying the Fundamental Theorem of Calculus to the definition of $g$,
$$g'(1)=(1/2)\mathrm{Vol}(S^{n-1})r^{n}e^{-r^{2}/2}/(2\pi)^{n/2}.$$
Combining the two formulas for $g'(1)$, we get
$$\int_{B(0,r)}(n-\vnorm{x}_{2}^{2})d\gamma_{n}(x)=2g'(1)=\mathrm{Vol}(S^{n-1})r^{n}e^{-r^{2}/2}/(2\pi)^{n/2}.$$
That is, \eqref{neweight} holds.
\end{proof}

\subsection{Sum of Squared Fourier Coefficients}\label{secfourvar}

%Taking two derivatives in $\rho$ and setting $\rho=0$...

For our second application of Lemma \ref{lemma2}, we consider the sum of squared second-order Hermite-Fourier coefficients of a set.

\begin{lemma}[\embolden{Second Variation of Sum of Squared Fourier Coefficients}]\label{lemma61}
 Suppose Assumption \ref{as1} holds with $\frac{d^{2}}{dt^{2}}|_{t=0}\gamma_{n}(A^{(t)})=0$.  For any $A\subset\R^{n}$, define
$$F(A)\colonequals\sum_{i=1}^{n}\int_{\R^{n}}\int_{\R^{n}} 1_{A}(x)(1-x_{i}^{2})(1-y_{i}^{2})\gamma_{n}(x)\gamma_{n}(y)1_{A}(y)dxdy.$$
Assume there exists $r>0$ such that $A=B(0,r)$ or $A=B(0,r)^{c}$.  Then
\begin{equation}\label{newsix}
\begin{aligned}
\frac{1}{2}\frac{d^{2}}{dt^{2}}F(A^{(t)})|_{t=0}
&=\sum_{i=1}^{n}\left(\int_{\partial A}(1-x_{i}^{2})\langle X(x),N(x)\rangle\gamma_{n}(x)dx\right)^{2}\\
%&\qquad+\sum_{i=1}^{n}(\int_{A} (1-y_{i}^{2})d\gamma_{n}(y))\int_{\partial A}(1-x_{i}^{2})\langle-x,X(x)\rangle\langle X(x),N(x)\rangle \gamma_{n}(x)dx\\
&\qquad+2\sum_{i=1}^{n}\int_{\partial A}(-x_{i})X^{(i)}\langle X(x),N(x)\rangle d\gamma_{n}(x)(\int_{A} (1-y_{i}^{2})d\gamma_{n}(y)).
\end{aligned}
\end{equation}
\end{lemma}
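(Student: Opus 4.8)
The plan is to apply the second variation formula of Lemma~\ref{lemma2} verbatim, with the symmetric Schwartz kernel $G(x,y)=\sum_{i=1}^{n}(1-x_{i}^{2})(1-y_{i}^{2})\gamma_{n}(x)\gamma_{n}(y)$ (a finite sum of polynomials times Gaussians, hence Schwartz), so that $F(A)=\int_{\R^{n}}\int_{\R^{n}}1_{A}(x)G(x,y)1_{A}(y)\,dx\,dy$ in the form required by that lemma. The formula produces two boundary integrals. In the first one, $\int_{\partial A}\int_{\partial A}G(x,y)\langle X(x),N(x)\rangle\langle X(y),N(y)\rangle\,dx\,dy$, I would substitute the definition of $G$ and separate the double integral over the index $i$; this yields $\sum_{i=1}^{n}\big(\int_{\partial A}(1-x_{i}^{2})\langle X(x),N(x)\rangle\gamma_{n}(x)\,dx\big)^{2}$, which is already the first term of \eqref{newsix}.

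It then remains to handle the term $\int_{\partial A}\mathrm{div}(V(x,0)X(x))\langle X(x),N(x)\rangle\,dx$. First I would compute $V(x,0)=\int_{A}G(x,y)\,dy=\gamma_{n}(x)\,\phi(x)$, where $\phi(x):=\sum_{i=1}^{n}(1-x_{i}^{2})c_{i}$ and $c_{i}:=\int_{A}(1-y_{i}^{2})\,d\gamma_{n}(y)$. Using $\partial_{j}\gamma_{n}(x)=-x_{j}\gamma_{n}(x)$ and $\partial_{j}\phi(x)=-2x_{j}c_{j}$, the product rule gives
$$\mathrm{div}\big(V(x,0)X(x)\big)=\gamma_{n}(x)\phi(x)\big[\mathrm{div}X(x)-\langle X(x),x\rangle\big]-2\gamma_{n}(x)\sum_{j=1}^{n}x_{j}c_{j}X^{(j)}(x).$$

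Now the key observation: since $A=B(0,r)$ or $A=B(0,r)^{c}$ is rotationally symmetric, the constants $c_{i}$ all coincide, say with $c$, so $\phi(x)=c\sum_{i=1}^{n}(1-x_{i}^{2})=c(n-\vnorm{x}_{2}^{2})$ is \emph{constant} on $\partial A=\partial B(0,r)$. Integrating the first piece above against $\langle X,N\rangle$ over $\partial A$, the constant $\phi|_{\partial A}$ pulls out and Lemma~\ref{lemma3} identifies what remains as $\phi|_{\partial A}\cdot\frac{d^{2}}{dt^{2}}|_{t=0}\gamma_{n}(A^{(t)})$, which vanishes by hypothesis. The second piece equals $-2\int_{\partial A}\gamma_{n}(x)\sum_{j}x_{j}c_{j}X^{(j)}(x)\langle X(x),N(x)\rangle\,dx=2\sum_{i=1}^{n}c_{i}\int_{\partial A}(-x_{i})X^{(i)}(x)\langle X(x),N(x)\rangle\,d\gamma_{n}(x)$, which is exactly the second term of \eqref{newsix} once one recalls $c_{i}=\int_{A}(1-y_{i}^{2})\,d\gamma_{n}(y)$. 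Adding the two boundary contributions finishes the argument.

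I do not expect a serious obstacle: once Lemma~\ref{lemma2} is invoked, the computation is essentially mechanical. The one point that genuinely uses the hypothesis on $A$ is that $\phi=\sum_{i}(1-x_{i}^{2})c_{i}$ be constant on the boundary sphere — for a general symmetric set the $c_{i}$ need not agree and this divergence contribution would not collapse — so rotational symmetry is used precisely here, in tandem with the constraint $\frac{d^{2}}{dt^{2}}|_{t=0}\gamma_{n}(A^{(t)})=0$ and Lemma~\ref{lemma3}. Minor care is needed with signs in the divergence expansion and with the orientation of $N$ when $A=B(0,r)^{c}$ (where $\partial A$ is the sphere of radius $r$ carrying the inward-pointing unit normal), but Lemma~\ref{lemma2} is stated independently of this choice, so the argument is unaffected.
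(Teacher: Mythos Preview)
Your proposal is correct and follows essentially the same approach as the paper. The only cosmetic difference is that the paper applies Lemma~\ref{lemma2} to each summand $F_{i}(A)=\big(\int_{A}(1-x_{i}^{2})\,d\gamma_{n}(x)\big)^{2}$ separately and then sums over $i$, whereas you apply it once to the full kernel $G=\sum_{i}G_{i}$; both routes arrive at the same divergence expansion and both use the rotational symmetry of $A$ (so that all $c_{i}$ coincide and $\sum_{i}c_{i}(1-x_{i}^{2})$ is constant on $\partial B(0,r)$) together with Lemma~\ref{lemma3} and the hypothesis $\frac{d^{2}}{dt^{2}}|_{t=0}\gamma_{n}(A^{(t)})=0$ to eliminate the first divergence contribution.
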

\begin{proof}
Let $i\in\{1,\ldots,n\}$ and let $G_{i}(x,y)=(1-x_{i}^{2})(1-y_{i}^{2})\gamma_{n}(x)\gamma_{n}(y)$ for any $x,y\in\R^{n}$.  Define
$$F_{i}(A^{(t)})\colonequals\int_{\R^{n}}\int_{\R^{n}} 1_{A^{(t)}}(x)G_{i}(x,y)1_{A^{(t)}}(y)=(\int_{A^{(t)}} (1-x_{i}^{2})d\gamma_{n}(x))^{2}.$$
Then $F(A^{(t)})=\sum_{i=1}^{n}F_{i}(A^{(t)})$.  Define
\begin{equation}\label{Athree1}
V_{i}(x,t)\colonequals\int_{A^{(t)}}G_{i}(x,y)dy=(\int_{A^{(t)}}(1-y_{i}^{2})d\gamma_{n}(y))(1-x_{i}^{2})\gamma_{n}(x).
\end{equation}

Applying Lemma \ref{lemma2},
\begin{equation}\label{Athree2}
\begin{aligned}
\frac{1}{2}\frac{d^{2}}{dt^{2}}F_{i}(A^{(t)})|_{t=0}
&=\int_{\partial A}\int_{\partial A}(1-x_{i}^{2})(1-y_{i}^{2})\langle X(x),N(x)\rangle\langle X(y),N(y)\rangle \gamma_{n}(x)\gamma_{n}(y)dxdy\\
&\qquad+\int_{\partial A}\mathrm{div}(V_{i}(x,0)X(x))\langle X(x),N(x)\rangle dx\\
&\stackrel{\eqref{Athree1}}{=}(\int_{\partial A}(1-x_{i})^{2}\langle X(x),N(x)\rangle \gamma_{n}(x)dx)^{2}\\
&\qquad+\int_{\partial A}\mathrm{div}((1-x_{i}^{2})\gamma_{n}(x)X(x))\langle X(x),N(x)\rangle dx(\int_{A}(1-y_{i}^{2})d\gamma_{n}(y)).
\end{aligned}
\end{equation}

We compute the last term as follows.
\begin{equation}\label{Athree3}
\begin{aligned}
&\mathrm{div}((1-x_{i}^{2})\gamma_{n}(x)X(x))
=\sum_{j=1}^{n}\frac{\partial}{\partial x_{j}}((1-x_{i}^{2})\gamma_{n}(x)X^{(j)}(x))\\
&\qquad=\sum_{j=1}^{n}(1-x_{i}^{2})((-x_{j})X^{(j)}(x)+\frac{\partial}{\partial x_{j}}X^{(j)}(x))\gamma_{n}(x)-2\cdot 1_{\{i=j\}}x_{i}\gamma_{n}(x)X^{(j)}(x)\\
&\qquad=(1-x_{i}^{2})\Big(\langle -x,X(x)\rangle+\mathrm{div}(X(x))\Big)\gamma_{n}(x)-2x_{i}\gamma_{n}(x)X^{(i)}(x).
\end{aligned}
\end{equation}
We now combine \eqref{Athree2} and \eqref{Athree3} and sum over $i\in\{1,\ldots,n\}$.  Since $A=B(0,r)$ or $A=B(0,r)^{c}$, the first term from \eqref{Athree3} vanishes by Lemma \ref{lemma3} after it is integrated, resulting in \eqref{newsix}.
\end{proof}

We continue to use the assumptions and notation from Assumtion \ref{as1}.  Also, for any $A\subset\R^{n}$, define
$$F(A)\colonequals\sum_{i=1}^{n}\int_{\R^{n}}\int_{\R^{n}} 1_{A}(x)(1-x_{i}^{2})(1-y_{i}^{2})\gamma_{n}(x)\gamma_{n}(y)1_{A}(y)dxdy.$$
\begin{cor}[\embolden{Second Variation Bound}]\label{varbndcor}
Let $r>0$.  Assume $A=A^{(0)}= B(0,r)$ or $A=A^{(0)}=B(0,r)^{c}$.  Let $f(x)=\langle X(x),N(x)\rangle$ for any $x\in\partial B(0,r)$.  Assume that $\int_{\partial B(0,r)}f(x)dx=0$ and $\int_{\partial B(0,r)}\abs{f(x)}^{2}dx=1$.  Then
$$\frac{1}{2}\frac{d^{2}}{dt^{2}}F(A^{(t)})|_{t=0}\leq\frac{2r^{n+1}e^{-r^{2}}\mathrm{Vol}(S^{n-1})}{n(n+2)(2\pi)^{n}}\left(r^{2}-n-2\right).$$
Moreover, equality holds when $f$ is a multiple of the function $x\mapsto (n-1)x_{1}^{2}-\sum_{j=2}^{n}x_{j}^{2}$.
\end{cor}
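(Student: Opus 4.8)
The plan is to evaluate the right-hand side of \eqref{newsix} essentially exactly, the only genuine inequality entering being the Poincar\'e-type inequality of Lemma~\ref{lemma50}. Three features of the configuration will be used: on $\partial B(0,r)$ the Gaussian density $\gamma_n(x)=(2\pi)^{-n/2}e^{-r^2/2}$ is constant; the unit exterior normal is $N(x)=\epsilon x/r$, where $\epsilon=1$ if $A=B(0,r)$ and $\epsilon=-1$ if $A=B(0,r)^c$; and, since $A$ is rotationally symmetric, the number $c_r\colonequals\int_A(1-y_i^2)\,d\gamma_n(y)$ does not depend on $i$. Writing $f(x)=\langle X(x),N(x)\rangle$, note that the hypothesis $\int_{\partial B(0,r)}f(x)\,dx=0$ is exactly $\frac{d}{dt}|_{t=0}\gamma_n(A^{(t)})=0$ via Lemma~\ref{lemma3}, so Lemma~\ref{lemma61} and hence formula \eqref{newsix} apply.

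For the first sum in \eqref{newsix}, I would factor the constant $\gamma_n(x)$ out of each integral and use $\int_{\partial B(0,r)}f\,dx=0$ to discard the constant term of $1-x_i^2$, giving
\[
\sum_{i=1}^{n}\Big(\int_{\partial A}(1-x_i^2)\langle X,N\rangle\,\gamma_n(x)\,dx\Big)^2
=\frac{e^{-r^2}}{(2\pi)^n}\sum_{i=1}^{n}\Big(\int_{\partial B(0,r)}x_i^2 f(x)\,dx\Big)^2.
\]
By Lemma~\ref{lemma50} together with the normalization $\int_{\partial B(0,r)}f^2\,dx=1$, this is at most $\frac{2r^{n+3}\mathrm{Vol}(S^{n-1})e^{-r^2}}{n(n+2)(2\pi)^n}$, with equality exactly when $f$ is a multiple of $x\mapsto(n-1)x_1^2-\sum_{j=2}^{n}x_j^2$.

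For the second sum in \eqref{newsix}, the point is that since $c_r$ is independent of $i$ it factors out of the sum over $i$, leaving $\sum_{i=1}^{n}(-x_i)X^{(i)}(x)=\langle -x,X(x)\rangle$; on $\partial B(0,r)$ one has $x=\epsilon r\,N(x)$, so this equals $-\epsilon r\,f(x)$ and the second sum collapses to $-2\epsilon r\,c_r\int_{\partial A}f(x)^2\,d\gamma_n(x)=-2\epsilon r\,c_r(2\pi)^{-n/2}e^{-r^2/2}$, using $\int_{\partial B(0,r)}f^2\,dx=1$ again. Lemma~\ref{newlem} evaluates $c_r$: for $A=B(0,r)$ it gives $c_r=\frac{\mathrm{Vol}(S^{n-1})r^n e^{-r^2/2}}{n(2\pi)^{n/2}}$ with $\epsilon=1$, while for $A=B(0,r)^c$, using $\int_{\R^n}(1-x_i^2)\,d\gamma_n=0$, it gives the negative of this with $\epsilon=-1$; in both cases $\epsilon c_r=\frac{\mathrm{Vol}(S^{n-1})r^n e^{-r^2/2}}{n(2\pi)^{n/2}}$, so the second sum equals $-\frac{2r^{n+1}\mathrm{Vol}(S^{n-1})e^{-r^2}}{n(2\pi)^n}$. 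Adding the two contributions and factoring out $\frac{2r^{n+1}\mathrm{Vol}(S^{n-1})e^{-r^2}}{n(n+2)(2\pi)^n}$ produces the factor $r^2-(n+2)$, which is the claimed bound; equality holds precisely when equality holds in Lemma~\ref{lemma50}, i.e.\ for the indicated $f$ (the tangential part of $X$ being irrelevant, since the second sum was seen to depend on $X|_{\partial A}$ only through $f$).

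There is no serious obstacle: this is an exact evaluation plus one invocation of Lemma~\ref{lemma50}. The only points demanding care are the sign bookkeeping for the complement $B(0,r)^c$ --- where the inward-pointing normal ($\epsilon=-1$) and the sign flip of $c_r$ conspire to cancel --- and the observation that the $i$-independence of $c_r$ is exactly what lets $\sum_i(-x_i)X^{(i)}$ be rewritten as the radial quantity $\langle -x,X\rangle$, so that the tangential part of $X$ drops out of the second sum entirely.
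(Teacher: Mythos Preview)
Your argument follows the paper's approach closely: apply the second-variation formula \eqref{newsix}, bound the first sum via Lemma~\ref{lemma50}, and evaluate the second sum exactly using the constancy of $\int_A(1-y_i^2)\,d\gamma_n(y)$ in $i$ together with Lemma~\ref{newlem}. Your sign bookkeeping for the complement case (via $\epsilon$) is in fact more explicit than the paper's, which writes out only the case $A=B(0,r)$.

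Two small points deserve attention. First, you write that $\int_{\partial B(0,r)}f=0$ is what licenses Lemma~\ref{lemma61}; in fact that lemma's hypothesis is $\frac{d^2}{dt^2}|_{t=0}\gamma_n(A^{(t)})=0$, a second-order condition on $X$ that is not determined by $f$ alone (cf.\ Lemma~\ref{lemma3}). The paper's own statement of the corollary is somewhat tacit on this, but the formula \eqref{newsix} is being invoked under that standing assumption. Second, and relatedly, for the equality claim the paper does more than you do: it explicitly constructs a vector field $X$ on $\R^n$ with $\langle X,N\rangle|_{\partial A}=(n-1)x_1^2-\sum_{j\ge2}x_j^2$ \emph{and} with $\mathrm{div}(X)-\langle X,x\rangle=0$ on $\partial A$, so that the second-order measure constraint is satisfied and \eqref{newsix} genuinely applies with equality in Lemma~\ref{lemma50}. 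Your remark that the tangential part of $X$ is irrelevant to the second sum is correct, but it does not by itself furnish such an $X$; the paper's explicit construction (via $g_i(x)=\tfrac12(\|x\|^2-r^2)x_i$) fills that gap.
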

\begin{proof}
Suppose $A=A^{(0)}=B(0,r)$, and let $f(x)=\langle X(x),N(x)\rangle$ for any $x\in \partial B(0,r)$.  Then, using Lemma \ref{lemma61} we get
\begin{flalign*}
\frac{1}{2}\frac{d^{2}}{dt^{2}}F(A^{(t)})|_{t=0}
&=\frac{e^{-r^{2}}}{(2\pi)^{n}}\sum_{i=1}^{n}(\int_{\partial B(0,r)}(1-x_{i}^{2})f(x)dx)^{2}\\
%&\qquad-\frac{re^{-r^{2}/2}}{(2\pi)^{n/2}}\int_{B(0,r)}(1-y_{1}^{2})d\gamma_{n}(y)\sum_{i=1}^{n}\int_{\partial B(0,r)}(1-x_{i}^{2})(f(x))^{2}dx\\
&\qquad-\frac{2re^{-r^{2}/2}}{(2\pi)^{n/2}}\int_{B(0,r)} (1-y_{1}^{2})d\gamma_{n}(y))\int_{\partial B(0,r)}(f(x))^{2} dx.
\end{flalign*}
Since $\int_{\partial B(0,r)}f(x)dx=0$, Lemma \ref{lemma50} applies, yielding
\begin{flalign*}
\frac{1}{2}\frac{d^{2}}{dt^{2}}F(A^{(t)})|_{t=0}
%&=\frac{e^{-r^{2}}}{(2\pi)^{n}}\sum_{i=1}^{n}(\int_{\partial B(0,r)}x_{i}^{2}f(x)dx)^{2}\\
%&\qquad-\frac{re^{-r^{2}/2}(n-r^{2})}{(2\pi)^{n/2}}\int_{B(0,r)}(1-y_{1}^{2})d\gamma_{n}(y)\int_{\partial B(0,r)}(f(x))^{2}dx\\
%&\qquad-\frac{2re^{-r^{2}/2}}{(2\pi)^{n/2}}(\int_{B(0,r)} (1-y_{1}^{2})d\gamma_{n}(y))\int_{\partial B(0,r)}(f(x))^{2} dx\\
&=\frac{e^{-r^{2}}}{(2\pi)^{n}}\sum_{i=1}^{n}(\int_{\partial B(0,r)}x_{i}^{2}f(x)dx)^{2}\\
&\qquad-\left(\frac{2re^{-r^{2}/2}}{(2\pi)^{n/2}}\int_{B(0,r)}(1-y_{1}^{2})d\gamma_{n}(y)\right)\int_{\partial B(0,r)}(f(x))^{2} dx\\
&\leq\frac{2r^{n+3}e^{-r^{2}}\mathrm{Vol}(S^{n-1})}{n(n+2)(2\pi)^{n}}\int_{\partial B(0,r)}(f(x))^{2} dx\\
&\qquad-\left(\frac{2re^{-r^{2}/2}}{(2\pi)^{n/2}}\int_{B(0,r)}(1-y_{1}^{2})d\gamma_{n}(y)\right)\int_{\partial B(0,r)}(f(x))^{2} dx.\\
\end{flalign*}

So, if $\int_{\partial B(0,r)}(f(x))^{2} dx=1$, we have
\begin{equation}\label{eight7}
\frac{1}{2}\frac{d^{2}}{dt^{2}}F(A^{(t)})|_{t=0}\\
\leq\frac{2r^{n+3}e^{-r^{2}}\mathrm{Vol}(S^{n-1})}{n(n+2)(2\pi)^{n}}
-\left(\frac{2re^{-r^{2}/2}}{(2\pi)^{n/2}}\int_{B(0,r)}(1-y_{1}^{2})d\gamma_{n}(y)\right).
\end{equation}

Then, substituting Lemma \ref{newlem},
\begin{equation}
\begin{aligned}
\frac{1}{2}\frac{d^{2}}{dt^{2}}F(A^{(t)})|_{t=0}
&\leq\frac{r^{n+1}e^{-r^{2}}\mathrm{Vol}(S^{n-1})}{n(2\pi)^{n}}\left(r^{2}\frac{2}{n+2}-2\right)\\
&=\frac{2r^{n+1}e^{-r^{2}}\mathrm{Vol}(S^{n-1})}{n(n+2)(2\pi)^{n}}\left(r^{2}-n-2\right).  %
\end{aligned}
\end{equation}

The equality case then comes directly from Lemma \ref{lemma50}, if we can find a vector field $X$ such that $\langle X(x),N(x)\rangle=(n-1)x_{1}^{2}-x_{2}^{2}-\cdots-x_{n}^{2}$ for any $x\in\partial A$ and such that $\frac{d^{2}}{dt^{2}}|_{t=0}\gamma_{n}(A^{t})=0$.  (Since $\int_{\partial A}[(n-1)x_{1}^{2}-x_{2}^{2}-\cdots-x_{n}^{2}]dx=0$, we know $\frac{d}{dt}|_{t=0}\gamma_{n}(A^{t})=0$ by Lemma \ref{lemma3}.)

We can construct the vector field $X$ explicitly.  For any $1\leq i\leq n$, define $g_{i}(x)\colonequals(x_{1}^{2}+\cdots+x_{n}^{2}-r^{2})x_{i}/2$, $x\in\R^{n}$.  Then $\frac{\partial}{\partial x_{i}}g_{i}(x)=x_{i}^{2}$ and $g_{i}(x)=0$ when $x\in\partial A$.  We therefore define
$$X(x)\colonequals r\big((n-1)(x_{1}+g_{1}(x)),-x_{2}-g_{2}(x),\ldots,-x_{n}-g_{n}(x)\big),\qquad\forall\,x\in\R^{n},$$
Then if $x\in\partial A$, we have
\begin{flalign*}
&\frac{1}{r}(\mathrm{div}(X)(x)-\langle X(x),x\rangle)\\
&=(n-1)\frac{\partial}{\partial x_{1}}g_{1}(x)-\frac{\partial}{\partial x_{2}}g_{2}(x)-\cdots-\frac{\partial}{\partial x_{n}}g_{n}(x)
-((n-1)x_{1}^{2}-x_{2}^{2}-\cdots-x_{n}^{2})=0.
\end{flalign*}
%Also, $N(x)=\pm x/\vnorm{x}_{2}$ for any $x\in\partial A$, and 
So, $\frac{d^{2}}{dt^{2}}|_{t=0}\gamma_{n}(A^{(t)})=0$ by Lemma \ref{lemma3}, as desired.  Lastly, note that $N(x)=\pm x/\vnorm{x}_{2}$ for all $x\in\partial A$, so $\pm\langle X(x),N(x)\rangle=(n-1)x_{1}^{2}-x_{2}^{2}-\cdots-x_{n}^{2}$ for any $x\in\partial A$ since $g_{i}(x)=0$ for any $x\in\partial A$, and for any $1\leq i\leq n$.

%
%
%And if $n=2$, we have $\int_{B(0,r)}(1-y_{1}^{2})d\gamma_{n}(y)=(1/2)r^{2}e^{-r^{2}/2}$, so
%\begin{equation}\label{eight1}
%\frac{1}{2}\frac{d^{2}}{dt^{2}}F(A^{(t)})|_{t=0}
%\leq \frac{r^{3}e^{-r^{2}}}{4\pi}\left(-4+r^{2}\left(\frac{1}{4\pi}+1\right)\right).
%\end{equation}
%\snote{Above seems wrong}
%Similarly, if $A=A^{(0)}=B(0,r)^{c}$, then \snote{???}
%%\begin{equation}\label{eight2}
%%\frac{1}{2}\frac{d^{2}}{dt^{2}}F(A^{(t)})|_{t=0}
%%\leq \frac{r^{3}e^{-r^{2}}}{4\pi}\left(r^{2}\left(-1+\frac{\pi}{8}\right)+4\right).
%%\end{equation}
\end{proof}

\section{Local Optimality}\label{secopt}

We continue to use the assumptions and notation from Assumtion \ref{as1}.  Also, for any $A\subset\R^{n}$, define
$$F(A)\colonequals\sum_{i=1}^{n}\int_{\R^{n}}\int_{\R^{n}} 1_{A}(x)(1-x_{i}^{2})(1-y_{i}^{2})\gamma_{n}(x)\gamma_{n}(y)1_{A}(y)dxdy.$$

\begin{proof}[Proof of Theorem \ref{newthm}]
Apply Corollary \ref{varbndcor}.
\end{proof}

\begin{remark}
 Note that there is a ``phase transition'' that occurs in Theorem \ref{newthm} (and in Corollary \ref{varbndcor}), where the ball or its complement changes from locally maximizing $F$ to not locally maximizing $F$.  We do not currently have an intuitive explanation for this phenomenon.
\end{remark}

\begin{remark}\label{goodrk}
%Consider $n=2$ and $a=.1$.  Then $\gamma_{n}(B(0,r))=1-e^{-r^{2}/2}=a= .1$ when $r=\sqrt{\log(100/81)}$, and $\gamma_{n}(B(0,r')^{c})=e^{-(r')^{2}/2}=.1$ when $r'=\sqrt{\log(100)}$.  From Corollary \ref{varbndcor}, $B(0,r)$ locally maximizes $F$, whereas $B(0,r')^{c}$ does not.  And Lemma \ref{newlem} says $\int_{B(0,r)}(1-x_{1}^{2})d\gamma_{2}(x)=\frac{1}{2}r^{2}e^{-r^{2}/2}$
%
%
%\snote{Need to edit}
Here we provide some more details for the calculation demonstrated in the Introduction which demonstrated the incorrectness of Conjecture \ref{SGP1}.

Let $r=2.4$.  Then $\gamma_{2}(B(0,r))=1-e^{r^{2}/2}\approx .943865$.  And from Lemma \ref{newlem}, $(\int_{B(0,r)}(1-x_{1}^{2})d\gamma_{2}(x))^{2}+(\int_{B(0,r)}(1-x_{2}^{2})d\gamma_{2}(x))^{2}=\frac{1}{2}r^{4}e^{-r^{2}}\approx.0522732$.

Let $A=\{(x_{1},x_{2})\in\R^{2}\colon x_{1}^{2}/(2.5)^{2}+x_{2}^{2}/(2.31394)^{2}\leq1\}$.  A numerical computation shows that $\gamma_{2}(A)\approx.943865$, $\int_{A}(1-x_{1}^{2})d\gamma_{2}(x)\approx .143076$ and $\int_{A}(1-x_{2}^{2})d\gamma_{2}(x)\approx .178889$, so $(\int_{A}(1-x_{1}^{2})d\gamma_{2}(x))^{2}+(\int_{A}(1-x_{2}^{2})d\gamma_{2}(x))^{2}\approx .0524720>.0522732$.

That is, if $F(A)\colonequals(\int_{A}(1-x_{1}^{2})d\gamma_{2}(x))^{2}+(\int_{A}(1-x_{2}^{2})d\gamma_{2}(x))^{2}$, then $F(A^{c})\approx .0524720>.0522732\approx F(B(0,r)^{c})$.  And if $r'=\sqrt{-2\log(1-e^{-2.88})}$, then $\gamma_{2}(B(0,r'))=\gamma_{2}(B(0,r)^{c})$, and again from Lemma \ref{newlem}, $F(B(0,r'))=\frac{1}{2}(r')^{4}e^{-(r')^{2}}=2(\log(1-e^{-2.88}))^{2}(1-e^{-2.88})^{2}\approx.0059468$.  In summary,
$$F(A)>\max(F(B(0,r)^{c}),F(B(0,r'))),\qquad \gamma_{2}(B(0,r)^{c})=\gamma_{2}(B(0,r'))\approx\gamma_{2}(A).$$
That is, Conjectures \ref{SGP} and \ref{SGP1} are false.

In fact, this behavior is generic for other measure restrictions when $n=2$.  If $1-e^{-r^{2}/2}=e^{-(r')^{2}/2}$, then $r'=\sqrt{-2\log(1-e^{-r^{2}/2})}$, $F(B(0,r))=\frac{1}{2}r^{4}e^{-r^{2}}$, and $F(B(0,r')^{c})=2(\log(1-e^{-r^{2}/2}))^{2}(1-e^{-r^{2}/2})^{2}\geq F(B(0,r))$ for all $0<r<\sqrt{2}$.  So, $\max(F(B(0,r)),F(B(0,r')))=F(B(0,r'))$.  And from Corollary \ref{varbndcor}, $B(0,r')$ locally maximizes $F$ only when $r'<2$.  That is, $B(0,r')$ locally maximizes $F$ only when $r>\sqrt{4-2\log(e^{2}-1)}\approx .53928$.

In summary, if $0<r< .53928$, and if $a=\gamma_{2}(B(0,r))$, then Conjectures \ref{SGP} and \ref{SGP1} are false, since $F(B(0,r))<F(B(0,r')^{c})$, and $B(0,r')^{c}$ does not maximize $F$ by Corollary \ref{varbndcor}, since $r'>2$.  Moreover, as mentioned in the Introduction, if $A'=\{(x_{1},x_{2})\in\R^{2}\colon x_{1}^{2}\leq1.90999\}$, then a numerical computation shows $\gamma_{2}(A')\approx .943865$, and $F(A')=F((A')^{c})\approx.0604796$.  That is,

% x^2 /a^2 +y^2 /b^2 =1
% a=2.5, b=2.314
% so -a\leq x\leq a
% and -b\sqrt{1-x^2 /a^2}\leq y\leq b\sqrt{1-x^{2}/a^{2}}
%
%exp((-x^2 -y^2)/2)*(1/(2*pi))
%(1-x^2)exp((-x^2 -y^2)/2)*(1/(2*pi))
%dydx
%-2.5, 2.5
%-2.31394*sqrt(1-(x^2)/(2.5)^2)  , 2.31304*sqrt(1-(x^2)/(2.5)^2)
%
%http://www.wolframalpha.com/widgets/view.jsp?id=f5f3cbf14f4f5d6d2085bf2d0fb76e8a
\end{remark}

\begin{proof}[Proof of Theorem \ref{thm3}]
Let $\rho\in(-1,1)$.  Let $G(x,y)=e^{-\frac{\vnorm{\rho x-y}^{2}}{2(1-\rho^{2})}}\gamma_{n}(x)=e^{\frac{-\vnorm{x}_{2}^{2}-\vnorm{y}_{2}^{2}+2\rho\langle x,y\rangle}{2(1-\rho^{2})}}$ for any $x,y\in\R^{n}$.  For any $A\subset\R^{n}$, define
$$F_{\rho}(A)\colonequals\int_{\R^{n}}\int_{\R^{n}} 1_{A}(x)G(x,y)1_{A}(y)dxdy
=\int_{\R^{n}}\int_{\R^{n}} 1_{A}(x)T_{\rho}1_{A}(x)d\gamma_{n}(x).
$$
$$
F(A)\colonequals\int_{\R^{n}}\int_{\R^{n}} 1_{A}(x)\left(\sum_{i=1}^{n}(1-x_{i}^{2})(1-y_{i}^{2})\right)\gamma_{n}(x)\gamma_{n}(y)1_{A}(y)dxdy.
$$
%From Corollary \ref{varbndcor}, if $A^{(0)}=B(0,r)$, then
%\begin{equation}\label{eight3}
%\frac{1}{2}\frac{d^{2}}{dt^{2}}|_{t=0}\frac{d^{2}}{d\rho^{2}}|_{\rho=0}F(A^{(t)})
%\leq\frac{r^{2}e^{-r^{2}}}{4\pi}\left(-3+r^{2}\left(\frac{1}{4\pi}+1\right)\right).
%\end{equation}
%From Corollary \ref{varbndcor}, if $A^{(0)}=B(0,r)^{c}$, then
%\begin{equation}\label{eight4}
%\frac{1}{2}\frac{d^{2}}{dt^{2}}F(A^{(t)})|_{t=0}
%\leq \frac{r^{3}e^{-r^{2}}}{4\pi}\left(r^{2}\left(-1+\frac{1}{4\pi}\right)+3\right).
%\end{equation}

We require the following well-known Gaussian/Mehler heat kernel expansion for $G$, which appears e.g. in \cite[Section 2.2]{heilman14b}: for any $x,y\in\R^{n}$, and for any $\rho\in(-1,1)$,
\begin{equation}\label{Csix}
e^{-(\|x\|_{2}^{2}+\|y\|_{2}^{2})/2}
\sum_{k=0}^{\infty}\rho^{k}\sum_{\substack{\ell\in\N^{n}\colon\\ \ell_{1}+\cdots+\ell_{n}=k}}h_{\ell}(x)h_{\ell}(y)\ell!
=(1-\rho^{2})^{-n/2}e^{-\frac{(\|x\|_{2}^{2}+\|y\|_{2}^{2}-2\rho\langle x,y\rangle)}{2(1-\rho^{2})}}.
\end{equation}
Combining \eqref{Csix} with the bound for Hermite polynomials in Lemma \ref{lemma6}, for any $x,y\in\R^{n}$,
\begin{equation}\label{Cseven}
\begin{aligned}
&\Big|(1-\rho^{2})^{-n/2}G(x,y)
-\gamma_{n}(x)\gamma_{n}(y)\Big(1+\rho\sum_{i=1}^{n}x_{i}y_{i}
+\rho^{2}\sum_{\substack{\ell\in\N^{n}\colon\\ \ell_{1}+\cdots+\ell_{n}=2}}h_{\ell}(x)h_{\ell}(y)\ell!\Big)\Big|\\
&\qquad\qquad\qquad\qquad
\leq\gamma_{n}(x)\gamma_{n}(y)\sum_{k=3}^{\infty}\rho^{k}(n+k)^{n}3^{k}k^{n}(1+\vnorm{x}_{2}^{k})(1+\vnorm{y}_{2}^{k})\Big).
\end{aligned}
\end{equation}

Below, $C$ denotes a large constant that depends on $n,r,\rho$, whose value can change each time it appears.  Also, $A$ denotes $B(0,r)$ or $B(0,r)^{c}$.  We now show that the formulas from Lemmas \ref{lemma60} and \ref{lemma61} are close in a precise sense.

Write $X(x)=f(x)N(x)$, where $x\in\partial B(0,r)$.  Since $\frac{d}{dt}\gamma_{n}(A^{(t)})=0$, Lemma \ref{lemma3} implies that $\int_{\partial B(0,r)}f(x)dx=0$.  Also, recall that if $A\subset\R^{n}$ is symmetric, then \eqref{one3.1} implies that $\int_{A}x_{i}d\gamma_{n}(x)=0$ for any $i\in\{1,\ldots,n\}$.  Combining these facts with \eqref{Cseven} and choosing $\rho$ sufficiently small (depending on $n$ and $r$), we have
\begin{equation}\label{end1}
\begin{aligned}
&\Big|\int_{\partial A}\int_{\partial A}G(x,y)f(x)f(y) dxdy
-\rho^{2}\int_{\partial A}\int_{\partial A}\sum_{\substack{\ell\in\N^{n}\colon\\ \ell_{1}+\cdots+\ell_{n}=2}}h_{\ell}(x)h_{\ell}(y)\ell!f(x)f(y)\Big|\\
&\leq C\abs{\rho}^{5/2}(\int_{\partial A}\abs{f(x)}\gamma_{n}(x)dx)^{2}
\leq C\abs{\rho}^{5/2}(\int_{\partial A}\abs{f(x)}^{2}\gamma_{n}(x)dx).
\end{aligned}
\end{equation}

Similarly, it follows from \eqref{six1.8} that, $\forall$ $x\in\R^{n}$,
$$
\Big|T_{\rho}1_{A}(x)-\gamma_{n}(a)-\rho^{2}\sum_{\substack{\ell\in\N^{n}\colon\\ \ell_{1}+\cdots+\ell_{n}=2}}\ell! h_{\ell}(x)(\int_{A}h_{\ell}(y)d\gamma_{n}(y))\Big|
\leq C\abs{\rho}^{5/2}(1+\vnorm{x}_{2}^{k}).
$$
$$
\Big|\frac{\partial}{\partial x_{i}}T_{\rho}1_{A}(x)-\rho^{2}\sum_{\substack{\ell\in\N^{n}\colon\\ \ell_{1}+\cdots+\ell_{n}=2}}\ell! h_{\ell}'(x)(\int_{A}h_{\ell}(y)d\gamma_{n}(y))\Big|
\leq C\abs{\rho}^{5/2}(1+\vnorm{x}_{2}^{k}).
$$

From Remark \ref{rotrk}, we may assume that $\int_{A^{(t)}}x_{i}x_{j}d\gamma_{n}(x)=0$ whenever $i,j\in\{1,\ldots,n\}$ with $i\neq j$, and for all $t\in(-1,1)$.  Also, from \eqref{Bone6} below (where the $G$ we use there is $G(x,y)\colonequals y_{i}y_{j}$ for all $x,y\in\R^{n}$), we may assume that $\int_{\partial A}x_{i}x_{j}f(x)d\gamma_{n}(x)=0$ whenever $i,j\in\{1,\ldots,n\}$ with $i\neq j$.  Consequently,
%\begin{equation}\label{end2}
%\begin{aligned}
%&\Big|\int_{\partial A}\langle x,X(x)\rangle T_{\rho}1_{A}(x)\gamma_{n}(x)dx\\
%&\qquad-\frac{1}{2}\sum_{i=1}^{n}(\int_{A} (1-y_{i}^{2})d\gamma_{n}(y))\int_{\partial A}(1-x_{i}^{2})\langle-x,X(x)\rangle\langle X(x),N(x)\rangle \gamma_{n}(x)dx\Big|\\
%&\qquad\qquad\qquad\leq C\abs{\rho}^{5/2}(\int_{\partial A}\abs{f(x)}^{2}dx).
%\end{aligned}
%\end{equation}

\begin{equation}\label{end3}
\begin{aligned}
&\Big|\int_{\partial A}\langle\nabla T_{\rho}1_{A}(x),X(x)\rangle\langle X(x),N(x)\rangle dx\\
&\quad-\sum_{i=1}^{n}\int_{\partial A}(-x_{i})X^{(i)}\langle X(x),N(x)\rangle d\gamma_{n}(x)(\int_{A} (1-y_{i}^{2})d\gamma_{n}(y))\Big|\\
&\qquad\qquad\qquad\leq C\abs{\rho}^{5/2}(\int_{\partial A}\abs{f(x)}^{2}dx).
\end{aligned}
\end{equation}

So, combining \eqref{end1} and \eqref{end3} with Lemmas \ref{lemma60} and \ref{lemma61}, (and $\int_{\partial A}x_{i}x_{j}f(x)\gamma_{n}(x)=0$ if $i,j\in\{1,\ldots,n\}$ with $i\neq j$,)
$$\abs{\frac{1}{\rho^{2}}\frac{d^{2}}{dt^{2}}|_{t=0}F_{\rho}(A^{(t)})
-\frac{1}{2}\frac{d^{2}}{dt^{2}}|_{t=0}F(A^{(t)})}
\leq C\abs{\rho}^{1/2}\int_{\partial B(0,r)}(f(x))^{2}dx.$$

That is, for $\rho$ sufficiently small (depending on $n$ and $r$), Theorem \ref{thm3} follows from Theorem \ref{newthm}.  That is, the ball or its complement is a local maximum of noise stability among symmetric sets.
\end{proof}

\section{Asymptotics for Second Degree Fourier Coefficients}\label{sechdim}

\begin{lemma}[\cite{matsunawa76}, Theorem 2.1]\label{lemma26}
Let $m\geq3$ be an integer.  Then there exists $\lambda_{m}$ such that
\begin{flalign*}
&0\leq\frac{1}{360m(m-1)(m+1)}-\frac{1}{120m^{2}(m-1)(m+1)}\\
&\qquad\qquad\qquad\leq\lambda_{m}\leq\frac{1}{360m(m-1)(m+1)}+\frac{11}{480m^{2}(m-1)(m+1)},
\end{flalign*}
and such that for any integer $n\geq6$,
$$\Gamma(n/2)=\sqrt{2\pi}((n-2)/2)^{(n-1)/2}e^{-(n-2)/2}e^{1/(6(n-2))}e^{-\lambda_{(n-2)/2}}.$$
\end{lemma}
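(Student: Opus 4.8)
The plan is to reduce the statement to an explicit two-sided estimate of the Binet function and then carry out the bookkeeping. Recall that the Binet function $\mu(x)\colonequals\log\Gamma(x)-(x-\tfrac12)\log x+x-\tfrac12\log(2\pi)$ is defined for all real $x>0$ and satisfies $\mu(x)\to0$ as $x\to\infty$ (a standard consequence of Stirling's formula, or of Binet's integral formula $\mu(x)=\int_{0}^{\infty}\bigl(\tfrac{1}{e^{t}-1}-\tfrac1t+\tfrac12\bigr)\tfrac{e^{-xt}}{t}\,dt$, which also gives $0<\mu(x)<\tfrac{1}{12x}$). Set $\lambda(x)\colonequals\tfrac{1}{12x}-\mu(x)$. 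Writing $m=(n-2)/2$, so that $n/2=m+1$, $(n-1)/2=m+\tfrac12$ and $6(n-2)=12m$, the functional equation $\Gamma(m+1)=m\,\Gamma(m)$ together with the definition of $\mu$ gives $\Gamma(n/2)=\sqrt{2\pi}\,((n-2)/2)^{(n-1)/2}e^{-(n-2)/2}e^{1/(6(n-2))}e^{-\lambda(m)}$, which is exactly the displayed identity with $\lambda_{(n-2)/2}\colonequals\lambda(m)$; this holds whether $m$ is an integer ($n$ even) or a half-integer ($n$ odd), so everything reduces to bounding $\lambda(x)$ for real $x\geq2$.

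First I would establish a telescoping representation of $\lambda$. From $\Gamma(x+1)=x\Gamma(x)$ one computes $\mu(x)-\mu(x+1)=(x+\tfrac12)\log\tfrac{x+1}{x}-1$; expanding $\log\tfrac{x+1}{x}=2\sum_{j\geq0}\tfrac{1}{(2j+1)(2x+1)^{2j+1}}$ gives $\mu(x)-\mu(x+1)=\sum_{j\geq1}\tfrac{1}{(2j+1)(2x+1)^{2j}}$, while the geometric series identity $\tfrac{1}{12x(x+1)}=\tfrac{1}{3\left((2x+1)^{2}-1\right)}=\tfrac13\sum_{j\geq1}(2x+1)^{-2j}$ yields
$$\lambda(x)-\lambda(x+1)=\sum_{j\geq1}\Bigl(\tfrac13-\tfrac{1}{2j+1}\Bigr)(2x+1)^{-2j}=\sum_{j\geq2}\frac{2j-2}{3(2j+1)}\,(2x+1)^{-2j}.$$
The $j=1$ term cancels, every remaining term is positive, and since $\lambda(x)\to0$ (because $0<\lambda(x)<\tfrac{1}{12x}$) we obtain the absolutely convergent expansion $\lambda(m)=\sum_{k\geq0}\sum_{j\geq2}\tfrac{2j-2}{3(2j+1)}(2m+2k+1)^{-2j}$.

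For the lower bound I would keep only the $j=2$ terms and compare the $k$-sum to an integral from below: since $(2m+2k+1)^{-4}$ is decreasing in $k$, $\lambda(m)\geq\tfrac{2}{15}\sum_{k\geq0}(2m+2k+1)^{-4}\geq\tfrac{2}{15}\int_{0}^{\infty}(2m+2t+1)^{-4}\,dt=\tfrac{1}{45(2m+1)^{3}}$, and it remains to verify the elementary inequality $\tfrac{1}{45(2m+1)^{3}}\geq\tfrac{1}{360m(m-1)(m+1)}-\tfrac{1}{120m^{2}(m-1)(m+1)}$ for $m\geq3$, which (for $m>1$, after clearing denominators) reduces to $12m^{3}+22m^{2}+17m+3\geq0$, true for all $m>0$; when $m<3$ the right-hand side is already negative, so the bound is trivial. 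For the upper bound I would bound the $k$-sum for each fixed $j$ by the midpoint comparison $\sum_{k\geq0}(2m+2k+1)^{-2j}<\int_{-1/2}^{\infty}(2m+2t+1)^{-2j}\,dt=\tfrac{1}{2(2j-1)(2m)^{2j-1}}$ (valid since the integrand is convex and decreasing and $2m>0$), obtaining $\lambda(m)<\sum_{j\geq2}\tfrac{j-1}{3(2j+1)(2j-1)}(2m)^{-(2j-1)}$, whose $j=2$ term is exactly $\tfrac{1}{360m^{3}}$ and whose tail is $O(m^{-5})$; one then checks that this is at most $\tfrac{1}{360m(m-1)(m+1)}+\tfrac{11}{480m^{2}(m-1)(m+1)}$ for $m\geq2$ by crudely bounding the tail and again reducing to a polynomial inequality in $m$. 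The hard part will be exactly this last step: one must squeeze the remainder of the $j$-series and the error in replacing the $k$-sum by an integral tightly enough that the total sits strictly below the prescribed coefficients $\tfrac{1}{120}$ and $\tfrac{11}{480}$ \emph{uniformly} for $m\geq2$; the asymptotics are comfortable, but the smallest values of $m$ may need to be checked by hand and then combined with a monotonicity argument. (Alternatively, one may simply invoke \cite[Theorem 2.1]{matsunawa76}.)
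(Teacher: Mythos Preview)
The paper does not prove this lemma at all: it is stated with a bare citation to \cite{matsunawa76}, Theorem~2.1, and simply quoted for use in Lemma~\ref{lemma25}. So there is no ``paper's own proof'' to compare against beyond the invocation you already mention in your closing parenthetical.

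Your proposal, by contrast, sketches a genuine self-contained proof via the Binet function $\mu(x)=\log\Gamma(x)-(x-\tfrac12)\log x+x-\tfrac12\log(2\pi)$ and the telescoping identity for $\lambda(x)=\tfrac{1}{12x}-\mu(x)$. The reduction $\Gamma(n/2)=\sqrt{2\pi}\,m^{m+1/2}e^{-m}e^{1/(12m)}e^{-\lambda(m)}$ with $m=(n-2)/2$ is correct, the series $\lambda(x)-\lambda(x+1)=\sum_{j\ge2}\tfrac{2j-2}{3(2j+1)}(2x+1)^{-2j}$ is derived cleanly, and your integral comparisons for the lower and upper bounds are sound (the convexity/midpoint step for the upper bound is the right tool). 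The lower-bound polynomial check $12m^{3}+22m^{2}+17m+3\ge0$ is fine. For the upper bound you are honest that the endgame---showing $\sum_{j\ge2}\tfrac{j-1}{3(4j^{2}-1)}(2m)^{-(2j-1)}\le\tfrac{1}{360m(m^{2}-1)}+\tfrac{11}{480m^{2}(m^{2}-1)}$ uniformly for $m\ge2$---still needs to be closed; in fact the margin is comfortable (at $m=2$ the left side is below $4\times10^{-4}$ while the right side exceeds $2\times10^{-3}$, and asymptotically the gap is of order $11/(480m^{4})$), so a crude tail bound like $\sum_{j\ge3}\tfrac{j-1}{3(4j^{2}-1)}(2m)^{-(2j-1)}<\tfrac{1}{12}\cdot\tfrac{(2m)^{-5}}{1-(2m)^{-2}}$ together with a short monotonicity check for small $m$ will finish it. One minor point: the lemma as stated in the paper says ``let $m\ge3$ be an integer'' but then applies $\lambda_{(n-2)/2}$ for all integers $n\ge6$, which forces half-integer $m$ as well; you noticed this and correctly work with real $x\ge2$, which is the right fix.
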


\begin{lemma}\label{lemma25}
For $n\in\N$ and $s\in\R$ define $r(s,n)\colonequals\sqrt{n+s\sqrt{2n}}$.  Then, as $n\to\infty$, the following asymptotic holds:
$$\sum_{i=1}^{n}\left(\int_{B(0,r(s,n))}(1-x_{i}^{2})d\gamma_{n}(x)\right)^{2}
=\frac{1}{\pi}\exp(-s^{2}+2+s^{3}2\sqrt{2}n^{-1/2}/3-s^{4}n^{-1}+O(n^{-3/2})).$$
Moreover, in the case $s=0$, the quantity $\sum_{i=1}^{n}(\int_{B(0,\sqrt{n})}(1-x_{i}^{2})d\gamma_{n}(x))^{2}$ strictly increases as $n$ increases.
\end{lemma}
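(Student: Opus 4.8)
The plan is to reduce the sum to a single elementary expression, apply the sharp Stirling estimate of Lemma \ref{lemma26}, and --- for the monotonicity at $s=0$ --- exploit an exact two-step ratio together with a log-concavity argument.

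\textbf{Step 1: a closed form.} Since the ball $B(0,r)$ is rotationally symmetric, the $n$ quantities $\int_{B(0,r)}(1-x_{i}^{2})\,d\gamma_{n}(x)$, $i=1,\dots,n$, are all equal, so the sum in the lemma equals $n\bigl(\int_{B(0,r)}(1-x_{1}^{2})\,d\gamma_{n}(x)\bigr)^{2}$. By Lemma \ref{newlem} and the identity $\mathrm{Vol}(S^{n-1})=2\pi^{n/2}/\Gamma(n/2)$ this is
\[
\frac{(\mathrm{Vol}(S^{n-1}))^{2}\,r^{2n}e^{-r^{2}}}{n(2\pi)^{n}}=\frac{4\,r^{2n}e^{-r^{2}}}{n\,2^{n}\,\Gamma(n/2)^{2}}.
\]
Thus everything reduces to the asymptotics of this elementary expression evaluated at $r^{2}=n+s\sqrt{2n}$.

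\textbf{Step 2: the asymptotic expansion.} I would insert the sharp Stirling estimate of Lemma \ref{lemma26}, which gives $2^{n}\Gamma(n/2)^{2}=4\pi\,(n-2)^{n-1}e^{-(n-2)}e^{1/(3(n-2))}e^{-2\lambda_{(n-2)/2}}$, where the two-sided bounds on $\lambda_{m}$ yield $\lambda_{(n-2)/2}=O(n^{-3})$ and $e^{1/(3(n-2))}=1+O(n^{-1})$. Writing $r^{2n}=n^{n}(1+s\sqrt{2/n})^{n}$ and $n(n-2)^{n-1}=n^{n}(1-2/n)^{n-1}$, taking logarithms, and Taylor expanding
\[
n\log\bigl(1+s\sqrt{2/n}\bigr)=s\sqrt{2n}-s^{2}+\tfrac{2\sqrt2}{3}s^{3}n^{-1/2}-s^{4}n^{-1}+O(n^{-3/2})
\]
together with $(n-1)\log(1-2/n)=-2+O(n^{-2})$, one checks that the pieces of order $n$, $\sqrt{n}$ and $\log n$ cancel between $r^{2n}e^{-r^{2}}$ and the $\Gamma$-factor; collecting the surviving terms produces the claimed asymptotic.

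\textbf{Step 3: strict monotonicity at $s=0$.} By Step 1, $a_{n}\colonequals\sum_{i=1}^{n}\bigl(\int_{B(0,\sqrt n)}(1-x_{i}^{2})\,d\gamma_{n}(x)\bigr)^{2}=4\,n^{n-1}e^{-n}/(2^{n}\Gamma(n/2)^{2})$. Using $\Gamma(n/2+1)=(n/2)\Gamma(n/2)$ one obtains the exact identity $a_{n+2}/a_{n}=(1+2/n)^{n+1}e^{-2}$, and $(1+2/n)^{n+1}>e^{2}$ for every $n\ge1$: this is the inequality $\log(1+u)>\tfrac{2u}{2+u}$ with $u=2/n>0$, which holds because the two sides agree at $u=0$ and $\tfrac{1}{1+u}>\tfrac{4}{(2+u)^{2}}$ simplifies to $u^{2}>0$. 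To upgrade $a_{n+2}>a_{n}$ to $a_{n+1}>a_{n}$ I would prove that $b(n)\colonequals\log a_{n}$ is strictly concave as a function of the real variable $n>0$: one has $b''(n)=\tfrac1n+\tfrac1{n^{2}}-\tfrac12\psi'(n/2)$, and the representation $\psi'(x)=\int_{0}^{\infty}\tfrac{t e^{-xt}}{1-e^{-t}}\,dt$ together with the elementary bound $\tfrac{t}{1-e^{-t}}\ge 1+\tfrac t2$ for $t>0$ (which is $e^{-t}(2+t)\ge 2-t$, immediate from monotonicity) gives $\psi'(x)>\tfrac1x+\tfrac1{2x^{2}}$, whence $b''(n)<0$. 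Concavity yields $b(n+1)-b(n)>b(n+2)-b(n+1)$, so $2\bigl(b(n+1)-b(n)\bigr)>b(n+2)-b(n)=\log\bigl(a_{n+2}/a_{n}\bigr)>0$, i.e.\ $a_{n+1}>a_{n}$ for all $n\ge1$.

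\textbf{Main obstacle.} The closed form is immediate and the key inputs (Lemmas \ref{newlem} and \ref{lemma26}) are given, so the two genuinely delicate points are: (i) the bookkeeping of the cancellations in the logarithmic expansion --- one must carry $n\log(1+s\sqrt{2/n})$ and the Stirling correction to order $n^{-1}$ and verify that all $s$-independent terms of size $n$, $\sqrt n$ and $\log n$ disappear; and (ii) for the monotonicity, the fact that $a_{n+1}/a_{n}$ has a vanishing first-order correction, which is precisely why one routes through the exact ratio $a_{n+2}/a_{n}$ and log-concavity rather than estimating $a_{n+1}/a_{n}$ directly.
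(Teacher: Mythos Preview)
Your Steps 1 and 2 are essentially the paper's own argument: reduce to the closed form via Lemma~\ref{newlem} and $\mathrm{Vol}(S^{n-1})=2\pi^{n/2}/\Gamma(n/2)$, insert Lemma~\ref{lemma26}, and Taylor expand the logarithm. The bookkeeping you flag in your ``Main obstacle'' (i) is exactly what the paper does, and your expansions match.

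Your Step 3, however, is a genuinely different route. The paper extends the closed form to a real variable $t$, setting $g(t)=e^{-1/(3(t-2))}t^{t-1}/(t-2)^{t-1}$, and shows $g'(t)>0$ for $t\ge3$ by a direct (somewhat intricate) substitution $x=1/(t-2)$ and polynomial positivity, then disposes of $1\le n<6$ by hand. In doing so the paper silently drops the $e^{-2\lambda_{(n-2)/2}}$ correction from Lemma~\ref{lemma26}. Your approach sidesteps Stirling entirely for the monotonicity: you use the functional equation of $\Gamma$ to get the \emph{exact} ratio $a_{n+2}/a_{n}=(1+2/n)^{n+1}e^{-2}$, prove it exceeds $1$ via the elementary inequality $\log(1+u)>2u/(2+u)$, and then bridge to $a_{n+1}>a_{n}$ by establishing log-concavity of $n\mapsto a_{n}$ through the trigamma bound $\psi'(x)>1/x+1/(2x^{2})$. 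This buys you a cleaner argument that is uniform over all $n\ge1$ and does not depend on the error term $\lambda_{m}$ at all; the price is importing the integral representation of $\psi'$. Both arguments are valid, but yours is arguably the more robust of the two for the monotonicity claim.
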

\begin{proof}
We begin with the first statement.  Using Lemma \ref{newlem},
\begin{flalign*}
&\sum_{i=1}^{n}\left(\int_{B(0,r(s,n))}(1-x_{i}^{2})d\gamma_{n}(x)\right)^{2}\\
&\qquad=2^{-n}\frac{4}{n}\frac{1}{2\pi}\left(\frac{2}{n-2}\right)^{n-1}e^{n-2}e^{-1/(3(n-2))}e^{-\lambda^{2}_{(n-2)/2}}(n+s\sqrt{2n})^{n}e^{-n-s\sqrt{2n}}\\
&\qquad=\frac{1}{\pi}e^{-2}e^{-1/(3(n-2))}e^{-\lambda^{2}_{(n-2)/2}}\frac{(1+s\sqrt{2}/\sqrt{n})^{n}}{(1-2/n)^{n-1}e^{s\sqrt{2n}}}.
\end{flalign*}

Taking the logarithm of the fraction, we get
\begin{flalign*}
&n\log(1+s\sqrt{2}/\sqrt{n})-(n-1)\log(1-2/n)-s\sqrt{2n}\\
&\qquad=n(s\sqrt{2}n^{-1/2}-s^{2}n^{-1}+s^{3}2\sqrt{2}n^{-3/2}/3-s^{4}n^{-2}+O(n^{-5/2}))\\
&\qquad\qquad\qquad-(n-1)(-2n^{-1}-2n^{-2}-8n^{-3}/3-O(n^{-4}))
-s\sqrt{2n}\\
&\qquad=-s^{2}+2+s^{3}2\sqrt{2}n^{-1/2}/3-s^{4}n^{-1}+O(n^{-3/2}).
\end{flalign*}

Combining these estimates proves the asymptotic formula.

We now consider the case $s=0$.  For any $t>2$, define
$$g(t)=e^{-\frac{1}{3(t-2)}}\frac{t^{t-1}}{(t-2)^{t-1}}=e^{-\frac{1}{3(t-2)}+(1-t)[\log(t)-\log(t-2)]}.$$
Then
\begin{flalign*}
g'(t)
&=\left(\frac{1}{3(t-2)^{2}}+\log(t)-\log(t-2)+(t-1)\left(\frac{1}{t}-\frac{1}{t-2}\right)\right)g(t)\\
&=\left(\frac{1}{3(t-2)^{2}}+\log\left(1+\frac{2}{t-2}\right)-\frac{1}{t-2}-\frac{1}{t}\right)g(t).
\end{flalign*}
Let $x=1/(t-2)$ so that $t-2=1/x$, $t=2+1/x$, and $1/t=x/(2x+1)$.
Let $h(x)=(1/3)x^{2}+\log(1+2x)-x-x/(2x+1)$.  Then $h'(x)=(2/3)x+2/(2x+1)-1-[(2x+1)-2x]/(2x+1)^{2}=(2/3)x+2/(2x+1)-1-1/(2x+1)^2$.  Then $(2x+1)^{2}h'(x)=(2/3)x(2x+1)^{2}+2(2x+1)-(2x+1)^{2}-1=(8/3)x^{3}+(8/3)x^{2}+(2/3)x+4x+2-4x^{2}-4x-1-1=(8/3)x^{3}-(4/3)x^{2}+(2/3)x=(1/3)x(8x^{2}-4x+2)$.  And the function $x\mapsto 8x^{2}-4x+2$ has a positive minimum at $x=1/4$.  So, $h'(x)>0$ for all $x\in(0,1)$.  That is, $g'(t)\geq0$ for any $t\geq3$.  Therefore, the quantity $\sum_{i=1}^{n}(\int_{B(0,\sqrt{n})}(1-x_{i}^{2})d\gamma_{n}(x))^{2}$ strictly increases as $n$ increases, if $n\geq6$.  (The case $1\leq n< 6$ follows by direct computation.)
\end{proof}
%\begin{remark}
%Combining \eqref{seven24} and Lemma \ref{lemma24} shows that, as $n\to\infty$, maximizing the sum of squared second order Hermite coefficients reduces to a ``one-dimensional'' problem.  Specifically, a ball in $\R^{n}$ of radius $\sqrt{n+s\sqrt{2n}}$ has measure $\gamma_{1}(-\infty,s)$ as $n\to\infty$, and the squared sum of its second order Fourier coefficients is equal to $e^{-s^{2}}/\pi$ as $n\to\infty$.
%\end{remark}

\section{Appendix: Proof of the Second Variation Formula}\label{sec2var}

Let $A\subset\R^{n}$ be a set with smooth boundary, and let $N\colon\partial A\to S^{n-1}$ denote the unit exterior normal to $\partial A$.  Let $X\colon\R^{n}\to\R^{n}$ be a vector field.  Let $\Psi\colon\R^{n}\times(-1,1)$ such that $\Psi(x,0)=x$ and such that $\frac{d}{dt}|_{t=0}\Psi(x,t)=X(\Psi(x,t))$ for all $x\in\R^{n},t\in(-1,1)$.  For any $t\in(-1,1)$, let $A^{(t)}=\Psi(A,t)$.   Define
\begin{equation}\label{Btwo5.1}
V(x,t)\colonequals\int_{A^{(t)}}G(x,y)dy
\end{equation}
%Define also $V(x,t)=\int_{A^{(t)}}$

\begin{lemma}[\embolden{The First Variation}\,\cite{chokski07}; also {\cite[Lemma 3.1, Equation (7)]{heilman14}}]\label{latelemma3}
Let $G\colon\R^{n}\times\R^{n}\to\R$ be a Schwartz function.
\begin{equation}\label{Bone6}
\frac{d}{dt}|_{t=0}\int_{\R^{n}} 1_{A^{(t)}}(y)G(x,y)dy
=\int_{\partial A}G(x,y)\langle X(y),N(y)\rangle dy.
\end{equation}
In particular, setting $G(x,y)=\gamma_{n}(y)$, we get
$$\frac{d}{dt}|_{t=0}\gamma_{n}(A^{(t)})=\int_{\partial A}\langle X(y),N(y)\rangle \gamma_{n}(y)dy.$$
\end{lemma}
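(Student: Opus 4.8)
The plan is to reduce \eqref{Bone6} to Reynolds' transport theorem followed by the divergence theorem. Fix $x\in\R^{n}$ and abbreviate $\phi(y):=G(x,y)$, a Schwartz function of $y$. Since $\Psi(\cdot,t)$ is the flow of $X$, it is for each $t\in(-1,1)$ a diffeomorphism of $\R^{n}$ with $\Psi(\cdot,0)=\mathrm{id}$, so the change of variables $y=\Psi(z,t)$ gives
\[
\int_{\R^{n}}1_{A^{(t)}}(y)\phi(y)\,dy=\int_{A}\phi(\Psi(z,t))\,J(z,t)\,dz,\qquad J(z,t):=\det D_{z}\Psi(z,t),
\]
where $J(z,0)=1$ and $J>0$ for $t$ near $0$. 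First I would differentiate this identity in $t$ at $t=0$, interchanging $\tfrac{d}{dt}$ with $\int_{A}$; the justification is dominated convergence, using that $\phi$ and its derivatives decay faster than any polynomial while the flow $\Psi$, its spatial derivative $D_{z}\Psi$, and $J$ grow at most polynomially in $z$ uniformly on compact time intervals.

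Next I would evaluate the two resulting terms. By the chain rule and $\tfrac{d}{dt}\big|_{t=0}\Psi(z,t)=X(z)$ we get $\tfrac{d}{dt}\big|_{t=0}\phi(\Psi(z,t))=\langle\nabla\phi(z),X(z)\rangle$. For the Jacobian, differentiating $\tfrac{d}{dt}\Psi=X(\Psi)$ in $z$ gives $\tfrac{d}{dt}D_{z}\Psi=DX(\Psi)\,D_{z}\Psi$, so at $t=0$, $\tfrac{d}{dt}\big|_{t=0}D_{z}\Psi(z,t)=DX(z)$; Jacobi's formula then yields $\tfrac{d}{dt}\big|_{t=0}J(z,t)=\mathrm{tr}(DX(z))=\mathrm{div}\,X(z)$. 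Hence the integrand of the derivative is
\[
\langle\nabla\phi(z),X(z)\rangle+\phi(z)\,\mathrm{div}\,X(z)=\mathrm{div}\bigl(\phi(z)X(z)\bigr).
\]
Applying the divergence theorem on $A$ to the vector field $\phi X$ — whose components inherit the rapid decay of $\phi=G(x,\cdot)$, so that no contribution arises ``at infinity'' when $A$ is unbounded (e.g. $A=B(0,r)^{c}$) — produces
\[
\int_{A}\mathrm{div}\bigl(\phi(z)X(z)\bigr)\,dz=\int_{\partial A}\phi(y)\langle X(y),N(y)\rangle\,dy,
\]
which is \eqref{Bone6}. Setting $G(x,y)=\gamma_{n}(y)$ gives the stated formula for $\tfrac{d}{dt}|_{t=0}\gamma_{n}(A^{(t)})$.

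I expect the main obstacle to be the analytic bookkeeping rather than any conceptual point: one must confirm that, under the standing hypotheses on $X$ and $\Psi$ (the flow exists on $\R^{n}\times(-1,1)$ and is smooth, with at most polynomial growth of $\Psi$, $D\Psi$, $J$ in the space variable), differentiation under the integral sign is legitimate and the divergence theorem applies on the possibly noncompact region $A$ with the possibly unbounded, non-compactly-supported field $\phi X$. The Schwartz decay of $G$ in its second argument is exactly what makes every integral appearing in the argument absolutely convergent; once that is recorded, the computation above is routine. An essentially equivalent alternative is to observe that $\tfrac{d}{dt}\big|_{t=0}1_{A^{(t)}}$ equals, in the sense of distributions, the normal velocity $\langle X,N\rangle$ times surface measure on $\partial A$ (e.g. via the coarea formula applied to a defining function of $A^{(t)}$), and then to pair this against $\phi$.
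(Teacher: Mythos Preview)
Your proposal is correct and is exactly the standard Reynolds-transport/divergence-theorem argument. The paper does not actually give its own proof of this lemma---it merely cites \cite{chokski07} and \cite[Lemma~3.1]{heilman14}---but the very same ingredients you use (the change of variables $y=\Psi(z,t)$, the identities $\frac{d}{dt}|_{t=0}\Psi=X$, $\frac{d}{dt}|_{t=0}J\Psi=\mathrm{div}(X)$, and the divergence theorem) are precisely what the paper records in \eqref{Btwo1}--\eqref{Btwo5} and applies throughout the proof of the second variation formula (Lemma~\ref{latelemma}). Your computation is the one-line-shorter version of that machinery, so there is nothing to correct.
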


\begin{lemma}[\embolden{The Second Variation}, {\cite[Theorem 2.6]{chokski07}}]\label{latelemma}

Let $G\colon\R^{n}\times\R^{n}\to\R$ be a Schwartz function.  Then %Let $F\colonequals \int_{\R^{n}} 1_{A}(x)G(x,y)1_{A}(y)dxdy$.  Assume that
%$$\frac{d}{dt}|_{t=0}\gamma_{n}(A^{(t)})=0,$$
%$$\frac{d^{2}}{dt^{2}}|_{t=0}\gamma_{n}(A^{(t)})=0,$$
%
%Then a normal variation of $A$ satisfies

\begin{flalign*}
\frac{1}{2}\frac{d^{2}}{dt^{2}}|_{t=0}\int_{\R^{n}}\int_{\R^{n}} 1_{A^{(t)}}(x)G(x,y)1_{A^{(t)}}(y)dy
&=\int_{\partial A}\int_{\partial A}G(x,y)\langle X(x),N(x)\rangle\langle X(y),N(y)\rangle dxdy\\
&\qquad+\int_{\partial A}\mathrm{div}(V(x,0)X(x))\langle X(x),N(x)\rangle dx.
\end{flalign*}
%We take $G(x,y)=e^{-(x^{2}-2\rho xy+y^{2})/[2(1-\rho^{2})]}$ or $G(x,y)=\langle x,y\rangle e^{-(x^{2}+y^{2})/2}$.
%...
\end{lemma}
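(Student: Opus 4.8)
The plan is to differentiate twice under the integral sign, carefully tracking how the moving domain $A^{(t)}$ contributes boundary terms. Write $F(A^{(t)}) = \int_{\R^n}\int_{\R^n} 1_{A^{(t)}}(x)G(x,y)1_{A^{(t)}}(y)\,dy\,dx$. First I would use the first-variation identity of Lemma \ref{latelemma3}: since $G$ is Schwartz and $\partial A$ is smooth, the function $V(x,t) = \int_{A^{(t)}}G(x,y)\,dy$ is smooth in $t$, and
\[
\frac{d}{dt}\Big|_{t=0} V(x,t) = \int_{\partial A}G(x,y)\langle X(y),N(y)\rangle\,dy.
\]
Then $F(A^{(t)}) = \int_{A^{(t)}} V(x,t)\,dx$, and applying a version of Lemma \ref{latelemma3} with the $t$-dependent integrand $V(\cdot,t)$ in place of the fixed Schwartz function $G(\cdot,\cdot)$ gives
\[
\frac{d}{dt}F(A^{(t)}) = \int_{A^{(t)}}\frac{\partial}{\partial t}V(x,t)\,dx + \int_{\partial A^{(t)}}V(x,t)\langle X(x),N^{(t)}(x)\rangle\,dx,
\]
where $N^{(t)}$ is the exterior normal to $\partial A^{(t)}$. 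This is the first variation; the key point is that it splits into an "interior" term coming from the motion of the kernel through $V$, and a "boundary" term coming from the motion of the domain of the outer integral.

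Next I would differentiate this expression once more and evaluate at $t=0$. The first term contributes $\int_{A}\frac{\partial^2}{\partial t^2}V(x,0)\,dx$ plus a boundary term $\int_{\partial A}\frac{\partial}{\partial t}V(x,0)\langle X,N\rangle\,dx$; the latter, by the first-variation formula for $V$, is exactly $\int_{\partial A}\int_{\partial A}G(x,y)\langle X(x),N(x)\rangle\langle X(y),N(y)\rangle\,dx\,dy$, which is the first term in the claimed formula. The second ("boundary") term, when differentiated, produces $\int_{\partial A}\frac{\partial}{\partial t}\big(V(x,0)\langle X(x),N^{(t)}(x)\rangle\big)\langle X(x),N(x)\rangle\,dx$ together with a term that re-uses $\frac{\partial}{\partial t}V$ again. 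The standard way to handle the $t$-derivative of a surface integral over the moving boundary $\partial A^{(t)}$ — including the derivative of $N^{(t)}$ and of the surface measure — is to convert it, via the transport/Reynolds-type formula on hypersurfaces, into a term involving $\mathrm{div}_{\partial A}$ of the velocity field restricted to the surface, and the mean curvature. After assembling all pieces and using the identity that the tangential divergence plus the normal-derivative contribution combine into the full Euclidean divergence $\mathrm{div}(V(x,0)X(x))$ tested against $\langle X,N\rangle$, one recovers the second term $\int_{\partial A}\mathrm{div}(V(x,0)X(x))\langle X(x),N(x)\rangle\,dx$. The $\frac{\partial^2}{\partial t^2}V(x,0)$ contribution from the interior integral, being an integral over the fixed region $A$ of a function with no boundary weight, does not survive: more precisely, the natural regrouping shows that the "interior" second-order piece is cancelled by part of the boundary term, which is why only the two surface integrals remain in the final formula. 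This cancellation — and keeping the bookkeeping honest between which boundary terms come from the outer domain moving versus from $V$ itself changing — is exactly the content of \cite[Theorem 2.6]{chokski07}, and I would follow their computation.

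The main obstacle is the careful differentiation of the surface integral $\int_{\partial A^{(t)}}V(x,t)\langle X(x),N^{(t)}(x)\rangle\,dx$ with respect to $t$: one must correctly account for the change in the surface measure, the change in the unit normal $N^{(t)}$, and the $t$-dependence of $V$, and then show that the awkward-looking tangential/curvature terms reorganize into the clean Euclidean divergence appearing in the statement. This is where sign conventions and the precise form of the transport theorem on hypersurfaces matter; everything else is routine differentiation under the integral sign justified by the Schwartz hypothesis on $G$. Since this computation is carried out in full in \cite{chokski07} and our setup matches theirs up to notation, I would present the reduction to their framework and cite their result for the remaining bookkeeping, reproducing the key identities (the first-variation formula for $V$, the transport formula for the moving surface integral, and the divergence regrouping) so the argument is self-contained.
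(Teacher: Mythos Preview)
Your overall strategy --- use the Reynolds transport formula on the moving domain rather than pull back to the fixed domain via the flow $\Psi$ and its Jacobian, as the paper does --- is a legitimate alternative route, and in fact slightly cleaner conceptually. The paper instead writes $F(A^{(t)})=\int_A V(\Psi(x,t),t)J\Psi(x,t)\,dx$, expands by the chain rule to seven terms, and regroups them via the divergence theorem; your transport approach avoids that bookkeeping.

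However, there is a genuine gap in your treatment of the term $\int_A V_{tt}(x,0)\,dx$. You assert that this interior second-order piece ``does not survive'' because it is ``cancelled by part of the boundary term.'' This is not what happens. If you carry out your own computation carefully --- e.g.\ by rewriting the moving boundary integral $\int_{\partial A^{(t)}}V\langle X,N^{(t)}\rangle$ as $\int_{A^{(t)}}\mathrm{div}(V X)$ via the divergence theorem before differentiating --- you will find
\[
\frac{d^{2}}{dt^{2}}\Big|_{t=0}F(A^{(t)})
=\int_A V_{tt}(x,0)\,dx
+2\int_{\partial A}\!\int_{\partial A}G\langle X,N\rangle\langle X,N\rangle
+\int_{\partial A}\mathrm{div}(V(\cdot,0)X)\langle X,N\rangle.
\]
Comparing with the claimed formula, you need
\[
\int_A V_{tt}(x,0)\,dx=\int_{\partial A}\mathrm{div}(V(x,0)X(x))\langle X(x),N(x)\rangle\,dx,
\]
not zero. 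This identity is a substantive computation (it is exactly Step~3 of the paper's proof, equations \eqref{Btwo13}--\eqref{Btwo18}): one writes $V_{tt}$ by differentiating the inner integral $\int_{A^{(t)}}G(x,y)\,dy$ twice, integrates by parts in $y$, and observes that two of the resulting terms cancel while the surviving one becomes the stated boundary integral after applying $\int_A\cdots\,dx$ and symmetry of the roles of $x$ and $y$. Without this step your argument is off by a factor of~$2$ on the $\mathrm{div}(VX)$ term.
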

\begin{proof}
%Let $A\subset\R^{n}$ be a set with smooth boundary, and let $N\colon\partial A\to S^{n-1}$ denote the unit exterior normal to $\partial A$.  Let $X\colon\R^{n}\to \R^{n}$.  Let $\Psi\colon\R^{n}\times(-1,1)$ such that $(d/dt)|_{t=0}\Psi(x,t)=X(x)$ for all $x\in\R^{n}$, and such that $\Psi(x,0)=x$ for all $x\in\R^{n}$.
%Define $A^{(t)}\colonequals\Psi(A,t)$.
Write $\Psi$ and $X$ in their components as $\Psi=(\Psi^{(1)},\ldots,\Psi^{(n)})$, $X=(X^{(1)},\ldots,X^{(n)})$.  We use subscript notation to denote partial derivatives, and we let $\mathrm{div}(X)=\sum_{i=1}^{n}X_{i}^{(i)}$ denote the divergence of $X$.  Let $J\Psi(y,t)$ denote $\abs{\det D\Psi(y,t)}=\abs{\det(\partial\Psi^{(i)}(y,t)/\partial y_{j})_{1\leq i,j\leq n}}\in\R$.

By assumption,
\begin{equation}\label{Btwo1}
\frac{d\Psi}{dt}|_{t=0}=X(\Psi(x,0))=X(x).
\end{equation}
Since $\Psi$ is smooth, we can write
$$D\Psi(x,t)=I+tDX+\frac{1}{2}t^{2}DZ+o(t^{2}),$$
$$Z\colonequals\frac{d^{2}\Psi}{dt^{2}}|_{t=0},\quad Z^{(i)}=\sum_{j=1}^{n}X_{j}^{(i)}X^{(j)}.$$
We then have the determinant expansion
\begin{flalign*}
&\det(D\Psi(x,t))
=1+t\mathrm{Tr}(DX)+\frac{1}{2}t^{2}[\mathrm{Tr}(DZ)+(\mathrm{Tr}(DX))^{2}-\mathrm{Tr}((DX)^{2})]+o(t^{2})\\
&\quad=1+t\mathrm{Tr}(DX)+\frac{1}{2}t^{2}[\mathrm{div}(Z)+(\mathrm{div}(X))^{2}-\sum_{i,j=1}^{n}X_{j}^{(i)}X_{i}^{(j)}]+o(t^{2})\\
&\quad=1+t\mathrm{Tr}(DX)+\frac{1}{2}t^{2}[\sum_{i,j=1}^{n}X_{ij}^{(i)}X^{(j)}+\sum_{i,j=1}^{n}X_{j}^{(i)}X_{i}^{(j)}+(\mathrm{div}(X))^{2}-\sum_{i,j=1}^{n}X_{j}^{(i)}X_{i}^{(j)}]+o(t^{2})\\
&\quad=1+t\mathrm{Tr}(DX)+\frac{1}{2}t^{2}\mathrm{div}(\mathrm{div}(X)X)+o(t^{2}).\\
\end{flalign*}

Since $J\Psi(x,t)=\abs{\det(D\Psi(x,0))}$, we therefore have
\begin{equation}\label{Btwo2}
J\Psi(x,0)=1.
\end{equation}
\begin{equation}\label{Btwo3}
(d/dt)J\Psi|_{t=0}=\mathrm{div}(X).
\end{equation}
\begin{equation}\label{Btwo4}
\frac{d^{2}\Psi^{(i)}}{dt^{2}}|_{t=0}=\sum_{j=1}^{n}X_{j}^{(i)}X^{(j)}.
\end{equation}
\begin{equation}\label{Btwo5}
\frac{d^{2}}{dt^{2}}J\Psi(x,t)|_{t=0}=\mathrm{div}((\mathrm{div}(X))X).
\end{equation}

Let
\begin{equation}\label{Btwo5.2}
F(A^{(t)})=\int_{\R^{n}} 1_{A^{(t)}}(x)G(x,y)1_{A^{(t)}}(y)dxdy=\int_{A^{(t)}}V(x,t)=\int_{A}V(\Psi(x,t),t)J\Psi(x,t)dx.
\end{equation}

In the sequel, we will use the chain rule and divergence theorem repeatedly.

\begin{equation}\label{Btwo6}
\begin{aligned}
\frac{d}{dt}F(A^{(t)})
&=\int_{A}\sum_{i=1}^{n}V_{x_{i}}(\Psi(x,t),t)\Psi_{t}^{(i)}(x,t)J\Psi(x,t)+V(\Psi(x,t),t)\frac{d}{dt}(J\Psi(x,t))\\
&\qquad+V_{t}(\Psi(x,t),t)J\Psi(x,t).
\end{aligned}
\end{equation}

\noindent
\embolden{Step 1. Computing the Second Derivative of $F(A^{(t)})$ with respect to $t$}.

\begin{equation}\label{Btwo7}
\begin{aligned}
&\frac{d^{2}}{dt^{2}}F(A^{(t)})
=\int_{A}\sum_{i,j=1}^{n}V_{x_{i}x_{j}}(\Psi(x,t),t)\Psi_{t}^{(i)}(x,t)\Psi_{t}^{(j)}(x,t)J\Psi(x,t)\\
&\qquad+2\sum_{i=1}^{n}V_{x_{i},t}(\Psi(x,t),t)\Psi_{t}^{(i)}(x,t)J\Psi(x,t)
+\sum_{i=1}^{n}V_{x_{i}}(\Psi(x,t),t)\Psi_{tt}^{(i)}(x,t)J\Psi(x,t)\\
&\qquad+2\sum_{i=1}^{n}V_{x_{i}}(\Psi(x,t),t)\Psi_{t}^{(i)}(x,t)(d/dt)J\Psi(x,t)
+2V_{t}(\Psi(x,t),t)(d/dt)J\Psi(x,t)\\
&\qquad+V(\Psi(x,t),t)(d^{2}/dt^{2})(J\Psi(x,t))
+V_{tt}(\Psi(x,t),t)J\Psi(x,t) dx.
\end{aligned}
\end{equation}

\begin{equation}\label{Btwo8}
\begin{aligned}
&\frac{d^{2}}{dt^{2}}F(A^{(t)})|_{t=0}
\stackrel{\substack{\eqref{Btwo1}\wedge\eqref{Btwo2}\wedge\\ \eqref{Btwo3}\wedge\eqref{Btwo4}\wedge\eqref{Btwo5}}}{=}
\int_{A}\sum_{i,j=1}^{n}V_{x_{i}x_{j}}(x,0)X^{(i)}(x)X^{(j)}(x)\\
&\qquad+2\sum_{i=1}^{n}V_{x_{i},t}(x,t)X^{(i)}(x)
+\sum_{i,j=1}^{n}V_{x_{i}}(x,t)X_{x_{j}}^{(i)}(x)X^{(j)}(x)\\
&\qquad+2\sum_{i=1}^{n}V_{x_{i}}(x,t)X^{(i)}(x)\mathrm{div}(X(x))
+2V_{t}(x,0)\mathrm{div}(X(x))\\
&\qquad+V(x,0)\mathrm{div}((\mathrm{div}(X(x)))X(x))
+V_{tt}(x,t) dx.
\end{aligned}
\end{equation}

From \eqref{Bone6}, $V_{t}(x,0)=\int_{\partial A}G(x,y)\langle X(y),N(y)\rangle dy$.  So, combining the second and fifth terms of \eqref{Btwo8}, then applying the divergence theorem,
\begin{equation}\label{Btwo9}
\begin{aligned}
&\int_{A}2\langle \nabla_{x}V_{t}(x,0),X(x)\rangle+2V_{t}(x,0)\mathrm{div}(X(x))dx\\
&\qquad=2\int_{A}\mathrm{div}(V_{t}(x,0)X(x))dx
=2\int_{\partial A}V_{t}(x,0)\langle X(x),N(x)\rangle dx\\
&\qquad=2\int_{\partial A}\int_{\partial A}G(x,t)\langle X(x),N(x)\rangle\langle X(y),N(y)\rangle dxdy.
\end{aligned}
\end{equation}

Combining the first, third and fourth terms of \eqref{Btwo8}, and using the divergence theorem,
\begin{equation}\label{Btwo10}
\begin{aligned}
&\int_{A}\sum_{i,j=1}^{n}V_{x_{i}x_{j}}(x,0)X^{(i)}(x)X^{(j)}(x)
+\sum_{i,j=1}^{n}V_{x_{i}}(x,t)X_{x_{j}}^{(i)}(x)X^{(j)}(x)\\
&\qquad\qquad\qquad\qquad\qquad\qquad\qquad\qquad\qquad
+\sum_{i=1}^{n}V_{x_{i}}(x,t)X^{(i)}(x)\mathrm{div}(X(x)) dx\\
&\qquad=\int_{A}\mathrm{div}(\langle\nabla_{x}V(x,0),X(x)\rangle X(x))dx
=\int_{\partial A}\langle \nabla_{x}V(x,0),X(x)\rangle\langle X(x),N(x)\rangle dx
\end{aligned}
\end{equation}

%Applying the divergence theorem to the sixth term of \eqref{Btwo8},
%\begin{equation}\label{Btwo10.1}
%\begin{aligned}
%&\int_{A} V(x,0)\mathrm{div}(\mathrm{div}(X(x))X(x))\\
%&\qquad=\int_{\partial A}V(x,0)\mathrm{div}(X(x))\langle X(x),N(x)\rangle
%-\int_{A}\mathrm{div}(X(x))\langle X(x),\nabla_{x}V(x,0)\rangle.
%\end{aligned}
%\end{equation}

Combining the sixth term and one of the fourth terms of \eqref{Btwo8}, then applying the divergence theorem,
%\int_{A}<F,\nabla g>+gdiv(F)=\int_{\partial A}g<F,N>
% so here, F=div(X)X, g=V(x,0)
% so \int_{A}g div(F)=\int_{\partial A}g<F,N>-\int_{A}<F,\nabla g>
\begin{equation}\label{Btwo11}
\begin{aligned}
&\int_{A} V(x,0)\mathrm{div}(\mathrm{div}(X(x))X(x))dx
+\langle\nabla_{x} V(x,0),X(x)\rangle\mathrm{div}(X(x))dx\\
&\qquad=\int_{\partial A}V(x,0)(\mathrm{div}(X(x)))\langle X(x),N(x)\rangle dx
\end{aligned}
\end{equation}

\noindent
\embolden{Step 2. Combining the Terms}.

Now, substituting \eqref{Btwo9}, \eqref{Btwo10} and \eqref{Btwo11} into \eqref{Btwo8},
\begin{equation}\label{Btwo12}
\begin{aligned}
&\frac{d^{2}}{dt^{2}}F(A^{(t)})|_{t=0}
=2\int_{\partial A}\int_{\partial A}G(x,y)\langle X(x),N(x)\rangle\langle X(y),N(y)\rangle dxdy\\
&\quad+\int_{\partial A}\langle \nabla_{x}V(x,0),X(x)\rangle\langle X(x),N(x)\rangle dx
+\int_{\partial A}V(x,0)(\mathrm{div}(X(x)))\langle X(x),N(x)\rangle dx\\
&\quad+\int_{A}V_{tt}(x,t) dx.\\
&\,=2\int_{\partial A}\int_{\partial A}G(x,y)\langle X(x),N(x)\rangle\langle X(y),N(y)\rangle dxdy\\
&\quad+\int_{\partial A}\mathrm{div}(V(x,0)X(x))\langle X(x),N(x)\rangle dx+\int_{A}V_{tt}(x,0) dx.
\end{aligned}
\end{equation}

\noindent
\embolden{Step 3. Computing the final term, $V_{tt}$}.

It therefore remains to compute $\int_{A}V_{tt}(x,t)dx$.  From \eqref{Btwo5.1},
\begin{flalign*}
V_{t}(x,t)&=\frac{d}{dt}\int_{A}G(x,\Psi(y,t))J\Psi(y,t)dy\\
&=\int_{A}\langle\nabla_{z}G(x,\Psi(y,t))(d/dt)\Psi(y,t)\rangle J\Psi(y,t)+G(x,\Psi(y,t))(d/dt)J\Psi(y,t) dy
\end{flalign*}

So, applying the Chain rule, and then the divergence theorem,
%\int g div(F)+\int <F,nabla g>=\int_{\partial }g<F,N>, or
% \int <nabla g,F>=-\int g div(F)+\int_{\partial }g<F,N>
\begin{equation}\label{Btwo13}
\begin{aligned}
&\int_{A}V_{tt}(x,0)dx
=\frac{d}{dt}|_{t=0}\int_{A}V_{t}(x,0)dx\\
&\,=\frac{d}{dt}|_{t=0}\int_{A}\int_{A}\langle\nabla_{z}G(x,\Psi(y,t))(d/dt)\Psi(y,t)\rangle J\Psi(y,t)+G(x,\Psi(y,t))(d/dt)J\Psi(y,t) dydx\\
&\,=\frac{d}{dt}|_{t=0}\int_{A}\int_{A}\langle\nabla_{y}G(x,\Psi(y,t))[D\Psi(y,t)]^{-1}(d/dt)\Psi(y,t)\rangle J\Psi(y,t)\\
&\qquad\qquad+G(x,\Psi(y,t))(d/dt)J\Psi(y,t) dydx\\
&\,=\frac{d}{dt}|_{t=0}-\int_{A}\int_{A}G(x,\Psi(y,t))\mathrm{div}([D\Psi(y,t)]^{-1}(d/dt)\Psi(y,t)\rangle J\Psi(y,t)) dydx\\
&\quad+\int_{A}\int_{\partial A}\langle G(x,\Psi(y,t))[D\Psi(y,t)]^{-1}(d/dt)\Psi(y,t)\rangle J\Psi(y,t), N(y)\rangle dydx\\
&\quad+\int_{A}\int_{A}G(x,\Psi(y,t))(d/dt)J\Psi(y,t) dydx
\end{aligned}
\end{equation}

We now differentiate the three terms in \eqref{Btwo13}.

\begin{equation}\label{Btwo14.0}
D\Psi=I+tD X+O(t^{2}),\quad [D\Psi]^{-1}=I-tD X+O(t^{2}).
\end{equation}

\begin{equation}\label{Btwo14}
(d/dt)|_{t=0}([D\Psi(y,t)]^{-1})=-DX(y).
\end{equation}

%(DX)X=(X_{i}^{(j)})X=\sum_{i}  X_{i}^{(j)}X^{(i)}
\begin{equation}\label{Btwo15}
\begin{aligned}
&(d/dt)|_{t=0}G(x,\Psi(y,t))\mathrm{div}([D\Psi(y,t)]^{-1}(d/dt)\Psi(y,t)\rangle J\Psi(y,t))\\
&\stackrel{\substack{\eqref{Btwo1}\wedge\eqref{Btwo4}\wedge\\ \eqref{Btwo3}\wedge\eqref{Btwo14}\wedge\eqref{Btwo14.0}}}{=}
\langle \nabla_{y}G(x,y),X(y)\rangle\mathrm{div}(X(y))\\
&\qquad\qquad\qquad+G(x,y)\mathrm{div}(-(DX)X+(\sum_{j=1}^{n}X_{x_{j}}^{(i)}X^{(j)})_{i}+X\mathrm{div}(X)) \\
&\qquad=\langle \nabla_{y}G(x,y),X(y)\rangle\mathrm{div}(X(y))+G(x,y)\mathrm{div}(X\mathrm{div}(X))\\
&\qquad=\mathrm{div}_{y}(G(x,y)X(y)\mathrm{div}(X(y))).
\end{aligned}
\end{equation}

As in \eqref{Btwo15},
\begin{equation}\label{Btwo16}
\begin{aligned}
&(d/dt)|_{t=0}\langle G(x,\Psi(y,t))[D\Psi(y,t)]^{-1}(d/dt)\Psi(y,t)\rangle J\Psi(y,t), N(y)\rangle\\
&\qquad= \langle \nabla_{y}G(x,y),X(y)\rangle X(y)+G(x,y)X(y)\mathrm{div}(X(y))\\
&\qquad= X(y)\mathrm{div}_{y}(G(x,y)X(y)).
\end{aligned}
\end{equation}

\begin{equation}\label{Btwo17}
\begin{aligned}
&(d/dt)|_{t=0}G(x,\Psi(y,t))(d/dt)J\Psi(y,t) \\
&\qquad\stackrel{\eqref{Btwo3}\wedge\eqref{Btwo5}}{=}\langle \nabla_{y}G(x,y),X(y)\rangle\mathrm{div}(X(y))+G(x,y)\mathrm{div}(X(y)\mathrm{div}(X(y)))\\
&\qquad=\mathrm{div}_{y}(G(x,y)X(y)\mathrm{div}(X(y))).
\end{aligned}
\end{equation}

Substituting \eqref{Btwo15}, \eqref{Btwo16} and \eqref{Btwo17} into \eqref{Btwo13} and noting that \eqref{Btwo15} and \eqref{Btwo17} cancel,

\begin{equation}\label{Btwo18}
\begin{aligned}
\int_{A}V_{tt}(x,0)dx
&= \int_{A}\int_{\partial A}\mathrm{div}_{y}(G(x,y)X(y))\langle X(y),N(y)\rangle dydx\\
&=\int_{\partial A}\mathrm{div}_{y}\left[\left(\int_{A}G(x,y)dx\right) X(y)\right]\langle X(y), N(y)\rangle dy\\
&\stackrel{\eqref{Btwo5.1}}{=}\int_{\partial A}\mathrm{div}(V(x,0) X(x))\langle X(x),N(x)\rangle dx
\end{aligned}
\end{equation}

\noindent
\embolden{Step 4. Combining all terms together}

Substituting \eqref{Btwo18} into \eqref{Btwo12}, we finally get
\begin{flalign*}
\frac{d^{2}}{dt^{2}}F(A^{(t)})|_{t=0}
&=2\int_{\partial A}\int_{\partial A}G(x,y)\langle X(x),N(x)\rangle\langle X(y),N(y)\rangle dxdy\\
&\qquad+2\int_{\partial A}\mathrm{div}(V(x,0)X(x))\langle X(x),N(x)\rangle dx.
\end{flalign*}

\end{proof}

\begin{lemma}\label{lemmab}
$$\frac{d^{2}}{dt^{2}}|_{t=0}\gamma_{n}(A^{(t)})=\int_{\partial A}(\mathrm{div}(X(x))-\langle X(x),x\rangle)\langle X(x),N(x)\rangle \gamma_{n}(x)dx.$$
\end{lemma}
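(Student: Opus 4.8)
The plan is to compute $\frac{d^{2}}{dt^{2}}|_{t=0}\gamma_{n}(A^{(t)})$ directly from the change-of-variables formula, reusing the flow and Jacobian expansions \eqref{Btwo1}--\eqref{Btwo5} established in the proof of Lemma \ref{latelemma}. Write $g\colonequals\gamma_{n}$. Since $\gamma_{n}(A^{(t)})=\int_{A^{(t)}}g(x)\,dx=\int_{A}g(\Psi(y,t))\,J\Psi(y,t)\,dy$, I would differentiate twice in $t$ under the integral sign (justified by the Gaussian decay of $g$ together with the smoothness of $\Psi$ and $X$) and evaluate at $t=0$ using $\Psi(y,0)=y$, $\Psi_{t}|_{t=0}=X$, $\Psi_{tt}^{(i)}|_{t=0}=\sum_{j}X_{j}^{(i)}X^{(j)}$, $J\Psi|_{t=0}=1$, $(d/dt)J\Psi|_{t=0}=\mathrm{div}(X)$, and $(d^{2}/dt^{2})J\Psi|_{t=0}=\mathrm{div}(\mathrm{div}(X)X)$. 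This yields
$$\frac{d^{2}}{dt^{2}}\Big|_{t=0}\gamma_{n}(A^{(t)})=\int_{A}\Big[\langle \mathrm{Hess}(g)X,X\rangle+\langle\nabla g,(DX)X\rangle+2\langle\nabla g,X\rangle\,\mathrm{div}(X)+g\,\mathrm{div}(\mathrm{div}(X)X)\Big]\,dy,$$
where $(DX)X$ denotes the vector with $i$-th coordinate $\sum_{j}X_{j}^{(i)}X^{(j)}$.

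The next step is to regroup this integrand into a single total divergence. Coordinatewise one checks $\langle \mathrm{Hess}(g)X,X\rangle+\langle\nabla g,(DX)X\rangle=\langle\nabla\langle\nabla g,X\rangle,X\rangle=\mathrm{div}(\langle\nabla g,X\rangle X)-\langle\nabla g,X\rangle\,\mathrm{div}(X)$, while $\mathrm{div}(g\,\mathrm{div}(X)X)=\langle\nabla g,X\rangle\,\mathrm{div}(X)+g\,\mathrm{div}(\mathrm{div}(X)X)$; adding these two identities shows the integrand equals $\mathrm{div}(\langle\nabla g,X\rangle X)+\mathrm{div}(g\,\mathrm{div}(X)X)=\mathrm{div}\big((\langle\nabla g,X\rangle+g\,\mathrm{div}(X))X\big)=\mathrm{div}(\mathrm{div}(gX)\,X)$. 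The divergence theorem on $A$ (with no contribution from infinity, thanks to the Gaussian weight) then gives $\frac{d^{2}}{dt^{2}}|_{t=0}\gamma_{n}(A^{(t)})=\int_{\partial A}\mathrm{div}(gX)(x)\,\langle X(x),N(x)\rangle\,dx$.

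Finally, since $\nabla\gamma_{n}(x)=-x\,\gamma_{n}(x)$, we have $\mathrm{div}(\gamma_{n}X)(x)=\langle\nabla\gamma_{n}(x),X(x)\rangle+\gamma_{n}(x)\,\mathrm{div}(X)(x)=(\mathrm{div}(X)(x)-\langle X(x),x\rangle)\,\gamma_{n}(x)$, and substituting this into the boundary integral gives exactly the asserted formula. The step requiring the most care is the regrouping into a total divergence (the same mechanism as in Step 2 of the proof of Lemma \ref{latelemma}), together with the routine checks that all intermediate integrands are absolutely integrable and that the divergence theorem applies with no boundary term at infinity — both following from the Gaussian weight $\gamma_{n}$ combined with at-most-polynomial growth of $X$ and its first two derivatives. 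Alternatively one could read this off by specializing Lemma \ref{latelemma}, but the direct computation above is shorter and avoids the limiting argument needed to handle the fact that $(x,y)\mapsto\gamma_{n}(y)$ is not Schwartz in $x$.
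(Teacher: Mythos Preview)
Your proof is correct and takes a genuinely different route from the paper. The paper applies Lemma \ref{latelemma} with $G(x,y)=\gamma_{n}(x)\gamma_{n}(y)$ to compute $\frac{d^{2}}{dt^{2}}|_{t=0}(\gamma_{n}(A^{(t)}))^{2}$, then uses the chain rule to isolate $\frac{d^{2}}{dt^{2}}|_{t=0}\gamma_{n}(A^{(t)})$; the double boundary integral from Lemma \ref{latelemma} cancels exactly against $(\frac{d}{dt}|_{t=0}\gamma_{n}(A^{(t)}))^{2}$, leaving the $\mathrm{div}(V(x,0)X(x))$ term. Your argument instead differentiates the single integral $\int_{A}g(\Psi(y,t))J\Psi(y,t)\,dy$ directly and regroups the four resulting terms into one total divergence. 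Your route is somewhat more elementary and self-contained, since it does not rely on the bilinear second variation machinery; the paper's route has the virtue of reusing Lemma \ref{latelemma} wholesale, so no new regrouping identity needs to be checked. One small correction to your closing remark: the paper's choice $G(x,y)=\gamma_{n}(x)\gamma_{n}(y)$ \emph{is} Schwartz on $\R^{n}\times\R^{n}$, so no limiting argument is actually needed there; the concern you raise would apply to $G(x,y)=\gamma_{n}(y)$, which is not what the paper uses.
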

\begin{proof}
Let $G(x,y)\colonequals\gamma_{n}(x)\gamma_{n}(y)$ for any $x,y\in\R^{n}$.  Then for any $A\subset\R^{n}$, $\int_{A}\int_{A}G(x,y)dxdy=(\gamma_{n}(A))^{2}$.  By Lemma \ref{latelemma3},
$$\frac{d}{dt}|_{t=0}\gamma_{n}(A^{(t)})=\int_{\partial A}\langle X(y),N(y)\rangle \gamma_{n}(y)dy.$$
And \eqref{Btwo5.1} says $V(x,t)\colonequals\int_{A^{(t)}}G(x,y)dy=\gamma_{n}(x)\gamma_{n}(A^{(t)})$ for any $x\in\R^{n},t\in\R$.  So, 
$$\mathrm{div}(V(x,0)X(x))=(\mathrm{div}(X(x))-\langle X(x),x\rangle)\gamma_{n}(x)\gamma_{n}(A),\qquad\forall\,x\in\R^{n}.$$
Then, by the Chain Rule and Lemma \ref{latelemma},
\begin{flalign*}
&\gamma_{n}(A)\frac{d^{2}}{dt^{2}}|_{t=0}\gamma_{n}(A^{(t)})+\left(\int_{\partial A}\langle X(y),N(y)\rangle \gamma_{n}(y)dy\right)^{2}\\
&\gamma_{n}(A)\frac{d^{2}}{dt^{2}}|_{t=0}\gamma_{n}(A^{(t)})+\left(\frac{d}{dt}|_{t=0}\gamma_{n}(A^{(t)})\right)^{2}
=\frac{1}{2}\frac{d^{2}}{dt^{2}}|_{t=0}(\gamma_{n}(A^{(t)}))^{2}\\
&=\left(\int_{\partial A}\langle X(y),N(y)\rangle \gamma_{n}(y)dy\right)^{2}
+\gamma_{n}(A)\int_{\partial A}(\mathrm{div}(X(x))-\langle X(x),x\rangle)\langle X(x),N(x)\rangle \gamma_{n}(x)dx.
\end{flalign*}
The Lemma follows.
%Then, apply \eqref{Btwo18}, and note that, by \eqref{Btwo5.1}, we have $V(x,0)=\int_{A}d\gamma_{n}(y)=\gamma_{n}(A)$.
\end{proof}

%\noindent\textbf{Acknowledgement}.  Thanks to Oded Regev for helpful discussions.  Thanks to Elchanan Mossel for encouraging me to publish these results.

%\bibliographystyle{abbrv}
\bibliographystyle{amsalpha}
\bibliography{symsetv9}

\end{document}